\documentclass[oneside,reqno]{amsart}

\usepackage{arxiv_preamble}

\title[Enumeration of Triangle-Maximal Pseudoline Arrangements]{Enumeration and Classification\\ of Triangle-Maximal Pseudoline Arrangements}
\author{Roman Parpalak}
\author{Denis Utkin}
\thanks{Email: \texttt{roman@parpalak.com} (R.\,Parpalak),
  \texttt{ud1@mail.ru} (D.\,Utkin).}
\date{July 31, 2026}

\begin{document}

\begin{abstract}
We describe algorithms for the exhaustive enumeration and classification of
simple arrangements of $n$ pseudolines ($n$ odd) maximizing the number of triangular faces.
The depth-first search enumerates reduced words for the longest permutation
$w_0$ by branching only on the even-indexed generators, using pruning constraints
imposed by the geometry of optimal arrangements.  The approach handles both perfect
arrangements with a regular triangular pattern and unavoidable deviations from it
for $n\equiv 1\pmod 6$.

The output is classified into a hierarchy of equivalence classes: by
commutation, by Euclidean transformations, and by projective transformations.  For each
projective class we recover its full symmetry group $G\subseteq S_{n+1}$
together with the orbit--stabilizer profile of its Euclidean subclasses.
Completeness of the search and classification is proved: every wiring diagram is reached.

We report full enumerations; e.g.\ for $n=27$, $85\,562\,064$ wiring diagrams
partitioned into $56\,646$ projective classes.  For larger $n$ (up to $n=93$),
where exhaustive enumeration is out of reach, we report partial (first-hit) results.
\end{abstract}

\maketitle

\section{Introduction}

The study of straight-line and pseudoline arrangements with many triangular
faces goes back to Grünbaum~\cite{grunbaum-1972}, who tabulated the known
extremal values and stated a series of conjectures.  Harborth~\cite{harborth-1985}
and Roudneff~\cite{roudneff-1986} gave constructions of pseudoline
arrangements with the maximum number of triangles.  Constructions for
straight lines were given by
Füredi and Palásti~\cite{furedi-palasti-1984},
Forge and Ramírez Alfonsín~\cite{forge-ramirez-1998}, and Bartholdi, Blanc,
and Loisel~\cite{bartholdi-blanc-loisel-2007}.  Upper bounds were tightened
by Blanc~\cite{blanc-2011}.

The classical extremal formulation in this setting is the Kobon triangle
problem~\cite{fujimura-1978,oeis-a006066}, which asks for the maximum number of triangular faces
determined by \(n\) lines in the
plane.  The Arnold problem is a related question concerning
the checkerboard coloring of the faces. In the English edition of Arnold's
Problems, it is formulated as follows~\cite[Problem 1983--4]{arnold-problems-2004}:
\begin{quote}
Let \(N\) lines be given in the real plane, and their complement be
chess-like painted black and white. What is the greatest difference between
the number of black and white regions?
\end{quote}

L. Fejes Tóth had earlier asked for the maximum possible ratio \(b/w\)
between the numbers of black and white faces in such a
coloring~\cite{fejes-toth-1975,furedi-palasti-1984}; Arnold asked
instead for the maximum difference \(b-w\).

The Kobon and Arnold problems have different formulations but their
extremal arrangements partially coincide.  To make \(b-w\) large, one must make \(b\)
as large as possible. This forces the average number of sides per black
face to be small, so in the extremal case most black faces are triangles.

In a simple arrangement of \(n\) pseudolines, the \(n(n-2)\) bounded
segments give the standard upper bound \(\lfloor n(n-2)/3\rfloor\) on the
number of triangular faces.  For \(n\equiv 3,5\pmod 6\),
\(n(n-2)\) is divisible by \(3\), so an arrangement attaining the bound
has every bounded segment lying on a triangle. Such arrangements are
called \emph{perfect}.  For \(n\equiv 1\pmod 6\), the remainder is
\(2\), so any arrangement attaining the bound has
two bounded segments not on any bounded triangle (\emph{unused} segments).

The same segment count restricted to one checkerboard color gives
an analogous upper bound \(\lfloor n(n-2)/3\rfloor\) on the number of bounded
black faces.  Perfect arrangements attain this bound as well: every
bounded black face is a triangle.  For \(n\equiv 1\pmod 6\), we introduce the colored
analog of unused segments, \emph{defects}.  A defect is a non-triangular black
face.  Arrangements attaining the maximum contain either two
quadrilateral defects or one pentagonal defect, and we call them
\emph{2-defective}.  In the odd case, one can easily show that the
Arnold optimum among pseudoline arrangements is attained on a simple one
(resolving a multiple crossing raises the black excess).  Overall, perfect
arrangements solve both the Kobon problem (including the general non-simple case,
for which the same upper bound holds~\cite{felsner-kriegel-1999}) and the
Arnold problem, and there exist 2-defective arrangements solving both as well.

We borrow the checkerboard coloring from the Arnold
problem as a constraint for enumerating simple pseudoline arrangements,
encoded as reduced words for the longest permutation.  Stretchability is
not considered: the results are combinatorial, leaving the straight-line
realization as a separate question.

We restrict to odd \(n\); the even case (pairs of parallel lines in the Arnold
problem, non-simple arrangements in the Kobon problem) is beyond the
scope of this paper.

\emph{Main results.}
We give algorithms for enumerating and classifying extremal
pseudoline arrangements, together with proofs of their completeness.
This yields the complete classification of perfect arrangements for
\(n\le 27\) and of 2-defective arrangements for \(n\le 19\) up to projective
equivalence, with the symmetry group of each class.  For example, at \(n=27\)
there are \(85\,562\,064\) wiring diagrams partitioned into \(56\,646\)
projective classes.
The same algorithms were used to find the base configuration for the
infinite family of triangle-maximal straight-line arrangements with
\(18\cdot 2^t+1\) lines~\cite{parpalak-utkin-2026}, which gives
solutions to the Kobon and Arnold problems.

\emph{Paper organization.}
\Cref{sec:background} introduces the basic notions: wiring diagrams,
generators of the symmetric group, reduced words for the longest
permutation, and the checkerboard coloring.  It establishes the
properties of defects and optimal arrangements, translating the extremal problem into one over
reduced words.

\Cref{sec:problem} states the enumeration problem precisely, surveys
prior algorithmic approaches, and outlines our approach.

\Cref{sec:perfect-search} develops an exhaustive enumeration
for perfect arrangements, exploiting their local structure; we call it the
\emph{perfect search}. Every perfect arrangement is reached
(\cref{thm:perfect-completeness}), exactly once per commutation class
(\cref{cor:perfect-class-output}).

\Cref{sec:classification} groups the search output into Euclidean and
projective equivalence classes, choosing a canonical representative for
each and recovering the symmetry group of every projective class; the
underlying operations on order matrices and the canonicalization
algorithm are developed in \cref{app:symmetries,app:canonicalization}.

\Cref{sec:2-defective-search} develops a search for 2-defective
arrangements.  The search anchors
one defect at a fixed position in the wiring diagram and enumerates the
placements of the second defect.  Completeness is proved on projective
classes (\cref{thm:2-defective-completeness}).

\Cref{sec:computational-results} reports the enumeration counts
(compared against prior results) and first-hit examples,
and explains how to extract the triangle-maximal arrangements
from the search output. \Cref{app:symmetry-drawings} illustrates those of
most notable symmetry.

Finally, \cref{app:special-self-consistency} reports empirical
self-consistency checks illustrating the completeness of the 2-defective
search, and \cref{app:data-formats} demonstrates the enumeration and
classification steps on a worked example.

\section{Background and the optimality framework}
\label{sec:background}

A \emph{pseudoline arrangement} is a finite collection of unbounded
simple curves in the affine plane, the \emph{pseudolines}, in which every two
cross transversely in exactly one point.  Throughout this paper, we
consider \emph{simple} arrangements: no three pseudolines meet at a
common point (for the general theory see~\cite{grunbaum-1972,felsner-goodman-2017}).
We work primarily with affine arrangements; we pass to the
projective closure where convenient.

\subsection{Wiring diagrams and reduced words}
\label{subsec:wiring-diagrams}
Start with an arrangement of \(n\) lines in general position, and pick an
oriented sweep line (a straight line for straight lines, another
pseudoline for pseudolines), meeting the crossings one at a time.
Label the lines \(0,1,\ldots,n-1\) in the order they
meet the sweep at its initial position, then move the sweep through all
crossings.  When the next crossing exchanges the lines currently
occupying adjacent positions \(i\) and \(i+1\), we record the adjacent
transposition \(\sigma_i\), a standard generator of the symmetric group
\(S_n\).  The permutations obtained in this way form the
\emph{allowable sequence} of the arrangement~\cite{goodman-pollack-1993}.
The recorded generators
\(\sigma_i\) encode its consecutive transitions.

For a simple arrangement the sweep records \(\binom n2\) adjacent transpositions
and takes the initial order \(0,1,\ldots,n-1\) to the reverse order
\(n-1,\ldots,1,0\).  Equivalently, it gives a reduced word for the longest
permutation \(w_0\in S_n\).

\begin{figure}[ht]
\centering
\begin{minipage}[t]{0.27\linewidth}
\centering
\begin{tikzpicture}[scale=1.2,baseline=(current bounding box.north)]
  \coordinate (L0a) at (-0.8,2.5);
  \coordinate (L0b) at (1.6,0);
  \coordinate (L1a) at (0.1,2.5);
  \coordinate (L1b) at (1.4,0);
  \coordinate (L2a) at (0.35,2.5);
  \coordinate (L2b) at (-0.45,0);
  \coordinate (L3a) at (1.15,2.5);
  \coordinate (L3b) at (-0.6,0);
  \coordinate (P01) at (intersection of L0a--L0b and L1a--L1b);
  \coordinate (P02) at (intersection of L0a--L0b and L2a--L2b);
  \coordinate (P03) at (intersection of L0a--L0b and L3a--L3b);
  \coordinate (P12) at (intersection of L1a--L1b and L2a--L2b);
  \coordinate (P13) at (intersection of L1a--L1b and L3a--L3b);
  \coordinate (P23) at (intersection of L2a--L2b and L3a--L3b);
  \draw (L0a) -- (L0b) node[pos=0.04,left,font=\small]{$0$};
  \draw (L1a) -- (L1b) node[pos=0.04,left,font=\small]{$1$};
  \draw (L2a) -- (L2b) node[pos=0.04,right,font=\small]{$2$};
  \draw (L3a) -- (L3b) node[pos=0.04,right,font=\small]{$3$};
  \foreach \p in {P01,P02,P03,P12,P13,P23}
    \fill (\p) circle (0.8pt);
  \draw[dashed] (-1.0,2.65) -- ++(2.5,0) node[right,font=\small]{$\gamma$}
    ++(0.18,-0.15) edge[solid, ->] ++(0,-0.45);
\end{tikzpicture}

\smallskip
\small labeled lines and sweep
\end{minipage}%
\hfill
\begin{minipage}[t]{0.27\linewidth}
\centering
\[
\renewcommand{\arraystretch}{1.22}
\begin{array}{rcl}
0\,1\,2\,3 & \xrightarrow{\sigma_1} & 0\,\mathbf{2\,1}\,3\\
            & \xrightarrow{\sigma_2} & 0\,2\,\mathbf{3\,1}\\
            & \xrightarrow{\sigma_0} & \mathbf{2\,0}\,3\,1\\
            & \xrightarrow{\sigma_1} & 2\,\mathbf{3\,0}\,1\\
            & \xrightarrow{\sigma_2} & 2\,3\,\mathbf{1\,0}\\
            & \xrightarrow{\sigma_0} & \mathbf{3\,2}\,1\,0
\end{array}
\]

\small
\smallskip
allowable sequence
\end{minipage}%
\hfill
\begin{minipage}[t]{0.37\linewidth}
\centering
\raisebox{-0.15cm}{%
\begin{tikzpicture}[x=0.6cm,y=0.6cm,baseline=(current bounding box.north)]
  \PseudolineWiringDiagram[fill]{4}{1,2,0,1,2,0}{}
  \draw[->] (2,-0.97) node[left,font=\small]{$\gamma$} -- ++(2,0);
\end{tikzpicture}
}

\smallskip
\small wiring diagram\\
for word $\sigma_1\sigma_2\sigma_0\sigma_1\sigma_2\sigma_0$
\end{minipage}

\caption{From a labeled line arrangement to an allowable sequence and to the
corresponding wiring diagram.}
\label{fig:sweep-word-wiring}
\end{figure}

Conversely, any reduced word for \(w_0\) can be drawn as a \emph{wiring diagram}
(also called a \emph{primitive sorting network}~\cite{knuth-1992}):
the labels start in the order \(0,1,\ldots,n-1\), and each generator
\(\sigma_i\) crosses the two wires currently occupying positions \(i\) and
\(i+1\).  The obtained diagram is itself a simple pseudoline arrangement
(up to homeomorphism of the plane); and, in the other direction, every pseudoline
arrangement is isomorphic to a wiring diagram~\cite{goodman-1980}.

A face of the wiring diagram is \emph{external} if it touches the
boundary and \emph{internal} otherwise.  In affine terms, external faces
are the unbounded faces of the arrangement and internal faces are the
bounded ones.  There are \(2n\) external faces.

Each crossing is a vertex of four faces.  Two of them lie between the
crossed wires.  The generator producing the crossing \emph{closes} the one
that ends at this crossing and \emph{opens} the new one that begins here.
The crossing is an intermediate vertex of the other two faces.
We call them the faces \emph{adjacent} to the generator.

The sweep can pass through two independent crossings in either order:
\begin{equation}
\label{eq:commute}
  \sigma_i\sigma_j=\sigma_j\sigma_i,\qquad |i-j|>1,
\end{equation}
which is the \emph{commutation} relation.
It leaves the wiring diagram unchanged, and we treat
commutation-equivalent words as encoding the same diagram.

The \emph{braid} relation
\begin{equation}
\label{eq:braid}
  \sigma_i\sigma_{i+1}\sigma_i = \sigma_{i+1}\sigma_i\sigma_{i+1}
\end{equation}
reverses three consecutive crossings among three wires.  This changes
the face structure and produces a different arrangement, so we do not
treat it as an equivalence.

\begin{observation}[Arrangements, diagrams, and words]
\label{obs:representation-multiplicity}
Each pseudoline arrangement is represented by several wiring
diagrams, and each wiring diagram is encoded by several reduced words for
\(w_0\), related by commutations~\eqref{eq:commute} (see
\cref{lem:o-matrix-commutation} for a more rigorous formulation).  Up to \(4n\) wiring
diagrams represent the same arrangement: \(2n\) choices of sweep start,
combined with the vertical reflection
\(\sigma_i\mapsto\sigma_{n-2-i}\) of the diagram.
\end{observation}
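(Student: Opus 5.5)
The statement has two parts, and I would prove them separately: first that commutations are the only ambiguity in passing from a diagram to a word, and then the count of at most \(4n\) diagrams per arrangement.

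\textbf{Words for a fixed diagram.} Fix a wiring diagram. Its crossings are partially ordered: a crossing must precede the next crossing on either of its two wires. A reduced word encoding the diagram is exactly a linear extension of this poset. Conversely, every linear extension records the pair of adjacent positions exchanged at each step, and these positions depend only on the diagram, so it gives a reduced word for the same diagram.

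Two crossings are incomparable in the poset only if they involve disjoint positions at the moment they are adjacent in a linear extension. If they shared a position, they would share a wire. Hence swapping such a pair is a commutation \(\sigma_i\sigma_j=\sigma_j\sigma_i\) with \(|i-j|>1\), as in \eqref{eq:commute}.

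The standard fact that any two linear extensions of a finite poset are connected by swaps of adjacent incomparable elements then shows that all words for the diagram form a single commutation class. This is the content deferred to \cref{lem:o-matrix-commutation}. In the order-matrix language there, a diagram is determined by recording, for each wire, the order in which it meets the others, and commutations are exactly the moves that preserve these orders.

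\textbf{Diagrams for a fixed arrangement.} Pass to the projective closure, where the arrangement lies on the projective plane. A wiring diagram corresponds to choosing a sweep, that is, an extra pseudoline in general position that completes the arrangement to an arrangement of \(n+1\) pseudolines. Up to isomorphism, this choice is determined by which unbounded face the sweep starts in. Equivalently, it is determined by where the extra pseudoline is "at infinity" relative to the \(2n\) external faces, which are cyclically ordered.

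So there are \(2n\) start positions, and diagrams with the same start and the same orientation coincide up to commutation. Two further discrete choices remain:
\begin{itemize}
\item the direction of the sweep, which is absorbed by reversing the word, i.e.\ by the start in the antipodal face;
\item the top-to-bottom labeling, which corresponds to the vertical reflection \(\sigma_i\mapsto\sigma_{n-2-i}\).
\end{itemize}
Together these give at most \(2n\cdot 2=4n\) diagrams. The bound is not attained exactly when the arrangement has symmetries identifying some of these choices.

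\textbf{Main obstacle.} The hardest step is the claim that the sweep's topological class is determined by its starting external face together with the reflection. To prove it, I would use the sweeping theorem for pseudoline arrangements: any two sweeps starting in the same face, with the same orientation, yield the same cyclic order of crossings along each pseudoline. By the first part, they therefore yield the same diagram up to commutation.
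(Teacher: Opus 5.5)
Your first part is the paper's own argument. Your poset is the order $\prec$ generated by the rows of the $O$-matrix. The injectivity proof of \cref{lem:o-matrix-commutation} is exactly the induction showing that two linear extensions are joined by swaps of adjacent incomparable crossings; these share no wire and so are commutations~\eqref{eq:commute}. Your count in the second part is also the paper's, phrased with sweeps instead of labelings at infinity. Your $2n$ sweep starts are the $2n$ labelings produced by the rotations $\rho^k$ (choice of line $0$ and of its end). Your top-to-bottom flip is $\mu$ (choice of direction around infinity). So the diagrams of one arrangement form a single $D_{2n}$-orbit, and the paper makes your remark about symmetries precise as the orbit--stabilizer count $4n/|H_E|$ (\cref{lem:parity-fixed-representatives}). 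One caveat: the initial sweep curve must have all crossings on one side, not merely be in general position. Also, a single unbounded face fixes that curve only under a fixed cyclic convention, because the curve's two ends lie in two antipodal faces. With such a convention, your bookkeeping that reversal is ``absorbed by the antipodal start'' is correct.

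The step you single out as the main obstacle is not one, and as phrased it does not give what you need. The cyclic order of crossings along each pseudoline of the projective closure is an invariant of the arrangement, the same for every sweep and every labeling. It therefore cannot separate the up to $4n$ diagrams, and it does not by itself yield equal $O$-matrices. For that, each row must agree as a linear order, read from the correct end, under the same labels. This needs no sweeping theorem. A sweep meets each line monotonically, so it meets that line's crossings in their order along the line, starting from the end on the outer side of the initial curve. Fixing the start and orientation fixes these ends and the labels, hence fixes the $O$-matrix, and your first part turns this into a single commutation class. A sweeping theorem (Goodman's) is needed only in the other direction: to know that each of the $4n$ labelings is realized by some wiring diagram, as in the surjectivity part of \cref{lem:o-matrix-commutation}. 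That matters for the exact count and your ``exactly when'' claim, not for the upper bound $4n$.
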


\subsection{Checkerboard coloring and projective closure}
\label{subsec:checkerboard}

The faces of a pseudoline arrangement admit a checkerboard coloring;
this can be proved by induction on \(n\).

In a wiring diagram,
at the start of the sweep, before any crossing, there are \(n+1\)
external faces, colored alternately.  Each subsequent generator opens
one new face, whose color depends only on the parity
of the generator index (\cref{fig:sweep-word-wiring}).

The spherical model represents the projective plane as a sphere with
antipodal points identified, with the line at infinity as the equator.
An affine face and its antipodal counterpart in the other hemisphere
are separated by \(n\) affine lines, hence carry opposite
colors for odd \(n\) and the same color for even \(n\).  For odd \(n\),
we pass to the projective closure by adding the line at infinity as a
line of the arrangement; this ensures the required color change at the
equator.

\subsection{Triangle bound and unused segments}

Let \(\mathcal{A}\) be a simple affine arrangement of \(n\) pseudolines, and let
\(a_3(\mathcal{A})\) be the number of bounded triangular faces.  Each pseudoline is cut
by its \(n-1\) crossings into \(n\) segments, of which \(n-2\) are bounded.
Thus \(\mathcal{A}\) has \(n(n-2)\) bounded segments.

Each bounded triangular face is bordered by three bounded segments.  A bounded
segment of \(\mathcal{A}\) borders at most one triangular face.  If two
triangles bordered it, their other sides would lie on the pseudolines \(L_1\) and
\(L_2\) through its endpoints, so \(L_1\) and \(L_2\) would cross twice, once at
each triangle's apex.  Hence
\begin{equation}
\label{eq:a3-bound}
  a_3(\mathcal{A})\le \left\lfloor \frac{n(n-2)}{3}\right\rfloor .
\end{equation}
This is the standard segment-count upper bound
\cite{blanc-2011}.

A bounded segment is \emph{used} if it borders a triangular face, and
\emph{unused} otherwise.  If \(u(\mathcal{A})\) denotes the number of
unused bounded segments, then
\[
  3a_3(\mathcal{A})+u(\mathcal{A})=n(n-2).
\]

Consider the divisibility of \(n(n-2)\) by \(3\).
For \(n\equiv 3,5\pmod 6\), the upper bound is compatible with
\(u(\mathcal{A})=0\).  A simple arrangement with \(u(\mathcal{A})=0\) is called
\emph{perfect}: all bounded segments are used by triangular faces
\cite{bartholdi-blanc-loisel-2007,blanc-2011}.
For \(n\equiv 1\pmod 6\), \(n(n-2)\) is not divisible by \(3\), so
perfect arrangements do not exist.  The maximum \(a_3(\mathcal{A})=
\lfloor n(n-2)/3\rfloor\) is attained with exactly two unused
segments.

\begin{lemma}[Triangle color in perfect arrangements]
\label{lem:perfect-color}
In a perfect arrangement, all bounded triangles share one color of the
checkerboard coloring, and all bounded non-triangular faces share the
other.
\end{lemma}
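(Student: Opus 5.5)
Since the arrangement is perfect, every bounded segment borders exactly one bounded triangle. A segment cannot border two triangles, by the argument given before \eqref{eq:a3-bound}. The plan is to propagate color along adjacencies. Two bounded faces that share a bounded segment have opposite colors in the checkerboard coloring. So the statement reduces to two claims: (i) no two bounded triangles share a segment, and (ii) no two bounded non-triangular faces share a segment. Together with connectivity, these claims give the result.

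\textbf{Claim (i)} is exactly the observation already proved: a bounded segment borders at most one triangular face. Hence two bounded triangles are never adjacent across a segment.

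\textbf{Claim (ii)} uses perfectness. Every bounded segment borders some triangle. The two faces on either side of a bounded segment are both bounded or one is unbounded. If both are bounded and non-triangular, that segment borders no triangle, contradicting $u(\mathcal{A})=0$. So across every bounded segment, one side is a triangle and the other side is not. If the non-triangle side is an unbounded face, it carries no constraint we need here.

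\textbf{Connectivity.} Build the graph whose vertices are the bounded faces and whose edges join faces sharing a bounded segment. This graph is connected: the union of the closures of the bounded faces is the region enclosed by the arrangement. That region is connected and has no cut points in its interior beyond vertices, because it is the complement of the unbounded faces; equivalently, one can use the wiring diagram, where internal faces in consecutive columns are linked through shared segments. By (i) and (ii), every edge of this graph joins a triangle to a non-triangle. Opposite colors across each edge then force all triangles to have one color and all non-triangles the other.

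\textbf{Main obstacle.} The delicate step is the connectivity of the bounded-face adjacency graph, especially ruling out two bounded faces that touch only at a crossing vertex.

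A cleaner route avoids global connectivity altogether. Fix any bounded non-triangular face $F$ and any one of its bounded sides $s$. By (ii), the face across $s$ is a triangle, so $F$ has the color opposite to some triangle. It then remains to show that all bounded triangles share one color.

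For two triangles that share a vertex, look at the four faces around that crossing. Opposite faces at a crossing have the same color, and by (i) two triangles cannot occupy adjacent positions around the crossing. So triangles meeting at a vertex are in opposite positions and have the same color.

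For general triangles, I would chain them through non-triangular faces. Walk around the boundary of a bounded non-triangular face $F$. Each bounded side $s$ of $F$ borders a triangle on the other side, and all of these triangles have the color opposite to $F$. Linking triangles through shared faces in this way, with connectivity again supplied by the wiring diagram, closes the argument.

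I expect this bookkeeping to be the fiddliest part, but routine.
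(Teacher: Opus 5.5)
\textbf{There is a genuine gap: the global connectivity step is asserted but never proved.} Your local facts are all correct. Claims (i) and (ii) hold. Your observation that two triangles sharing a crossing must sit in opposite positions there, and hence share a color, is exactly the alternation step of the paper's proof. The problem is that both of your routes end with connectivity ``supplied by the wiring diagram''. For the first route you argue that the region has no cut points ``beyond vertices''. But cut points at crossings are exactly the configuration that would disconnect the segment-adjacency graph, so this assumes the point at issue. The remark that internal faces in consecutive columns are linked through shared segments is not automatic either. Two consecutive internal faces on the same level of a wiring diagram meet only at a crossing, as opposite faces. The connectivity statement is in fact true for $n\ge 3$, but it needs its own topological or case-by-case argument. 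As written, it carries the whole content of the lemma.

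\textbf{The missing idea is that the pseudolines themselves supply the connectivity, so no topology of the bounded region is needed.} Fix a pseudoline $L$. In a perfect arrangement every bounded segment of $L$ is used by a triangle. The triangles using two consecutive segments both have the shared crossing $v$ as a vertex. By your vertex observation they are vertically opposite at $v$ and have the same color, so all triangles along $L$ share one color $c(L)$.

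Now take any two pseudolines $L_1,L_2$, crossing at $v$. Since $L_1$ has $n-1\ge 2$ crossings, $v$ is an endpoint of a bounded segment of $L_1$. The triangle using that segment has sides on both $L_1$ and $L_2$ at $v$, so $c(L_1)=c(L_2)$. Every triangle has its sides on pseudolines, so all triangles have one color. By your claim (ii), every bounded non-triangular face lies across a bounded side from a triangle, so it has the other color.

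This is the paper's proof. With it, the bipartite adjacency-graph argument and the chaining through non-triangular faces become unnecessary.
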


\begin{proof}
\emph{Alternation.}\enspace Consider a pseudoline \(L\) with no unused segments,
and two triangles \(t_1,t_2\) incident to two consecutive segments of \(L\)
sharing a crossing \(v\).  Here \(v\) is the crossing of \(L\)
with another pseudoline \(N\).  If \(t_1,t_2\) were on the same side of \(L\),
they would both border the segment of \(N\) with the vertex \(v\) on that side of \(L\),
so the segment would be used by two triangles (excluded above).
Hence the triangles incident to \(L\) alternate between its two sides;
consecutive ones, being vertically opposite at the shared crossing, share the
same color of the checkerboard coloring. Therefore all triangles
along \(L\) share a single color, which we denote \(c(L)\).

\emph{Propagation.}\enspace If two pseudolines \(L_1,L_2\) without unused
segments cross, their crossing point is a vertex of a bounded triangle
with sides on both pseudolines, so the triangle color is \(c(L_1)=c(L_2)\).


In a perfect arrangement every pseudoline has all its bounded
segments used, so all bounded triangles share one color, and all bounded
non-triangular faces (incident to triangles across a segment) share the other.
\end{proof}

\begin{figure}[ht]
\centering
\begin{subfigure}[b]{0.49\linewidth}
\centering
\begin{tikzpicture}[
  font=\footnotesize,
  arr/.style={-{Stealth[length=3pt,width=1.8pt]}, very thin, shorten >=2pt}
]
\useasboundingbox (0,0) rectangle (\linewidth,-0.91397\linewidth);
\node[inner sep=0, anchor=north west] (S7) at (0,0.08\linewidth)
  {
\begin{tikzpicture}[x=\linewidth,y=\linewidth,
  poly/.style={fill=gray!20},
  ext/.style={black},
  lbl/.style={inner sep=0pt,font=\scriptsize},
  plbl/.style={inner sep=0pt,font=\scriptsize}]
\useasboundingbox (0,0) rectangle (1,-0.91397);
\clip (0,0) rectangle (1,-0.91397);
\fill[poly] (0.45510,-0.33393) -- (0.54842,-0.16889) -- (0.54457,-0.32738) -- cycle;
\fill[poly] (0.27896,-0.36540) -- (0.22366,-0.23771) -- (0.35895,-0.34789) -- cycle;
\fill[poly] (0.31217,-0.43815) -- (0.13009,-0.40689) -- (0.27896,-0.36540) -- cycle;
\fill[poly] (0.42000,-0.40057) -- (0.35895,-0.34789) -- (0.45510,-0.33393) -- cycle;
\fill[poly] (0.36354,-0.54412) -- (0.31217,-0.43815) -- (0.39931,-0.44677) -- cycle;
\fill[poly] (0.47080,-0.44795) -- (0.39931,-0.44677) -- (0.42000,-0.40057) -- cycle;
\fill[poly] (0.52768,-0.50117) -- (0.47080,-0.44795) -- (0.53518,-0.44776) -- cycle;
\fill[fill=red!22] (0.66360,-0.45215) -- (0.53518,-0.44776) -- (0.54457,-0.32738) -- (0.78349,-0.31896) -- cycle; 
\fill[poly] (0.51074,-0.59734) -- (0.52768,-0.50117) -- (0.57505,-0.54292) -- cycle;
\fill[poly] (0.29517,-0.74634) -- (0.36354,-0.54412) -- (0.42358,-0.65974) -- cycle;
\fill[poly] (0.48329,-0.77157) -- (0.42358,-0.65974) -- (0.51074,-0.59734) -- cycle;
\fill[poly] (0.14275,-0.17183) -- (0.22366,-0.23771) -- (0.18219,-0.14197) -- cycle;
\fill[poly] (0.55095,-0.06458) -- (0.54842,-0.16889) -- (0.59978,-0.07806) -- cycle;
\fill[poly] (0.85329,-0.24141) -- (0.78349,-0.31896) -- (0.88776,-0.31529) -- cycle;
\fill[fill=red!22] (0.76787,-0.45571) -- (0.66360,-0.45215) -- (0.57505,-0.54292) -- (0.65334,-0.61190) -- cycle; 
\fill[poly] (0.26175,-0.84519) -- (0.29517,-0.74634) -- (0.20866,-0.80468) -- cycle;
\fill[poly] (0.53243,-0.86362) -- (0.48329,-0.77157) -- (0.46705,-0.87464) -- cycle;
\fill[poly] (0.02958,-0.43490) -- (0.13009,-0.40689) -- (0.02726,-0.38924) -- cycle;
\draw[ext] (0.00155,-0.38482) -- (0.13009,-0.40689) -- (0.31217,-0.43815) -- (0.39931,-0.44677) -- (0.47080,-0.44795) -- (0.53518,-0.44776) -- (0.66360,-0.45215) -- (0.79394,-0.45660);
\draw[ext] (0.12252,-0.15535) -- (0.22366,-0.23771) -- (0.35895,-0.34789) -- (0.42000,-0.40057) -- (0.47080,-0.44795) -- (0.52768,-0.50117) -- (0.57505,-0.54292) -- (0.67291,-0.62915);
\draw[ext] (0.17182,-0.11803) -- (0.22366,-0.23771) -- (0.27896,-0.36540) -- (0.31217,-0.43815) -- (0.36354,-0.54412) -- (0.42358,-0.65974) -- (0.48329,-0.77157) -- (0.54472,-0.88663);
\draw[ext] (0.55159,-0.03850) -- (0.54842,-0.16889) -- (0.54457,-0.32738) -- (0.53518,-0.44776) -- (0.52768,-0.50117) -- (0.51074,-0.59734) -- (0.48329,-0.77157) -- (0.46299,-0.90041);
\draw[ext] (0.61262,-0.05536) -- (0.54842,-0.16889) -- (0.45510,-0.33393) -- (0.42000,-0.40057) -- (0.39931,-0.44677) -- (0.36354,-0.54412) -- (0.29517,-0.74634) -- (0.25340,-0.86990);
\draw[ext] (0.87075,-0.22203) -- (0.78349,-0.31896) -- (0.66360,-0.45215) -- (0.57505,-0.54292) -- (0.51074,-0.59734) -- (0.42358,-0.65974) -- (0.29517,-0.74634) -- (0.18703,-0.81927);
\draw[ext] (0.91383,-0.31437) -- (0.78349,-0.31896) -- (0.54457,-0.32738) -- (0.45510,-0.33393) -- (0.35895,-0.34789) -- (0.27896,-0.36540) -- (0.13009,-0.40689) -- (0.00445,-0.44191);
\end{tikzpicture}};
\coordinate (T-intdef) at ([xshift= 0.675\linewidth, yshift=-0.40\linewidth]S7.north west);
\coordinate (T-extdef) at ([xshift= 0.67\linewidth, yshift=-0.526\linewidth]S7.north west);
\node[anchor=south, inner sep=1pt] (L-extdig)
  at ([xshift= 0.32\linewidth, yshift=-0.08\linewidth] S7.north west) {external digons};
\node[anchor=south east, inner sep=1pt, align=right] (L-intdef)
  at ([xshift= 0.96\linewidth, yshift=-0.45\linewidth] S7.north west) {internal\\defect};
\node[anchor=east, inner sep=1pt] (L-extdef)
  at ([xshift= 0.96\linewidth, yshift=-0.70\linewidth] S7.north west) {external defect};
\foreach \tx/\ty in {0.17/-0.186, 0.585/-0.10}
  \draw[arr] (L-extdig) -- ([xshift=\tx\linewidth, yshift=\ty\linewidth] S7.north west);
\draw[arr] (L-intdef) -- (T-intdef);
\draw[arr] (L-extdef) -- (T-extdef);
\coordinate (C-unused) at ([xshift=0.96\linewidth, yshift=-0.15\linewidth]S7.north west);
\node[anchor=east, inner sep=1pt] (L-unused) at (C-unused) {unused segments};
\coordinate (T-u1) at ([xshift=0.631\linewidth, yshift=-0.335\linewidth]S7.north west);
\coordinate (T-u2) at ([xshift=0.533\linewidth, yshift=-0.392\linewidth]S7.north west);
\coordinate (T-u3) at ([xshift=0.736\linewidth, yshift=-0.38\linewidth]S7.north west);
\coordinate (T-u4) at ([xshift=0.580\linewidth, yshift=-0.460\linewidth]S7.north west);
\coordinate (T-u5) at ([xshift=0.616\linewidth, yshift=-0.510\linewidth]S7.north west);
\foreach \t in {T-u1,T-u2,T-u3,T-u4,T-u5}
  \draw[arr] (L-unused) -- (\t);
\end{tikzpicture}
\end{subfigure}\hfill
\begin{subfigure}[b]{0.49\linewidth}
\centering
\begin{tikzpicture}[x=\linewidth,y=\linewidth,
  poly/.style={fill=gray!20},
  ext/.style={black},
  lbl/.style={inner sep=0pt,font=\scriptsize},
  plbl/.style={inner sep=0pt,font=\scriptsize}]
\useasboundingbox (0,0) rectangle (1,-0.91397);
\clip (0,0) rectangle (1,-0.91397);
\fill[poly] (0.55631,-0.25913) -- (0.65165,-0.11879) -- (0.65365,-0.23053) -- cycle;
\fill[poly] (0.40934,-0.31290) -- (0.37431,-0.15353) -- (0.47000,-0.28795) -- cycle;
\fill[poly] (0.24038,-0.40254) -- (0.15908,-0.32512) -- (0.32895,-0.35292) -- cycle;
\fill[poly] (0.31476,-0.47449) -- (0.14449,-0.45835) -- (0.24038,-0.40254) -- cycle;
\fill[poly] (0.42342,-0.36940) -- (0.32895,-0.35292) -- (0.40934,-0.31290) -- cycle;
\fill[poly] (0.50190,-0.33751) -- (0.47000,-0.28795) -- (0.55631,-0.25913) -- cycle;
\fill[poly] (0.44147,-0.43082) -- (0.42342,-0.36940) -- (0.47382,-0.37952) -- cycle;
\fill[poly] (0.38435,-0.53706) -- (0.31476,-0.47449) -- (0.41050,-0.48487) -- cycle;
\fill[poly] (0.46094,-0.48954) -- (0.41050,-0.48487) -- (0.44147,-0.43082) -- cycle;
\fill[poly] (0.53345,-0.39321) -- (0.47382,-0.37952) -- (0.50190,-0.33751) -- cycle;
\fill[poly] (0.65407,-0.33474) -- (0.65365,-0.23053) -- (0.76142,-0.19808) -- cycle;
\fill[poly] (0.56174,-0.44836) -- (0.53345,-0.39321) -- (0.59443,-0.40943) -- cycle;
\fill[poly] (0.47865,-0.53860) -- (0.46094,-0.48954) -- (0.52161,-0.49355) -- cycle;
\fill[poly] (0.31693,-0.68836) -- (0.38435,-0.53706) -- (0.43736,-0.57843) -- cycle;
\fill[poly] (0.51395,-0.62958) -- (0.43736,-0.57843) -- (0.47865,-0.53860) -- cycle;
\fill[poly] (0.58420,-0.49501) -- (0.52161,-0.49355) -- (0.56174,-0.44836) -- cycle;
\fill[poly] (0.65002,-0.42663) -- (0.59443,-0.40943) -- (0.65407,-0.33474) -- cycle;
\fill[poly] (0.62515,-0.58366) -- (0.58420,-0.49501) -- (0.64223,-0.49304) -- cycle;
\fill[poly] (0.57833,-0.79174) -- (0.51395,-0.62958) -- (0.60261,-0.68265) -- cycle;
\fill[poly] (0.69867,-0.73797) -- (0.60261,-0.68265) -- (0.62515,-0.58366) -- cycle;
\fill[poly] (0.80668,-0.47940) -- (0.64223,-0.49304) -- (0.65002,-0.42663) -- cycle;
\fill[poly] (0.06471,-0.30968) -- (0.15908,-0.32512) -- (0.08983,-0.25918) -- cycle;
\fill[poly] (0.31886,-0.07563) -- (0.37431,-0.15353) -- (0.35378,-0.06014) -- cycle;
\fill[poly] (0.64994,-0.02318) -- (0.65165,-0.11879) -- (0.70538,-0.03969) -- cycle;
\fill[poly] (0.82049,-0.12289) -- (0.76142,-0.19808) -- (0.85298,-0.17052) -- cycle;
\fill[poly] (0.27800,-0.77570) -- (0.31693,-0.68836) -- (0.24630,-0.75282) -- cycle;
\fill[poly] (0.61361,-0.88061) -- (0.57833,-0.79174) -- (0.55756,-0.88508) -- cycle;
\fill[poly] (0.78153,-0.78569) -- (0.69867,-0.73797) -- (0.73980,-0.82429) -- cycle;
\fill[poly] (0.90197,-0.47150) -- (0.80668,-0.47940) -- (0.89730,-0.50993) -- cycle;
\fill[poly] (0.06185,-0.50645) -- (0.14449,-0.45835) -- (0.04930,-0.44933) -- cycle;
\draw[ext] (0.02550,-0.44707) -- (0.14449,-0.45835) -- (0.31476,-0.47449) -- (0.41050,-0.48487) -- (0.46094,-0.48954) -- (0.52161,-0.49355) -- (0.58420,-0.49501) -- (0.64223,-0.49304) -- (0.80668,-0.47940) -- (0.92580,-0.46952);
\draw[ext] (0.04112,-0.30582) -- (0.15908,-0.32512) -- (0.32895,-0.35292) -- (0.42342,-0.36940) -- (0.47382,-0.37952) -- (0.53345,-0.39321) -- (0.59443,-0.40943) -- (0.65002,-0.42663) -- (0.80668,-0.47940) -- (0.91995,-0.51756);
\draw[ext] (0.07252,-0.24270) -- (0.15908,-0.32512) -- (0.24038,-0.40254) -- (0.31476,-0.47449) -- (0.38435,-0.53706) -- (0.43736,-0.57843) -- (0.51395,-0.62958) -- (0.60261,-0.68265) -- (0.69867,-0.73797) -- (0.80224,-0.79762);
\draw[ext] (0.30499,-0.05616) -- (0.37431,-0.15353) -- (0.47000,-0.28795) -- (0.50190,-0.33751) -- (0.53345,-0.39321) -- (0.56174,-0.44836) -- (0.58420,-0.49501) -- (0.62515,-0.58366) -- (0.69867,-0.73797) -- (0.75008,-0.84587);
\draw[ext] (0.34865,-0.03679) -- (0.37431,-0.15353) -- (0.40934,-0.31290) -- (0.42342,-0.36940) -- (0.44147,-0.43082) -- (0.46094,-0.48954) -- (0.47865,-0.53860) -- (0.51395,-0.62958) -- (0.57833,-0.79174) -- (0.62243,-0.90283);
\draw[ext] (0.64951,0.00072) -- (0.65165,-0.11879) -- (0.65365,-0.23053) -- (0.65407,-0.33474) -- (0.65002,-0.42663) -- (0.64223,-0.49304) -- (0.62515,-0.58366) -- (0.60261,-0.68265) -- (0.57833,-0.79174) -- (0.55236,-0.90841);
\draw[ext] (0.71881,-0.01992) -- (0.65165,-0.11879) -- (0.55631,-0.25913) -- (0.50190,-0.33751) -- (0.47382,-0.37952) -- (0.44147,-0.43082) -- (0.41050,-0.48487) -- (0.38435,-0.53706) -- (0.31693,-0.68836) -- (0.26827,-0.79753);
\draw[ext] (0.83526,-0.10409) -- (0.76142,-0.19808) -- (0.65407,-0.33474) -- (0.59443,-0.40943) -- (0.56174,-0.44836) -- (0.52161,-0.49355) -- (0.47865,-0.53860) -- (0.43736,-0.57843) -- (0.31693,-0.68836) -- (0.22864,-0.76893);
\draw[ext] (0.87587,-0.16363) -- (0.76142,-0.19808) -- (0.65365,-0.23053) -- (0.55631,-0.25913) -- (0.47000,-0.28795) -- (0.40934,-0.31290) -- (0.32895,-0.35292) -- (0.24038,-0.40254) -- (0.14449,-0.45835) -- (0.04119,-0.51848);
\end{tikzpicture}
\end{subfigure}
\caption{A 2-defective arrangement for $n=7$ and a perfect arrangement for $n=9$.}
\label{fig:n7-n9-samples}
\end{figure}

\subsection{Black-face bound and defects}
\label{subsec:black-face-bound}

There are two checkerboard colorings, differing by a swap of colors.
We choose the one in which the black faces are at least as numerous as
the white faces.  Then in a perfect arrangement the bounded triangles are
black and the adjacent non-triangular faces are white (\cref{fig:n7-n9-samples}).
Indeed, by \cref{lem:perfect-color} the triangles share one color, and there are
\(\tfrac13 n(n-2)\) of them among the \(\binom{n-1}{2}\) internal faces, that is,
more than half.  The external faces split evenly, so that color is black.

\begin{lemma}[External faces in perfect arrangements]
\label{lem:external-digons}
In a perfect arrangement, every external black face is an
external digon (a face between 2 unbounded segments).
\end{lemma}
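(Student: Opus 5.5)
The plan is a direct colour argument. It uses \cref{lem:perfect-color} together with the colour convention fixed just above, under which all bounded triangles of a perfect arrangement are black.

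First I will describe the boundary of an external face \(F\). In the affine picture, \(F\) is an unbounded face. Its boundary is a connected polygonal path made of two unbounded segments (rays) at the ends and \(k\ge 0\) bounded segments between them. The case \(k=0\) means the two rays leave from a common crossing and nothing else bounds \(F\); this is exactly an external digon. So it suffices to show that a black external face has \(k=0\).

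Now suppose \(F\) has some bounded segment \(s\) on its boundary. Since the arrangement is perfect, \(s\) is used: it borders a bounded triangular face \(T\). The two faces incident to \(s\) are \(F\) and the face across \(s\). Because \(F\) is unbounded, \(F\neq T\), so \(T\) is the face across \(s\) from \(F\). The two faces on opposite sides of a segment receive opposite colours in the checkerboard colouring. By \cref{lem:perfect-color} and the choice of colouring made before the statement, \(T\) is black. Hence \(F\) is white.

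Taking the contrapositive, a black external face has no bounded segment on its boundary, so \(k=0\) and \(F\) is an external digon. The only step needing care is the description of the boundary of an unbounded face as a single path of two rays and bounded segments. I expect this to be routine; it is most easily seen in the wiring diagram, where an external face is the region above the top wire or below the bottom wire between two consecutive crossings on that wire.
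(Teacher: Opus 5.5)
Your argument is correct and is essentially the paper's. The paper argues by contradiction that a bounded segment on a black external face has a white, hence non-triangular, face across it and so is unused; you run the contrapositive through the black triangle that the used segment must border, which forces \(F\) to be white. One small slip, which does not affect the proof, is your closing picture of external faces in a wiring diagram. The region beyond slot \(0\) is a single external face running the full length of the diagram, with a vertex at every occurrence of \(\sigma_0\) (likewise beyond slot \(n-1\) with \(\sigma_{n-2}\)). The initial and trailing faces between adjacent wires are external as well. Nothing in your argument depends on that description.
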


\begin{proof}
Suppose an external black face contains a bounded segment.  This segment
lies between two non-triangular faces (a white face and the external
one) and is therefore unused, contradicting perfectness.  Hence the
external black face has only unbounded sides and is an external digon.
\end{proof}

To measure how a general arrangement deviates from this triangular
pattern, we count non-triangular black faces, treating internal and
external cases uniformly.  This is achieved by passing to the projective closure: bounded
triangles remain triangles, and external digons become
triangles whose third side lies on the line at infinity.  Being triangular
is then a property of the projective face, independent of which line plays
the role of the line at infinity.

\begin{definition}
\label{def:defect}
  A black face is a \emph{defect} if its image in the projective closure
  of the arrangement is not a triangle.  In affine terms, a defect is a
  black face which is neither a bounded triangle nor an external digon.
\end{definition}

Let \(b_{\mathrm{int}}(\mathcal{A})\) be the number of internal black faces.  Each
internal black face has at least three bounded sides, and each bounded
segment is incident to exactly one black face.  The same segment-count
argument as for triangles therefore gives
\begin{equation}
\label{eq:bint-bound}
  b_{\mathrm{int}}(\mathcal{A})\le \left\lfloor \frac{n(n-2)}{3}\right\rfloor .
\end{equation}

The \(2n\) external faces are colored alternately, so exactly \(n\) of
them are black.  The total number of black faces is
therefore
\begin{equation}
\label{eq:black-total}
  b(\mathcal{A}) = b_{\mathrm{int}}(\mathcal{A}) + n.
\end{equation}

\begin{lemma}[Defect-free odd arrangements are exactly the perfect ones]
\label{lem:defect-free-perfect}
For odd \(n\), a simple affine arrangement of \(n\) pseudolines is
perfect if and only if it is defect-free; in particular, defect-free
arrangements do not exist for \(n\equiv 1\pmod 6\).
\end{lemma}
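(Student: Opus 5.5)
The plan is to prove the two implications separately, working throughout with the fixed checkerboard coloring of \cref{subsec:black-face-bound}, in which black faces are at least as numerous as white ones. The nonexistence statement for \(n\equiv 1\pmod 6\) then follows from the perfectness criterion alone.

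\emph{Perfect \(\Rightarrow\) defect-free.} Let the arrangement be perfect. By \cref{lem:perfect-color}, all bounded triangles have one color and all bounded non-triangular faces have the other. The counting remark after that lemma shows the triangles are more than half of the \(\binom{n-1}{2}\) internal faces. Since the external faces split evenly, the chosen coloring makes the triangles black. Hence every internal black face is a bounded triangle. By \cref{lem:external-digons}, every external black face is an external digon. By \cref{def:defect}, neither kind of face is a defect, so the arrangement is defect-free.

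\emph{Defect-free \(\Rightarrow\) perfect.} Suppose there is no defect. Then every black face is either a bounded triangle or an external digon. Take any bounded segment \(s\). It separates two faces of opposite colors, so it borders exactly one black face \(F\). An external digon has only unbounded sides, so \(F\) is not an external digon. Hence \(F\) is a bounded black triangle, and \(s\) is used. Therefore \(u(\mathcal A)=0\), and the arrangement is perfect. As a consistency check, this gives \(3a_3(\mathcal A)=n(n-2)\), matching the equality case of \eqref{eq:bint-bound}.

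\emph{The case \(n\equiv 1\pmod 6\).} From \(3a_3(\mathcal A)+u(\mathcal A)=n(n-2)\), perfectness forces \(3\mid n(n-2)\). For \(n\equiv 1\pmod 6\) we have \(n(n-2)\equiv 2\pmod 3\), so no perfect arrangement exists. By the equivalence just proved, no defect-free arrangement exists either.

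The only delicate point is the coloring convention. ``Defect'' is defined relative to the chosen black color, so in the first direction we must confirm that the triangles are black and not white; the counting argument preceding \cref{lem:external-digons} does exactly this. Oddness of \(n\) enters only through the setup: the projective closure adds the line at infinity so that the color change at the equator is consistent, which makes ``external digon \(=\) projective triangle'' meaningful in \cref{def:defect}. I expect no further obstacle.
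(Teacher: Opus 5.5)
Your proof is correct and follows the paper's argument: the forward direction via \cref{lem:perfect-color,lem:external-digons} together with the majority-color remark, and the converse via the observation that each bounded segment borders exactly one black face, which in a defect-free arrangement must be a bounded triangle. The only difference is in the finish: you conclude \(u(\mathcal{A})=0\) directly from that observation, whereas the paper first counts \(3\,b_{\mathrm{int}}(\mathcal{A})=n(n-2)\) and then invokes \(a_3\ge b_{\mathrm{int}}\) and \eqref{eq:a3-bound}; your shortcut is valid, slightly cleaner, and does not depend on the bound at all.
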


\begin{proof}
By \cref{lem:perfect-color,lem:external-digons}, a perfect arrangement
has no defects.  Conversely, let
\(\mathcal{A}\) be defect-free.  Then every internal black face is
a triangle and every external black face is an external digon.  Each
bounded segment is incident to exactly one black face.  This face is a
triangle: an external digon has no bounded sides, so it is internal.
Each internal black face contributes exactly three
bounded sides, hence
\[
  3\, b_{\mathrm{int}}(\mathcal{A}) = n(n-2).
\]
For \(n\equiv 1\pmod 6\), \(n(n-2)\equiv 2\pmod 3\) and this equation has
no solution; so no defect-free arrangement exists.

For \(n\equiv 3,5\pmod 6\), the equation gives
\(b_{\mathrm{int}}(\mathcal{A}) = \lfloor n(n-2)/3\rfloor\).  Since every
internal black face is a triangle, \(a_3(\mathcal{A}) \ge
b_{\mathrm{int}}(\mathcal{A})\); together with the upper bound on \(a_3\),
this gives equality.  Hence \(u(\mathcal{A})=0\) and \(\mathcal{A}\) is
perfect.
\end{proof}

\begin{lemma}[No white triangles in optimal arrangements]
\label{lem:no-white-triangles}
If a simple arrangement \(\mathcal{A}\) attains the bound \(b_{\mathrm{int}}(\mathcal{A}) = \lfloor n(n-2)/3\rfloor\), every bounded
triangle of \(\mathcal{A}\) is black.
\end{lemma}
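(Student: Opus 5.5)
The plan is a sharpened version of the segment count behind \eqref{eq:bint-bound}: count the bounded segments through the black faces incident to them, and show that a white bounded triangle forces at least three units of slack, while the hypothesis allows at most two.

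\emph{Step 1 (exact count with slack).} Each bounded segment borders exactly one black face. Summing, over all black faces, the number of bounded sides gives
\[
  n(n-2)=\sum_{B\ \text{internal black}} s(B)\;+\sum_{B\ \text{external black}} s(B),
\]
where \(s(B)\) is the number of bounded sides of \(B\). Every internal black face has \(s(B)\ge 3\). Define the \emph{excess} \(E\) by
\[
  E=\sum_{\text{int}} \bigl(s(B)-3\bigr)+\sum_{\text{ext}} s(B)\;\ge 0 .
\]
Then \(n(n-2)=3\,b_{\mathrm{int}}(\mathcal{A})+E\). Under the hypothesis \(b_{\mathrm{int}}(\mathcal{A})=\lfloor n(n-2)/3\rfloor\), \(E\) equals the remainder of \(n(n-2)\) modulo \(3\), so \(E\le 2\).

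\emph{Step 2 (a white triangle creates excess).} Suppose \(T\) is a white bounded triangle with sides on pseudolines \(a,b,c\). Each side of \(T\) is a bounded segment, and across it lies a black face \(B_a\), \(B_b\), \(B_c\) respectively. Since a bounded segment borders at most one triangular face, and that face is \(T\), none of the \(B_x\) is a bounded triangle. Hence each \(B_x\) is either internal with \(s(B_x)\ge 4\), or external with \(s(B_x)\ge 1\) (it contains the side of \(T\)). Either way, \(B_x\) contributes at least \(1\) to \(E\).

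\emph{Step 3 (the three neighbours are distinct).} This is the point needing care: could one face border \(T\) along two sides and be counted only once? I would argue via sign vectors, i.e.\ the side of each pseudoline a face lies on, which determine a face of a pseudoline arrangement. \(B_a\) differs from \(T\) only in the side of \(a\), and \(B_b\) only in the side of \(b\). So \(B_a\) and \(B_b\) differ in their sides of both \(a\) and \(b\), and in particular \(B_a\neq B_b\); the other pairs are handled in the same way. Hence the three contributions come from distinct faces and add up to \(E\ge 3\), contradicting \(E\le 2\). Therefore every bounded triangle is black.

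I expect Step 3 to be the only real obstacle: it relies on the fact that a face is determined by its position relative to each pseudoline. The rest is bookkeeping reusing the argument of \eqref{eq:bint-bound}.
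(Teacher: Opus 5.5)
Your proof is correct, and it takes a different route from the paper's.

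\textbf{The paper's route.} The paper argues by a triangle flip. It pushes one side of the white triangle across the opposite vertex. This gives a new simple arrangement in which the white triangle is replaced by a black one and every other face keeps its colour, so \(b_{\mathrm{int}}\) would exceed the bound~\eqref{eq:bint-bound}.

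\textbf{Your route.} You stay inside the segment count. A white triangle forces its three edge-neighbours to be distinct black non-triangles. Each of them costs at least one unit of slack, but at most two units are available.

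\textbf{Link to the paper's later bookkeeping.} Your \(E\) is exactly the paper's excess \(\sum_{f\text{ black}}(s(f)-3)=n(n-2)-3\,b_{\mathrm{int}}(\mathcal{A})\) from \cref{lem:optimal-defects-2q1p}, computed with projective side counts. An external face gains exactly three projective sides, so its term \(s(f)-3\) equals your count of its bounded sides. Your Step~2 says precisely that the three black neighbours of a white triangle are defects in the sense of \cref{def:defect}. That identity does not use the present lemma, so there is no circularity.

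\textbf{Trade-off.} The flip argument is shorter. However, it rests on a topological operation whose properties are asserted from the picture: the result is still a simple pseudoline arrangement, the new triangle is bounded and black, and all other colours are preserved. Your argument is purely combinatorial. It needs only two facts: a bounded segment borders at most one triangle (already proved for \eqref{eq:a3-bound}), and a face lies on one side of each pseudoline. Both arguments reach the same quantitative conclusion: a white bounded triangle forces \(b_{\mathrm{int}}(\mathcal{A})\le\lfloor n(n-2)/3\rfloor-1\).

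\textbf{A minor point on Step~3.} You only need the easy direction, namely that each face has a well-defined side of every pseudoline. You do not need that the sign vector determines the face.
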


\begin{proof}
Suppose that \(\mathcal{A}\) has a bounded white triangle \(t\), and
flip it: push one side of \(t\) lying on a pseudoline
\(L\) across the opposite vertex, meeting no other element of
\(\mathcal{A}\).  The flip replaces \(t\) by a triangle across the vertex,
cut off by \(L\) from the face vertically opposite \(t\) and therefore white;
so the new triangle is black, while every other face keeps its color.
Thus \(b_{\mathrm{int}}\) increases by one, contradicting~\eqref{eq:bint-bound}.
\end{proof}

\subsection{Optimal arrangements for \texorpdfstring{$n\equiv 1\pmod 6$}{n = 1 (mod 6)}}
\label{subsec:max-1mod6}

In the absence of perfect arrangements, we establish the structural
properties of arrangements attaining the upper
bounds~\eqref{eq:a3-bound} and~\eqref{eq:bint-bound}.

\begin{lemma}[Defect profiles for $n\equiv 1\pmod 6$]
\label{lem:optimal-defects-2q1p}
For \(n\equiv 1\pmod 6\), an arrangement attaining the upper
bound~\eqref{eq:bint-bound} has, in its projective closure, either one
pentagonal defect or two quadrilateral defects.
\end{lemma}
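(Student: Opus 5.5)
The plan is to count, in the projective closure, the total number of black-face sides in excess of three, and to show that attaining the bound~\eqref{eq:bint-bound} forces this excess to be exactly $2$. We work in the projective closure $\overline{\mathcal A}$, which has $n+1$ lines (odd $n$, so the line at infinity is added and the coloring is consistent by \cref{subsec:checkerboard}). Every line of $\overline{\mathcal A}$ is cut by its $n$ crossings into $n$ projective segments, so there are $n(n+1)$ segments in total. Each segment borders exactly one black face, since a segment separates a black face from a white one. Hence
\[
  \sum_{F\ \text{black}} |F| = n(n+1),
\]
where $|F|$ denotes the number of sides of $F$ in the closure.

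Next we count the black faces of the closure. The affine black faces are in bijection with the black faces of the closure: each external black face of $\mathcal A$ is one face of $\overline{\mathcal A}$, cut by the line at infinity, and the external faces are identified only within the affine chart. So the number of black faces of the closure is $b(\mathcal A)=b_{\mathrm{int}}(\mathcal A)+n$ by~\eqref{eq:black-total}. I will double-check this against the external-face description: an external face of $\mathcal A$ together with its side on the line at infinity is a single projective face, and antipodal external faces are distinct affine faces.

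Now assume $b_{\mathrm{int}}=\lfloor n(n-2)/3\rfloor=(n(n-2)-2)/3$ for $n\equiv1\pmod 6$. Then
\[
  b = \frac{n(n-2)-2}{3}+n = \frac{n(n+1)-2}{3}.
\]
Every projective face has at least $3$ sides when $n+1\ge 3$, because there are no digons in a simple arrangement of at least three projective lines. Therefore
\[
  \sum_F \bigl(|F|-3\bigr) = n(n+1)-3b = 2,
\]
with every term nonnegative. The only ways to obtain a total of $2$ are a single face with $|F|=5$, or two faces with $|F|=4$, all other black faces being triangles. By \cref{def:defect}, these non-triangular black faces are exactly the defects, which gives the claim.

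The main obstacle is the bookkeeping at infinity. One must confirm that the line at infinity contributes exactly $n$ segments, each bordering one black face, and that the projective black-face count equals $b_{\mathrm{int}}+n$ with no double counting of external faces. One must also rule out projective digons, which follows since any two lines meet once and $n+1\ge 4$.
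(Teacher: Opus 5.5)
Your proof is correct and is essentially the paper's argument. Both double-count black-face sides in the projective closure to get $\sum_F(|F|-3)=n(n+1)-3b=n(n-2)-3b_{\mathrm{int}}=2$. The only difference is how the total $n(n+1)$ is obtained: the paper takes the $n(n-2)$ bounded segments plus three extra sides per external black face, while you count the $n$ segments on each of the $n+1$ closed lines directly. Your explicit remark that projective faces have no digons is a useful addition, since the paper uses the nonnegativity of each term $|F|-3$ without stating it.
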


\begin{proof}
Let \(s(f)\) denote the number of sides of the projective closure of a
black face \(f\).  Each external black face contributes three sides
beyond its bounded segments: two former unbounded rays and one side on
the line at infinity.  Hence
\[
  \sum\nolimits_{f \text{ black}} s(f) = n(n-2) + 3n .
\]
Subtracting 3 for each of the \(b(\mathcal{A})\) black faces and
using~\eqref{eq:black-total},
\[
  \sum\nolimits_{f \text{ black}} \bigl(s(f) - 3\bigr)
  = n(n+1) - 3\,b(\mathcal{A})
  = n(n-2) - 3\,b_{\mathrm{int}}(\mathcal{A}).
\]
If \(\mathcal{A}\) attains the bound~\eqref{eq:bint-bound}, the
right-hand side equals \(2\).  Split the sum:
\[
  \sum\nolimits_{f \text{ black}} \bigl(s(f) - 3\bigr)
  = \sum\nolimits_{s(f) > 3} \bigl(s(f) - 3\bigr)
  + \sum\nolimits_{s(f) = 3} \bigl(s(f) - 3\bigr)
  = 2.
\]
By \cref{def:defect}, the first sum runs over the defects; the second vanishes.
Each defect contributes at least \(1\), so either two defects are
projective quadrilaterals (\(s(f_1)=s(f_2)=4\)) or a single defect is a
projective pentagon (\(s(f_1)=5\)).
\end{proof}

\begin{definition}
\label{def:2-defective}
A simple arrangement of \(n\) pseudolines with \(n\equiv 1\pmod 6\) is
\emph{2-defective} if it attains the bound~\eqref{eq:bint-bound} on
internal black faces.
\end{definition}

\noindent
The name refers to the side excess \(\sum_{f}\bigl(s(f)-3\bigr)=2\),
not to the number of defects.

\begin{lemma}[Simultaneous attainment of the bounds]
\label{lem:a3-max-characterization}
For \(n\equiv 1\pmod 6\) and a simple affine arrangement \(\mathcal{A}\) of
\(n\) pseudolines,
\[
  a_3(\mathcal{A}) = \left\lfloor\frac{n(n-2)}{3}\right\rfloor
  \iff
  b_{\mathrm{int}}(\mathcal{A}) = \left\lfloor\frac{n(n-2)}{3}\right\rfloor
  \text{ and }\mathcal{A}\text{ has no internal defects}.
\]
\end{lemma}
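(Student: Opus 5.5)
The plan is to prove the two directions separately; the reverse one is immediate. Throughout, write \(N=\lfloor n(n-2)/3\rfloor=(n(n-2)-2)/3\) (using \(n\equiv 1\pmod 6\)).

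\emph{(\(\Leftarrow\)).} Assume \(b_{\mathrm{int}}(\mathcal{A})=N\) and there are no internal defects.
\begin{itemize}
\item Every internal black face is then a bounded triangle, so \(a_3(\mathcal{A})\ge b_{\mathrm{int}}(\mathcal{A})=N\).
\item Combined with~\eqref{eq:a3-bound}, this gives \(a_3(\mathcal{A})=N\).
\end{itemize}

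\emph{(\(\Rightarrow\)).} Assume \(a_3(\mathcal{A})=N\). Then \(3a_3+u=n(n-2)\) gives \(u(\mathcal{A})=2\), so at most two pseudolines carry an unused segment. Call the other pseudolines \emph{good}; since \(n\ge 7\), there are at least \(n-2\ge 5\) of them.

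\emph{Step 1: all triangles have one colour.} I would reuse the proof of \cref{lem:perfect-color} verbatim on the good pseudolines. The alternation step applies to any pseudoline all of whose bounded segments are used, so all triangles along a good line \(L\) share a colour \(c(L)\). The propagation step shows \(c(L_1)=c(L_2)\) for any two good lines, since any two pseudolines cross. Every bounded triangle has its three sides on three distinct pseudolines, and at most two pseudolines are not good, so every triangle has a side on a good line. Hence all bounded triangles share one colour \(c\).

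\emph{Step 2: that colour is black.} I would repeat the majority argument from the start of \cref{subsec:black-face-bound}. There are \(N\) triangles among the \(\binom{n-1}{2}\) internal faces, and
\[
  \frac{n(n-2)-2}{3}>\frac{(n-1)(n-2)}{4}\quad\text{for } n\ge 7 .
\]
So colour \(c\) is a strict majority of the internal faces. The external faces split evenly, so \(c\) is the majority colour overall, that is, black. (This also reproves \cref{lem:no-white-triangles} in this setting without assuming the \(b_{\mathrm{int}}\) bound.)

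\emph{Step 3: counting segments.} Every bounded segment borders exactly one black face.
\begin{itemize}
\item The \(N\) black triangles account for \(3N=n(n-2)-2\) bounded segments, all distinct.
\item Exactly two bounded segments are therefore incident to non-triangular black faces.
\item An internal black face that is not a triangle has at least four bounded sides, which is impossible with only two segments left. Hence there are no internal defects.
\item The internal black faces are thus exactly the \(N\) triangles, so \(b_{\mathrm{int}}(\mathcal{A})=N\).
\end{itemize}

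The main obstacle is Step 1. The statement being proved does not assume the bound on \(b_{\mathrm{int}}\), so \cref{lem:no-white-triangles} cannot be quoted directly. The care needed is in checking that the alternation argument of \cref{lem:perfect-color} only uses that the two consecutive segments at the shared crossing on \(L\) are used. That property holds along every good line regardless of where the two unused segments lie.
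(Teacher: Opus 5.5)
Your proof is correct and follows the paper's argument essentially step for step. The reverse direction uses $a_3\ge b_{\mathrm{int}}$ together with \eqref{eq:a3-bound}. The forward direction uses good and bad pseudolines, the alternation and propagation arguments of \cref{lem:perfect-color}, and the majority argument that makes the common triangle colour black. The only difference is at the end: you count the two leftover bounded segments directly, whereas the paper chains $a_3=a_3^{\text{black}}\le b_{\mathrm{int}}(\mathcal{A})\le\lfloor n(n-2)/3\rfloor$ via \eqref{eq:bint-bound}, which is the same segment count in another form.
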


\begin{proof}
\((\Leftarrow)\)\enspace
Assume \(b_{\mathrm{int}}(\mathcal{A}) = \lfloor n(n-2)/3\rfloor\) and
\(\mathcal{A}\) has no internal defects.  Then every internal black face
is a triangle, so \(a_3(\mathcal{A}) \ge b_{\mathrm{int}}(\mathcal{A}) =
\lfloor n(n-2)/3\rfloor\); combined with~\eqref{eq:a3-bound},
\(a_3(\mathcal{A}) = \lfloor n(n-2)/3\rfloor\).

\((\Rightarrow)\)\enspace
Assume \(a_3(\mathcal{A}) = \lfloor n(n-2)/3\rfloor\). Then exactly two
bounded segments are unused.  Call a pseudoline \emph{bad} if it contains
an unused bounded segment, and \emph{good} otherwise.
There are at most two bad pseudolines.

A good pseudoline has all its bounded segments used, so by the alternation
and propagation arguments of \cref{lem:perfect-color} the colors of all
triangles along all good pseudolines coincide; denote this color by \(c\).

Every bounded triangle has at most two of its three sides on bad
pseudolines and so is incident to at least one good pseudoline.  Hence
every bounded triangle has the common color \(c\).  Among the
\(\binom{n-1}{2}\) internal faces, \(\lfloor n(n-2)/3\rfloor\) are
triangles; this is more than half, so \(c\) is black.

Therefore \(a_3 = a_3^{\text{black}} \le b_{\mathrm{int}}(\mathcal{A})\), and
combined with~\eqref{eq:bint-bound} this gives
\(b_{\mathrm{int}}(\mathcal{A}) = \lfloor n(n-2)/3\rfloor = a_3(\mathcal{A})\),
so \(\mathcal{A}\) attains the bound and every internal black face is a
triangle.  Hence \(\mathcal{A}\) has no internal defects.
\end{proof}

\section{Problem statement, prior work, and our approach}
\label{sec:problem}

\subsection{Problem statement}
\label{subsec:problem-statement}

We study the following enumeration problem.

\begin{problem}
\label{prob:main}
For odd \(n\), enumerate all simple affine arrangements of \(n\)
pseudolines that attain the upper bound~\eqref{eq:bint-bound} on the
number of internal black faces.
\end{problem}

By \cref{lem:defect-free-perfect,lem:optimal-defects-2q1p}, the arrangements
in question split into two families: the perfect ones for
\(n\equiv 3,5\pmod 6\), and the 2-defective ones for
\(n\equiv 1\pmod 6\).  The first family solves both the Kobon and
Arnold problems described in the introduction.  The second family is
Arnold-optimal.  Some of its members are Kobon-optimal as well, possibly
after additional transformations (\cref{subsec:triangle-maximal-classes}).

The even case lies outside the scope of this paper.  Its optimal
arrangements involve deviations from simple arrangements: pairs of parallel
lines in the Arnold problem, and triple intersection points in the Kobon
problem.  Simple even arrangements, moreover, have many unavoidable
defects, which our algorithms do not handle.

\subsection{Prior enumeration algorithms}
\label{subsec:prior-algorithms}

The number of simple arrangements of \(n\) pseudolines grows as
\(2^{\Theta(n^2)}\)~\cite{knuth-1992}, so exhaustive search is feasible
only with substantial pruning.  We compare prior algorithms by how this
pruning is organized and by what structure the search is
built on: either the order in which the lines cross a given line
(the \(O\)-matrix, \cref{def:order-matrix}), or a word encoding
a wiring diagram.

Bokowski, Roudneff, and Strempel~\cite{bokowski-roudneff-strempel-1997}
enumerate perfect arrangements on the projective plane.  The
triangle condition propagates through the \(O\)-matrix as constraints
between rows, so non-perfect partials are ruled out during construction.

Bartholdi, Blanc, and Loisel~\cite{bartholdi-blanc-loisel-2007}
describe a depth-first search in the affine plane that grows a
wiring diagram one column of crossings at a time, with a budget on
unused segments for early pruning.

Wood~\cite{wood-2024} treats the perfect case with zero tolerance
for defects: a ``primary'' swap of two adjacent wires is admitted
only when the ``secondary'' swaps that resolve the enclosed
polygon into triangles are still available.  We arrived independently
at a similar idea of grouping elementary generators into composite
ones, and we choose this grouping to define a canonical form for the
enumerated words.

Savchuk~\cite{savchuk-2025} uses a table encoding similar
to the $O$-matrix and translates the optimality constraints
into clauses for a general-purpose SAT solver.

\subsection{Our approach}
\label{subsec:our-approach}

A straightforward pruning rule for the
depth-first search is a defect budget.  This was also our starting point.
It is fully local in the perfect case: the budget is zero,
so a branch dies as soon as a defect forms.
For 2-defective arrangements the same mechanism is much less local.
The budget allowing two internal quadrilateral defects cannot prune a branch
with extra trailing defects that become visible only after the word is built.
Instead, our approach either entirely disallows branches producing defects
(the perfect search of
\cref{sec:perfect-search}) or admits exactly the required defect profile
(the 2-defective search of \cref{sec:2-defective-search}), making the defect
budget unnecessary.

The search iterates over the
generators in a reduced word for the longest permutation \(w_0\).  Thus its natural
output is a stream of reduced words.  This representation is compact compared
to allowable sequences and $O$-matrices,
so we use it as the common input--output format of the enumeration
and classification programs.

\section{The perfect search}
\label{sec:perfect-search}

This section presents an exhaustive search for defect-free arrangements
of an odd number of pseudolines.  By \cref{lem:defect-free-perfect}, this
search enumerates exactly the perfect arrangements.  The search exploits the
local constraints of defect-freeness: every internal black face is a
triangle (\cref{lem:perfect-color}) and every external black face is a
digon (\cref{lem:external-digons}).  It also factors out
the commutation relation~\eqref{eq:commute}, enumerating one canonical
representative per commutation class of reduced words.

The search factors out only part of the geometric freedom of
\cref{obs:representation-multiplicity}.  In \cref{subsec:checkerboard} we
noted that the parity of a generator determines the color of the
face it opens in the wiring diagram, without fixing which parity
gives which color.  We now fix this convention: odd generators open black faces,
while even generators open white ones.  The reflection
$\sigma_i \mapsto \sigma_{n-2-i}$ inverts parity for odd
$n$, so this keeps one diagram from each reflection pair, leaving $2n$ of the
$4n$ wiring diagrams per arrangement.

\begin{figure}[ht]
\centering
\begin{subfigure}[b]{0.35\linewidth}
\centering
\begin{tikzpicture}[x=0.6cm,y=0.6cm]
  \PseudolineWiringDiagram[fill,noright]{9}{{7,5,3,1},dots,0,dots,{7,5,3,1}}{}
\end{tikzpicture}
\caption{}
\label{fig:perfect-schematic-digons}
\end{subfigure}\hfill
\begin{subfigure}[b]{0.27\linewidth}
\centering
\begin{tikzpicture}[x=0.6cm,y=0.6cm]
  \PseudolineWiringDiagram[fill,noright]{9}{{7,5,3,1},4,{3,5}}{}
\end{tikzpicture}
\caption{}
\label{fig:composite-K}
\end{subfigure}\hfill
\begin{subfigure}[b]{0.35\linewidth}
\centering
\begin{tikzpicture}[x=0.6cm,y=0.6cm]
  \PseudolineWiringDiagram[fill,noright,nolabels]{9}{{7,5,3,1},4,{3,5},4,{3,5}}{wire/3,wire/4,gen/2/4,gen/4/4}
\end{tikzpicture}
\caption{}
\label{fig:no-repeat}
\end{subfigure}
\caption{Wiring diagram patterns for $n=9$.  (a)~The forced odd prefix
$\sigma_1\sigma_3\sigma_5\sigma_7$, the single $\sigma_0$, and the
trailing generators.
(b)~A composite generator $K_4=\sigma_4\sigma_3\sigma_5$:
the white $\sigma_4$, with $\sigma_3$ and $\sigma_5$ closing two black
triangles.
(c)~Two consecutive $K_4$ form a white quadrilateral; its four black
neighbors cannot all be triangles, since otherwise the highlighted
wires would cross twice (\cref{lem:composite-no-repeat}).}
\label{fig:perfect-schematic}
\end{figure}

\subsection{Constraints from external black digons}
\label{subsec:external-digons}

In a perfect arrangement every external black face is a digon
(\cref{lem:external-digons}; see \cref{fig:perfect-schematic-digons}).
The digons are alike, but the sweep direction makes them
constrain the search in three different ways.  The initial digons fix a
starting prefix of the word, removing all branching at the top of the
search.  The digon associated with $\sigma_0$ forces that generator to occur
exactly once.  The trailing ones are formed at the end of the word,
but they still forbid certain pairs of wires from crossing in the interior.

\begin{lemma}[Forced prefix from initial digons]
\label{lem:initial-digons}
The initial external black digons of a perfect arrangement of $n$
pseudolines, $n$ odd, are bounded by the wire pairs
$$(1,2),\ (3,4),\ \ldots,\ (n-2,n-1).$$
They are closed by the letters of the prefix
$\sigma_1\sigma_3\cdots\sigma_{n-2}$ (up to commutations) of the word
encoding the wiring diagram.
\end{lemma}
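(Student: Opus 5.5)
We work entirely at the left end of the wiring diagram, before any crossing has been swept. There the wires occupy positions $0,1,\dots,n-1$ in the order $0,1,\dots,n-1$, and they bound $n+1$ initial external faces. Face $F_0$ lies below wire $0$, face $F_j$ lies between the wires in positions $j-1$ and $j$ for $1\le j\le n-1$, and face $F_n$ lies above wire $n-1$.

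\emph{Step 1: fix the colors of the initial faces.} The face between positions $i$ and $i+1$ is the one later closed, or opened, by a $\sigma_i$. By the convention fixed at the start of this section, odd generators open black faces. Consider the faces $F_{i+1}$ lying between positions $i$ and $i+1$ for odd $i$. Such faces alternate in color with the even-index ones, because the initial external faces are colored alternately (\cref{subsec:checkerboard}). This alternation must be consistent with the face opened by $\sigma_i$ after a crossing in the same slot. That new face lies on the other side of the crossing vertex, so it is vertically opposite and has the same color. We conclude that $F_{i+1}$ is black exactly when $i$ is odd.

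Counting then gives the colors at the edges. For odd $n$ the faces $F_2,F_4,\dots,F_{n-1}$ are black, which is $(n-1)/2$ faces. The two outer faces $F_0$ and $F_n$ are white, since $F_1$ is white and $F_{n-1}$ is black. I expect this color bookkeeping to be the main obstacle. It depends on the sweep convention: one must check that a $\sigma_i$-opened face and the initial face it continues really share a color, and that the topmost and bottommost faces come out white when $n$ is odd. If the vertical-opposition argument proves awkward, I would instead verify the parity statement directly on the example of \cref{fig:sweep-word-wiring}.

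\emph{Step 2: apply perfectness.} By \cref{lem:external-digons}, every external black face of a perfect arrangement is a digon. Take the black face $F_{i+1}$, with $i$ odd. It is bounded on the left by the unbounded ends of the two wires initially at positions $i$ and $i+1$, namely wires $i$ and $i+1$. Being a digon, it has exactly two sides, both unbounded rays. So it has no bounded side, and it must close at the very first crossing on either wire. That crossing can only be the crossing of wires $i$ and $i+1$ with each other, which in the word is the first letter $\sigma_i$ acting in that slot. This identifies the digons as those bounded by the pairs $(1,2),(3,4),\dots,(n-2,n-1)$.

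\emph{Step 3: derive the prefix.} By Step 2, for each odd $i$ the first crossing involving wire $i$ or wire $i+1$ is the crossing $\sigma_i$ of these two wires. Any letter $\sigma_j$ occurring before it with $j\in\{i-1,i,i+1\}$ would involve one of those wires earlier, unless $j=i$. Hence each odd $\sigma_i$ occurs before every $\sigma_{i\pm1}$. The odd generators commute pairwise, since they differ by at least $2$. Using the commutation relation~\eqref{eq:commute}, we can therefore move each of them to the front, past any letters preceding it. Those letters all have index differing from $i$ by more than $1$ and lie in other slots. The result is a commutation-equivalent word beginning with $\sigma_1\sigma_3\cdots\sigma_{n-2}$.

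Formally, we induct on the letters preceding the first odd occurrence. Suppose some letter $\sigma_j$ precedes the first occurrence of an odd $\sigma_i$ with $|i-j|\le 1$. If $j$ is even, then $\sigma_j$ involves wire $i$ or wire $i+1$ before they cross, which contradicts Step 2. If $j$ is odd, then $j=i$.
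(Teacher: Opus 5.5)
Your argument follows the paper's route. The paper's proof is a two-line appeal to \cref{lem:external-digons} and the parity-color convention, noting that any other letter at the start would add a vertex to an open digon. Your Steps 2 and 3 spell this out correctly: the first crossing on wire $i$ or $i+1$ is their mutual crossing, and the first occurrences of the odd generators commute to the front.

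There is a false claim in Step 1, at exactly the point you flagged as the main obstacle: $F_0$ is black, not white.
\begin{itemize}
\item The initial faces $F_0,F_1,\dots,F_n$ alternate in color, and wire $0$ separates $F_0$ from the white face $F_1$, so $F_0$ is black.
\item Since $n+1$ is even, $F_0$ and $F_n$ must have opposite colors, so they cannot both be white.
\item As a cross-check, two white outer faces would leave only $n-1$ black external faces instead of $n$.
\end{itemize}
$F_0$ is the long external face that receives a vertex from every occurrence of $\sigma_0$; it is the black face in the proof of \cref{lem:sigma-zero}.

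This does not break your derivation, because Steps 2 and 3 only use that $F_{i+1}$ is black for odd $i$, and that part is right. It does break the completeness claim implicit in Step 1. Among all $n+1$ initial faces the black ones are $F_0,F_2,\dots,F_{n-1}$. In a perfect arrangement $F_0$ is also a digon, but it is bounded by wires $0$ and $n-1$ and closed by $\sigma_0$, not by a prefix letter. The lemma's ``initial digons'' must therefore be read as the faces lying between two initial wire ends, namely among $F_1,\dots,F_{n-1}$; $F_0$ is handled separately by \cref{lem:sigma-zero}. State that restriction explicitly rather than arguing that the outer faces are white.

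Your fallback of checking the parity on \cref{fig:sweep-word-wiring} would not have been a valid substitute either. That figure is an $n=4$ diagram, neither odd nor perfect, and a single example proves nothing.
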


\begin{proof}
Immediate from \cref{lem:external-digons} and the parity-color
convention.  The pairs are read off the start of the diagram; any
other letter at the start would add an extra vertex to one of the open
digons, making it a face with more than two sides.
\end{proof}

\begin{lemma}[The single $\sigma_0$]
\label{lem:sigma-zero}
In a perfect arrangement of an odd number of pseudolines, the generator
$\sigma_0$ is used exactly once in the encoding word, and it crosses the
two wires labeled $0$ and $n-1$.
\end{lemma}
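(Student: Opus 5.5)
We argue about the external faces on the top boundary of the wiring diagram, i.e.\ the faces lying above wire position $0$. With the paper's convention, the external faces at the start of the sweep are colored alternately. The generators $\sigma_1,\sigma_3,\ldots$ open black faces, so the face between positions $i$ and $i+1$ has the parity-determined color. In particular, the region between positions $0$ and $1$ is white, since $\sigma_0$ is even and opens white faces. The external face above position $0$ (top boundary) is therefore black at the start. Each occurrence of $\sigma_0$ acts on the topmost wire and changes which external face lies above it. The external faces along the top boundary thus correspond exactly to the occurrences of $\sigma_0$: if $\sigma_0$ occurs $k$ times, the top boundary carries $k+1$ external faces, which alternate in color.

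First we show $k\ge 1$. Every pair of wires must cross, in particular wire $0$ with all others. Wire $0$ starts at position $0$ and ends at position $n-1$, so it must leave position $0$, which requires a $\sigma_0$.

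Next we show $k\le 1$. The first top external face $F$ is black (it lies above wire $0$ at the start). Its bottom side is the portion of the top wire between the start and the first $\sigma_0$. By \cref{lem:external-digons}, $F$ must be a digon, so its boundary consists of exactly two unbounded segments. One of them is the initial segment of wire $0$, from the left boundary up to its first crossing. If that crossing is not with the other side of the digon, the face gets an extra bounded side, which cannot happen in a perfect arrangement by \cref{lem:external-digons}. Hence wire $0$ can have no crossing before the first $\sigma_0$, and the first $\sigma_0$ is the first crossing of wire $0$, with the other side being the unbounded right part of the wire that then occupies the top. After this $\sigma_0$ the top face is white. If a second $\sigma_0$ occurred, it would open a black top external face, bounded below by the wire now on top. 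That face would be closed either by a further $\sigma_0$ or by the right boundary. For it to be a digon, its two sides must be unbounded: the wire entering the top at the second $\sigma_0$ would have to have its unbounded end there. But the wire leaving the top at that moment had its segment between two $\sigma_0$'s as a bounded segment bordering a black external face, i.e.\ an unused segment, contradicting perfectness.

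The cleanest way to organize this is by counting: $n$ black external faces total, the initial $(n-1)/2$ digons from \cref{lem:initial-digons} plus the top one, and symmetric counting at the end. The main obstacle is making this bookkeeping rigorous. The simpler route is the one above: any black external face with a bounded side contradicts perfectness, because that bounded side would border no triangle.

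Finally, we identify the wires. A digon formed at the single $\sigma_0$ between wire $0$'s unbounded initial ray and the top wire's unbounded final ray means that the crossing is the first crossing of wire $0$ and the last crossing of the other wire. Since $\sigma_0$ is the only move touching the top position, the wire at the top after it stays there until the end, so it is the wire ending at position $0$, namely $n-1$. Before it, wire $0$ is on top from the start. So the single $\sigma_0$ crosses wires $0$ and $n-1$.
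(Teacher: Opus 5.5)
\textbf{The gap: your picture of the top boundary is wrong, and your bound $k\le 1$ rests on it.} An occurrence of $\sigma_0$ crosses the wires in slots $0$ and $1$. The faces it closes and opens are therefore the white faces between those two slots. For the face beyond slot $0$, the crossing is only an intermediate vertex (\cref{subsec:wiring-diagrams}).

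So the region above slot $0$ is a single external face running along the entire diagram. It is black throughout, and its affine vertices are exactly the $k$ occurrences of $\sigma_0$. It is not a sequence of $k+1$ faces of alternating color.

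Consequently two of your claims are false: ``after this $\sigma_0$ the top face is white'' and ``a second $\sigma_0$ would open a black top external face.'' Your model is also not self-consistent. It puts a white top face and the white face between slots $0$ and $1$ on the two sides of the same segment of the top wire, which violates the checkerboard coloring. It also leaves no black external face for that segment to border, yet your final contradiction needs exactly such a face.

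\textbf{How to fix it.} Once the face structure is stated correctly, the argument is one line, and this is the paper's proof. The top face is black and external, so by \cref{lem:external-digons} it is a digon. A digon has a single affine vertex, so $k=1$.

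Your own digon argument already gives this when $F$ is read as the whole top face. The wire entering slot $0$ at the first $\sigma_0$ then carries an unbounded ray from that crossing on, so no later $\sigma_0$ can cross it.

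Likewise, the last sentence of your $k\le 1$ paragraph becomes valid in the correct picture. For $k\ge 2$, the stretch of the top wire between the first two $\sigma_0$ crossings is a bounded side of the black top face, so it borders no triangle.

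Your argument for $k\ge 1$ and your identification of the crossed wires as $0$ and $n-1$ are fine as written.
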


\begin{proof}
Two external faces run along the whole wiring diagram, one black and one
white.  By our convention the black one is incident to $\sigma_0$
(\cref{fig:perfect-schematic-digons}).  By construction its only vertices
come from occurrences of $\sigma_0$.  In a perfect
arrangement it is a digon (\cref{lem:external-digons}), hence has a single
affine vertex.  Therefore $\sigma_0$ occurs exactly once, crossing the two
outermost wires, labeled $0$ and $n-1$.
\end{proof}

\begin{lemma}[Pruning from trailing digons]
\label{lem:trailing-digon-pruning}
The trailing external black digons of a perfect arrangement are bounded
by the wire pairs
\[
  (0,1),\ (2,3),\ \ldots,\ (n-3,n-2),
\]
and none of them is crossed in the interior of the wiring diagram.
\end{lemma}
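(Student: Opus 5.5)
At the right end of the diagram the wires occupy positions in reversed order: position $p$ holds wire $n-1-p$. The final external faces lie between consecutive positions $p,p+1$. By the parity-color convention, the black ones are those whose color matches a face opened by an odd generator. So the plan is to identify these black faces first and then apply \cref{lem:external-digons}.

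\textbf{Identifying the black trailing faces.} The faces between positions $1,2$; $3,4$; $\ldots$ are black at the start (\cref{lem:initial-digons}). Along the right boundary the colors alternate, just as along the left one. The external face below position $n-1$ (the bottom face) is white: it is the one running along the whole diagram opposite to the $\sigma_0$ face, which is black by \cref{lem:sigma-zero}.

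Parity-wise, the left boundary's black gaps sit at $(1,2),(3,4),\dots,(n-2,n-1)$. The right boundary is reached after all $\binom n2$ generators have acted, and the rightmost faces are opened by the last generators at each position. Concretely, the face ending at the right between positions $p,p+1$ was opened by the last $\sigma_p$, so it is black iff $p$ is odd. Its boundary wires are $n-1-p$ and $n-2-p$. For $p=1,3,\dots,n-2$ this gives the pairs $(n-3,n-2),\dots,(0,1)$, which are exactly the stated list.

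\textbf{Ruling out interior crossings.} By \cref{lem:external-digons}, each such face is a digon. A digon has a single vertex, namely the last $\sigma_p$ itself, which crosses those two wires. Since the word is reduced, each pair of wires crosses exactly once, at that final vertex on the face's boundary. Hence the pair is not crossed anywhere else.

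The statement's intended meaning of "not crossed in the interior" is that the two wires bounding the digon remain adjacent from their crossing to the end. Equivalently, no other wire crosses either of them after their mutual crossing. Otherwise the face would acquire an extra vertex and have more than two sides. Reading the lemma this way is also what makes it a pruning rule: during the search, once a pair $(2k,2k+1)$ has crossed, neither wire may be crossed again.

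\textbf{Expected obstacle.} The main point needing care is the parity bookkeeping: checking that the last $\sigma_p$ with $p$ odd opens a black face that is indeed external. I would verify this against the $n=9$ schematic (\cref{fig:perfect-schematic-digons}) and note the symmetry with \cref{lem:initial-digons} under reversal of the word.
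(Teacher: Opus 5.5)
Your proposal is correct and takes essentially the same route as the paper. The parity-color convention identifies the black trailing gaps (odd $p$) and the pairs read off the final reversal; \cref{lem:external-digons} makes each such face a digon whose only vertex is the crossing of that pair; and reducedness forbids any other crossing of the pair. Your additional reading, that neither wire moves after this crossing, is exactly how the paper later turns the lemma into a pruning rule in \cref{rem:wall-sealing}.
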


\begin{proof}
Immediate from \cref{lem:external-digons}, the parity-color convention,
and reducedness.  The pairs are read off the wire reversal at the end of
the wiring diagram; reducedness forbids any other letter from crossing
them.
\end{proof}

\subsection{Even-only branching and composite generators}
\label{subsec:composite-generators}

After the initial odd prefix, no black generator can be the next
branching choice: every $\sigma_{2k-1}$ already appears in the prefix and
commutes with the others, so a second one would give two consecutive
occurrences of $\sigma_{2k-1}$, violating reducedness.  The next
branching is therefore on a white generator $\sigma_g$, with $g$ even.

An even generator $\sigma_g$ adds a vertex to each of the two adjacent black faces.
In a perfect arrangement these faces must become triangles
(\cref{lem:perfect-color}), so the closing
generators $\sigma_{g-1}$ and $\sigma_{g+1}$ have to occur before any later
generator adds another vertex to either face (see \cref{fig:composite-K}).
After these occurrences of $\sigma_{g-1}$ and $\sigma_{g+1}$,
every odd $\sigma_{2k-1}$ would again act on a pair of wires
that have already crossed, so branching requires an even generator.
We call this procedure \emph{even-only branching}.

We define the \emph{composite generators} as the following sequences
\begin{equation}
\label{eq:composite-K}
  K_g = \sigma_g\sigma_{g-1}\sigma_{g+1}\ \ (g=2,4,\ldots,n-3),
  \qquad
  K_0 = \sigma_0\sigma_1,
\end{equation}
and treat them as a single branching unit in the search.  We keep the bare
term \emph{generator}, or \emph{elementary generator} when contrast is
needed, for the individual $\sigma_i$.
In terms of composite generators, the even-only branching
introduced above builds a reduced word of the following
\emph{composite form}:
\begin{equation}
\label{eq:factorization}
  W=\sigma_1\sigma_3\cdots\sigma_{n-2}\,K_{g_1}K_{g_2}\cdots K_{g_{N_{\mathrm{perf}}}},
\end{equation}
where $K_0$ appears only once by \cref{lem:sigma-zero}.

Any reduced word for $w_0$ contains $\binom{n}{2}$ elementary generators:
$(n-1)/2$ in the prefix, $2$ in $K_0$, and $3$ in each of the
remaining $N_{\mathrm{perf}}-1$ composite generators. This gives
$N_{\mathrm{perf}} = (n^2 - 2n + 3)/6$ composite generators in \eqref{eq:factorization}.

The \emph{distant commutation} relation for composite generators follows
from~\eqref{eq:commute}:
\begin{equation}
\label{eq:distant-commute}
  K_gK_h = K_hK_g \qquad (|g-h|\ge 4).
\end{equation}

\begin{lemma}[Sufficiency of the composite form]
\label{lem:composite-sufficiency}
Every reduced word for $w_0$ of the form \eqref{eq:factorization}
encodes a perfect arrangement.
\end{lemma}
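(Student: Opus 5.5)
By \cref{lem:defect-free-perfect}, it suffices to show that the arrangement encoded by a reduced word $W$ of the form \eqref{eq:factorization} is defect-free. We use the convention that odd generators open black faces. We check every black face of the wiring diagram and show that its projective closure is a triangle. The black faces split into three kinds:
\begin{enumerate}
\item the initial external faces, present before any letter;
\item the faces opened by odd letters in the interior of the word;
\item the external face incident to $\sigma_0$.
\end{enumerate}
This face is counted among the external faces above. For each kind we count its affine vertices, that is, the crossings lying on its boundary, in the order the sweep meets them.

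\emph{External faces.} Initially the black external faces are the gaps between positions $2k-1$ and $2k$. The prefix $\sigma_1\sigma_3\cdots\sigma_{n-2}$ closes each of them at its first vertex, so each is an external digon. The black face below position $0$ (between wire position $0$ and the boundary) acquires a vertex only from $\sigma_0$. In $W$, the letter $\sigma_0$ occurs only inside $K_0$. How often $K_0$ occurs is not given by the form itself, so we derive it by a counting argument. Let $a$ be the number of factors $K_0$ and $N-a$ the number of remaining factors. The letter count gives $\tfrac{n-1}{2}+2a+3(N-a)=\binom n2$. In $w_0$ the pair of outermost wires crosses exactly once. A cleaner route is to observe that each $\sigma_0$ crosses the wire currently at position $0$ with its neighbour, while the bottom face stays external throughout. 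If $\sigma_0$ occurred twice, the two occurrences would have to be separated by a $\sigma_1$, and the face opened by the first $\sigma_0$ would then be a bounded white face rather than the external one. I would instead just note that the bottom black external face ends up with exactly as many vertices as there are occurrences of $\sigma_0$. I expect this point to need care. The cleanest approach is probably to read off the final layer: after the last letter, the trailing black faces are gaps $(2k,2k+1)$, and reducedness together with the fact that the last letters at each odd position close triangles forces single vertices. Concretely, I would prove that the trailing black faces, and the bottom one, are digons by a symmetric argument. Reverse the word (the reverse of a reduced word for $w_0$ is again one) and relabel positions; the composite structure then turns into triangle-closing patterns read backwards.

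\emph{Internal black faces.} Consider an odd letter $\sigma_{g\pm1}$ inside some $K_g$, or the $\sigma_1$ of $K_0$. It closes a black face $F$ that was opened by the previous occurrence of the same odd generator. I claim $F$ has exactly three vertices: the opening crossing, one vertex contributed by the $\sigma_g$ of the same composite, and the closing crossing. The intermediate vertices of $F$ come only from letters $\sigma_{g}$ and $\sigma_{g\pm 2}$, on the position indices adjacent to $F$, occurring between the opening and the closing. Each letter $\sigma_{g'}$ with $g'$ even is immediately followed, in its composite, by $\sigma_{g'-1}\sigma_{g'+1}$, which close both black faces adjacent to it. So after the opening of $F$, the first even letter touching $F$ belongs to a composite $K_{g}$ or $K_{g\pm2}$, and that composite contains the closing letter of $F$. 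Hence exactly one intermediate vertex occurs, and $F$ is a bounded triangle. The one subtlety is that an even letter not in the same composite might touch $F$ before the closing letter. This is ruled out because within $K_{g'}$ the closing letters follow $\sigma_{g'}$ directly, and every odd letter lies inside some composite or in the prefix. The prefix case, where $F$ is opened by a prefix letter, is handled in the same way.

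The main obstacle, in my view, is the external side: showing that $\sigma_0$ occurs once and that the trailing black faces are digons using only reducedness. I would first try the reversal argument described above, and fall back on the face-count identity $\sum(s(f)-3)=n(n-2)-3b_{\mathrm{int}}$. Every internal black face is a triangle, and their number equals the number of odd closing letters, which gives $3b_{\mathrm{int}}=n(n-2)$. The identity then shows that the external faces also have zero excess.
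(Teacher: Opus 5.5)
Your treatment of the internal black faces is sound and is essentially the paper's invariant: each composite restores the $(n-1)/2$ open black wedges, so every odd letter inside a composite closes a triangle. The gap is on the external side, and it comes from a misreading. You write that the number $a$ of factors $K_0$ ``is not given by the form itself'' and then never determine it. Everything external depends on $a$. The bottom black face has exactly as many vertices as there are occurrences of $\sigma_0$. Your fallback needs $b_{\mathrm{int}}=n(n-2)/3$, but the number of odd closing letters in the composite part is $2N-a$.

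None of the arguments you sketch for $a=1$ works. That the outermost wires cross once does not force a single $\sigma_0$. A first $\sigma_0$ can cross wire $0$ with some wire $k$ and a second can cross $k$ with $n-1$, exactly as in the double-zero words of \cref{cor:special-sigma-zero}, which are reduced words for $w_0$. Nor is a bounded white face opened by the first $\sigma_0$ a contradiction, since $\sigma_0$ always opens the white face between slots $0$ and $1$.

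The missing step is that \eqref{eq:factorization} has exactly $N_{\mathrm{perf}}=(n^2-2n+3)/6$ composite letters. Your own letter count $\tfrac{n-1}{2}+2a+3(N_{\mathrm{perf}}-a)=\binom n2$ then forces $a=1$. This is how $N_{\mathrm{perf}}$ was defined, with $K_0$ appearing once.

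With $a=1$ the proof closes at once, and you need neither the reversal argument nor the identity. The bottom black face has a single vertex, so it is a digon. The trailing black faces are digons by your own invariant: after the last composite, the black face between slots $2k-1$ and $2k$ still has only its opening vertex and runs to the right boundary. You called this the main obstacle, but it is already settled by what you proved for internal faces.

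Alternatively, as the paper does, count the internal black triangles: two per ordinary composite and one for the single $K_0$, giving $2N_{\mathrm{perf}}-1=n(n-2)/3$. Their $n(n-2)$ sides exhaust all bounded segments, so $u(\mathcal{A})=0$. The arrangement is then perfect by definition, without inspecting external faces or invoking \cref{lem:defect-free-perfect}. Your identity-based fallback is equivalent to this once $a=1$ is in hand.
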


\begin{proof}
The odd prefix $\sigma_1\sigma_3\cdots\sigma_{n-2}$ of \eqref{eq:factorization}
opens $(n-1)/2$ black wedges.  By induction, this count is restored by each
composite generator $K_g$: it closes $2$ new black triangles (or $1$ if $g=0$)
and leaves $(n-1)/2$ open black wedges again.

The total number of internal black triangles is
$2N_{\mathrm{perf}}-1=n(n-2)/3$.
Their $n(n-2)$ sides exhaust the $n(n-2)$ bounded segments,
each incident to exactly one black face;
hence every bounded segment is used and the arrangement is perfect.
\end{proof}

\begin{lemma}[Necessity of the composite form]
\label{lem:composite-necessity}
A reduced word $W$ of odd-index majority encoding a perfect arrangement can be brought
to the form \eqref{eq:factorization} by commutations~\eqref{eq:commute} of elementary generators,
none of which transpose two white generators, that is, the indices of the composite
generators follow the order of the white generators in $W$.
\end{lemma}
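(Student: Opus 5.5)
We argue by induction along the word, processing the white generators of $W$ in the order they occur. By "odd-index majority" we take it to mean that odd generators open black faces, i.e.\ the parity-color convention fixed at the start of this section. Under that convention, \cref{lem:initial-digons,lem:sigma-zero,lem:trailing-digon-pruning} are available for $W$.

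\emph{Base step.} We first bring $W$ to the form $\sigma_1\sigma_3\cdots\sigma_{n-2}W'$. By \cref{lem:initial-digons}, each initial black digon between wires $(2k-1,2k)$ is closed by the first generator acting on positions $2k-1,2k$. Any generator touching position $2k-1$ or $2k$ earlier would add a vertex to that digon, so it cannot occur. Hence every letter preceding this occurrence of $\sigma_{2k-1}$ has index at distance $\ge 2$, and these letters commute with it. We move the $(n-1)/2$ occurrences to the front one at a time. Each move passes only odd letters or distant even letters, and never exchanges two white letters with each other.

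\emph{Inductive step.} Suppose $W$ has been brought to $\sigma_1\cdots\sigma_{n-2}K_{g_1}\cdots K_{g_m}R$. We maintain the invariant that after this prefix every position pair $(2k-1,2k)$ bounds an open black wedge whose only vertex so far is its opening crossing, exactly as after the odd prefix. This is the counting argument of \cref{lem:composite-sufficiency}.

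Let $x$ be the first letter of $R$. It cannot be odd: an odd $\sigma_{2k-1}$ would close the wedge at $(2k-1,2k)$ as a digon. That digon is internal, because it is bounded by crossings on both sides, so this is impossible by \cref{lem:perfect-color}. It would also cross an already crossed pair, violating reducedness, as argued in the even-only branching paragraph. So $x=\sigma_g$ with $g$ even.

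Assume first $g\ge2$. Then $\sigma_g$ adds a third vertex to the black wedges at $(g-1,g)$ and $(g+1,g+2)$. Each must close as a triangle, so the next letter acting on positions $g-1$ or $g$ must be $\sigma_{g-1}$, and similarly the next letter acting on $g+1$ or $g+2$ must be $\sigma_{g+1}$.

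We show that these two occurrences can be commuted leftward to sit immediately after $\sigma_g$. Every letter strictly between $\sigma_g$ and, say, the closing $\sigma_{g-1}$ avoids positions $g-1$ and $g$. It must also not be $\sigma_{g-2}$, since that would touch position $g-1$. Hence all these letters commute with $\sigma_{g-1}$. The same argument applies to $\sigma_{g+1}$, after which $\sigma_{g-1}$ and $\sigma_{g+1}$ commute with each other.

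During these moves only $\sigma_{g\pm1}$, which are odd, pass other letters. Therefore no two white generators are ever transposed, and the white generators keep their order in $W$. If $g=0$, \cref{lem:sigma-zero} shows this is the unique $\sigma_0$. The face at $(1,2)$ closes via $\sigma_1$ by the same argument, giving $K_0$.

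After $K_g$, the invariant is restored, since two new wedges are opened at $(g-1,g)$ and $(g+1,g+2)$. The induction then proceeds until the word is exhausted.

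\emph{Main obstacle.} The delicate point is the claim that the intervening letters avoid $\sigma_{g-2}$ and $\sigma_{g}$. For $\sigma_g$ this is immediate: a second $\sigma_g$ acting on positions $g,g+1$ touches the wedge at $(g-1,g)$. For $\sigma_{g-2}$ we must use that it touches position $g-1$, adding a fourth vertex to a black face that must be a triangle.

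We also have to verify that the wedge structure is well defined at the boundary positions $0$ and $n-1$. This follows from \cref{lem:sigma-zero} and \cref{lem:trailing-digon-pruning}, which say that the external black faces contribute no extra vertices. With that checked, the induction is routine.
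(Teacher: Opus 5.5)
Your proof is correct and follows the same route as the paper. For each white $\sigma_g$, the adjacent black faces must be triangles, so the closing $\sigma_{g\pm1}$ are the next letters touching them; every intervening letter then avoids $\sigma_{g-2},\sigma_g,\sigma_{g+2}$ and commutes with them, and only odd letters are ever moved. Your left-to-right induction with the fresh-wedge invariant, and the explicit extraction of the odd prefix, spell out bookkeeping the paper leaves implicit (one cosmetic slip: $\sigma_g$ adds the second vertex to each wedge, not the third).
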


\begin{proof}
Consider an occurrence of a white generator $\sigma_g$.  The two black
faces adjacent to it must be triangles.  Therefore the
generators that close them are $\sigma_{g-1}$ and $\sigma_{g+1}$, with the
obvious omission of $\sigma_{-1}$ when $g=0$.

Before both closing generators occur, none of $\sigma_{g-2}$, $\sigma_g$,
$\sigma_{g+2}$ can be applied.  Applying any of them would add another vertex to one of the two open
black faces, producing a black face with at least four vertices, which
cannot occur in a perfect arrangement.  Any letter occurring between $\sigma_g$
and the closing $\sigma_{g-1},\sigma_{g+1}$ is therefore none of $\sigma_{g-2},\sigma_g,\sigma_{g+2}$,
hence commutes with both $\sigma_{g-1}$ and $\sigma_{g+1}$.  Moving $\sigma_{g-1}$
and $\sigma_{g+1}$ next to $\sigma_g$ yields the composite generator $K_g$.

Applying this to every white generator, we obtain a word that starts
with the odd prefix and continues as a
sequence of composite generators, and the sequence of their indices coincides
with the order of the original white generators.
\end{proof}

\begin{lemma}[No consecutive repeat of $K_g$]
\label{lem:composite-no-repeat}
A reduced word for a perfect arrangement does not contain two consecutive
occurrences of the composite generator $K_g$.
\end{lemma}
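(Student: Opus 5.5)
The plan is to derive a contradiction with reducedness, following the picture in \cref{fig:no-repeat}: in a perfect arrangement, two consecutive copies of $K_g$ would force some pair of wires to cross twice. First I dispose of $g=0$. By \cref{lem:sigma-zero}, $\sigma_0$ occurs only once, so $K_0K_0$ is impossible. From now on $g\ge 2$ is even, and by \cref{lem:composite-necessity} I may assume the word is already in the composite form~\eqref{eq:factorization}.

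Next I track the wires through the first $K_g$. Just before it, let $c,a,b,d$ be the wires at positions $g-1,g,g+1,g+2$. The letter $\sigma_g$ crosses $a,b$, and then $\sigma_{g-1}$ crosses $c,b$ and $\sigma_{g+1}$ crosses $a,d$. After the first $K_g$ the order at positions $g-1,\dots,g+2$ is $b,c,d,a$. In the second $K_g$, the letter $\sigma_{g+1}$ acts on the wires then at positions $g+1,g+2$. Those are $c$ and $a$: the preceding $\sigma_g$ swaps $c,d$ and does not touch $a$, and $\sigma_{g-1}$ does not touch position $g+1$. So the second $K_g$ contains a crossing of $c$ with $a$.

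It remains to show that $c$ and $a$ have already crossed before the first $K_g$. Consider the black face between positions $g-1$ and $g$ that is open just before the first $K_g$; its bounding wires are $c$ (above) and $a$ (below). The letter $\sigma_g$ adds a vertex to it, and $\sigma_{g-1}$ closes it. By \cref{lem:perfect-color} (or \cref{lem:external-digons} if the face were external) it must be a triangle, so it has exactly one further vertex: the crossing that opened it. This opening crossing cannot be absent, that is, the face cannot be one of the initial external faces. The initial faces at positions $g-1,g$ with $g$ even are exactly the initial digons of \cref{lem:initial-digons}, and these are closed by the odd prefix, which precedes every composite generator. Hence the face was opened by a $\sigma_{g-1}$ crossing the two wires then at positions $g-1,g$. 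No other vertex occurs on the face before $\sigma_g$, so these wires are still $c$ and $a$; that is, the opening crossing is a crossing of $c$ with $a$. Together with the crossing in the second $K_g$, this gives two crossings of $c$ and $a$, contradicting reducedness.

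The main obstacle I expect is the bookkeeping in the last step. One must justify that the wires bounding the face do not change between its opening and $\sigma_g$, and that the opening vertex really exists rather than being on the boundary. The first point uses the fact that a change of wire at position $g-1$ or $g$ would itself be an extra vertex of the face. The second point uses the forced prefix of \cref{lem:initial-digons}.
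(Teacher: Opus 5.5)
Your proof is correct and is essentially the paper's argument. The wires $c$ and $a$ are exactly the pseudolines carrying a pair of opposite sides of the white quadrilateral opened and closed by the two $\sigma_g$, and your two crossings of $c$ with $a$ are the apexes of the two opposite black triangles the paper invokes.

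Your explicit wire tracking makes precise what the paper leaves to the figure. It also shows that only the triangle closed by the first $\sigma_{g-1}$ needs perfectness, since the other apex is forced directly by the letter $\sigma_{g+1}$ of the second $K_g$.
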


\begin{proof}
For $g=0$, \cref{lem:sigma-zero} gives a single $\sigma_0$, hence a single
$K_0$, so there is nothing to show.  For $g\ge 2$, two consecutive
occurrences of $K_g$ contain the subword
$\sigma_g\sigma_{g-1}\sigma_{g+1}\sigma_g$.  The two occurrences of
$\sigma_g$ open and close a white face whose remaining two
vertices come from $\sigma_{g-1}$ and $\sigma_{g+1}$, so this face is a
quadrilateral.

In a perfect arrangement, each of the four black faces adjacent to this
quadrilateral is a triangle.  The two pseudolines containing a pair of
opposite sides of the quadrilateral cross twice, at the apex of
each of the two opposite triangles, contradicting
reducedness (see \cref{fig:no-repeat}).
\end{proof}

\begin{lemma}[Canonical composite form]
\label{lem:composite-canonical-form}
Among the words obtainable from the composite form \eqref{eq:factorization}
by distant commutations~\eqref{eq:distant-commute}, exactly one satisfies
$g-2\le h$ for every consecutive pair $K_gK_h$.
\end{lemma}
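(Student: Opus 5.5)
This is a partially commutative (trace monoid) normal form. Regard the sequence of indices $g_1,\dots,g_N$ of the composite generators as a word over the alphabet of even indices. Two letters $g,h$ commute exactly when $|g-h|\ge 4$, by \eqref{eq:distant-commute}; otherwise they are "dependent", i.e.\ $|g-h|\in\{0,2\}$. The equivalence class under distant commutations is therefore a trace, represented by its dependence poset. The occurrences of letters are partially ordered by: an occurrence precedes another if it comes earlier and they are linked by a chain of dependent pairs. The words in the class are exactly the linear extensions of this poset. The condition $g-2\le h$ for consecutive $K_gK_h$ fails only when $h\le g-4$, i.e.\ when $K_g,K_h$ commute and $h<g$. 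So the claim says that exactly one linear extension has no "descent" between commuting adjacent letters. This is the classical Foata/Cartier normal form, essentially the lexicographically minimal representative.

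For existence, I take the lexicographically smallest word in the class, comparing index sequences. Suppose it had consecutive $K_gK_h$ with $h\le g-4$. Swapping them is a distant commutation and gives a lexicographically smaller word in the class, which is a contradiction. The class is finite, so the minimum exists.

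For uniqueness, I let $u\ne v$ be two words in the class, both satisfying the condition. I look at the first position where they differ, say $u$ has $K_a$ there and $v$ has $K_b$ with $a<b$. The common prefix removes the same occurrences from both, so the remaining suffixes $u',v'$ are again equivalent. The first letter of $u'$ is $a$. It must appear somewhere in $v'$, and the first $a$ in $v'$ corresponds to the first $a$ in $u'$, since dependent equal letters keep their order. Every letter preceding that occurrence in $v'$ must commute with $a$; otherwise it would precede $a$ in the poset, and $a$ could not be first in $u'$. So the letters of $v'$ before the first $a$ all satisfy $|c-a|\ge4$, and this includes $b$. Let $c$ be the letter immediately before that $a$ in $v'$. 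Consecutiveness forces $a\ge c-2$, and $|c-a|\ge 4$ then gives $c\le a-4<a$. So this letter is smaller than $a$, and I move leftward along the block. Each consecutive pair $c',c$ in the block must satisfy $c\ge c'-2$. Both letters commute with $a$, but they need not commute with each other, so the chain can only increase or stay within steps of size $2$ downward. This is the delicate step, and I expect it to be the main obstacle: ruling out the block wandering back above $a$ and so reaching $b>a$.

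The tool I would use is interval reasoning. The block consists of letters all differing from $a$ by at least $4$. It ends just left of $a$ at a value $\le a-4$. Each step from a letter to its right neighbour decreases by at most $2$, since $c\ge c'-2$. Read right to left, the block therefore increases by at most $2$ per step. A chain starting at $\le a-4$ that increases by at most $2$ per step can never jump over the forbidden window $(a-4,a+4)$, which has width $8>2$. So every letter of the block is $\le a-4$. In particular $b\le a-4<a$, contradicting $a<b$. Hence $u=v$.
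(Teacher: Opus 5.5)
Your proof is correct, and it follows a genuinely different route from the paper's.

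\textbf{The paper's route.} The paper works with the oriented rewriting rule $K_gK_h\to K_hK_g$ for $g\ge h+4$.
\begin{itemize}
\item Existence comes from termination: each swap lowers the number of ``distant inversions'' by exactly one.
\item Uniqueness comes from local confluence plus Newman's lemma. The only overlap is $K_aK_bK_c$ with $a\ge b+4$ and $b\ge c+4$, so $a\ge c+8$, and both orders of rewriting end at $K_cK_bK_a$.
\end{itemize}

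\textbf{Your route.} You argue in the trace monoid instead.
\begin{itemize}
\item Existence: you take the lexicographically least word of the finite class. This needs no termination argument.
\item Uniqueness: you use a first-difference argument. It rests on two standard trace facts: left cancellation, and the fact that dependent occurrences (in particular equal letters) keep their relative order.
\end{itemize}

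\textbf{Where the structure enters.} The specific dependence relation is used at the same point in both proofs. In the paper it appears as $a\ge c+8$, which makes the critical pair joinable. In yours it appears as the observation that a chain rising by at most $2$ per step, read right to left, cannot cross the forbidden window $(a-4,a+4)$.

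\textbf{What each route buys.}
\begin{itemize}
\item Yours shows that the canonical word is exactly the lexicographic minimum of its class.
\item Yours argues directly over the whole commutation class. The paper's route passes through confluence of the oriented system.
\item The paper's route is shorter given Newman's lemma. It also produces the bubble-sort procedure and the inversion count that are reused later, in Step~4 of \cref{lem:forced-descent}.
\end{itemize}

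\textbf{A caveat.} Your aside calling this the classical Foata/Cartier (or lexicographic) normal form is loose. For a general independence relation, the local condition ``no adjacent independent descent'' does not single out one word. For example, take $z<x<y$ with $x$ independent of $y$ and $z$, and $y,z$ dependent. Then $xyz$ and $yzx$ are equivalent and both satisfy the condition. Your proof does not rely on that remark, though. Your gap argument is exactly what excludes this phenomenon here.
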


\begin{proof}
\emph{Existence.}  Repeatedly replace forbidden consecutive pairs $K_gK_h$,
$g\ge h+4$, by $K_hK_g$.  By~\eqref{eq:distant-commute}, each replacement
uses only commutations of independent elementary generators.
It remains to check that the process terminates.  Call a pair of composite
generators $K_g,K_h$ of $W$, with $K_g$ preceding $K_h$, a \emph{distant
inversion} if $g\ge h+4$.  A replacement
$K_gK_h \to K_hK_g$ reverses the order of exactly one pair, the swapped one,
which is a distant inversion before and not after; the relative order of every
other pair is unchanged.  Each replacement therefore decreases the number of
distant inversions by one, and the process ends at a word with no forbidden pair.

\emph{Uniqueness.}  The rule is locally confluent: two replacements at disjoint
position pairs commute, and the only overlap of two forbidden pairs is a
triple $K_aK_bK_c$ with
$a\ge b+4$ and $b\ge c+4$, whence $a\ge c+8$, so whichever swap is applied
first, the result is completed to $K_cK_bK_a$ by two further swaps.
Together with termination, Newman's lemma~\cite{newman-1942} makes the rule confluent, so
the normal form is unique.
\end{proof}

\begin{lemma}[Composite form up to distant commutation]
\label{lem:composite-distant-commutation}
Two reduced words of the form \eqref{eq:factorization} in the same
commutation class of elementary generators are related by distant commutations~\eqref{eq:distant-commute}.
\end{lemma}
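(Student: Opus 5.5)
The plan is to use the standard description of a commutation class of reduced words as a labelled partial order (a heap) on the letter occurrences, and then to show that the composite factorization is determined by this heap up to linear extensions of an induced order on composite blocks. Two words in the same commutation class have the same heap. The heap is a poset whose elements are the occurrences $(i,k)$, meaning the $k$-th occurrence of $\sigma_i$. The covering relations come from pairs of occurrences with adjacent or equal indices, which keep their relative order. Every word in the class is a linear extension of this heap, and every linear extension is obtainable by commutations (the classical Cartier--Foata / Viennot fact; alternatively one can use the $O$-matrix formulation of \cref{lem:o-matrix-commutation}).

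\emph{Step 1: the blocks are heap data.} Take a word $W$ of the form \eqref{eq:factorization}. The white occurrences $\sigma_g$, $g$ even, are totally ordered among themselves only when their indices differ by at most $2$. I will show that the set of elementary occurrences forming each composite $K_g$ is determined by the heap alone, independently of the chosen word:
\begin{itemize}
\item the block consists of the white occurrence $\sigma_g$ together with the next occurrences of $\sigma_{g-1}$ and $\sigma_{g+1}$ in the heap order;
\item for $K_0$ it consists of $\sigma_0$ and the next $\sigma_1$;
\item the odd prefix consists of the first occurrence of each odd generator.
\end{itemize}
This holds because in any word of the form \eqref{eq:factorization}, the letters $\sigma_{g\pm1}$ immediately following $\sigma_g$ are exactly the first occurrences of $\sigma_{g\pm1}$ after that $\sigma_g$, and "first occurrence after" is a heap notion, since occurrences with adjacent indices are comparable. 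So two such words $W,W'$ in one class yield the same partition of heap elements into blocks.

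\emph{Step 2: the quotient order.} Define a relation on blocks by $B\prec B'$ if some element of $B$ precedes some element of $B'$ in the heap. Because each word of composite form lists every block contiguously, the block sequences of both $W$ and $W'$ are linear extensions of this relation. Next I check that $\prec$ relates $K_g$ and $K_h$ with $|g-h|\le2$, since their indices overlap or are adjacent. For $|g-h|\ge4$ the elementary indices of the two blocks differ by at least $2$, so the blocks are heap-incomparable unless they are related through a chain of intermediate blocks.

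\emph{Step 3: conclude.} The block sequences of $W$ and $W'$ are two linear extensions of the same order. That order is generated by comparabilities between blocks with $|g-h|\le 2$, and their relative order is the same in both sequences by Step 1. It remains to move from one linear extension to the other by adjacent transpositions of incomparable neighbours. This is standard: bubble-sort $W$ toward $W'$. At each step the block to be moved past its left neighbour is incomparable to it in the order, because it appears earlier in $W'$. Incomparable adjacent blocks must have $|g-h|\ge4$, since nearer indices are always comparable. Hence each swap is a distant commutation \eqref{eq:distant-commute}.

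The main obstacle is Step 1, specifically verifying that block membership is intrinsic. One must ensure that in a composite-form word no other $\sigma_{g\pm1}$ can sit heap-between $\sigma_g$ and its closers, and this requires care at the boundary generators $g=0$ and $g=n-3$. To handle it I will argue directly from the composite form: in \eqref{eq:factorization}, $\sigma_{g\pm1}$ immediately follows $\sigma_g$, so nothing intervenes, and the "next occurrence" characterization follows.
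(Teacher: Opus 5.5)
Your proof is correct, and it reaches the conclusion by a different route than the paper.

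\emph{The paper's route.} It leans on geometry. By \cref{lem:composite-sufficiency} the arrangement is perfect, so two white letters $\sigma_g,\sigma_{g\pm 2}$ can never become adjacent in any word of the class: the black face between them would acquire a fourth vertex. Hence white letters at index distance at most $2$ keep their relative order throughout the whole commutation class. The passage to distant commutations is then delegated to the projection lemma of Diekert--M\'etivier.

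\emph{Your route.} You never compare white letters. Step~1 makes the block partition part of the heap. Step~2 orders $K_g$ and $K_{g+2}$ through the odd letter $\sigma_{g+1}$ they share, whose occurrences are totally ordered. The bubble sort of Step~3 proves directly the instance of the projection lemma you need.

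\emph{What each buys.} Yours is a purely trace-theoretic proof, independent of \cref{lem:composite-sufficiency,lem:composite-necessity} and of any face counting. It also isolates the one property that matters, namely that the blocks are intrinsic. That is exactly what breaks for the special-mode alphabet: there two white letters can claim the same next $\sigma_{h+1}$, and relation \eqref{eq:special-commutation} appears. The paper's route, in exchange, proves a stronger fact: white letters at index distance $2$ are comparable in the full elementary heap, not merely among composite-form words.

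Two small remarks.
\begin{itemize}
\item The sentence opening Step~1, about white occurrences at index distance at most $2$ being ordered, is not given by the heap. The letters $\sigma_g$ and $\sigma_{g+2}$ commute, and their comparability is precisely what the paper extracts from perfectness. Nothing later in your argument uses it, so you can drop it.
\item There is no real boundary difficulty at $g=0$ or $g=n-3$. The `next occurrence' description applies verbatim to $K_0=\sigma_0\sigma_1$ and to $K_{n-3}$.
\end{itemize}
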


\begin{proof}
By \cref{lem:composite-sufficiency}, the encoded arrangement is perfect.  Words in
one commutation class are related by the
commutations~\eqref{eq:commute}, which preserve every generator's index and the
relative order of non-commuting generators; so both words factor into the same
composite generators (\cref{lem:composite-necessity}), and only their order
may differ.

If two words of the class differed in the relative order of two white
generators $\sigma_g$ and $\sigma_h$ with $|g-h|<4$, they would be
consecutive in some word of the class. This is impossible for $\sigma_g\sigma_g$
by reducedness, and $\sigma_g\sigma_{g\pm 2}$ would add two vertices to the black face
between them, giving it at least four sides in the projective closure, contradicting perfectness
(\cref{lem:perfect-color,lem:external-digons}).  Hence the relative order of two
white generators at index distance at most $2$ is the same throughout the class.

By the above, the two words list the same composite generators in orders that
differ only in pairs at index distance at least $4$, which commute
by~\eqref{eq:distant-commute}; the two words are therefore related by
distant commutations~\cite[Prop.~2.1]{diekert-metivier-1997}.
\end{proof}

\subsection{The algorithm}
\label{subsec:algorithm}

The search enumerates the canonical words of the form \eqref{eq:factorization}
directly, adding the reducedness check and the trailing-digon
pruning of \cref{lem:trailing-digon-pruning}.

Define the search as the following recursive procedure:

\begingroup\ttfamily
\begin{tabbing}
xxxx\=xxxx\=\kill
Function Search($W$), where $W = \sigma_1\sigma_3\cdots\sigma_{n-2}\cdot K_{g_1}\cdots K_{g_k}$:\\
\>if $k = N_{\mathrm{perf}}$, output $W$;\\
\>else for each $g$ admissible at $W$, call Search($W\cdot K_g$).
\end{tabbing}
\endgroup

\noindent
The search starts at level $k=0$, with the \emph{partial word} $W$ equal to the
prefix from
\cref{lem:initial-digons}; the depth $N_{\mathrm{perf}}=(n^2-2n+3)/6$ is fixed by
\cref{lem:composite-necessity}.  Let $a_0,\ldots,a_{n-1}$ be the permutation of
wire labels produced by $W$ from the identity permutation; that is, $a_i$ is
the wire occupying slot $i$ after the crossings of $W$.

An index $g=0,2,4,\ldots,n-3$ is \emph{admissible} at $W$ when all of the following hold:

\begin{enumerate}[label=(\roman*)]
\item the wire pairs that $K_g$ would cross have not yet met
  (reducedness), checked by a comparison among $a_{g-1},a_g,a_{g+1},a_{g+2}$
  for $g\ge 2$ and by the special condition $a_0=0,\ a_1=n-1$ for $g=0$
  (\cref{lem:sigma-zero});
\item $g$ does not violate the trailing-digon constraint
  (\cref{lem:trailing-digon-pruning}, \cref{rem:wall-sealing});
\item $g\ne g_k$ (\cref{lem:composite-no-repeat});
\item $g\notin\{g_k-4,g_k-6,\ldots\}$ (canonical-form rule of
  \cref{lem:composite-canonical-form}).
\end{enumerate}

The implementation maintains $a$ incrementally: applying $K_g$ updates it
by adjacent transpositions, reverted on backtrack.  A transposition
$\sigma_i$ is reduced only when $a_i < a_{i+1}$; if $a_i > a_{i+1}$, the
wires have already crossed, and the branch is pruned.  Conditions
(iii) and (iv) reference the previous $K_{g_k}$ and are skipped at $k=0$.

Without~(i) the output contains words that describe no wiring diagram;
without~(iv) each wiring diagram is emitted many times, wasting work on the
redundant branches.
Conditions~(ii) and~(iii) are optional for correctness and only prune
empty branches early (those on which the search reaches no leaf).
The implementation includes several further prunings (\cref{app:prunings}).
They also do not affect correctness, but narrow the admissibility
conditions.

\begin{remark}[Branching order]
\label{rem:branching-order}
The branching order among admissible candidates affects only how quickly
a first arrangement is found; for an exhaustive run it changes the order of the
output, not its content.  We use two orders:

\begin{itemize}
\item[$\hookrightarrow$] the \emph{ascending} order
  $g_k-2$, then $g_k+2,\,g_k+4,\,\ldots,\,n-5,\,\,n-3$;
\item[$\hookrightarrow$] the \emph{descending} order
  $g_k-2$, then $n-3,\,n-5,\,\ldots,\,g_k+4,\,\,g_k+2$.
\end{itemize}

The ascending order makes every projective class (\cref{def:p-equivalence})
appear early in the output stream, which helps monitoring progress and
incremental classification of an exhaustive run.
For first-hit search the faster order is not known in advance and is chosen
empirically per~$n$ (\cref{subsec:first-hit}).
\end{remark}

\begin{theorem}[Completeness of the perfect search]
\label{thm:perfect-completeness}
The perfect search enumerates at least one reduced word
encoding each perfect arrangement.
\end{theorem}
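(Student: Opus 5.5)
Take an arbitrary perfect arrangement and fix one of its wiring diagrams obeying the parity--color convention of \cref{sec:perfect-search} (odd generators open black faces). Such a diagram exists, since the reflection $\sigma_i\mapsto\sigma_{n-2-i}$ swaps parities for odd $n$. Let $W$ be any reduced word for $w_0$ encoding it. I will show that the commutation class of $W$ contains a word that the search reaches as a leaf. Then I will check that every admissibility test along the path leading to that leaf succeeds.

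\emph{Step 1 (normal form).} By \cref{lem:initial-digons}, $W$ can be commuted so that it begins with $\sigma_1\sigma_3\cdots\sigma_{n-2}$. By \cref{lem:composite-necessity}, it can then be commuted into the composite form \eqref{eq:factorization}, $W'=\sigma_1\cdots\sigma_{n-2}K_{g_1}\cdots K_{g_N}$. Here $N=N_{\mathrm{perf}}$ is forced by the letter count. Applying distant commutations \eqref{eq:distant-commute} and \cref{lem:composite-canonical-form} turns $W'$ into the unique canonical word $W''$, in which every consecutive pair $K_gK_h$ satisfies $h\ge g-2$. Every word in this chain encodes the same wiring diagram, since all moves are commutations of independent elementary generators.

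\emph{Step 2 (the path is admissible).} By induction on $k$, Search is called on the prefix $W''_k$ consisting of the odd prefix and the first $k$ composite generators. At each level, $g=g_{k+1}$ must be admissible at $W''_k$.
\begin{itemize}
\item[(i)] $W''$ is reduced, so no pair of wires crosses twice. Hence the wires about to be crossed by $K_g$ have not met, which is the condition $a_i<a_{i+1}$ at each of the three transpositions. For $g=0$, \cref{lem:sigma-zero} gives the single $\sigma_0$ crossing wires $0$ and $n-1$, so $a_0=0$ and $a_1=n-1$.
\item[(ii)] By \cref{lem:trailing-digon-pruning}, no letter of $W''$ crosses a trailing digon pair in the interior. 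Every prefix inherits this property, so the trailing-digon test passes, provided the test in \cref{rem:wall-sealing} only forbids configurations that force such a crossing or make some trailing pair unable to finish as a digon.
\item[(iii)] $g_{k+1}\ne g_k$ by \cref{lem:composite-no-repeat}.
\item[(iv)] $g_{k+1}\ge g_k-2$ by canonicity.
\end{itemize}
The further prunings of \cref{app:prunings} must likewise be necessary conditions satisfied by every prefix of a canonical perfect word. The paper states that they do not affect correctness, and I would cite that. At depth $N_{\mathrm{perf}}$ the search outputs $W''$.

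\emph{Main obstacle.} The delicate point is condition (ii), together with the auxiliary prunings. These are phrased as tests on partial words (for example ``wall sealing''), so I must argue that they are \emph{necessary}: any prefix of a genuine perfect word passes them. The plan is to show that each test certifies that every completion of the prefix would cross some trailing pair $(2j,2j+1)$ before the end, or would cross two already-crossed wires. Either outcome contradicts \cref{lem:trailing-digon-pruning} or reducedness applied to the full word $W''$. The remaining steps are direct citations of the lemmas already established.
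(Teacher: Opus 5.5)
Your proposal is correct and follows essentially the same route as the paper's proof: fix the parity convention by reflection, commute into the composite form via \cref{lem:composite-necessity} and then into the canonical representative via \cref{lem:composite-canonical-form}, and check that every admissibility test passes along this path because the word stays reduced and still encodes the same perfect arrangement. Your explicit treatment of the trailing-digon test and the auxiliary prunings as necessary conditions on prefixes (justified by \cref{rem:wall-sealing}, \cref{lem:forced-descent}, \cref{cor:wall-debt-perfect}) is more careful than the paper's one-line justification, but it is not a different argument.
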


\begin{proof}
Let $W$ be a reduced word encoding a perfect arrangement.  Without loss of
generality $W$ has odd-index majority; otherwise apply the parity-inverting
reflection $\sigma_i\mapsto\sigma_{n-2-i}$ ($n$ odd).  By
\cref{lem:composite-necessity}, commute $W$ into the form \eqref{eq:factorization}.
By \cref{lem:composite-canonical-form}, commute the composite part further into the
representative of its commutation class satisfying the canonical-form rule
of the search.

At each partial word along this representative, reducedness holds because the original
word is reduced and we used only commutations.  The trailing-digon and no-repeat
constraints hold because the representative still encodes the same perfect
arrangement.  Therefore the search accepts each of its composite generators
when the corresponding partial word is reached, and the final leaf is enumerated.
\end{proof}

\Cref{thm:perfect-completeness} guarantees that every perfect arrangement
is reached at least once.  The following corollary refines this to an exact
enumeration of commutation classes.

\begin{corollary}[One emitted word per commutation class]
\label{cor:perfect-class-output}
The perfect search emits exactly one reduced word in each odd-majority commutation class
of reduced words encoding a perfect arrangement, and none in any other class.
\end{corollary}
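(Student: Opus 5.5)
The corollary has two parts: existence and uniqueness of an emitted word in each relevant class, and the fact that nothing else is emitted. Existence is essentially \cref{thm:perfect-completeness}. Fix an odd-majority commutation class $\mathcal C$ of reduced words encoding a perfect arrangement. Pick $W\in\mathcal C$. By \cref{lem:composite-necessity}, $W$ commutes into composite form \eqref{eq:factorization}, and by \cref{lem:composite-canonical-form} further into its canonical representative. Both steps use only commutations~\eqref{eq:commute}, so the result stays in $\mathcal C$. The argument of \cref{thm:perfect-completeness} then shows that every prefix of this representative passes conditions (i)--(iv), so it is emitted. Here I would stress that the theorem's proof already produces an emitted word inside the \emph{given} class, not merely for the given arrangement.

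\emph{No emissions outside such classes.} Every emitted word has the form \eqref{eq:factorization} and has length $\binom n2$. Condition (i) guarantees that every letter swaps a pair of wires that has not met, so the word is reduced and equals $w_0$. By \cref{lem:composite-sufficiency}, it encodes a perfect arrangement. It has odd-index majority: the prefix contributes $(n-1)/2$ odd letters, each $K_g$ with $g\ge2$ contributes two odd letters and one even, and $K_0$ contributes one of each. Hence the emitted word lies in an odd-majority class of the required kind.

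\emph{Uniqueness.} Suppose two emitted words $W_1,W_2$ lie in the same commutation class. By \cref{lem:composite-distant-commutation}, they are related by distant commutations~\eqref{eq:distant-commute} of composite generators. Both satisfy the canonical rule $g-2\le h$ for every consecutive pair; this is condition (iv) together with the trivial cases $h\ge g$. The uniqueness part of \cref{lem:composite-canonical-form} then gives $W_1=W_2$ as sequences of composite generators.

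It remains to check that the search tree cannot emit the same word along two different branches. This holds because a branch is determined by its sequence $g_1,\dots,g_k$, and that sequence is read off from the word. The subtle step is the link between conditions (iii)/(iv) and the canonical-form rule. Condition (iv) forbids exactly $h\le g-4$, and (iii) forbids $h=g$. Every admissible consecutive pair therefore satisfies $h\ge g-2$, so emitted words are canonical in the sense of \cref{lem:composite-canonical-form}.

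Conversely, the existence argument needs the canonical representative to have no consecutive repeat $K_gK_g$. This is supplied by \cref{lem:composite-no-repeat}, since the representative still encodes a perfect arrangement. I expect this bookkeeping, matching the admissibility conditions exactly to the canonical form, to be the only real obstacle.
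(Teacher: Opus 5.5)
Your proposal is correct and follows the paper's proof. Existence comes from the commutation-only construction in the proof of \cref{thm:perfect-completeness}; the absence of emissions outside the stated classes comes from \cref{lem:composite-sufficiency}; and uniqueness comes from \cref{lem:composite-distant-commutation} combined with the uniqueness part of \cref{lem:composite-canonical-form}. Your extra bookkeeping makes explicit what the paper leaves implicit, namely the odd-majority count of emitted words, the match between conditions (iii)--(iv) and the canonical rule, and the fact that distinct branches yield distinct words.
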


\begin{proof}
The search enumerates exactly the canonical words of the form
\eqref{eq:factorization}, all encoding perfect arrangements
(\cref{lem:composite-sufficiency}); so no emitted word lies outside the stated
classes.  Each such class contains an emitted word: the construction in the
proof of \cref{thm:perfect-completeness} turns any member into a canonical word
by commutations alone, so that word lies in the same class and is enumerated.  It is
unique: the words of the composite form in a class are related by distant
commutations (\cref{lem:composite-distant-commutation}), and exactly one of
them is canonical (\cref{lem:composite-canonical-form}).
\end{proof}

\Cref{cor:perfect-class-output} marks the limit of what the perfect search does
on its own: its output lists every commutation class of perfect arrangements
exactly once.
The only redundancy left is geometric.  By
\cref{obs:representation-multiplicity}, after the parity fix a single perfect
arrangement still corresponds to up to $2n$ wiring diagrams, one commutation
class each, so it is emitted up to $2n$ times.  These repeats are removed by
grouping the output into the equivalence classes of \cref{sec:classification}.

\section{Symmetries and classification of arrangements}
\label{sec:classification}

For a straight-line arrangement, the motions of the Euclidean plane act on it by
rotations and reflections (translations act trivially).  Projective
transformations relate it to other arrangements via projective closure
followed by a change of affine chart.  In this section, we describe how the
combinatorial type of a straight-line arrangement changes under the above
transformations and extend the resulting combinatorial operations to
pseudoline arrangements.

The enumeration output is a stream of reduced words.  By
\cref{obs:representation-multiplicity}, the same arrangement is enumerated
several times as several wiring diagrams.
We describe how to identify these duplicates.

This approach is also needed in
\cref{sec:2-defective-search}.  There the 2-defective search is complete in a
weaker sense: it does not enumerate every wiring diagram, but it reaches
every projective class, producing at least one representative of each.

\subsection{Encoding wiring diagrams as \texorpdfstring{$O$}{O}-matrices}
\label{subsec:order-matrices}

A reduced word reflects the order in which the crossings appear
during a particular sweep of a wiring diagram.
An alternative encoding of the same wiring diagram lists, for
each line, the order in which it crosses the others.  As we will see,
this encoding drops the inessential information about the order of
commuting generators in a reduced word.

\begin{definition}[$O$-matrix]
\label{def:order-matrix}
We refer to a tuple \(O=(O_0,\ldots,O_{n-1})\) whose row \(O_i\) is a
permutation of \(\{0,\ldots,n-1\}\setminus\{i\}\) as a label matrix.  Such a
label matrix is called an \emph{order matrix}, or \emph{$O$-matrix} for short, if
there is a simple pseudoline arrangement with a labeling of its lines at infinity
such that, for every \(i\), line \(i\) meets the others in the order
\(O_i\).\footnote{Each row \(O_i\) is the \emph{local sequence} of line \(i\)
\cite{goodman-pollack-1984}, equivalently the rank-3 case of
\emph{hyperline sequences} \cite{bokowski-roudneff-strempel-1997}; \(\{O_0,\ldots,O_{n-1}\}\)
encodes the rank-3 oriented matroid associated with a simple pseudoline
arrangement.  We write this as a matrix and develop
the row-level operations on this representation.  Rote, in his NumPSLA
program \cite{rote-2025-numpsla} for enumerating
all pseudoline arrangements, uses the same object to identify them via the
antipodal spherical model.}  Let \(\mathcal{O}_n\)
denote the set of all $O$-matrices on \(n\) labels.\footnote{Not every label
matrix is an $O$-matrix: some do not record the crossings of any
pseudoline arrangement.  The $O$-matrices among label matrices are characterized by a
consistency condition on index triples~\cite[Theorem~5.2.10]{felsner-goodman-2017}.
All $O$-matrices appearing below are guaranteed to lie in $\mathcal{O}_n$ by
construction: they arise either from
a reduced word by the procedure described next, or by applying to an existing
$O$-matrix an operation of \cref{subsec:order-matrix-operations}.  Each
operation re-reads the same arrangement (for \(\pi_p\), its projective closure
in a new affine chart) under a new labeling, so the resulting matrix is again
realized by that arrangement.  The procedure of reading the matrix off the relabeled
arrangement is explained in \cref{app:symmetries:row-formulas}.}
\end{definition}

Such a labeling is determined by three choices: which line is \(0\), an
end of it, and a direction around infinity (clockwise or
counterclockwise).  The remaining labels follow the ends of the lines
at infinity in that direction.  There are \(4n\) such labelings per arrangement.
Wiring diagrams and reduced words for \(w_0\) already have an intrinsic
labeling, induced by the sweep start and direction.

Given a reduced word \(W = \sigma_{g_1}\cdots\sigma_{g_m}\) for \(w_0\),
define \(O(W)\) by the following procedure.  Maintain the running wire
permutation \(a\), initially the identity.  For each letter \(\sigma_g\),
append \(a_{g+1}\) to row \(O_{a_g}\) and \(a_g\) to row \(O_{a_{g+1}}\),
then swap \(a_g\) and \(a_{g+1}\).  The wiring diagram of \(W\)
(\cref{subsec:wiring-diagrams}) is the arrangement for which \(O(W)\) records
the crossing orders, so \(O(W) \in \mathcal{O}_n\).

\begin{samepage}
For the word
\(
  \textcolor{red}{\sigma_1}\textcolor{orange}{\sigma_2}\textcolor{green!60!black}{\sigma_0}\textcolor{blue}{\sigma_1}\textcolor{violet}{\sigma_2}\textcolor{black}{\sigma_0}
\)
from \cref{fig:sweep-word-wiring}, this construction gives
\nopagebreak

\nopagebreak
\smallskip
\noindent\begin{minipage}[c]{0.45\linewidth}
\centering
\begin{tikzpicture}[x=0.5cm,y=0.5cm]
  \PseudolineWiringDiagram[fill,nolabels]{4}{1,2,0,1,2,0}{}
  \fill[red,opacity=0.7]            (0.75,1.5) circle (0.75mm);
  \fill[orange,opacity=0.7]         (1.75,0.5) circle (0.75mm);
  \fill[green!60!black,opacity=0.7] (2.75,2.5) circle (0.75mm);
  \fill[blue,opacity=0.7]           (3.75,1.5) circle (0.75mm);
  \fill[violet,opacity=0.7]         (4.75,0.5) circle (0.75mm);
  \fill[black,opacity=0.7]          (5.75,2.5) circle (0.75mm);
\end{tikzpicture}
\end{minipage}%
\begin{minipage}[c]{0.45\linewidth}
\centering
\(\renewcommand{\arraystretch}{1.15}
\begin{array}{c|ccc}
  & 1\text{st} & 2\text{nd} & 3\text{rd} \\
\hline
  0 & \textcolor{green!60!black}{2}   & \textcolor{blue}{3}   & \textcolor{violet}{1} \\
  1 & \textcolor{red}{2}    & \textcolor{orange}{3} & \textcolor{violet}{0} \\
  2 & \textcolor{red}{1}    & \textcolor{green!60!black}{0}   & \textcolor{black}{3}  \\
  3 & \textcolor{orange}{1} & \textcolor{blue}{0}   & \textcolor{black}{2}
\end{array}\)
\end{minipage}
\end{samepage}

\begin{lemma}[Bijection between commutation classes and \(\mathcal{O}_n\)]
\label{lem:o-matrix-commutation}
Words in the same commutation class of reduced words for \(w_0\) have the
same \(O\)-matrix, and the induced map from commutation classes to
\(\mathcal{O}_n\) is a bijection.\footnote{This equivalence is essentially
\mbox{(v)\,$\Leftrightarrow$\,(vi)} of \cite[Theorem~1.7]{goodman-pollack-1984},
stated there for periodic allowable sequences; we give a direct proof in the
present setting of reduced words with a fixed initial labeling.}
\end{lemma}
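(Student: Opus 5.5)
We prove the three claims in turn: the map is well defined on commutation classes, it is injective, and it is surjective onto \(\mathcal{O}_n\).

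\emph{Well-definedness.} It suffices to treat a single commutation \(\sigma_i\sigma_j\to\sigma_j\sigma_i\) with \(|i-j|>1\), since a commutation class is generated by such moves. The two letters act on disjoint slot pairs \(\{i,i+1\}\) and \(\{j,j+1\}\). The running permutation \(a\) before the pair is the same in both words, and so is \(a\) after the pair. The four wires involved are distinct, so the rows receiving appended entries are \(O_{a_i}\), \(O_{a_{i+1}}\), \(O_{a_j}\), \(O_{a_{j+1}}\). Each of these rows receives exactly one entry, and that entry is the same in both orders. Hence every row \(O_k\) ends up with the same sequence, and \(O(W)\) is constant on the class.

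\emph{Injectivity.} Let \(W,W'\) be reduced words for \(w_0\) with \(O(W)=O(W')\). We argue by induction on length, in the slightly more general setting of reduced words from a common starting permutation to \(w_0\). Let the first letter of \(W\) be \(\sigma_g\), crossing wires \(x=a_g\) and \(y=a_{g+1}\). Then \(y\) is the first entry of \(O_x\) and \(x\) is the first entry of \(O_y\).

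In \(W'\), consider the first crossing involving \(x\) or \(y\). Its partner must be the first entry of the corresponding row, so this crossing is the crossing of \(x\) with \(y\). Moreover, before this crossing neither \(x\) nor \(y\) moves. Every earlier letter of \(W'\) therefore acts on slots disjoint from \(\{g,g+1\}\). Such a letter is either some \(\sigma_j\) with \(|j-g|>1\), or it is \(\sigma_{g\pm1}\), which would involve \(x\) or \(y\) and is excluded. So the crossing of \(x\) and \(y\) in \(W'\) is a \(\sigma_g\), and it commutes past all earlier letters to the front.

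After this step both words start with \(\sigma_g\). Removing that letter leaves two reduced words from the same new permutation, and their \(O\)-matrices are obtained from the common one by deleting the first entries of rows \(x\) and \(y\), so they again agree. By induction the shortened words are commutation equivalent, and therefore so are \(W\) and \(W'\).

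\emph{Surjectivity.} This is the step we expect to be the main obstacle. Take \(O\in\mathcal{O}_n\), realized by an arrangement with a labeling at infinity. We sweep the arrangement with a pseudoline starting at the chosen end and direction, which is possible by the sweeping results of~\cite{goodman-1980} already invoked in \cref{subsec:wiring-diagrams}. The labels at infinity then coincide with the initial order \(0,\ldots,n-1\), and the sweep yields a reduced word \(W\) whose crossings along each line occur in the order \(O_i\). Hence \(O(W)=O\).

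The delicate point is matching the three-choice labeling convention (line, end, direction) with the sweep start. If a purely combinatorial route is preferred, one can instead build \(W\) greedily, repeatedly applying any \(\sigma_g\) whose two wires are mutual next entries in their rows. The existence of such a \(g\) at each stage is exactly where realizability of \(O\) is needed: in the arrangement, a currently leftmost crossing in the sweep order supplies it.
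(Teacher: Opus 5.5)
Your proof is correct and follows the paper's argument: well-definedness because the two commuting letters act on disjoint slot pairs, surjectivity by sweeping a realizing arrangement with the given labeling, and injectivity by commuting the first crossing of \(W\) to the front of \(W'\) and inducting. The only difference is packaging: the paper phrases injectivity through the partial order \(\prec\) generated by the rows (the first crossing is \(\prec\)-minimal, so every earlier crossing of \(W'\) shares no line with it), whereas you read the same fact directly off the first entries of the rows \(O_x\) and \(O_y\) and track the slots \(g, g+1\) explicitly.
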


\begin{proof}
\emph{Well-defined.}  If \(|i-j|>1\), the generators
\(\sigma_i\sigma_j\) act on disjoint pairs of slots in \(a\), so
swapping them does not change which wire is appended to which row.

\emph{Surjective.}  By definition of \(\mathcal{O}_n\), any
\(O\in\mathcal{O}_n\) arises from a labeled arrangement.  Encoding it as
a reduced word \(W\) with the same labeling (\cref{subsec:wiring-diagrams})
gives \(O(W) = O\).

\emph{Injective.}  The matrix \(O\) gives the crossings (the
\(\binom{n}{2}\) unordered pairs of lines) a partial order
\(\prec\): row \(O_i\) lists the crossings on line \(i\) in order, and
\(\prec\) is the transitive closure of these per-line orders.  Crossings
sharing a line are thus comparable, so incomparable crossings have no
common line.  For two consecutive letters
\(\sigma_i\sigma_j\), a common line means \(|i-j|=1\) (no commutation)
and no common line means \(|i-j|>1\) (commutation~\eqref{eq:commute}).
A word with \(O\)-matrix \(O\) lists the crossings in a linear extension
of \(\prec\).

It therefore suffices to prove the following more general claim, by
induction on the number of crossings: any two words that list the same
crossings as linear extensions of a common partial order, in which
incomparable crossings share no line, are commutation-equivalent.
Since \(\prec\) has this property, injectivity follows.

Let \(W,W'\) be two such words and let \(c\) be the first crossing of
\(W\): \(W=c\,W_1\).  As nothing precedes it, \(c\) is \(\prec\)-minimal,
so every crossing before \(c\) in \(W'\) is incomparable to \(c\) (it cannot be
\(\prec\)-above \(c\), since \(W'\) is a linear extension) and hence
shares no line with \(c\).  Commuting \(c\) past each of them brings it to
the front: \(W'\sim c\,W_1'\).  The words
\(W_1,W_1'\) list the remaining crossings as linear extensions of the
restricted order, which inherits the property. Induction gives
\(W_1\sim W_1'\), and hence \(W\sim W'\).
\end{proof}

\begin{remark}[Bijection between \texorpdfstring{$O$}{O}-matrices and wiring diagrams]
\label{rem:o-matrix-wiring}
We use \(O\)-matrices as the formal combinatorial representation of wiring
diagrams. The \(O\)-matrix is recovered from a wiring diagram by definition.
In the other direction, an \(O\)-matrix encodes a rank-3 oriented
matroid, hence a pseudoline arrangement, from which a wiring diagram is built
by a sweep.  Neither this formalism nor a rigorous notion of wiring diagram is
needed for our purposes, so we treat wiring diagrams as an illustrative picture
and phrase formal statements in terms of \(O\)-matrices.
\end{remark}

The lemma gives a direct link between the commutation
classes of the words emitted by the search algorithms and their \(O\)-matrices,
letting us check that each class is encoded by a single canonical representative
and, where needed, deduplicate the output.  The Euclidean and projective
identifications below are therefore developed entirely on \(\mathcal{O}_n\),
without further reference to words.

\subsection{Operations on \texorpdfstring{$O$}{O}-matrices}
\label{subsec:order-matrix-operations}

The action of a Euclidean motion or projective transformation on an
arrangement can be viewed in two ways.  In the first, the sweep start
is fixed and the transformation moves the
arrangement; this view is geometric and, for a straight-line
arrangement, is realized by an actual motion of the plane.  In the
second, the arrangement is fixed and a change of labeling at infinity
yields the new \(O\)-matrix; this view is purely combinatorial and
applies to pseudoline arrangements as well.  Both give the same
row-level rules on \(\mathcal{O}_n\).  We formulate the rules
and record the realizing motion after each definition.
\Cref{app:symmetries:row-formulas} explains how each rule is read off the
relabeled arrangement.

\begin{definition}[Reflection]
\label{def:reflection}
The \emph{reflection} \(\mu\colon\mathcal{O}_n\to\mathcal{O}_n\) sends row
\(i\) to the old row \(n-1-i\) and replaces every entry \(x\) by
\(n-1-x\):
\[
  (\mu O)_{i,j} = n-1-(O_{n-1-i})_j .
\]
\end{definition}

The element \(\mu\) is an involution (\(\mu^2=\mathrm{id}\)).
Geometrically, for a straight-line arrangement, \(\mu\) is realized by a
Euclidean reflection of \(\mathbb{R}^2\) that reverses the order of the lines
at infinity (\(i \leftrightarrow n-1-i\)).

\begin{definition}[Euclidean rotation]
\label{def:euclidean-rotation}
The \emph{Euclidean rotation} \(\rho\colon\mathcal{O}_n\to\mathcal{O}_n\)
removes the top row, reverses it, appends it as the new bottom row, and
relabels lines so that the new top row carries label \(0\); see
\eqref{eq:rho-formula} in the appendix for the explicit row formula.
\end{definition}

Applying \(\rho\) to the matrix above yields:
\[
\begin{array}{c|ccc}
0&2&3&1\\
1&2&3&0\\
2&1&0&3\\
3&1&0&2
\end{array}
\xrightarrow[\scriptstyle\text{send to bottom}]{\scriptstyle\text{reverse row }0}
\begin{array}{c|ccc}
1&2&3&0\\
2&1&0&3\\
3&1&0&2\\
0&\mathbf{1}&\mathbf{3}&\mathbf{2}
\end{array}
\xrightarrow{\scriptstyle\text{relabel}}
\begin{array}{c|ccc}
0&1&2&3\\
1&0&3&2\\
2&0&3&1\\
3&0&2&1
\end{array}
\]

The operation \(\rho\) satisfies \(\rho^{2n}=\mathrm{id}\), corresponding to
the \(2n\) positions for the label \(0\) around infinity.  Geometrically,
for a straight-line arrangement, \(\rho\) is realized by a rotation of
\(\mathbb{R}^2\) past the next critical sweep direction, namely the one in
which the sweep is parallel to line \(0\).  At this critical direction,
line \(0\) becomes the new line \(n-1\), and the order of intersections
along it reverses.  Equivalently, \(\rho\) shifts the labeling at
infinity by one step.

Reflections and Euclidean rotations make the dihedral group \(D_{2n}\) of
order \(4n\) act on \(\mathcal{O}_n\):
\[
  D_{2n}=\langle\,\rho,\,\mu\,\mid\,\rho^{2n}=\mu^2=(\mu\rho)^2=e\,\rangle,
  \qquad |D_{2n}|=4n .
\]
The defining relations are verified for the operations \(\rho,\mu\) from the
row-level formulas in \cref{app:symmetries:group-relations}, and the action is
set up in \cref{def:dihedral-action}; it need not be faithful
(\cref{rem:action-not-faithful}).

\begin{definition}[Projective rotation]
\label{def:projective-rotation}
For each label \(p\in\{0,\ldots,n-1\}\), the \emph{projective rotation}
\(\pi_p\colon\mathcal{O}_n\to\mathcal{O}_n\) sends line \(p\) to infinity:
add the current line at infinity as a new line, reverse every row with
label less than \(p\), cyclically shift each row so that \(p\) becomes its
last entry, and relabel the remaining lines by their order along \(p\);
see \eqref{eq:pi-formula} in the appendix for the explicit row formula.
\end{definition}

Adding the current line at infinity as a new line materializes the
projective closure of \cref{subsec:checkerboard}, so the \(n+1\) lines
are now treated symmetrically and any of them can be chosen as the line
at infinity.

For example, with \(p=1\),
\[
\begin{aligned}
&
\begin{array}{c|ccc}
0&2&3&1\\
1&2&3&0\\
2&1&0&3\\
3&1&0&2
\end{array}
\xrightarrow{\scriptstyle\text{add }\infty}
\begin{array}{c|cccc}
0&2&3&1&\infty\\
1&2&3&0&\infty\\
2&1&0&3&\infty\\
3&1&0&2&\infty\\
\infty&0&1&2&3
\end{array}
\xrightarrow[{\begin{array}{c}\scriptstyle\text{send them}\\[-2pt]\scriptstyle\text{to bottom}\end{array}}]{\begin{array}{c}\scriptstyle\text{reverse rows}\\[-2pt]\scriptstyle\text{with label}<p\end{array}}
\begin{array}{c|cccc}
1&2&3&0&\infty\\
2&1&0&3&\infty\\
3&1&0&2&\infty\\
\infty&0&1&2&3\\
0&\boldsymbol{\infty}&\mathbf{1}&\mathbf{3}&\mathbf{2}
\end{array}
\\[6pt]
&\xrightarrow[\scriptstyle\text{put }p\text{ last}]{\scriptstyle\text{cyclic shift}}
\begin{array}{c|cccc}
1&2&3&0&\infty\\
2&0&3&\infty&\mathbf{1}\\
3&0&2&\infty&\mathbf{1}\\
\infty&2&3&0&\mathbf{1}\\
0&3&2&\infty&\mathbf{1}
\end{array}
\xrightarrow[{\begin{array}{c@{\;\;}c}\scriptstyle 2\to 0&\scriptstyle 3\to 1\\[-2pt]\scriptstyle 0\to 2&\scriptstyle \infty\to 3\end{array}}]{\begin{array}{c}\scriptstyle\text{relabel}\\[-2pt]\scriptstyle\text{by row }p\end{array}}
\begin{array}{c|ccc}
0&2&1&3\\
1&2&0&3\\
3&0&1&2\\
2&1&0&3
\end{array}
\xrightarrow{\scriptstyle\text{sort}}
\begin{array}{c|ccc}
0&2&1&3\\
1&2&0&3\\
2&1&0&3\\
3&0&1&2
\end{array}
\end{aligned}
\]
Geometrically, for a straight-line arrangement \(\pi_p\) is realized by
a rotation of the sphere that brings the great circle of line
\(p\) onto the equator (the new line at infinity).  The four substeps
trace this rotation: each line below \(p\) passes parallel to the sweep
(its row reverses, as in \cref{def:euclidean-rotation}), the cyclic
shift reads each row from its new crossing with the line at infinity,
and the \(\infty\) row enters with the closure and leaves once \(p\)
becomes the new infinity.  Equivalently, \(\pi_p\) reselects which line plays
the role of infinity: line \(p\) takes that role, and the remaining
labels are reassigned along it.

\subsection{Equivalence classes}
\label{subsec:equivalence-classes}

The equivalence relations on \(\mathcal{O}_n\) defined below are generated by
the operations of \cref{subsec:order-matrix-operations}.

\begin{definition}[Euclidean equivalence]
\label{def:e-equivalence}
The \emph{Euclidean equivalence relation} \(\sim_E\) on \(\mathcal{O}_n\) is
the orbit equivalence of the dihedral group \(D_{2n}\) acting on
\(\mathcal{O}_n\) (\cref{def:dihedral-action}):
\(O\sim_E O'\iff\exists\,g\in D_{2n}\colon O'=gO\).  An equivalence
class is called an \emph{\(E\)-class}.
\end{definition}

The orbit \(D_{2n}\cdot O\) is finite, so each \(E\)-class admits a
deterministic minimum under a fixed lexicographic order on \(\mathcal{O}_n\).
This minimum is the \emph{canonical representative} of the \(E\)-class,
computed in \cref{app:canonicalization:forms}.
Its short label, the \(E\)-class identifier (\emph{\(E\)-id}), is defined
in \cref{def:class-identifiers}.

An \(E\)-class forgets the labeling of lines at infinity within one
affine chart (\cref{def:order-matrix}).  It corresponds to an abstract
arrangement in that chart,
formalizing the intuitive notion of two arrangements being the same.

A broader equivalence relation identifies different
\(E\)-classes whose individual representatives are mapped to one another
by projective transformations:

\begin{definition}[Projective equivalence]
\label{def:p-equivalence}
The \emph{projective equivalence relation}\footnote{Throughout this paper,
``projective equivalence'' means combinatorial equivalence after projective
closure and change of affine chart.  It is not a claim of equivalence under
a geometric realization in \(\mathrm{PGL}(3)\).} \(\sim_P\) on
\(\mathcal{O}_n\) is the smallest equivalence relation containing
\(\sim_E\) and identifying \(O\) with \(\pi_p O\) for every
\(p\in\{0,\ldots,n-1\}\).  An equivalence class is called a
\emph{\(P\)-class}.
\end{definition}

In the spherical model, \(\sim_P\) corresponds to rotations of the
sphere: those preserving the equator (the dihedral part \(D_{2n}\)) and
those bringing another great circle onto the equator (the \(\pi_p\)).
Each \(P\)-class is a finite union
of \(E\)-classes; its \emph{canonical representative} is its
lexicographic minimum (\cref{app:canonicalization:forms}).  Its short label,
the \(P\)-class identifier (\emph{\(P\)-id}), is defined in
\cref{def:class-identifiers}.

\subsection{Symmetry groups and stabilizers}
\label{subsec:symmetries-stabilizers}

The operations \(\rho\), \(\mu\) and \(\pi_p\)
(\cref{def:euclidean-rotation,def:reflection,def:projective-rotation})
compose into finite \emph{operation sequences} and thus form a monoid
\(\mathcal{W}\). For \(g\in\mathcal{W}\) and \(O\in\mathcal{O}_n\) write
\(gO\) for the result of applying the sequence to \(O\).

Each operation also permutes the \(n+1\) projective lines
(ordinary labels \(0,\ldots,n-1\) and \(\infty\equiv n\)).  The permutation for \(\rho\) and \(\mu\) is determined
by the operation alone, but for \(\pi_p\) it additionally depends on row
\(p\) of the matrix acted on (the relabel step of \cref{def:projective-rotation}
reads it off).  Applying \(g\in\mathcal{W}\) to \(O\) therefore permutes
the lines by some \emph{induced line permutation} \(\tau(g;O)\in S_{n+1}\),
recording where each line ends up.  By construction, these permutations
satisfy the groupoid law
\begin{equation}
\label{eq:tau-composition}
  \tau(hg;\,O)=\tau(h;\,gO)\circ\tau(g;\,O).
\end{equation}

A sequence \(g\in\mathcal{W}\) with \(gM=M\) is a symmetry of the arrangement:
it returns the \(O\)-matrix to itself but relabels the lines by \(\tau(g;M)\).
For instance, rotations and reflections of a symmetric arrangement mix its lines,
but keep \(M\) fixed. Thus the relabeling depends on the sequence itself, not on
its action on the \(O\)-matrix. The next lemma gathers the relabelings of all
symmetries into a subgroup of \(S_{n+1}\).

\begin{lemma}[Stabilizer image is a subgroup]
\label{lem:gp-is-subgroup}
For every \(M\in\mathcal{O}_n\), the set
\begin{equation}
\label{eq:stabilizer-image}
  G(M):=\{\,\tau(g;M)\ :\ g\in\mathcal{W},\ gM=M\,\}
\end{equation}
of induced line permutations is a subgroup of \(S_{n+1}\).
\end{lemma}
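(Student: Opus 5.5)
We show that \(G(M)\) contains the identity, is closed under composition, and hence, being a subset of the finite group \(S_{n+1}\), is a subgroup. In a finite group a nonempty subset closed under the group operation is a subgroup: for \(\sigma\) in the subset, some power \(\sigma^k\) equals the identity, so \(\sigma^{-1}=\sigma^{k-1}\) already lies in it. Closure under composition therefore suffices, and no inverse operation sequences need to be built in \(\mathcal{W}\).

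\emph{Identity.} The empty sequence \(e\in\mathcal{W}\) fixes \(M\) and induces the identity permutation, so \(\mathrm{id}\in G(M)\) and the set is nonempty. Alternatively one can take \(\rho^{2n}\), which fixes every \(O\)-matrix. We must check that \(\tau(\rho^{2n};M)=\mathrm{id}\): \(\rho\) shifts the labeling at infinity by one step, and \(2n\) steps return every label, including \(\infty\), to itself. Since the empty sequence suffices, this check is not needed.

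\emph{Closure.} Let \(\sigma=\tau(g;M)\) and \(\sigma'=\tau(h;M)\) with \(gM=hM=M\). Then \(hg\in\mathcal{W}\) satisfies \((hg)M=h(gM)=hM=M\). By the groupoid law~\eqref{eq:tau-composition},
\[
  \tau(hg;M)=\tau(h;gM)\circ\tau(g;M)=\tau(h;M)\circ\tau(g;M)=\sigma'\circ\sigma .
\]
Hence \(G(M)\) is closed under composition.

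The only delicate point is that \(\tau\) depends on the matrix acted on, not just on the operation sequence, as happens for \(\pi_p\). This dependence is harmless here: because \(gM=M\), the second factor in the groupoid law is evaluated at \(M\) again. Finally, finiteness of \(S_{n+1}\) supplies the inverses.
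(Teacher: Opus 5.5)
Your proposal is correct and follows essentially the same argument as the paper: the empty sequence gives nonemptiness, the groupoid law collapses to \(\tau(hg;M)=\tau(h;M)\circ\tau(g;M)\) because \(gM=M\), and a nonempty subset of the finite group \(S_{n+1}\) closed under composition is a subgroup. Your aside about \(\rho^{2n}\) is correct but, as you note yourself, unnecessary.
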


\begin{proof}
The stabilizer of \(M\) in \(\mathcal{W}\) contains the empty sequence and
is closed under concatenation.  For \(g,h\) in it, the groupoid law
\eqref{eq:tau-composition} collapses, since \(gM=M\), to
\[
  \tau(hg;\,M)=\tau(h;M)\circ\tau(g;M),
\]
so \(G(M)\) is a nonempty subset of the finite group \(S_{n+1}\), closed
under composition, hence a subgroup.
\end{proof}

\begin{definition}[Symmetry group]
\label{def:p-symmetry-group}
The \emph{symmetry group} \(G_P\) of a \(P\)-class \(P\) is the subgroup
\(G(M)\) \eqref{eq:stabilizer-image}, taken for any representative
\(M\in P\).
Different representatives give conjugate subgroups of \(S_{n+1}\)
(\cref{lem:gp-conjugacy}), so as an abstract group (or as an action
on the \(n+1\) projective lines of \(P\)) \(G_P\) is independent of the choice.
\end{definition}

The group \(G_P\) acts on the \(n+1\) affine charts by permuting their lines
at infinity.  Fix a representative \(M\in P\).  These charts correspond to the
\(O\)-matrices \(\pi_i M\), one for each line \(i\) chosen as the line at
infinity (with \(\pi_\infty=\mathrm{id}\)).  Under \(\sim_E\) they split into
\(E\)-classes contained in \(P\).  Write \(m(E)\) for the number of charts
falling in an \(E\)-class \(E\) (a different \(M\in P\) relabels the same
\(n+1\) charts, so this count depends only on \(P\)); then
\(\sum_{E\subseteq P}m(E)=n+1\).
Each \(E\)-class also has internal symmetries within a fixed chart: the
elements of \(D_{2n}\) fixing its \(O\)-matrix.

\begin{definition}[Multiplicity and Euclidean stabilizer]
\label{def:multiplicity}%
\label{def:euclidean-stabilizer}
For an \(E\)-class \(E\) contained in a \(P\)-class \(P\), the number
\(m(E)\) is its \emph{multiplicity}, and the \emph{Euclidean stabilizer}
\(H_E\) is the stabilizer in \(D_{2n}\) of a representative \(M\in E\).
\end{definition}

\begin{lemma}[Orbit-stabilizer for charts]
\label{lem:orbit-stabilizer-charts}
For every \(E\)-class \(E\) contained in a \(P\)-class \(P\),
\[
  m(E)\cdot|H_E|=|G_P|.
\]
\end{lemma}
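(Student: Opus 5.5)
Fix a representative \(M\in E\subseteq P\) and view \(G_P=G(M)\) as acting on the \(n+1\) projective lines \(\{0,\ldots,n-1,\infty\}\). The plan is to realize the \(m(E)\) charts in \(E\) as an orbit of \(G_P\) on lines, and \(H_E\) (or rather its image under \(\tau\)) as the stabilizer of the line \(\infty\). Orbit--stabilizer, \(|G_P|=|G_P\cdot\infty|\cdot|\mathrm{Stab}_{G_P}(\infty)|\), then gives the claim.

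\emph{Step 1 (orbit = charts in \(E\)).} A line \(i\) lies in the \(G_P\)-orbit of \(\infty\) if and only if \(\pi_i M\sim_E M\). Suppose \(\pi_iM=d\,M\) for some \(d\in D_{2n}\). Since \(D_{2n}\) is a group, some word \(d'\in\mathcal W\) in \(\rho,\mu\) inverts \(d\). Then \(g=d'\pi_i\) fixes \(M\), and \(\tau(g;M)\) sends line \(i\) to \(\infty\): \(\pi_i\) moves \(i\) to infinity, and \(\rho,\mu\) fix \(\infty\). Conversely, let \(g\in\mathcal W\) fix \(M\) with \(\tau(g;M)(i)=\infty\). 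Write the corresponding chart change as ``read \(M\) with \(i\) at infinity, under some labeling.'' Then \(M=gM\) is a relabeling at infinity of \(\pi_iM\) within the same chart, so it differs from \(\pi_iM\) by an element of \(D_{2n}\), since \(D_{2n}\) accounts for all \(4n\) labelings at infinity (\cref{def:order-matrix}). Hence \(|G_P\cdot\infty|=m(E)\). Independence from the choice of \(M\) comes from the conjugacy statement \cref{lem:gp-conjugacy}.

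\emph{Step 2 (stabilizer of \(\infty\) is \(H_E\)).} Elements of \(G_P\) fixing \(\infty\) come from symmetries keeping the same line at infinity. By the argument above, each such symmetry is realized by some \(d\in D_{2n}\) with \(dM=M\). Thus \(d\mapsto\tau(d;M)\) maps \(H_E\) onto \(\mathrm{Stab}_{G_P}(\infty)\), and it is a homomorphism by \eqref{eq:tau-composition}. Injectivity holds because \(\tau(d;M)\) records where the labels go, and \(\rho^k\mu^\epsilon\) is determined by where it sends label \(0\) together with the induced orientation. An element acting trivially on labels is therefore the identity of \(D_{2n}\), for \(n\ge3\), as an operation on labelings, and so equal to \(e\) in the stabilizer.

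\emph{Main obstacle.} The delicate point is the claim, used in both steps, that any \(g\in\mathcal W\) with \(gM=M\) and a prescribed image of \(\infty\) factors as ``\(\pi_i\) followed by a dihedral element.'' That is, composite sequences of several \(\pi\)'s collapse to a single chart change plus a relabeling at infinity. I would argue this combinatorially via the second (fixed-arrangement, relabeling) viewpoint of \cref{subsec:order-matrix-operations}: every operation sequence amounts to choosing a line at infinity plus one of \(4n\) labelings, and \(\tau\) records precisely this choice. A related subtlety is that the \(D_{2n}\)-action may be non-faithful (\cref{rem:action-not-faithful}). Here I would interpret \(H_E\) as the stabilizer computed through its induced line permutations, so that the map in Step 2 is bijective.
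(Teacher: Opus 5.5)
Your skeleton is the same as the paper's. You partition \(G_P\) according to which line is carried to infinity, identify those lines with the \(m(E)\) charts of \(E\), and show that each fibre has \(|H_E|\) elements. Phrasing this as orbit--stabilizer for \(G_P\) acting on the \(n+1\) lines repackages the paper's coset count; it is not a different argument. The ``main obstacle'' you flag is exactly \cref{lem:chart-reduction}, including its second identity matching the induced permutations. The paper proves it in the appendix from \cref{lem:adjacency-preservation} and \cref{lem:o-matrix-determinacy}, so citing it there is legitimate.

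The genuine gap is the injectivity of \(d\mapsto\tau(d;M)\) on \(H_E\) in Step 2.

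\emph{Why your justification fails.} You argue that an element of \(D_{2n}\) is determined by where it sends label \(0\) together with the orientation. That data distinguishes only \(2n\) elements, while \(|D_{2n}|=4n\). Concretely, \(\tau(\rho)\) is an \(n\)-cycle on the finite labels, so \(\tau(\rho^n)=\mathrm{id}\). Yet \(\rho^n\neq e\): it reverses every row (\cref{lem:rho-order}). In terms of labelings at infinity, it switches which end of line \(0\) starts the sweep, and the line permutation does not record that choice. So the kernel of \(\tau\) on \(D_{2n}\) is \(\{e,\rho^n\}\) (\cref{lem:tau-kernel}), not trivial.

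\emph{The missing idea.} \(\rho^n\) lies in no stabilizer. No row of an \(O\)-matrix is a palindrome (\cref{lem:no-palindrome}), so \(\rho^n O\neq O\) for every \(O\) (\cref{cor:rho-n-no-fixed}). Hence \(\ker\tau\cap H_E=\{e\}\). This is precisely the step the paper inserts when it says distinct \(h\) in the coset induce distinct line permutations.

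\emph{Your fallback does not repair it.} Reinterpreting \(H_E\) as its image in \(S_{n+1}\) changes the quantity in the statement. \(H_E\) is defined as a subgroup of \(D_{2n}\), and \cref{lem:parity-fixed-representatives} uses it that way. Showing that both readings have the same order needs exactly \(\rho^n\notin H_E\).

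Non-faithfulness of the action on \(\mathcal{O}_n\) is a red herring here. For \(n=3\), \(\rho^2\) fixes every matrix but has nontrivial \(\tau\), so it counts legitimately in both \(H_E\) and \(G_P\). The only elements that could be double-counted are those in \(\ker\tau\).
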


\begin{proof}
Since \(|G_P|\) does not depend on the chosen representative
(\cref{lem:gp-conjugacy}), compute it from a representative \(M\) of
\(E\), and write \(M_s=\pi_s M\) (\(\pi_\infty=\mathrm{id}\)) for the
\(O\)-matrix in chart \(s\).  By \cref{lem:chart-reduction}, every
\(g\in\mathcal{W}\) acts on \(M\) as \(gM=h M_s\) for some
\(h\in D_{2n}\), where \(s=\tau(g;M)^{-1}(n)\) is the line \(g\) sends to
infinity.  Hence some stabilizer element sends line \(s\) to infinity if and
only if \(h M_s=M\) for some \(h\in D_{2n}\), that
is, if and only if \(M_s\sim_E M\).  (When it does, \(g=h\pi_s\)
is such an element.)  The charts so reached are therefore the
\(m(E)\) charts of \(E\) (\cref{def:multiplicity}).

Fix one such \(s\).  The \(h\in D_{2n}\) with \(h M_s=M\) form a coset of
\(\operatorname{Stab}_{D_{2n}}(M_s)\); as \(M_s\) is a representative of
\(E\), this stabilizer has order \(|H_E|\) (\cref{def:euclidean-stabilizer}).
Distinct \(h\) in the coset induce distinct line permutations
\(\tau(h;M_s)\).  If two agreed, they would differ by a stabilizer element
inducing the trivial permutation.  But the only nonidentity element of
\(D_{2n}\) with trivial induced permutation is \(\rho^{n}\)
(\cref{lem:tau-kernel}), which lies outside any stabilizer
(\cref{cor:rho-n-no-fixed}).  Hence each
chart \(s\) contributes exactly \(|H_E|\) distinct elements
\(\tau(g;M)=\tau(h;M_s)\circ\tau(\pi_s;M)\) of \(G_P\), and elements from
different \(s\) differ since \(s=\tau(g;M)^{-1}(n)\).  Therefore
\(m(E)\cdot|H_E|=|G_P|\).
\end{proof}

It remains to count how many times a single \(E\)-class \(E\) is
enumerated.  This means counting how many commutation classes of reduced
words correspond to \(E\) and obey the coloring convention (i.e., the
chosen majority parity of generators).  By \cref{lem:o-matrix-commutation} these are the
\(O\)-matrices of \(E\) that survive the odd-majority filter; since the
perfect search emits one word per commutation class
(\cref{cor:perfect-class-output}), their count is exactly how many times
\(E\) is duplicated in its output.

\begin{lemma}[Parity-fixed representatives]
\label{lem:parity-fixed-representatives}
Let \(E\) be an \(E\)-class with a parity majority: in each of its
reduced words, odd- and even-indexed generators occur in unequal numbers
(in particular for perfect and 2-defective arrangements,
\cref{subsec:checkerboard,subsec:black-face-bound}).
Then the number \(\operatorname{rep}(E)\) of its \(O\)-matrices
that survive the odd-majority filter is
\[
  \operatorname{rep}(E)=\frac{2n}{|H_E|}.
\]
\end{lemma}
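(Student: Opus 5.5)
The plan is to apply orbit--stabilizer to the dihedral action and then show that the parity filter keeps exactly half of the orbit, with the reflection \(\mu\) pairing survivors with non-survivors. Fix a representative \(M\in E\). The \(E\)-class is the orbit \(D_{2n}\cdot M\), so \(|E|=|D_{2n}|/|H_E|=4n/|H_E|\). It therefore suffices to show that exactly half of the \(O\)-matrices in \(E\) pass the odd-majority filter.

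The first step is to check that the filter is well defined on \(O\)-matrices. The parity counts of a reduced word are invariant under commutations, since these preserve the multiset of letters. By \cref{lem:o-matrix-commutation}, the numbers of odd and even letters are therefore functions of \(O\in E\). By hypothesis these two numbers are unequal, so every \(O\in E\) is either odd-majority or even-majority, and never balanced.

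The second step is to show that \(\mu\) swaps the two kinds. On words, \(\mu\) corresponds to the vertical reflection \(\sigma_i\mapsto\sigma_{n-2-i}\) of \cref{obs:representation-multiplicity}. I would verify this from \cref{def:reflection}: relabelling \(x\mapsto n-1-x\) together with reversing the slot order turns the crossing sequence of \(W\) into that of the reflected word, so \(\mu O(W)=O(W^{\mu})\). For odd \(n\), \(n-2-i\) has the opposite parity to \(i\), so \(\mu\) exchanges odd-majority and even-majority matrices. Hence \(\mu\) restricts to a bijection between the two subsets of \(E\). Since \(E\) is \(\mu\)-stable, each subset has \(|E|/2=2n/|H_E|\) elements.

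The main obstacle is the identification \(\mu O(W)=O(W^{\mu})\), because the row formula for \(\mu\) has to be matched with the word reflection. I would prove it directly from the construction of \(O(W)\), by induction on the letters of \(W\), using the running permutation \(a\). The inductive claim is that after each prefix the reflected run satisfies \(a'_i=n-1-a_{n-1-i}\). Under this correspondence, the entry appended to row \(a'_{i}\) at step \(\sigma_{n-2-g}\) equals \(n-1\) minus the entry appended to row \(a_{n-1-i}\) at step \(\sigma_g\), which is exactly the rule \((\mu O)_{i,j}=n-1-(O_{n-1-i})_j\).

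A secondary point is that the count \(|E|=4n/|H_E|\) is just orbit--stabilizer, and the potential non-faithfulness of the action (\cref{rem:action-not-faithful}) does not affect it. The rest of the argument is routine.
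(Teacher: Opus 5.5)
Your proposal is correct and follows the paper's proof essentially step for step: orbit--stabilizer gives \(4n/|H_E|\) matrices in \(E\), the parity counts are functions of the \(O\)-matrix by \cref{lem:o-matrix-commutation}, and \(\mu\) pairs each odd-majority matrix with an even-majority one, so the filter keeps half. Your inductive verification that \(\mu O(W)=O(W^{\mu})\) via \(a'_i=n-1-a_{n-1-i}\) is a useful detail the paper only asserts when it says that \(\mu\) interchanges the parities.
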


\begin{proof}
The \(D_{2n}\)-orbit of a representative of \(E\) has \(4n/|H_E|\)
$O$-matrices (the orbit--stabilizer theorem with stabilizer \(H_E\)).
The number of generators of each index parity is invariant under commutation,
hence is a
function of the $O$-matrix (\cref{lem:o-matrix-commutation}). By hypothesis
these two numbers never coincide, so each $O$-matrix of \(E\) has a strict majority
parity (one checkerboard color strictly dominates, \cref{subsec:checkerboard}).
The reflection \(\mu\) interchanges the
parities (\(n\) odd) and so flips the majority, pairing each odd-majority
$O$-matrix with an even-majority one; the filter keeps one per pair:
\[
  \operatorname{rep}(E) = \frac{1}{2}\cdot\frac{4n}{|H_E|} = \frac{2n}{|H_E|}.\qedhere
\]
\end{proof}

This specifies the ``up to \(4n\)'' of \cref{obs:representation-multiplicity}.
The \(4n\) is the full orbit size \(|D_{2n}|\), reached when \(H_E\) is trivial.
The parity fix halves it to \(2n\), and the arrangement's own symmetries
divide this by a further \(|H_E|\).

The algorithm reconstructing \(G_P\) and the \(H_E\) from a canonical
representative is described in \cref{app:canonicalization:groups}.  The
per-\(n\) profiles it produces are tabulated in
\cref{tab:sym-combined}; the isomorphism types of \(G_P\) observed
there fall within \(C_k\), \(D_k\), \(A_4\), \(S_4\), \(A_5\), and the
trivial group \(\{e\}\).\footnote{For symmetric straight-line arrangements, \(G_P\) is
conjugate to a finite subgroup of \(\mathrm{PGL}(3,\mathbb{R})\); since
any finite subgroup of a connected Lie group lies in a maximal compact
subgroup, this gives \(G_P\subseteq\mathrm{SO}(3)\). The finite
subgroups of \(\mathrm{SO}(3)\) are exactly the cyclic groups \(C_n\),
the dihedral groups \(D_n\), and the rotation groups of the regular
polyhedra \(A_4\), \(S_4\), \(A_5\) (tetrahedron, cube, icosahedron).
For pseudoline
arrangements the bound is a priori weaker, yet no symmetry type outside
this list occurs in our enumeration (\cref{tab:sym-combined}).}

\section{The 2-defective search}
\label{sec:2-defective-search}

For $n\equiv 1\pmod 6$, no perfect arrangement exists.  By
\cref{lem:optimal-defects-2q1p}, an optimal arrangement contains either two
quadrilateral defects or one pentagonal defect, with all other
internal black faces triangles.

This section extends the perfect search of \cref{sec:perfect-search} to
the 2-defective case, keeping a small per-step state,
composite generators, and the trailing-digon pruning of
\cref{lem:trailing-digon-pruning}.
The search enumerates the 2-defective arrangements by modified
composite generators, so the search space stays comparable to the
defect-free one.

\subsection{How missing black generators yield defects}
\label{subsec:special-idea}

Removing a black elementary generator from a word encoding a perfect
triangular pattern merges the two black faces it would have separated
(\cref{fig:merge-patterns}).

\begin{figure}[ht]
\centering
\begin{subfigure}[b]{0.3\linewidth}
\centering
\begin{tikzpicture}[x=0.5cm,y=0.5cm]
  \PseudolineWiringDiagram[fill,nolabels,noright]{7}{{5,1},4,3,2,1}{%
    0/3/F, 1/3/F, 2/3/F, 2/4/T, 3/3/L,
    gen/1/1, gen/1/5,
    spec/1/3%
  }
\end{tikzpicture}
\caption{}
\label{fig:merge-external}
\end{subfigure}\hfill
\begin{subfigure}[b]{0.3\linewidth}
\centering
\begin{tikzpicture}[x=0.5cm,y=0.5cm]
  \PseudolineWiringDiagram[fill,nolabels,noright]{7}{{1,3,5},{2},{1},{4},{3,5}}{%
    1/3/R, 2/3/F, 3/3/F, 4/3/F,
    2/2/B, 5/3/L, 4/4/T,
    gen/2/2, gen/3/1, gen/4/4, gen/5/3, gen/5/5,
    spec/3/3%
  }
\end{tikzpicture}
\caption{}
\label{fig:merge-internal}
\end{subfigure}\hfill
\begin{subfigure}[b]{0.3\linewidth}
\centering
\begin{tikzpicture}[x=0.5cm,y=0.5cm]
  \PseudolineWiringDiagram[fill,nolabels,noright]{7}{{1,5},{2},{1},{4},{3,5}}{%
    0/3/F, 1/3/F, 2/3/F, 3/3/F, 4/3/F,
    2/2/B, 5/3/L, 4/4/T,
    gen/2/2, gen/3/1, gen/4/4, gen/5/3, gen/5/5,
    spec/1/3, spec/3/3%
  }
\end{tikzpicture}
\caption{}
\label{fig:merge-pentagon}
\end{subfigure}
\caption{Examples of local merge patterns producing defects.
(a)~An external digon merges with an adjacent bounded triangle into a
single external face, a quadrilateral defect in the projective closure
($\sigma_3$ skipped).
(b)~Two adjacent bounded triangles merge into an internal quadrilateral
defect ($K_2^{-}K_4$).
(c)~Three black faces in a row merge into a pentagonal defect when both
crossings separating them are missing ($\sigma_3$ skipped, $K_2^{-}K_4$).}
\label{fig:merge-patterns}
\end{figure}

A 2-defective arrangement (\cref{def:2-defective}) is encoded by a word for a perfect
triangular pattern with exactly two black elementary generators
omitted.  We constrain the search to fix one omission in advance and
enumerate only the other.

For the first (fixed) omission we drop $\sigma_X$ from the forced
odd prefix of \cref{lem:initial-digons}:
\[
  \sigma_1\sigma_3\cdots\widehat{\sigma_X}\cdots\sigma_{n-2},
\]
where $X\in\{1,3,\ldots,n-2\}$ is a search parameter.  The missing crossing
turns the corresponding initial digon into a defect on the
initial external boundary between wires $X$ and $X+1$.  We call this
defect the \emph{anchor}.  Pinning it partially fixes the $E$-class
and $P$-class representative
(\cref{def:e-equivalence,def:p-equivalence}), removing part of the
redundancy of how different wiring diagrams represent the same arrangement
(see \cref{obs:representation-multiplicity}).

The second omission is left free: the search enumerates its position,
covering the 2-defective arrangements compatible with the chosen $X$.
Internal defects cannot be anchored to the external boundary,
so arrangements with no external defects are not emitted directly.
By \cref{cor:chart-exhaustion}, all of them can be reconstructed via the
$O$-matrix operations of \cref{subsec:order-matrix-operations} from the
other $P$-class representatives, emitted by the search.

\subsection{Special composite generators and correctness}
\label{subsec:special-blocks}

The second omission is realized at the level of the composite-generator
alphabet \eqref{eq:composite-K}: exactly one composite
generator $K_g=\sigma_g\sigma_{g-1}\sigma_{g+1}$ in the composite
form \eqref{eq:factorization} is replaced by a shorter composite generator containing the white
$\sigma_g$ and only one of its two closing black generators
$\sigma_{g-1}$ or $\sigma_{g+1}$.

\begin{definition}[Special composite generators]
\label{def:special-generators}
For an even index $g\in\{2,4,\ldots,n-3\}$, the two \emph{special
composite generators} at index $g$ are
\begin{equation}
\label{eq:special-generators}
  K_g^{+} = \sigma_g\sigma_{g+1},
  \qquad
  K_g^{-} = \sigma_g\sigma_{g-1}.
\end{equation}
The superscript records the surviving black generator.  The composite
generators \eqref{eq:composite-K} are called \emph{ordinary} when contrast
is needed.  The \emph{special-mode alphabet} of the 2-defective search is
\begin{equation}
\label{eq:special-alphabet}
  \{K_g : g\in\{0,2,\ldots,n-3\}\}
  \;\cup\;
  \{K_g^{+},K_g^{-} : g\in\{2,4,\ldots,n-3\}\}.
\end{equation}
\end{definition}

\begin{lemma}[Composite-generator factorization for quadrilateral defects]
\label{lem:special-factorization}
Fix $n\equiv 1\pmod 6$ and $X\in\{1,3,\ldots,n-2\}$.  Let $\mathcal{A}$ be a
simple arrangement of $n$ pseudolines with two quadrilateral defects,
in a wiring-diagram representation $D$ with exactly one of the defects (the
anchor) on the initial external boundary between wires $X$ and
$X+1$.  Then $D$ is encoded by a reduced word
\[
  W \;=\; \sigma_1\sigma_3\cdots\widehat{\sigma_X}\cdots\sigma_{n-2}\,
  C_1 C_2 \cdots C_{N},
\]
with each letter $C_i$ taken from the special-mode alphabet
\eqref{eq:special-alphabet}, and each quadrilateral defect $F$
corresponds in $W$ to
one of the following cases:
\begin{enumerate}
\item the prefix skip $\widehat{\sigma_X}$ --- anchor defect between
  wires $X$ and $X+1$;
\item two occurrences of $K_0$ --- external defect between wires
  $0$ and $n-1$;
\item two non-consecutive letters $K_h^{\pm}\,\ldots\,K_h$ at the
  same even index $h > 0$, with no $K_{h\mp 2}$ between them ---
  $K^{\pm}$-trapezoid;
\item two letters $K_h^{-}\,\ldots\,K_{h+2}$,
  $K_{h+2}^{+}\,\ldots\,K_h$ ($h > 0$), or $K_2^{+}\,\ldots\,K_0$,
  with no letter of index $h$ or $h+2$ between them --- rhombus;
\item a single special $K_h^{\pm}$, $h > 0$, with no later $C_i$
  equal to $K_h$ or $K_{h\mp 2}$ closing the defect --- trailing
  external defect.
\end{enumerate}
\end{lemma}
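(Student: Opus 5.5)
We follow the proof of \cref{lem:composite-necessity}, but now some white generators will lack a closing black partner. We first fix the colour convention as in the perfect case. By \cref{lem:optimal-defects-2q1p} there are exactly two quadrilateral defects, and every other black face is a projective triangle. So every internal black face other than a defect is a bounded triangle, and every external black face other than a defect is an external digon.

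\emph{Step 1: the prefix.} We repeat the argument of \cref{lem:initial-digons}. Each initial black external face between wires $2k-1,2k$ with $2k-1\ne X$ is a digon, so it is closed by $\sigma_{2k-1}$ before any other letter touches it. By hypothesis the anchor sits between $X$ and $X+1$ and is not a digon. So after commutations $W$ begins with $\sigma_1\sigma_3\cdots\widehat{\sigma_X}\cdots\sigma_{n-2}$, and this settles case~(1).

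\emph{Step 2: grouping white generators.} Take each occurrence of a white $\sigma_g$ in order. If both adjacent black faces it enters are triangles (or digons), the argument of \cref{lem:composite-necessity} applies verbatim. Their closing generators $\sigma_{g\pm1}$ must occur before any $\sigma_{g-2},\sigma_g,\sigma_{g+2}$, so we commute them next to $\sigma_g$ to form $K_g$, or $K_0$ when $g=0$.

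Suppose instead that one adjacent face, say the one above (slot $g+1$), is a defect. The defect is a quadrilateral, so only one extra vertex is allowed. We then argue that the opposite face is still closed immediately, by $\sigma_{g-1}$, and we group $\sigma_g\sigma_{g-1}$ as $K_g^{-}$.

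The defect face itself gets its remaining vertex from a later white letter in slot $g$, $g+2$ (or $g-2$), or it never closes. These three possibilities are cases (3), (4) and (5). The $K_0$ case needs separate care. The black face along $\sigma_0$ is external, so it is either a digon, giving a single $K_0$ as in \cref{lem:sigma-zero}, or a quadrilateral defect with two vertices from $\sigma_0$, giving case (2). In the $K_0$ case we still pair each $\sigma_0$ with its $\sigma_1$.

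A counting check confirms that the total number of omitted black letters is two: the prefix skip plus exactly one special letter, or the prefix skip plus the doubled $K_0$. This count matches the side excess $2$ of \cref{lem:optimal-defects-2q1p}.

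\emph{Step 3: classifying the second defect.} Let $F$ be the non-anchor defect, and look at the letter that opens it and the letters that close it.

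Suppose the opening letter is a special $K_h^{\pm}$ and the face is closed by the next white letter in the same slot. That letter is then $K_h$, and it must not be consecutive with $K_h^{\pm}$: a consecutive pair would make $F$ a digon, which is impossible for the reduced-word reason of \cref{lem:composite-no-repeat}. We also need no $K_{h\mp2}$ in between, since that would add a fifth vertex. This is case (3).

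Suppose instead $F$ straddles slots $h$ and $h+2$. Then it is opened by $K_h^{-}$ or $K_{h+2}^{+}$ and closed by the ordinary letter on the other side, which gives the rhombus of case (4). The boundary variant with $K_0$ arises because $K_0$ has no $\sigma_{-1}$, so that configuration appears as $K_2^{+}\ldots K_0$.

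If no closing letter ever appears, $F$ runs to the right boundary, which is case (5).

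\emph{Main obstacle.} The delicate point is Step 2 in the presence of a defect. We must rule out interleavings in which a white letter sits between a special letter and its closer without being part of the defect. We must also show that the defect cannot absorb vertices from both a missing $\sigma_{g-1}$ and a missing $\sigma_{g+1}$ simultaneously; that configuration would be a pentagon, which the hypothesis excludes.

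I would handle both by counting vertices of $F$ as it is swept: each white letter in an adjacent slot contributes exactly one vertex, and the budget is four. Any extra letter therefore either exceeds this budget or produces a double crossing, exactly as in \cref{fig:no-repeat}.
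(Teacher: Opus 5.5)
Your outline matches the paper's: prefix from the initial digons, then packaging each white letter with its closing black letters, then classifying the non-anchor defect. The gap is the packaging rule in Step~2, and that rule is the core of the lemma.

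As written, you package every white $\sigma_g$ that touches a defect as a special letter. This fails in two places.
\begin{itemize}
\item Take the anchor. Its only white vertex comes from some $\sigma_{X\pm1}$. Your rule makes that letter special, so the black $\sigma_X$ closing the anchor is left as a stray letter outside the alphabet.
\item Take a trapezoid or rhombus. Both white letters touching $F$ become special, and the black letter closing $F$ is again unpackaged.
\end{itemize}
This contradicts your own count of one special letter. Your Step~3 also tacitly treats the later white letter as an ordinary $K_h$, without justification. Even if you meant the rule only for the first white vertex of a non-anchor defect, the complementary case is never stated or proved.

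The missing criterion is as follows. Say $F$ lies on the $\sigma_{g+1}$ side of $\sigma_g$, and suppose $\sigma_g$ is the \emph{last} letter giving $F$ a vertex before the $\sigma_{g+1}$ that closes $F$. Then no $\sigma_g$ or $\sigma_{g+2}$ occurs between them, since either would give $F$ a further vertex. So that $\sigma_{g+1}$ commutes up to $\sigma_g$ exactly as in \cref{lem:composite-necessity}, and this side behaves as in an ordinary $K_g$. Only if another vertex of $F$ follows $\sigma_g$, or $F$ never closes, is $\sigma_g$ packaged as $K_g^{-}$.

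With this rule:
\begin{itemize}
\item the anchor's white letter always absorbs $\sigma_X$;
\item each internal defect yields exactly one special letter (its earlier white vertex) and one ordinary one;
\item your unargued pairing of each $\sigma_0$ with $\sigma_1$ in case~(2) also follows, since the black face across $\sigma_0$ from the external defect is a triangle or the anchor.
\end{itemize}

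Your resolution of the two-sided case is also incorrect. The two black faces adjacent to a white $\sigma_g$ lie on opposite sides of it and are distinct, so defects on both sides do not form a pentagon. This configuration genuinely occurs, e.g.\ in \cref{fig:special-no-repeat-example-2}. What must be excluded is that \emph{both} sides are left without a black letter. The reason is that one of the two defects is then the anchor, whose unique white letter is trivially the last before its closing $\sigma_X$, so that side keeps its black letter.

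There are also two smaller slips in Step~3.
\begin{itemize}
\item Consecutive $K_h^{\pm}K_h$ does not make $F$ a digon; $F$ remains a quadrilateral. It is excluded because $\sigma_h\sigma_{h\pm1}\sigma_h$ bounds a white triangle (\cref{lem:no-white-triangles}).
\item An intervening ordinary $K_{h\mp2}$ does not give $F$ a fifth vertex. It closes $F$ as a rhombus, after which the later $K_h$ no longer touches $F$.
\end{itemize}
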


\begin{figure}[ht]
\centering
\subcaptionbox{trapezoids from case~(3)\label{fig:defect-class-trapezoid}}{%
\begin{tikzpicture}[x=0.44cm,y=0.44cm]
  \PseudolineWiringDiagram[fill,nolabels,noleft,noright]{7}{{5,1},4,3,2,{1,3},4,{3,5}}{%
    1/5/R, 2/4/B, 2/5/F, 3/4/F, 3/5/F, 4/4/F, 4/5/F, 5/4/F, 5/5/F, 6/4/B, 6/5/F, 7/5/L,
    gen/2/4, gen/3/3, gen/6/4, gen/7/3, gen/7/5%
  }
\end{tikzpicture}\hspace{0.5em}%
\begin{tikzpicture}[x=0.44cm,y=0.44cm]
  \PseudolineWiringDiagram[fill,nolabels,noleft,noright]{7}{{1,5},2,3,4,{3,5},2,{1,3}}{%
    1/1/R, 2/1/F, 2/2/T, 3/1/F, 3/2/F, 4/1/F, 4/2/F, 5/1/F, 5/2/F, 6/1/F, 6/2/T, 7/1/L, 7/2/F,
    gen/2/2, gen/3/3, gen/6/2, gen/7/1, gen/7/3%
  }
\end{tikzpicture}%
}\hfill
\subcaptionbox{rhombi from case~(4)\label{fig:defect-class-rhombus}}{%
\begin{tikzpicture}[x=0.44cm,y=0.44cm]
  \PseudolineWiringDiagram[fill,nolabels,noleft,noright]{7}{{1,3,5},{2,0},{1,3}}{%
    1/1/R, 2/0/B, 2/1/F, 2/2/T, 3/0/F, 3/1/L, 3/2/F,
    gen/2/0, gen/2/2, gen/3/1, gen/3/3%
  }
\end{tikzpicture}\hspace{0.5em}%
\begin{tikzpicture}[x=0.44cm,y=0.44cm]
  \PseudolineWiringDiagram[fill,nolabels,noleft,noright]{7}{{1,3,5},{2,4},{1,3,5}}{%
    1/3/R, 2/2/B, 2/3/F, 2/4/T, 3/2/F, 3/3/L, 3/4/F,
    gen/2/2, gen/2/4, gen/3/1, gen/3/3, gen/3/5%
  }
\end{tikzpicture}%
}\hfill
\subcaptionbox{case~(5)\label{fig:defect-class-trailing}}{%
\begin{tikzpicture}[x=0.44cm,y=0.44cm]
  \PseudolineWiringDiagram[fill,nolabels,noleft,noright]{7}{{1,3,5},{2},{3}}{%
    1/1/R, 2/1/F, 2/2/T, 3/1/F, end/1/F,
    gen/2/2, gen/3/3%
  }
\end{tikzpicture}%
}
\caption{Local quadrilateral defect patterns:
(a)~$K^{-}$-trapezoid $K_4^{-}\,K_2\,K_4$ and $K^{+}$-trapezoid $K_2^{+}\,K_4\,K_2$;
(b)~rhombi $K_2^{+}\,K_0$ and $K_2^{-}\,K_4=K_4^{+}\,K_2$;
(c)~trailing external defect $K_2^{+}$.}
\label{fig:quadrilateral-defect-classification}
\end{figure}

\begin{proof}
Let $W$ be any reduced word encoding the wiring diagram $D$.
By the argument of \cref{lem:initial-digons}, commutations of individual
$\sigma_i$~\eqref{eq:commute} bring it into the form
\[
  W \;=\; \sigma_1\sigma_3\cdots\widehat{\sigma_X}\cdots\sigma_{n-2}\;
  R,
\]
where the prefix generators close the initial external digons other
than the missing digon at $(X, X+1)$, and $R$ encodes the rest of the
diagram. We
construct the factorization $C_1\cdots C_N$ by scanning $R$ and
applying the local argument of \cref{lem:composite-necessity} to each
occurrence of a white $\sigma_h$.

\emph{Composition rules.} An application of the white generator
$\sigma_h$ gives one vertex to each of two adjacent black faces.
On any side where the adjacent black face is a triangle, the
corresponding closing $\sigma_{h\pm 1}$ commutes into a position
consecutive with $\sigma_h$, since no $\sigma_{h\pm 2}$ or $\sigma_h$
can appear between them in $W$ without adding a vertex to the open
triangle.

(i) If both adjacent faces of $\sigma_h$ are triangles, both
$\sigma_{h-1}$ and $\sigma_{h+1}$ commute in and $\sigma_h$ is packaged
into an ordinary $K_h$ (or, at $h=0$, into $K_0$ with only $\sigma_1$
involved).

(ii) If the adjacent face on one side of $\sigma_h$ is a defect $F$
and $\sigma_h$ is the last letter with a vertex on $F$ before the closing
$\sigma_{h\pm 1}$ in $W$, the same argument applies:
$\sigma_{h\pm 1}$ still commutes in.  When the other side
contributes its black generator as well (by rule (i) or by this
rule), $\sigma_h$ is packaged into an ordinary $K_h$ ($K_4$ in
\cref{fig:defect-class-trapezoid}, left).

(iii) Otherwise, either $F$ has no closing $\sigma_{h\pm 1}$, or another
letter with a vertex on $F$ follows $\sigma_h$ before that closing $\sigma_{h\pm 1}$.
Either way, the defect-facing side of $\sigma_h$ is left without
a black $\sigma_{h\pm 1}$, and $\sigma_h$ is packaged into the special $K_h^{\mp}$
using only the other side ($K_4^{-}$ in
\cref{fig:defect-class-trapezoid}, left).

Rule~(iii) cannot hold on both sides of a single $\sigma_h$.  If it
did, both faces adjacent to $\sigma_h$ would be defects.  Since the
arrangement has exactly two defects, one of them would be the anchor
at $(X, X{+}1)$.  But $\sigma_h$ is the only white letter with a
vertex on the anchor, hence trivially the last before its closing
$\sigma_{h\pm 1}$, so rule~(ii) applies on the anchor side.

\emph{Defect classification.}
Consider the places in wiring diagram $D$ where a defect can appear.
An external defect can be placed on the initial external boundary (case~(1)),
the trailing external boundary (case~(5)), or between wires $0$ and $n-1$
(case~(2)).
Otherwise $F$ is internal, covered by cases~(3) and~(4).

\emph{Case (1).}
The anchor occupies the initial boundary at the wire pair $(X, X+1)$.
Its two internal vertices come from a white $\sigma_h$,
$h=X\pm 1$, and the black $\sigma_X$ closing the defect.
Rule~(ii) applies on the anchor side of $\sigma_h$, so $\sigma_h$ is
packaged into either $K_{X\pm 1}$ or $K_{X\pm 1}^{\mp}$,
depending on the composition rule applying on the other side.

\emph{Case~(2).}
$F$ has both of its internal vertices from applications of $\sigma_0$,
on distinct wire pairs.  The adjacent black face of $\sigma_0$ on
the side opposite $F$ is either a triangle or the anchor defect
(no other defects exist), so composition rule (i) or (ii) applies.
Thus each $\sigma_0$ is packaged into a $K_0$, yielding two occurrences of $K_0$
in $W$ (\cref{fig:special-zero-example}).

\emph{Cases~(3),~(4).}
$F$ has four internal vertices, coming from four letters of $W$.
By the parity coloring the opening
$\sigma_g$ and closing $\sigma_g$ at the slot pair $(g, g+1)$ are odd,
and the other two are white $\sigma_h, \sigma_{h'}$ at adjacent
slot pairs, so $h, h'\in\{g-1, g+1\}$. One of them, say $\sigma_h$,
obeys composition rule (ii) and is packaged into $K_h$, while the other
obeys composition rule (iii) and is packaged into $K_{h'}^{\pm}$.

For $h' = h$ (trapezoid), placing $K_h^{\pm}$ and $K_h$ as
consecutive letters would force a wire pair to cross twice.
The two letters are therefore separated.
No $K_{h\mp 2}$ may occur between them.
Such a letter would already close the defect as a rhombus (the next
case), leaving the later $K_h$ outside the defect.  For $h' = h\pm 2$ (rhombus),
the earlier of the two white generators takes rule~(iii), the later
rule~(ii).  Both orders occur among the words encoding $D$:
$\sigma_h$ and $\sigma_{h'}$ commute~\eqref{eq:commute}, and a chain of
noncommuting letters leading from one to the other would have to pass
through an intermediate occurrence of $\sigma_g$ (either $h=g-1$ and
$h'=g+1$ or vice versa), which would close $F$ before its
fourth vertex.  No letter of index $h$ or $h'$ occurs between the two
composite letters: its white generator would give the quadrilateral
$F$ a fifth vertex.  The two variants $K_h^{-}\,\ldots\,K_{h+2}$ and
$K_{h+2}^{+}\,\ldots\,K_h$ thus encode the same rhombus defect; with
the two letters consecutive, they are both equal to
\(\sigma_h\,\sigma_{h+2}\,\sigma_{h-1}\,\sigma_{h+1}\,\sigma_{h+3}\)
and form the two sides of the additional relation
\begin{equation}
\label{eq:special-commutation}
  K_h^{-}\, K_{h+2} \;=\; K_{h+2}^{+}\, K_h.
\end{equation}
A special variant at index $0$ is not defined (no
$K_0^{-}=\sigma_0$), so for the rhombus closed by $\sigma_1$ only the
order with $\sigma_0$ later applies, yielding $K_2^{+}\,\ldots\,K_0$
with no paired form.

\emph{Case~(5).}
$F$ has one internal black opening vertex from $\sigma_g$ for some odd
$g > 0$ and one internal white vertex from $\sigma_h$, $h\in\{g-1,
g+1\}$. The composition rule for $\sigma_h$ is (iii), so it is packaged into
$K_h^{\pm}$ with no further $K_{h}$ or $K_{h\mp 2}$.

Scanning $R$, the composition rules package every white $\sigma_h$
with its closing black generators into a single $K_h$ or $K_h^{\pm}$.
This exhausts $R$ and produces the factorization $C_1\cdots C_N$,
with every letter in the special-mode alphabet.
\end{proof}

Both $K_h^{+}$ and $K_h^{-}$ are required in the alphabet: each
encodes one of the two trapezoids of case~(3).  In case~(4) a
rhombus admits two reduced-word encodings, related by
\eqref{eq:special-commutation} and commutations of independent
elementary generators; this is the principal source of
duplicates in the search output.

\begin{figure}[ht]
\centering
\begin{tikzpicture}[x=0.5cm,y=0.5cm]
  \PseudolineWiringDiagram[fill,nolabels]{7}{{3,1},2,{1,3},{0,4},{1,3,5},2,{1,3},4,{3,5},2,{1,3},0,1}{top, 0/5/F, 1/5/F, 2/5/F, 3/5/F, 4/5/F, 4/4/B, 5/5/L, 4/0/T, 12/0/T, gen/4/0, gen/5/1, gen/12/0, gen/13/1}
\end{tikzpicture}
\caption{An arrangement for $n=7$ with two applications of $K_0$.}
\label{fig:special-zero-example}
\end{figure}

The composite generator $K_0=\sigma_0\sigma_1$
has only one black generator and admits no
shortened special variant.  When the external face between wires
$0$ and $n-1$ is a quadrilateral defect, the single-use rule of
\cref{lem:sigma-zero} relaxes instead:

\begin{corollary}[$K_0$ for a quadrilateral defect at $(0, n-1)$]
\label{cor:special-sigma-zero}
In case~(2) of \cref{lem:special-factorization}, the two $K_0$
encoding the quadrilateral defect at $(0, n-1)$ cross, in order, wire
$0$ with some wire $k\ne n-1$ and wire $k$ with wire $n-1$
(\cref{fig:special-zero-example}).
\end{corollary}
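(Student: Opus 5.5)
We track the bottom slot pair $(0,1)$ of the wiring diagram. The external face $F$ between wires $0$ and $n-1$ is the face lying between slots $0$ and $1$ for the entire sweep. Its only affine vertices are the occurrences of $\sigma_0$. By \cref{lem:optimal-defects-2q1p}, $F$ is a projective quadrilateral in case~(2) of \cref{lem:special-factorization}. Its projective sides are the two unbounded rays, the side at infinity, and bounded segments, so it has exactly two affine vertices. Hence $\sigma_0$ occurs exactly twice, each packaged into a $K_0=\sigma_0\sigma_1$ as the lemma states. Write these two letters as $\sigma_0^{(1)}$ and $\sigma_0^{(2)}$, in sweep order.

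We then identify the wires crossed by each occurrence. Initially slot $0$ holds wire $0$ and slot $1$ holds wire $1$. We need to say what occupies slot $0$ and slot $1$ just before each $\sigma_0$.

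The wire in slot $0$ changes only at occurrences of $\sigma_0$. So before $\sigma_0^{(1)}$, slot $0$ still holds wire $0$. After $\sigma_0^{(2)}$, slot $0$ must hold wire $n-1$, since at the end of the sweep the order is reversed. Between the two occurrences, slot $0$ holds a single wire $k$, namely the one brought down by $\sigma_0^{(1)}$. Therefore $\sigma_0^{(1)}$ crosses wire $0$ with $k$, and $\sigma_0^{(2)}$ crosses the wire then in slot $0$ (which is $k$) with the wire then in slot $1$. The latter must be $n-1$, because it ends up in slot $0$. This gives the order claimed: first wire $0$ with $k$, then wire $k$ with wire $n-1$.

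It remains to show $k\ne n-1$. If $k=n-1$, then $\sigma_0^{(2)}$ would cross $n-1$ with itself, which is absurd. Equivalently, wires $0$ and $n-1$ would already have crossed at $\sigma_0^{(1)}$, and reducedness forbids a second crossing of the pair that must end in slots $0,1$. We also need $k\ne 0$, which is trivial.

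The main subtlety is the count of exactly two occurrences of $\sigma_0$. It rests on the identification that every affine vertex of $F$ is a $\sigma_0$ (as in the proof of \cref{lem:sigma-zero}) together with the side count of a projective quadrilateral. We also need to rule out that $F$ is the anchor, which cannot happen since the anchor lies on the initial boundary between wires $X$ and $X+1$ with $X\ge1$.
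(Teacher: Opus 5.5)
Your proof is correct and follows the paper's argument: only $\sigma_0$ (hence only $K_0$) changes the wire in slot~$0$, so wire~$0$ stays there until the first $K_0$, some wire $k$ until the second, and wire $n-1$ afterwards. The only difference is that you re-derive that $\sigma_0$ occurs exactly twice from the side count of the projective quadrilateral, whereas the paper reads this off case~(2) of \cref{lem:special-factorization}.

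One small slip: $F$ is not a face between slots $0$ and $1$, since those faces are split by every $\sigma_0$ and also have $\sigma_1$ vertices. It is the external face on the outer side of the slot-$0$ wire, the one running along the whole diagram as in the proof of \cref{lem:sigma-zero}. The property you actually use, that its affine vertices are exactly the occurrences of $\sigma_0$, does hold for that face.
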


\begin{proof}
By \cref{lem:special-factorization}, case~(2), exactly two of the
$C_i$ equal $K_0$, and among all letters only $K_0=\sigma_0\sigma_1$
moves a wire occupying slot~$0$.  Hence wire~$0$ holds slot~$0$ until
the first $K_0$, which crosses it with some wire $k$ occupying slot~$1$.
Afterwards $k$ holds slot~$0$ until the second $K_0$.
This is the last occurrence of $K_0$, moving the wire $n-1$ to its final slot~$0$.
\end{proof}

\begin{lemma}[Coverage with initial-digon anchor]
\label{lem:special-coverage-internal}
Let $n\equiv 1\pmod 6$, $X\in\{1,3,\ldots,n-2\}$, and let $\mathcal{A}$ be a
2-defective arrangement of $n$ pseudolines that admits a
wiring-diagram representation in which exactly one defect lies on
the initial external boundary, between wires $X$ and $X+1$.  Then
this representation is encoded by a reduced word
\begin{equation}
\label{eq:special-word-form}
  \sigma_1\sigma_3\cdots\widehat{\sigma_X}\cdots\sigma_{n-2}\,
  C_1 C_2 \cdots C_{N_{\mathrm{sp}}},
\end{equation}
where each $C_i$ is taken from the special-mode alphabet
\eqref{eq:special-alphabet}, the sequence $C_1\cdots C_{N_{\mathrm{sp}}}$
falls into one of two
subtypes:
\begin{enumerate}
\item \textls[100]{special subtype}: exactly one $C_i$ equals
$K_g^{\pm}$ and exactly one $C_i$ equals $K_0$;
\item \textls[100]{double-zero subtype}: no $C_i$ equals
$K_g^{\pm}$ and exactly two $C_i$ equal $K_0$.
\end{enumerate}
The number of letters $C_i$ is
\begin{equation}
\label{eq:special-block-count}
  N_{\mathrm{sp}} = \frac{n^2 - 2n + 7}{6}.
\end{equation}
\end{lemma}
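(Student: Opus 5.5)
The plan is to derive the statement from \cref{lem:special-factorization}, \cref{lem:optimal-defects-2q1p} and a letter count. First, the pentagonal profile has to be excluded or placed correctly. By \cref{lem:optimal-defects-2q1p} the arrangement has either two quadrilateral defects or one pentagonal defect. The hypothesis says exactly one defect lies on the initial boundary between $X$ and $X+1$, and dropping $\sigma_X$ from the prefix leaves that face with one more vertex than a digon. Suppose the anchor were a pentagon. Then it would have to absorb a further missing black closing letter, merging across a second crossing as in \cref{fig:merge-pentagon}. I would either show that the lemma is meant for the quadrilateral profile via \cref{lem:special-factorization}, or observe that in the pentagonal case the anchor face is closed by a single special letter adjacent to it; by rule (iii) of that proof, this again gives exactly one $K^{\pm}_g$. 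In either case \cref{lem:special-factorization} supplies the factorization \eqref{eq:special-word-form} with letters from the special-mode alphabet \eqref{eq:special-alphabet}.

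Second, the subtype dichotomy comes from a case analysis on the second defect $F$, using the list in \cref{lem:special-factorization}; the anchor is case~(1). If $F$ is in case~(2), there are two $K_0$. No special letter occurs, because by the proof of \cref{lem:special-factorization} rule~(iii) is invoked only on the side facing a non-anchor defect, and $F$ consumes no white letter with rule~(iii). This is the double-zero subtype. If $F$ is in cases~(3), (4) or (5), exactly one white letter is packaged by rule~(iii), so exactly one special letter appears. Moreover the black face between wires $0$ and $n-1$ is then a digon, which is not a defect. The argument of \cref{lem:sigma-zero} therefore applies verbatim and gives exactly one $\sigma_0$, hence exactly one $K_0$. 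This is the special subtype.

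Third, the count $N_{\mathrm{sp}}$ follows by counting elementary letters, $\binom n2$ in total. The prefix contributes $(n-1)/2-1=(n-3)/2$. In the special subtype, the unique $K_0$ contributes $2$, the special letter $2$, and the remaining $N_{\mathrm{sp}}-2$ ordinary letters $3$ each. This gives
\[
\tfrac{n-3}{2}+4+3(N_{\mathrm{sp}}-2)=\tfrac{n(n-1)}{2},
\]
hence $3N_{\mathrm{sp}}=\tfrac{n^2-2n+3}{2}+2$, that is, $N_{\mathrm{sp}}=(n^2-2n+7)/6$. In the double-zero subtype, two $K_0$ contribute $4$ and $N_{\mathrm{sp}}-2$ ordinary letters contribute $3$ each, which yields the same equation.

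The main obstacle is the first step: making rigorous that the pentagonal profile either cannot occur with an anchored external defect of this shape or still fits one of the two subtypes. It must also be checked that in cases (3)–(5) no second special letter arises. The latter needs the observation from \cref{lem:special-factorization} that rule~(iii) cannot apply on both sides of a white letter and applies on the $F$-side of exactly one white letter. For that I would count the internal vertices of $F$ and identify which white letters close on it.
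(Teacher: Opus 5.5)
Your handling of the two-quadrilateral profile and the length count match the paper's proof. Case~(2) of \cref{lem:special-factorization} gives the double-zero subtype. Cases~(3)--(5) give one special letter together with the single $K_0$ of \cref{lem:sigma-zero}. The count $\tfrac{n-3}{2}+3N_{\mathrm{sp}}-2=\binom{n}{2}$ yields \eqref{eq:special-block-count}.

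The genuine gap is the pentagonal profile, which you leave as an unresolved alternative. The first branch of that alternative does not work. The lemma covers every 2-defective arrangement, and in the pentagonal profile the pentagon is the only defect, so the hypothesis forces it to be the anchor. This is the $n=25$ single-pentagon family the search actually enumerates. You also cannot cite \cref{lem:special-factorization} for this case, since its hypothesis is two quadrilateral defects. The second branch points the right way but describes the mechanism wrongly: the anchor is not closed by the special letter.

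What is needed is to rerun the composition rules on the pentagon itself. Its three internal vertices come from two white letters $\sigma_h,\sigma_{h'}$ with $h,h'\in\{X-1,X+1\}$ and from the black $\sigma_X$ that closes it. Since the pentagon is the sole defect, no white letter has defects on both sides, so the rules apply verbatim:
\begin{enumerate}
\item The later white letter $\sigma_{h'}$ is the last one with a vertex on the pentagon before $\sigma_X$. Rule~(ii) packages it with $\sigma_X$ into an ordinary $K_{h'}$.
\item The earlier $\sigma_h$ is followed on the pentagon by another vertex before its closing letter. Rule~(iii) packages it into $K_{X-1}^{-}$ or $K_{X+1}^{+}$.
\item Every other white letter sees two triangles and becomes an ordinary $K_g$.
\item The face between wires $0$ and $n-1$ is a digon, so exactly one $K_0$ occurs.
\end{enumerate}
This places the word in the special subtype.

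There is also a boundary point your sketch does not see. For $X=1$ a white vertex of the pentagon could a priori come from $\sigma_0$. If it were the earlier one, rule~(iii) would require $K_0^{-}=\sigma_0$, which is not in the alphabet \eqref{eq:special-alphabet}. The paper excludes this as follows. $\sigma_0$ crosses wires $0$ and $n-1$; the argument of \cref{lem:sigma-zero} uses only that this face is a digon. So wire $n-1$ must first reach slot $1$ through some $\sigma_1$, and the first $\sigma_1$ is the letter closing the anchor. Hence $h=h'=2$. Similarly, for $X=n-2$ there is no $\sigma_{n-1}$, so $h=h'=n-3$. Without these points, the claim that every letter lies in the special-mode alphabet is not established for a pentagonal anchor.
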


\begin{proof}
Apply \cref{lem:initial-digons} to each initial wire pair
$(2k-1, 2k)$ with $2k-1\ne X$: each is a digon contributing
$\sigma_{2k-1}$ to the prefix, up to commutations of independent
letters.  The pair $(X, X+1)$ is the anchor defect, not a digon, so
$\sigma_X$ is absent.

\emph{Pentagonal anchor.}  The pentagon at $(X, X+1)$ has three
internal vertices: two from white generators $\sigma_{h}$ and
$\sigma_{h'}$, $h, h'\in\{X-1, X+1\}$, and one from the black closing
$\sigma_X$ (not to be confused with the skipped one).
The local composition rules of \cref{lem:special-factorization}
apply verbatim (the pentagon is the sole defect, so neither $\sigma_h$
nor $\sigma_{h'}$ has a defect adjacent on both sides, and rule~(iii)
never applies on both sides).  Thus $\sigma_X$ is packaged with the
later $\sigma_{h'}$ into an ordinary $K_{h'}$ by rule~(ii), and the
earlier $\sigma_h$ into a special $K_{h}^{\pm}$ (either $K_{X-1}^{-}$
or $K_{X+1}^{+}$) by rule~(iii).  This is the special subtype
(there is no external defect between wires $0$ and $n-1$).
Every other white $\sigma_g$ has both adjacent faces triangles and is
packaged into an ordinary $K_g$ by rule~(i).

If $X=n-2$, there is no generator $\sigma_{n-1}$, so $h=h'=n-3$.
If $X=1$, $\sigma_0$ cannot precede the closing $\sigma_1$, the first
occurrence of $\sigma_1$ in \eqref{eq:special-word-form}: $\sigma_0$
crosses wires $0$ and $n-1$ (by \cref{lem:sigma-zero}, whose proof uses
only that this face is a digon), so wire $n-1$ must already have been
moved to slot $1$ by an occurrence of $\sigma_1$.  Therefore, $h=h'=2$.

\emph{Two quadrilateral defects.}  Otherwise $\mathcal{A}$ has a second
quadrilateral defect.  By \cref{lem:special-factorization}, that
second defect is one of cases~(2)--(5).  Case~(2) gives the
double-zero subtype (two $K_0$, no special); cases~(3)--(5) give
the special subtype (one special $K_g^{\pm}$ and one $K_0$ from
the $(0, n-1)$ digon by \cref{lem:sigma-zero}).

\emph{Length.}  Total length is $\binom{n}{2}$, the prefix
contributes $(n-3)/2$, and the $C_i$ contribute
$3 N_{\mathrm{sp}} - 2$ in both cases (two length-$2$ letters and
$N_{\mathrm{sp}} - 2$ length-$3$ ordinary composite generators).  Solving
gives \eqref{eq:special-block-count}.
\end{proof}

\Cref{lem:special-coverage-internal} covers wiring diagrams with
exactly one defect on the initial boundary, at the wire pair $(X, X+1)$.
Two cases lie out of scope.
Wiring diagrams with both defects on the initial boundary could in
principle be encoded by skipping two letters of the prefix, but this
would expand the search space.  The defect jump of
\cref{subsec:special-jump} below reaches such diagrams as images of
in-scope ones instead.
Wiring diagrams with no defect on the initial boundary are not
produced by the search directly.  The $P$-class of such an
arrangement contains a representative with a defect on the initial
boundary, which is enumerated (\cref{thm:2-defective-completeness}).

\begin{lemma}[Sufficiency of the composite form]
\label{lem:special-sufficiency}
Every reduced word for $w_0$ of the form \eqref{eq:special-word-form}, with
each $C_i$ in the special-mode alphabet and $C_1\cdots
C_{N_{\mathrm{sp}}}$ of the special or double-zero subtype of
\cref{lem:special-coverage-internal}, encodes a 2-defective arrangement
with one defect on the initial external boundary
between wires $X$ and $X+1$.
\end{lemma}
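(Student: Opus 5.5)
The plan is to count black faces by sides in the projective closure, mirroring \cref{lem:composite-sufficiency}, and then use the equality case of \cref{lem:optimal-defects-2q1p}. I would not try to identify the defects one by one.

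\emph{Step 1: the sweep invariant.} I track the set of open black faces along the sweep. The parity convention says odd generators open black faces. So after the skipped prefix $\sigma_1\sigma_3\cdots\widehat{\sigma_X}\cdots\sigma_{n-2}$ there are $(n-1)/2$ open black regions. One of them is the wedge at $(X,X+1)$, which is still open and has no black vertex yet. Each ordinary $K_g$ with $g\ge2$ closes two black faces and opens two, restoring the count; this is the induction of \cref{lem:composite-sufficiency}. A special $K_g^{\pm}$ closes only one and leaves the other face open with an extra vertex. Each $K_0$ closes one face.

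\emph{Step 2: counting internal black faces.} Every closed internal black face is closed by exactly one black letter. I count the black letters among the $C_i$. In the special subtype there are $2(N_{\mathrm{sp}}-2)+1+1$ of them, in the double-zero subtype $2(N_{\mathrm{sp}}-2)+1+1$ as well. Both equal $2N_{\mathrm{sp}}-2=(n^2-2n+1)/3=\lfloor n(n-2)/3\rfloor$ by \eqref{eq:special-block-count}. I then argue that each black letter closes an internal black face. It closes a face that has an opening vertex: either a black crossing from the prefix or earlier word, or the initial wedge whose opening vertex is a white letter. So its closing vertex is internal, and the face is bounded except for the initial wedges. The subtle point is the initial external face at $(X,X+1)$ once it gets closed by a later $\sigma_X$: it is external, not internal. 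I would handle this by counting $b(\mathcal{A})$ directly. The total number of black faces is $n$ plus the number of black-face closings at internal vertices that start at an internal vertex. The cleanest route is the identity $b=\#\{\text{odd letters}\}+(\text{initial black faces})-(\text{bookkeeping})$, so that $b_{\mathrm{int}}=\lfloor n(n-2)/3\rfloor$ follows from \eqref{eq:black-total}. Then the arrangement is 2-defective by \cref{def:2-defective}.

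\emph{Step 3: locating a defect at $(X,X+1)$.} The initial external face between wires $X$ and $X+1$ is black, since it is of the digon type. It is not a digon, because its first internal vertex comes from some letter other than $\sigma_X$ crossing $X$ and $X+1$. Hence it is a defect. It is the only one on the initial boundary: all other initial black faces are closed immediately by prefix letters and are digons. This gives the remaining claim.

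The main obstacle is Step 2. The count must be done carefully, since external defects (the anchor, the $(0,n-1)$ face in the double-zero subtype, trailing faces left open by a special letter) make the naive identification of closing letters with internal triangles fail. I would first try expressing $b$ via the Euler-type formula $b=(\text{number of black openings})=(n+1)/2+\#\{\text{odd letters}\}$ and subtract the external ones. The reducedness hypothesis ("reduced word for $w_0$") is what guarantees the final trailing faces are exactly the $n$ external black faces.
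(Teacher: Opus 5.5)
Your strategy is the paper's: count the black letters of $C_1\cdots C_{N_{\mathrm{sp}}}$, turn them into internal black faces, compare with \eqref{eq:bint-bound}, and check separately that the face at $(X,X+1)$ is not a digon. Step~2 as written fails, however, because of an off-by-one error. From \eqref{eq:special-block-count}, $2N_{\mathrm{sp}}-2=(n^2-2n+1)/3$. For $n\equiv 1\pmod 6$, on the other hand, $\lfloor n(n-2)/3\rfloor=(n^2-2n-2)/3$. So in fact $2N_{\mathrm{sp}}-2=\lfloor n(n-2)/3\rfloor+1$; at $n=7$ this is $12$ versus $11$. Hence your claim that every black letter of the $C_i$ closes an internal black face is false, since it would push $b_{\mathrm{int}}$ above the bound. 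The unspecified ``bookkeeping'' term is exactly the missing step.

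The fix is the paper's accounting.
\begin{itemize}
\item Each odd letter $\sigma_i$ closes the black face currently between slots $i$ and $i+1$.
\item The prefix has already closed the initial black face at every odd position $i\ne X$. So every black letter inside the $C_i$ closes a face opened by an earlier odd letter, i.e.\ an internal one.
\item The one exception is the first $\sigma_X$, which closes the anchor.
\item The trailing faces and the black face running along the whole diagram (the one incident to $\sigma_0$) are never closed by any letter. Face shapes play no role in the count, so the other external defects you worry about are irrelevant.
\end{itemize}
This gives $b_{\mathrm{int}}=(2N_{\mathrm{sp}}-2)-1=\lfloor n(n-2)/3\rfloor$.

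Your closing formula $b=(n+1)/2+\#\{\text{odd letters}\}$ is also correct and would work, provided you include the $(n-3)/2$ odd letters of the prefix. Then $b_{\mathrm{int}}=b-n=2N_{\mathrm{sp}}-3$; carrying this out would have exposed the slip.

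In Step~3, the justification you need is structural. In every letter of the special-mode alphabet the black generator comes after the white one. So the first $\sigma_X$ is preceded, inside its own $C_i$, by $\sigma_{X\pm1}$, which puts a vertex on the anchor before it is closed.
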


\begin{proof}
As in \cref{lem:composite-sufficiency}, each black generator of a composite
letter closes a distinct internal black face.  Two black generators per
letter would give $2N_{\mathrm{sp}}$; three are lost, as $K_0$ and the
special $K_h^{\pm}$ (or the second $K_0$) carry one black generator each,
and one black generator closes the anchor rather than an internal face.
Hence $b_{\mathrm{int}} \ge 2N_{\mathrm{sp}} - 3 = \left\lfloor n(n-2)/3\right\rfloor$,
giving with \eqref{eq:bint-bound} an equality, so $\mathcal{A}$ is 2-defective.

The anchor face between wires $X$ and $X+1$ is a defect rather than a
digon: the first $\sigma_X$ sits in a
letter $C_i$ behind its white generator $\sigma_{X\pm 1}$, which has
already placed a vertex on the face; together they give it at least two
affine vertices.
\end{proof}

\begin{lemma}[Consecutive same-index composite generators]
\label{lem:special-no-repeat}
In the encoding word \eqref{eq:special-word-form} of
\cref{lem:special-coverage-internal}, with $\sigma_X$ the skipped
initial generator and the only defect on the initial boundary between
wires $X$ and $X+1$, no two consecutive composite generators share
the same index $g$, except in one of the forms
\[
  K_{X+1}\, K_{X+1}, \quad
  K_{X+1}\, K_{X+1}^{+},
  \qquad
  K_{X-1}\, K_{X-1}, \quad
  K_{X-1}\, K_{X-1}^{-}.
\]
In each exception, $K_{X\pm 1}$ is the first $C_i$ involving
wires $X$ or $X+1$.
\end{lemma}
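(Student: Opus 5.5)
Following \cref{lem:composite-no-repeat}, we take two consecutive composite letters $C_iC_{i+1}$ of the same index $g$ and examine the white face they enclose. Each letter begins with the white $\sigma_g$, and the second $\sigma_g$ acts on slots $g,g+1$ after the black generators of the first letter. The two occurrences of $\sigma_g$ therefore open and close a white face $Q$. Its remaining vertices come from the black generators of the first letter.

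\textbf{Case $g=0$.} Two consecutive $K_0$ form the subword $\sigma_0\sigma_1\sigma_0$, and we show this is impossible. By \cref{cor:special-sigma-zero} the two $K_0$ exist only in the double-zero subtype. There the middle $\sigma_1$ would be the vertex of the external defect face at $(0,n-1)$, lying between the two $\sigma_0$. This gives a contradiction: either the face acquires too many vertices, or the wires cross twice, since $\sigma_0\sigma_1\sigma_0$ makes the white face a triangle bounded by three wires that pairwise cross, which forces the pattern $K_0K_0$ to re-cross a pair.

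\textbf{Case $g\ge 2$, first letter $C_i=K_g$.} The face $Q$ is a white quadrilateral with black neighbors on all four sides. We apply the argument of \cref{lem:composite-no-repeat}. If the two black neighbors across one pair of opposite sides were both triangles, the pseudolines through the other pair of opposite sides would cross at both apices, contradicting reducedness. So on each pair of opposite sides at least one neighbor is not a triangle, hence is a defect. Since there are only two defects, they occupy one side from each opposite pair, so they are adjacent around $Q$.

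We then show that one of these defects must be the anchor. The four neighbors are the two black faces at slots $g\pm1$ closed by the first $K_g$, and the two opened there and closed later. The two faces closed by the first $K_g$ were opened earlier. A non-anchor defect adjacent to $Q$ that is closed inside the first $K_g$ would conflict with the composition rules of \cref{lem:special-factorization}, because a defect closed by $\sigma_{g\pm1}$ within $K_g$ needs the white letter to be its last vertex. The step I expect to be the main obstacle is pinning down exactly which neighbor configurations survive.

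My plan for that step is as follows. Take the anchor to be the face closed by the first $\sigma_{X}$, which lies in the first letter touching wires $X,X+1$. Then $g=X\pm1$, and the first $K_g$ is the first letter involving those wires. The second defect must lie adjacent on the neighboring side. If that side is the one whose black generator is present in the second letter, the second letter is the ordinary $K_{X\pm1}$ and the second defect is a trailing or trapezoid face of $Q$'s other neighbor. If instead the black generator toward the anchor side is missing, the second letter is $K_{X+1}^{+}$ (respectively $K_{X-1}^{-}$). These two options give exactly the four listed forms, and the superscript is forced by which side retains its black generator.

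\textbf{Case $g\ge 2$, first letter special.} Here $C_i=K_g^{\pm}$ is followed by $K_g$ or $K_g^{\pm}$. This is excluded by the wire-double-crossing remark in case~(3) of \cref{lem:special-factorization}, and by the uniqueness of the special letter in \cref{lem:special-coverage-internal}.
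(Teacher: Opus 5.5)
Your skeleton matches the paper's. The two same-index letters enclose the white face $Q$ between their $\sigma_g$'s. A special first letter is excluded by non-reducedness of $K_g^{\pm}K_g$ and by uniqueness of the special letter; the paper uses the white-triangle lemma instead, and both work. Your opposite-sides argument is a correct and more explicit version of the paper's claim that the two non-triangular neighbours of $Q$ share a vertex $v$. One small error in the case $g=0$: the middle $\sigma_1$ is not a vertex of the external face at $(0,n-1)$. The actual reason is just that $\sigma_0\sigma_1\sigma_0\sigma_1$ is not reduced.

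The gap is in the step you flagged, and it has two parts.

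\emph{The false claim about closed defects.} You say a non-anchor defect adjacent to $Q$ and closed inside the first $K_g$ conflicts with the composition rules. That configuration is exactly the allowed exception $K_{X+1}K_{X+1}$. In the $n=7$, $X=1$ word $K_4^{+}K_2K_2K_0K_4K_2K_4$ of \cref{fig:special-no-repeat-example-2}, the second defect is the rhombus between slots $3,4$. It is opened by the prefix $\sigma_3$ and closed by the $\sigma_3$ of the first $K_2$, whose $\sigma_2$ is its last vertex, fully consistent with rule~(ii). Taken literally, your argument would exclude a form the lemma permits. It also never addresses the case where both defects are faces opened by the first $K_g$. None of this is needed: the arrangement has only two defects and the anchor is one of them by hypothesis (a single pentagon cannot supply two non-triangles). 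So the anchor is adjacent to $Q$, which forces $g=X\pm1$.

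\emph{The missing classification.} The final case analysis is asserted, not proved, and your description of the second defect is wrong. When the letter repeats as an ordinary letter, the second defect is closed inside the first letter, so it is internal (trapezoid or rhombus), never trailing. The missing idea is the paper's split on the shared vertex $v$. Take $g=X+1$; the case $g=X-1$ is symmetric.
\begin{itemize}
\item If $v$ is the $\sigma_X$ vertex, the second defect is the face opened by that $\sigma_X$. A second letter containing $\sigma_X$ would close it as a triangle with vertices $\sigma_X,\sigma_{X+1},\sigma_X$. Hence the second letter is $K_{X+1}^{+}$.
\item If $v$ is the opening $\sigma_{X+1}$, the second defect is the face closed by the $\sigma_{X+2}$ of the first letter. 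Both faces opened by the first letter must then be triangles. A special second letter would turn one of them into a third defect (\cref{lem:special-jump-pair}), so the second letter is the ordinary $K_{X+1}$.
\end{itemize}
This same reasoning excludes $K_{X+1}K_{X+1}^{-}$, which your plan never rules out. There, the face left open by the omitted $\sigma_{X+2}$ is a defect lying opposite the anchor across $Q$, which contradicts your own one-defect-per-opposite-pair constraint.
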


\begin{proof}
The pair $K_0 K_0$ expands to $\sigma_0\sigma_1\sigma_0\sigma_1$,
which is not reduced (a braid move~\eqref{eq:braid} and a cancellation
give $\sigma_1\sigma_0$).  A pair $K_g^{\pm} K_g^{\pm}$ or $K_g^{\pm} K_g$
contains the subword $\sigma_g\sigma_{g\pm 1}\sigma_g$ encoding a
white triangle, ruled out by \cref{lem:no-white-triangles}.

A pair of consecutive composite generators $K_g K_g^{\pm}$ or $K_g K_g$
produces a white quadrilateral $Q$ whose two adjacent black faces, sharing a
common vertex $v$, must both be non-triangles: otherwise the argument of
\cref{lem:composite-no-repeat} forces some wire pair to cross twice.
This can take place in a 2-defective arrangement only if both non-triangle
black faces are quadrilateral defects
(\cref{fig:special-no-repeat-example,fig:special-no-repeat-example-2}).
One of them is the anchor defect between wires $X$ and $X+1$; sharing a
side with $Q$ forces $g\in\{X-1,X+1\}$.  The two vertices the anchor
shares with $Q$ come from letters both packaged into the first $K_g$:
$\sigma_g$ opens $Q$ and $\sigma_X$ closes the anchor, and the common
vertex $v$ is from $\sigma_X$ or from $\sigma_g$.  If $v$ is from
$\sigma_X$, then $\sigma_X$ opens a second defect
(\cref{fig:special-no-repeat-example}) and the next composite generator
is the special $K_g^{\pm}$, lacking a further $\sigma_X$.  If $v$ is from
$\sigma_g$, the other black generator of $K_g$ closes the internal defect
(\cref{fig:special-no-repeat-example-2}), produced by a special generator,
hence only ordinary $K_g$ may repeat.

For the anchor to reach this $\sigma_X$, wires $X$ and $X+1$ must
occupy their initial slots when it is applied: no earlier $C_i$ may
have touched them.
\end{proof}

\begin{figure}[ht]
\centering
\begin{subfigure}[b]{0.3\linewidth}
\centering
\begin{tikzpicture}[x=0.5cm,y=0.5cm]
  \PseudolineWiringDiagram[fill,nolabels,noright]{7}{3,2,{1,3},2,3}{%
    0/1, 1/1, 2/1, 3/1, 4/1, 5/1, end/1/F, 2/2/T, 4/2/T,
    gen/2/2, gen/3/1, gen/3/3, gen/4/2, gen/5/3%
  }
  \node[font=\scriptsize] at (2.75,3.5) {$Q$};
\end{tikzpicture}
\caption{}
\label{fig:special-no-repeat-example}
\end{subfigure}%
\begin{subfigure}[b]{0.7\linewidth}
\centering
\begin{tikzpicture}[x=0.5cm,y=0.5cm]
  \PseudolineWiringDiagram[fill,nolabels]{7}{{3,5},4,{2,5},{1,3},2,{1,3},{0,4},{1,3,5},2,{1,3},4,{3,5}}{%
    0/1/F, 1/1/F, 2/1/F, 3/1/F, 3/2/T, 4/1/L,
    1/3/R, 2/3/F, 2/4/T, 3/3/F, 3/2/B, 4/3/L,
    gen/2/4, gen/3/5, gen/3/2, gen/4/1, gen/4/3, gen/5/2, gen/6/1, gen/6/3%
  }
  \node[font=\scriptsize] at (3.75,3.5) {$Q$};
\end{tikzpicture}
\caption{}
\label{fig:special-no-repeat-example-2}
\end{subfigure}
\caption{The exceptions allowed by \cref{lem:special-no-repeat}, for $X=1$:
(a)~$K_{X+1}\,K_{X+1}^{+}$ as a local pattern;
(b)~$K_{X+1}\,K_{X+1}$ in the word $K_4^{+}K_2K_2K_0K_4K_2K_4$ emitted
at $n=7$.}
\label{fig:special-no-repeat-patterns}
\end{figure}

\subsection{Defect jump}
\label{subsec:special-jump}

The forced initial-digon prefix and the trailing-digon pruning of
\cref{lem:initial-digons,lem:trailing-digon-pruning} were derived for
perfect arrangements, where they force the boundary wire pairs into
digons.  They therefore conflict with 2-defective arrangements with a defect
on the boundary.  Rather than drop the pruning, we keep it
and reach these arrangements by a local operation that moves a defect: the search
enumerates arrangements with the defect off the boundary and emits
the images in which the defect has moved onto it.

\begin{definition}[Defect jump]
\label{def:defect-jump}
Let $F$ be a defect of a 2-defective arrangement, and let
$L_1,L_2$ be two wires carrying non-adjacent sides of $F$.
Then $L_1$ and $L_2$ cross at a unique point $p$, lying
outside $F$.  The \emph{defect jump along the strip between
$L_1$ and $L_2$} uncrosses $L_1,L_2$ at $p$ and
crosses them inside $F$ instead.
\end{definition}

\begin{lemma}[Correctness of the defect jump]
\label{lem:defect-jump-correct}
The defect jump produces a simple arrangement $\mathcal{A}'$,
which is again 2-defective; moreover the two
faces of $\mathcal{A}$ at $p$ that lie along the strip between $L_1$
and $L_2$ are black.
\end{lemma}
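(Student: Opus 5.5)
The plan is to view the jump as a local isotopy of $L_1$ and $L_2$ inside the strip $S$ between them. Orient $S$ from $F$ towards $p$. It is a chain of faces $F=f_0,f_1,\ldots,f_k$ separated by the wires $M_1,\ldots,M_k$ that cross both $L_1$ and $L_2$ between $F$ and $p$; the last face $f_k$ has $p$ as its vertex.

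\emph{Simplicity.} I would realize the jump by pushing the crossing $p$ backwards through $f_k,f_{k-1},\ldots,f_1$ into $F$. Each passage of the crossing across a wire $M_j$ is a triangle flip, of the kind used in \cref{lem:no-white-triangles}, on the triangle formed by $L_1,L_2,M_j$. Equivalently, the order of the crossings $L_1\cap M_j$ and $L_2\cap M_j$ along $M_j$ is swapped. Every pair of wires still crosses exactly once, and the new crossing lies in the interior of $F$, away from all other vertices. So $\mathcal{A}'$ is a simple pseudoline arrangement. This is cleanest to phrase on the projective closure, so that the case where $p$ or $F$ is external needs no separate treatment.

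\emph{Colour bookkeeping.} A face strictly inside the lens between the old and new routes of $L_1,L_2$ lies on the opposite side of \emph{both} wires after the jump. Its colour is therefore unchanged, since the checkerboard colour flips once per wire crossed. Faces outside the lens are untouched. The only combinatorial changes are the following.
\begin{enumerate}
\item At $p$, the four faces meeting there are re-joined: the two faces along $S$ (namely $f_k$ and the face opposite it across $p$) lose the vertex $p$, while the two side faces at $p$ merge into one face.
\item $F$ is cut by the new crossing into two opposite wedges lying along $S$, which keep the colour of $F$, together with side pieces that are glued to the neighbouring faces across $L_1$ and $L_2$.
\end{enumerate}
Since $L_1,L_2$ carry non-adjacent sides of $F$ and $F$ has at most five sides, each of the two new wedges has at most three sides in the projective closure.

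\emph{Key step: blackness at $p$.} I expect this to be the main obstacle, and I would argue it by extremality, as in \cref{lem:no-white-triangles}. Suppose the two strip faces at $p$ were white. Then the two side faces at $p$, which merge into one, are black. The jump turns the single black face $F$ into two black faces, namely the two wedges, and merges two black faces at $p$ into one. The net change in the number of black faces is zero; but the white faces along $S$ at $p$ shrink, and I would then redo the count of $\sum_f(s(f)-3)$ from \cref{lem:optimal-defects-2q1p} to obtain a contradiction. A more direct route I would try first is parity. The colour of $f_k$ differs from that of $F$ by the parity of $k$, and the colour of the other faces at $p$ is determined by $f_k$. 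I would show that $k$ is even by counting, via reducedness, the endpoints on the two sides of $S$ of the wires $M_j$.

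\emph{Conclusion.} Once the strip faces at $p$ are known to be black, the jump
\begin{enumerate}
\item replaces the defect $F$ by two black faces with at most three sides,
\item merges two white faces at $p$, which does not affect the count of black faces, and
\item gives the two black strip faces at $p$ one vertex fewer each.
\end{enumerate}
Hence $b_{\mathrm{int}}$ does not decrease. By \eqref{eq:bint-bound}, equality is preserved, and $\mathcal{A}'$ is 2-defective by \cref{def:2-defective}. The side-excess identity of \cref{lem:optimal-defects-2q1p} then shows where the defect reappears: at one of the black faces at $p$ that has gained a side. This is the ``jump'' of the defect.
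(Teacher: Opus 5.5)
There is a genuine gap: your local picture at $p$ is reversed, and the key step (blackness at $p$) fails because of it. After the jump, the two arcs reaching $p$ from the $F$ side have been exchanged, since they now cross inside $F$. So the two new wires pass $p$ on opposite sides without meeting, and the region between them joins $f_k$ to the face opposite it across $p$. It is therefore the two \emph{strip} faces that merge, while the two side faces merely lose the vertex $p$. The other way of resolving the crossing would fuse the two strands coming from $F$ into a single curve. This is exactly how the defect ``jumps'': two triangles with common vertex $p$ merge into the new quadrilateral.

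With the correct picture, your own extremality idea finishes at once, as in the paper. Splitting $F$ adds one internal black face. The merging faces are opposite at $p$, hence share a colour. If they were white, $b_{\mathrm{int}}$ would exceed \eqref{eq:bint-bound}. So they are black, the merge removes one black face, and $b_{\mathrm{int}}$ is unchanged.

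With your picture nothing closes. Assuming white strip faces gives a net change of zero and no contradiction; the side-excess recount you appeal to is left unspecified. Your conclusion is also internally inconsistent: $F$ splits ($+1$), two white faces merge ($0$), and the black strip faces only lose a vertex ($0$). So $b_{\mathrm{int}}$ would rise by one, which \eqref{eq:bint-bound} forbids, rather than stay equal. Nor can a black face at $p$ ``gain a side'' in that picture.

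The parity route is a dead end too, because the colour at $p$ is not forced by reducedness. Take four lines bounding a quadrilateral $F$ whose opposite sides $L_1,L_2$ meet beyond a third side. This gives $k=1$, and the strip faces at $p$ have the colour opposite to $F$. Blackness at $p$ must come from extremality.

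Two secondary points. First, your simplicity argument assumes the strip between $F$ and $p$ is a chain of faces, with each $M_j$ running straight from $L_1$ to $L_2$, so that the triangle $L_1L_2M_j$ is an empty face when the crossing reaches it. Other wires may cross one another inside this region; then those triangles are not faces, and the flips are not available as described. The paper avoids this by exchanging the arcs of $L_1$ and $L_2$ inside the region $R$ bounded by them and a path through the interior of $F$. Every other wire meets $\partial R$ an even number of times, hence meets both arcs once or neither. So all its crossings survive, crossings among other wires are untouched, and only the crossing of $L_1,L_2$ moves from $p$ into $F$. Second, both pieces of $F$ have at most three sides only when $F$ is a quadrilateral; a pentagon splits into a triangle and a quadrilateral.
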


\begin{proof}
\emph{Simplicity.}  The jump exchanges the parts of $L_1$ and $L_2$
inside the closed region $R$ bounded by the arc of $L_1$ between $p$
and $F$, the arc of $L_2$ between $p$ and $F$, and a path joining their
endpoints through the interior of $F$ (\cref{fig:defect-jump-schema}).
Besides those two arcs, the
boundary $\partial R$ consists only of the path through the interior
of the face $F$, which no wire crosses.  Every other wire $N$ is an
unbounded curve with both ends outside $R$, so it meets the closed
boundary $\partial R$ an even number of times.  As $N$ crosses each of
$L_1$ and $L_2$ exactly once in $\mathcal{A}$, it therefore meets the
two arcs either both once or both not at all.  Exchanging the arcs then
keeps each crossing of $N$ with $L_1$ and $L_2$ (only swapping which
arc carries it), leaves all crossings not involving $L_1,L_2$ in
place, and replaces the crossing at $p$ by a single crossing inside
$F$.  Every pair of wires still crosses exactly once, so
$\mathcal{A}'$ is a simple arrangement.

\emph{2-defectiveness.}  The jump changes $\mathcal{A}$ only inside
$R$, at its two ends.  Crossing $L_1,L_2$ inside the black defect $F$
splits it and raises the number of internal black faces by one.  At
$p$, the two faces along the strip merge into one; they are opposite
at $p$ and therefore share the same color.  If this merge removed
no internal black face, then
$b_{\mathrm{int}}(\mathcal{A}') = b_{\mathrm{int}}(\mathcal{A}) + 1 =
\lfloor n(n-2)/3\rfloor + 1$, contradicting~\eqref{eq:bint-bound} for
the simple arrangement $\mathcal{A}'$.  Hence the merge removes exactly
one internal black face; the two merging faces are then black, and
$b_{\mathrm{int}}(\mathcal{A}') = b_{\mathrm{int}}(\mathcal{A})$, so
$\mathcal{A}'$ is again 2-defective.
\end{proof}

\begin{figure}[ht]
\centering
\begin{subfigure}[b]{0.64\linewidth}
\centering
\begin{tikzpicture}[scale=0.8, every node/.style={font=\small},
    wire/.style={line width=0.4pt},
    boundary/.style={line width=1.2pt}]
\coordinate (p) at (4.6,0.25);
\def\vext{1.5}
\def\Lone{plot[hobby] coordinates {(-1.7,0.86) (0,0.9) (2.4,0.74) (4.6,0.25) (6.4,-0.40)}}
\def\Ltwo{plot[hobby] coordinates {(-1.7,-0.66) (0,-0.7) (2.4,-0.48) (4.6,0.25) (6.4,1.25)}}
\def\Nline{(3.35,\vext) .. controls (3.25,0.95) and (3.25,-0.35) .. (3.35,-\vext)}
\def\Npline{(5.75,\vext) .. controls (5.65,0.9) and (5.65,-0.05) .. (5.75,-\vext)}
\def\Fleftc{(-1.15,\vext) .. controls (-1.10,0.7) and (-1.10,-0.3) .. (-1.15,-\vext)}
\def\Frightc{(0.15,\vext) .. controls (0.10,0.7) and (0.10,-0.3) .. (0.15,-\vext)}
\path[name path=Lone] \Lone;
\path[name path=Ltwo] \Ltwo;
\path[name path=Fleft] \Fleftc;
\path[name path=Fright] \Frightc;
\path[name intersections={of=Lone and Fleft, by=f1}];
\path[name intersections={of=Lone and Fright, by=f2}];
\path[name intersections={of=Ltwo and Fright, by=f3}];
\path[name intersections={of=Ltwo and Fleft, by=f4}];
\fill[red!22] (f1) -- (f2) -- (f3) -- (f4) -- cycle;          
\begin{scope}[overlay]
  \clip \Nline -- (7,-1.5) -- (7,1.5) -- cycle;               
  \clip (-3,-3) rectangle (4.6,3);                            
  \clip \Lone -- (6.6,-3) -- (-3,-3) -- cycle;                
  \clip \Ltwo -- (6.6,3) -- (-3,3) -- cycle;                  
  \fill[black!15] (-3,-3) rectangle (7,3);
\end{scope}
\begin{scope}[overlay]
  \clip \Npline -- (2,-1.5) -- (2,1.5) -- cycle;              
  \clip (4.6,-3) rectangle (7,3);                             
  \clip \Lone -- (6.6,3) -- (-3,3) -- cycle;                  
  \clip \Ltwo -- (6.6,-3) -- (-3,-3) -- cycle;                
  \fill[black!15] (2,-3) rectangle (7,3);
\end{scope}
\draw[wire] \Lone node[right,inner sep=2pt] {$L_1$};
\draw[wire] \Ltwo node[right,inner sep=2pt] {$L_2$};
\draw[wire] \Fleftc node[above right,inner sep=2pt] {$N_1$};
\draw[wire] \Frightc node[above right,inner sep=2pt] {$N_2$};
\draw[wire] \Nline node[above right,inner sep=2pt] {$N_3$};
\draw[wire] \Npline node[above right,inner sep=2pt] {$N_4$};
\begin{scope}
  \clip \Frightc -- (4.6,-1.5) -- (4.6,1.5) -- cycle;
  \draw[boundary] \Lone;
  \draw[boundary] \Ltwo;
\end{scope}
\draw[boundary, dashed] (f2) .. controls (-0.35,0.25) and (-0.35,-0.25) .. (f3);
\fill (p) circle (1.4pt) node[below] {$p$};
\node[circle, fill=white, inner sep=0.5pt] at (-0.65,0.1) {$F$};
\node[boundary,black] at (1.6,1.2) {$\partial R$};
\node[left,inner sep=2pt] at (-1.7,0.86) {$L_1$};
\node[left,inner sep=2pt] at (-1.7,-0.66) {$L_2$};
\end{tikzpicture}
\caption{}
\label{fig:defect-jump-schema}
\end{subfigure}\hfill
\begin{subfigure}[b]{0.35\linewidth}
\centering
\begin{tikzpicture}[x=0.55cm,y=0.55cm]
  \PseudolineWiringDiagram[fill,nolabels,noleft,noright]{5}{1,2,3,2,1}{%
    1/1/R, 2/1/F, 2/2/T, 3/1/F, 4/1/F, 4/2/T, 5/1/L,
    wire/2, wire/3%
  }
  \node[pl/generator even] at (1.75,1.5) {$\sigma_g$};
  \node[pl/generator even] at (3.75,1.5) {$\sigma_g$};
  \node[pl/generator odd]  at (4.75,2.5) {$\sigma_h$};
  \node[font=\scriptsize, inner sep=1pt] (qlab) at (0.5,1.5) {$q$};
  \draw[thin] (qlab) -- (1.4,1.5);
  \node[circle, fill=white, inner sep=0pt] at (2.75,2.5) {$F$};
  \node[font=\scriptsize] at (2.75,3.35) {$L_2$};
  \node[font=\scriptsize] at (2.75,1.7) {$L_1$};
  \node[left, font=\footnotesize] at (0,3) {$h$};
  \node[left, font=\footnotesize] at (0,2) {$h{+}1=g$};
  \node[left, font=\footnotesize] at (0,1) {$g{+}1$};
\end{tikzpicture}
\caption{}
\label{fig:special-jump-pair-fragment}
\end{subfigure}
\caption{(a)~The region $R$ in the proof of \cref{lem:defect-jump-correct}.
(b)~A fragment for \cref{lem:special-jump-pair} with a $K_g^{+}$ whose
omitted $\sigma_h$ ($h=g-1$) would cross $L_1,L_2$ held by the slots
$h,h+1$: the wires carry non-adjacent sides of $F$.  Here the $\sigma_g$
of the later $K_g$ moves $L_1$ out of the pair, ending its side before
the $\sigma_h$ of that $K_g$ closes $F$.}
\label{fig:defect-jump-and-pair}
\end{figure}

%
\newcommand\jface[2]{\begin{scope}[overlay]#2\fill[#1] (-4,-6) rectangle (12,6);\end{scope}}
\newcommand\jstripL{\clip \Wone -- (12,-6) -- (-4,-6) -- cycle;
                    \clip \Wtwo -- (12,6) -- (-4,6) -- cycle;}
\newcommand\jstripR{\clip \Wone -- (12,6) -- (-4,6) -- cycle;
                    \clip \Wtwo -- (12,-6) -- (-4,-6) -- cycle;}
\newcommand\jleftof[1]{\clip #1 -- (-4,-6) -- (-4,6) -- cycle;}
\newcommand\jrightof[1]{\clip #1 -- (12,-6) -- (12,6) -- cycle;}
%
\newcommand\jvext{1.4}
\newcommand\jtrans[3]{%
  ({#1+(#2)+(#3)},\jvext)
  .. controls ({#1+(#2)*0.5/\jvext-0.333*(#3)},0.5)
         and ({#1-(#2)*0.5/\jvext-0.333*(#3)},-0.5) ..
  ({#1-(#2)+(#3)},-\jvext)}
%
\newcommand\jBl{\jtrans{0.6}{0.85}{-0.09}}
\newcommand\jCl{\jtrans{0.6}{-0.85}{-0.09}}
\newcommand\jAl{\jtrans{3.4}{0}{0.0}}
\newcommand\jApl{\jtrans{5}{0}{0.03}}
\newcommand\jDl{\jtrans{7.9}{0}{0.06}}
\newcommand\jwires{\draw[wire] \Wone; \draw[wire] \Wtwo;
  \draw[wire] \jBl; \draw[wire] \jCl;
  \draw[wire] \jAl; \draw[wire] \jApl; \draw[wire] \jDl;}

\begin{figure}[ht]
\centering
\begin{subfigure}[b]{0.49\linewidth}
\centering
\begin{tikzpicture}[x=0.40cm,y=0.40cm]
  \PseudolineWiringDiagram[fill,nolabels]{7}{{5,1},4,3,2,{1,3},{0,4},{1,3,5},2,{1,3},4,{3,5},2,{1,3}}{%
    0/3/F, 1/3/F, 2/3/F, 2/4/T, 3/3/L,
    1/5/R, 2/4/B, 2/5/F, 3/5/F, 4/5/F, 5/5/F, 6/4/B, 6/5/F, 7/5/L,
    wire/4, wire/5,
    gen/1/1, gen/1/5, gen/2/4, gen/3/3,
    spec/1/3, spec/3/5%
  }
  \node[circle, fill=white, inner sep=0.0 pt] at (3.75, 0.5) {$F$};
  \node[inner sep=0.4 pt] (plabel-a) at (12.75, 5.4) {$p$};
  \draw[thin] (plabel-a) -- ++(0, -0.85);
\end{tikzpicture}
\caption{}
\label{fig:defect-jump-internal}
\end{subfigure}\hfill
\begin{subfigure}[b]{0.49\linewidth}
\centering
\begin{tikzpicture}[x=0.40cm,y=0.40cm]
  \PseudolineWiringDiagram[fill,nolabels]{7}{{5,1},4,{3,5},2,{1,3},{0,4},{1,3,5},2,{1,3},4,{3,5},2,3}{%
    0/3/F, 1/3/F, 2/3/F, 2/4/T, 3/3/L,
    9/1/R, 10/1/F, 11/1/F, 12/1/F, 12/2/T, 13/1/F, end/1/F,
    wire/4, wire/5,
    gen/1/1, gen/1/5, gen/12/2, gen/13/3,
    spec/1/3, spec/13/1%
  }
  \node[circle, fill=white, inner sep=0pt] at (11.37, 4.47) {$F$};
  \node[inner sep=0.4 pt] (plabel-b) at (2.75, 1.4) {$p$};
  \draw[thin] (plabel-b) -- ++(0, -0.85);
\end{tikzpicture}
\caption{}
\label{fig:defect-jump-trailing}
\end{subfigure}\\[12pt]
\begin{subfigure}[b]{0.49\linewidth}
\centering
\begin{tikzpicture}[scale=0.6, every node/.style={font=\small},
    wire/.style={line width=0.4pt}]
\def\Wone{plot[hobby] coordinates {(-0.6,0.95) (1.4,0.89) (3.6,0.65) (6.5,0) (8.8,-0.85)}}
\def\Wtwo{plot[hobby] coordinates {(-0.6,-0.95) (1.4,-0.89) (3.6,-0.65) (6.5,0) (8.8,0.85)}}
\coordinate (p) at (6.5,0);
\jface{red!22}{\jstripL \jrightof\jBl \jrightof\jCl \jleftof\jAl}
\jface{black!15}{\jstripL \jrightof\jApl}
\jface{black!15}{\jstripR \jleftof\jDl}
\jwires
\fill (p) circle (1.2pt) node[below,inner sep=2.5pt] {$p$};
\node[circle, fill=white, inner sep=0.5pt] at (2.1,0) {$F$};
\node[left,inner sep=2pt] at (-0.6,0.95) {$L_1$};
\node[left,inner sep=2pt] at (-0.6,-0.95) {$L_2$};
\end{tikzpicture}\unskip
\caption{}
\label{fig:defect-jump-pentagon}
\end{subfigure}\hfill
\begin{subfigure}[b]{0.49\linewidth}
\centering
\begin{tikzpicture}[scale=0.6, every node/.style={font=\small},
    wire/.style={line width=0.4pt}]
\def\Wone{plot[hobby] coordinates {(-0.6,0.95) (1.0,0.65) (2.3,0) (4,-0.63) (6.4,-0.85) (8.8,-0.78)}}
\def\Wtwo{plot[hobby] coordinates {(-0.6,-0.95) (1.0,-0.65) (2.3,0) (4,0.63) (6.4,0.85) (8.8,0.78)}}
\coordinate (x) at (2.3,0);
\jface{red!22}{\jstripL \jrightof\jBl \jrightof\jCl}
\jface{black!15}{\jstripR \jleftof\jAl}
\jface{red!22}{\jstripR \jrightof\jApl \jleftof\jDl}
\jwires
\fill (x) circle (1.2pt) node[below,inner sep=2.5pt] {$p$};
\node[circle, fill=white, inner sep=0.5pt] at (6.5,0) {$F$};
\node[circle, fill=white, inner sep=0pt, font=\footnotesize] at (1.25,0) {$F'$};
\node[left,inner sep=2pt] at (-0.6,0.95) {$L_1$};
\node[left,inner sep=2pt] at (-0.6,-0.95) {$L_2$};
\end{tikzpicture}\unskip
\caption{}
\label{fig:defect-jump-pentagon-after}
\end{subfigure}
\captionsetup{margin=28pt}
\caption{\looseness=-1
Two 2-defective arrangements differing by a defect jump on the strip
between wires $4$ and $5$: (a)~an internal defect
($K_4^{-}$ at the start) becomes (b)~a trailing external one
($K_2^{+}$ at the end).  A pentagonal defect~(c) and the two
quadrilateral defects~(d) after a defect jump.}
\label{fig:defect-jump-examples}
\end{figure}

By \cref{lem:defect-jump-correct} the two faces of $\mathcal{A}$ at
$p$ along the strip are black, which leaves three cases.
If $F$ is a quadrilateral and $p$ is the shared vertex of two
triangles (in the projective closure), the defect moves: $F$ becomes
a pair of triangles, and a new quadrilateral defect appears at $p$
(\cref{fig:defect-jump-internal,fig:defect-jump-trailing}).
If $F$ is a pentagon (necessarily the sole defect, so $p$ is again
the shared vertex of two triangles), $F$ splits into a triangle
and a quadrilateral defect, and a second quadrilateral defect
appears at $p$
(\cref{fig:defect-jump-pentagon,fig:defect-jump-pentagon-after}).
Conversely, if $F$ is a quadrilateral and $p$ is a vertex of the other
quadrilateral defect $F'$, the two defects become a pentagon at $p$.

By construction, the defect jump is an involution: the jump of the new
defect at $p$, along the same strip between $L_1$ and $L_2$, restores
the original arrangement.

\begin{lemma}[Jump pair at a special composite generator]
\label{lem:special-jump-pair}
Let $W$ be a reduced word of the form \eqref{eq:special-word-form} of
the special subtype, let $F$ be the black face that the omitted generator
$\sigma_{g\mp 1}$ of $K_g^{\pm}$ would have closed, and let $L_1,L_2$ be the
wires that $\sigma_{g\mp 1}$ would cross.  Then $L_1$ and $L_2$ carry
non-adjacent sides of $F$ and $F$ is a defect.
\end{lemma}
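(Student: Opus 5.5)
Take $K_g^{+}=\sigma_g\sigma_{g+1}$; the case $K_g^{-}$ is symmetric under the reflection $\sigma_i\mapsto\sigma_{n-2-i}$ composed with the colour swap. The omitted generator is then $\sigma_h$ with $h=g-1$ odd. Write $L_1,L_2$ for the wires occupying slots $h,h+1$ just after this $\sigma_g$, as in \cref{fig:special-jump-pair-fragment}. The plan has three steps, carried out in this order.

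\textbf{Step 1: $F$ is a defect.} The face $F$ is the black face lying between slots $h$ and $h+1$ at the moment of the special letter. Since $\sigma_g$ adds a vertex to it, $F$ is not a digon. If $F$ were a triangle, the argument of \cref{lem:special-factorization} would apply: rule (i) or (ii) would commute its closing $\sigma_h$ next to $\sigma_g$, and the letter would be packaged as the ordinary $K_g$. The special packaging therefore means rule (iii) held on the $h$-side, and rule (iii) only occurs when the adjacent face is a defect. It follows that $F$ is a defect in every word of the form \eqref{eq:special-word-form}, because the packaging is determined by the face structure.

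\textbf{Step 2: $L_1,L_2$ carry sides of $F$.} Just after $\sigma_g$, the face $F$ is bounded above by $L_2$ (slot $h$) and below by $L_1$ (slot $h+1$). Hence both wires carry sides of $F$ at that moment, and $L_1$ has just entered slot $h+1$ via the $\sigma_g$.

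\textbf{Step 3: the two sides are non-adjacent.} Two sides of a face are adjacent exactly when their wires meet at a vertex of the face. Suppose the sides of $L_1$ and $L_2$ were adjacent. They would then meet at a vertex of $F$, which is a crossing $\sigma_h$ of $L_1$ with $L_2$ with both wires still bounding $F$. That crossing would be precisely the closing generator that rule (iii) excludes. By rule (iii), either $F$ has no closing $\sigma_h$ at all, or some later letter adds a vertex to $F$ on the $L_1$- or $L_2$-side before it closes; this later letter is a $\sigma_g$ or $\sigma_{h-1}$. In the second alternative that letter swaps the wire out of the slot pair, as in the figure, so the eventual closing crossing is between different wires. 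In both alternatives the crossing of $L_1$ and $L_2$, which exists and is unique, is not a vertex of $F$. Therefore the sides are non-adjacent.

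\textbf{Main obstacle.} The delicate point is the external case, where $F$ is a trailing defect. There the pair of slots $h,h+1$ is never crossed again while both wires bound $F$. I would handle it via \cref{lem:trailing-digon-pruning} and the projective closure: the side on the line at infinity separates the sides of $L_1$ and $L_2$, so they are non-adjacent even though $L_1$ and $L_2$ still cross later, at a point $p$ outside $F$. A second check is that a pentagonal anchor with $h=h'$ is consistent with this argument; I expect it to go through since both white vertices lie on the same side.
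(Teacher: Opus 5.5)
Your skeleton matches the paper's: the side of $F$ on $L_1$ begins at the white vertex $q$ of $\sigma_g$, so the two sides can only be adjacent at the closing vertex of $F$. That vertex would then have to be a $\sigma_h$ crossing $L_1$ with $L_2$. The gap is in how you exclude that $\sigma_h$.

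Both Step~1 and Step~3 rest on the claim that rule~(iii) of \cref{lem:special-factorization} held on the $h$-side. You justify it by reading the composition rules backwards (``the packaging is determined by the face structure''). Those rules are a construction that produces \emph{some} factorization of a given diagram, not a characterization. Nothing in \cref{lem:special-factorization} says that the letters of an arbitrary word of the form \eqref{eq:special-word-form} arise from them. The packaging is in fact not determined by the faces: the same rhombus is packaged as $K_h^{-}K_{h+2}$ or as $K_{h+2}^{+}K_h$ depending on the word, by \eqref{eq:special-commutation}. Likewise, your Step~1 conclusion ``the letter would be packaged as the ordinary $K_g$'' is not a contradiction. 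Repackaging a commutation-equivalent word says nothing about the given $W$, and the lemma must hold for every leaf the search emits.

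What is missing is an argument on the letters of $W$ itself, and that is the whole content of the paper's proof. Because $W$ is of the special subtype, $K_g^{\pm}$ is its only special letter, and the prefix precedes it. So after it, the generators $\sigma_{h-1},\sigma_h,\sigma_{h+1}$ (the only ones touching $F$) occur only inside ordinary letters $K_{h-1}$ or $K_{h+1}$, with $K_0=\sigma_0\sigma_1$ when $h=1$. In each of these, $\sigma_h$ is preceded by the white $\sigma_{h\pm1}$ acting on slots $(h-1,h)$ or $(h+1,h+2)$. The first such letter therefore moves $L_1$ or $L_2$ out of the slot pair before its $\sigma_h$ closes $F$, so the closing vertex, if any, is not $L_1\cap L_2$.

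This gives non-adjacency directly. ``$F$ is a defect'' then follows because the sides of a projective triangle are pairwise adjacent, so your Step~1 is unnecessary as a separate argument. Your two ``obstacles'' also disappear. If no later $\sigma_h$ exists, the two sides run to infinity and are separated by the side at infinity. The pentagonal anchor needs no separate check.

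Finally, reducing $K_g^{-}$ to $K_g^{+}$ via $\sigma_i\mapsto\sigma_{n-2-i}$ is not valid as stated. For odd $n$ this reflection swaps index parity and leaves the form \eqref{eq:special-word-form}. Instead, run the argument with $h=g\mp 1$ for both variants at once.
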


\begin{proof}
Let $h=g\mp 1$ be the index of the omitted elementary generator and
$(h,h+1)$ the slot pair whose wires it would cross.  Right after the
$\sigma_g$ of $K_g^{\pm}$ these slots hold $L_1$ and $L_2$, the face $F$ lies
between them, and $\sigma_g$ gives $F$ a vertex $q$
(\cref{fig:special-jump-pair-fragment}).  $\sigma_g$ acts on the
slots $g$ and $g+1$, shifted by one relative to $(h,h+1)$, so it crosses one
wire of the pair, say $L_1$, with a third wire.  Hence the side of $F$
starting at $q$ lies on $L_1$, whereas the side carried by $L_2$
started earlier; the two can be adjacent only at their common end, the
closing vertex of $F$, and only if that vertex is the crossing of $L_1$ with
$L_2$, produced by an occurrence of $\sigma_h$.  A bare $\sigma_h$ is not a
letter of the alphabet; the prefix precedes $\sigma_g$ entirely; $W$ has no
second special composite generator; and in $K_{h\pm 1}$ the generator
$\sigma_h$ is preceded by the white $\sigma_{h\pm 1}$, acting on
the slots $(h-1,h)$ or $(h+1,h+2)$, which moves one of $L_1,L_2$ out of
the pair and thereby ends the side that wire carries before $\sigma_h$
is applied.  The crossing of $L_1$ with $L_2$ is therefore not the closing
vertex of $F$: the two sides are not adjacent, so $F$ is not a triangle in the
projective closure and is therefore a defect.
\end{proof}

\begin{lemma}[Coverage with anchored initial defect]
\label{lem:special-coverage-boundary}
Let $\mathcal{A}$ be a 2-defective arrangement of $n$ pseudolines that
admits a wiring-diagram representation $D$ with a defect on the
initial external boundary between wires $X$ and $X+1$.  Then either
$D$ is encoded by a word of the form \eqref{eq:special-word-form}
whose special composite generator $K_g^{\pm}$ (if any) does not
create a defect on the trailing external boundary, or $D$ is the
image of such a wiring diagram under a single defect jump
(\cref{def:defect-jump}).
\end{lemma}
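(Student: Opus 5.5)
We first apply \cref{lem:special-coverage-internal} to $D$. It already yields a word of the form \eqref{eq:special-word-form} whenever exactly one defect of $D$ lies on the initial boundary, at $(X,X+1)$. We therefore split into two cases. In the \emph{generic} case $D$ has no second defect on the trailing boundary created by a special letter. In the \emph{trailing} case the second defect is a trailing external defect, case~(5) of \cref{lem:special-factorization}. The case of two defects on the initial boundary is handled together with the trailing one, by symmetry under the jump. In the generic case the word supplied by \cref{lem:special-coverage-internal} is already of the required kind, and we are done. The double-zero subtype has no special letter at all, and a pentagonal anchor forces its special letter to be $K_{X\pm1}^{\pm}$ adjacent to the anchor. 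That letter is internal to the initial defect and not a trailing one.

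In the trailing case, the special letter $K_g^{\pm}$ has no later $K_g$ or $K_{g\mp2}$ closing its face $F$, so $F$ is an external black face on the trailing boundary. By \cref{lem:special-jump-pair}, the wires $L_1,L_2$ that the omitted $\sigma_{g\mp1}$ would cross carry non-adjacent sides of $F$, so the defect jump of \cref{def:defect-jump} along the strip between $L_1$ and $L_2$ is defined. By \cref{lem:defect-jump-correct} it produces a 2-defective arrangement $\mathcal{A}'$, with wiring diagram $D'$ in the same sweep. In $D'$ the crossing $L_1L_2$ is moved into $F$. On the trailing boundary this turns $F$ into a digon plus a triangle, as in \cref{fig:defect-jump-trailing} read backwards. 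The new quadrilateral defect sits at the old crossing point $p$. The anchor at $(X,X+1)$ is untouched, because the jump only alters the region $R$. That region lies away from the initial boundary unless $p$ is a vertex of the anchor, and that subcase gives a pentagon at the anchor, which is again covered by the generic case. The defect at $p$ is internal or lies at $(0,n-1)$, so $D'$ falls under the generic case. Because the jump is an involution, $D$ is the image of $D'$ under a single jump. The same argument applies when $D$ has both defects on the initial boundary: jumping the non-anchor one moves it off the boundary.

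The main obstacle is showing that $D'$ really has exactly one defect on the initial boundary and no trailing special defect. The new defect at $p$ must be neither trailing nor initial. I would argue this from the position of $p$. Since $F$ is trailing and $L_1,L_2$ still occupy slots $h,h+1$ after $\sigma_g$, their crossing $p$ lies strictly inside the diagram, between the wires' initial slots and the trailing digon. If $p$ were on the initial boundary, then $L_1,L_2$ would be an initial pair $(2k-1,2k)$ crossed by the prefix, so a digon; that contradicts the fact that $L_1$ and $L_2$ still have not crossed after $\sigma_g$. For the trailing boundary, \cref{lem:trailing-digon-pruning} applied to the other pairs rules out a second trailing defect. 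I expect the bookkeeping of the two faces at $p$ (internal triangles versus the external $(0,n-1)$ face, which gives the double-zero subtype) to require the most care.
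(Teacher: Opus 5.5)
Your overall plan is the paper's: in the bad case, jump the non-anchor external defect along the wires carrying its non-adjacent sides. You then observe that the resulting diagram is covered directly by \cref{lem:special-coverage-internal}, and use that the jump is an involution. The gap is at exactly the step you flag as the main obstacle, and the argument you give there rests on a false claim. In the trailing case, $F$ stays open between $L_1$ and $L_2$ from the $\sigma_g$ of $K_g^{\pm}$ to the end of the sweep, so these wires do not cross after $\sigma_g$. Since every pair crosses once, their crossing $p$ must \emph{precede} $\sigma_g$; this is also why the omitted $\sigma_{g\mp1}$ would not be a reduced letter there. So ``$L_1$ and $L_2$ still have not crossed after $\sigma_g$'' is false, and it yields no contradiction with $p$ being an early crossing.

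What actually excludes an external face at $p$ is the checkerboard coloring, which is the paper's argument. By \cref{lem:defect-jump-correct}, both faces along the strip at $p$ are black. The one on $F$'s side cannot be external: it would keep $L_1,L_2$ adjacent up to the trailing boundary and would therefore be $F$, with $p$ a vertex where $L_1$ and $L_2$ meet, contradicting non-adjacency. Now consider the other face. The wires are adjacent at the end and the sweep reverses the order, so they are adjacent at the start as well. If that face were external, it would therefore lie on the initial boundary between the same two wires, at the antipodal end of the strip. But for odd $n$ antipodal external faces have opposite colors. Concretely, black trailing faces sit on wire pairs $(2m,2m+1)$ and black initial faces on pairs $(2k-1,2k)$. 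Hence both faces at $p$ are internal black triangles, and the new defect is an internal quadrilateral.

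The same argument is what you need for two defects on the initial boundary, with the danger now being a trailing digon at the other end. There you cannot invoke \cref{lem:special-jump-pair}: \cref{lem:special-coverage-internal} does not apply to such a $D$, so there is no word and no special letter. Use \cref{def:defect-jump} directly instead.

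Two smaller points:
\begin{itemize}
\item \cref{lem:trailing-digon-pruning} is a statement about perfect arrangements and does not apply to $D'$. The absence of a trailing defect in $D'$ follows simply because its only defects are the untouched anchor and the new internal one.
\item Your extra subcases are vacuous, so you need not analyze them. A pentagon at the anchor cannot arise, because the coloring argument forbids the anchor from being a strip face at $p$. A defect at $(0,n-1)$ cannot arise, because the strip faces at $p$ are black, so $p$ is an odd crossing whose strip faces lie between two slots and never touch the $(0,n-1)$ face.
\end{itemize}
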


\begin{proof}
If $\mathcal{A}$ has no defect on the trailing external boundary and
exactly one defect on the initial external boundary (the anchor),
the second defect is either external, between wires
$0$ and $n-1$, or internal,
and $D$ is covered by \cref{lem:special-coverage-internal} directly.

Otherwise the non-anchor defect $F_{\mathrm{ext}}$ of $\mathcal{A}$ lies
on an external boundary away from the anchor; write $D=D_{\mathrm{ext}}$.
Let $L_1,L_2$ be the two wires carrying non-adjacent
sides of $F_{\mathrm{ext}}$, crossing once at $p\notin F_{\mathrm{ext}}$
(\cref{def:defect-jump}).
By \cref{lem:defect-jump-correct} the two faces meeting at $p$ along
the strip between $L_1$ and $L_2$ are black.
Suppose one of these two faces is external.  Then it is an external digon
whose only vertex is $p$: a wire splitting its arc at infinity would
split the arc of $F_{\mathrm{ext}}$ as well, since the sweep reverses
the wire order.  This digon and $F_{\mathrm{ext}}$ are then opposite external
faces of the same color.
This contradicts the checkerboard coloring since $n$ is odd.

Both faces at $p$ are therefore internal black triangles, and the
jump of $F_{\mathrm{ext}}$ along the strip merges them into an internal
quadrilateral $F_{\mathrm{int}}$. The resulting diagram $D_{\mathrm{int}}$ with
the non-anchor defect $F_{\mathrm{int}}$ is covered by
\cref{lem:special-coverage-internal}.  Since the jump is an involution,
$D_{\mathrm{ext}}$ is the image of $D_{\mathrm{int}}$ under the
jump along the same strip.
\end{proof}

\subsection{The algorithm}
\label{subsec:2-defective-algorithm}

The 2-defective search extends the depth-first enumeration of composite
generators of \cref{subsec:algorithm}: it runs over the special-mode
alphabet \eqref{eq:special-alphabet} on the prefix with one skipped
generator $\sigma_X$.

Define the search as the following recursive procedure:

\begingroup\ttfamily
\begin{tabbing}
xxxx\=xxxx\=\kill
Function SpecialSearch($W$), where $W = \sigma_1\sigma_3\cdots\widehat{\sigma_X}\cdots\sigma_{n-2}\cdot C_1\cdots C_k$:\\
\>if $k = N_{\mathrm{sp}}$, output $W$;\\
\>else for each $C$ admissible at $W$, call SpecialSearch($W\cdot C$).
\end{tabbing}
\endgroup

\noindent
The search starts at level $k=0$ with $W$ equal to the prefix of
\cref{lem:special-coverage-internal}; the depth $N_{\mathrm{sp}}$ is
given by \eqref{eq:special-block-count}.  The letters $C_i$ are subject
to the structural constraint of
\cref{lem:special-coverage-internal}: either exactly one $K_g^{\pm}$ and
exactly one $K_0$ among them (special subtype), or no $K_g^{\pm}$ and exactly
two $K_0$ (doub\-le-zero subtype).

A pair consisting of an index $g\in\{0,2,\ldots,n-3\}$ and a
composite-generator variant (ordinary, $K_g^{+}$, or $K_g^{-}$) is
\emph{admissible} at $W$ when all of the following hold:

\begin{enumerate}[label=(\roman*)]
\item the wire pairs that the composite generator would cross have not yet met
  (reducedness): the comparison of
  \cref{subsec:algorithm}, with the pair on the omitted side ignored
  for $K_g^{\pm}$;
\item for $g\ge 2$, $g$ does not violate the trailing-digon constraint
  (\cref{lem:trailing-digon-pruning}, checked as in \cref{rem:wall-sealing});
\item $K_0$ usage matches \cref{lem:sigma-zero} ($a_0=0,\ a_1=n-1$) or
  \cref{cor:special-sigma-zero} ($a_0=0$ for the first $K_0$,
  $a_1=n-1$ for the second);
\item $g\ne g_k$, the index of the last letter $C_k$, with the exceptions
  of \cref{lem:special-no-repeat};
\item $g\notin\{g_k-4,g_k-6,\ldots\}$ (canonical-form rule of
  \cref{lem:composite-canonical-form}).
\end{enumerate}

The implementation extends the state of \cref{subsec:algorithm}: each
stack entry additionally records the variant of $K_g$ applied; the
single-special constraint of
\cref{lem:special-coverage-internal} is enforced by skipping the
$K_g^{\pm}$ branches once one has been used, or once one of the two
$K_0$ of \cref{cor:special-sigma-zero} has been applied.

As in the perfect search, the implementation includes the further
prunings of \cref{app:prunings} (see \cref{rem:2-defective-forced-descent}
for the restricted use of the forced-descent pruning).

When the search reaches depth $N_{\mathrm{sp}}$, the accepted leaf is
emitted as-is: by \cref{lem:special-sufficiency} it encodes a 2-defective
arrangement.
When the search applies a special $K_g^{\pm}$ at a position where the
ordinary $K_g$ is rejected by a reducedness conflict with a prefix
elementary generator or by the trailing-digon pruning
(\cref{lem:trailing-digon-pruning}) and reaches a leaf, the defect jump
(\cref{def:defect-jump}) is applied to the resulting defect along the
strip between the wires that the omitted generator of $K_g^{\pm}$ would
have crossed.  This jump is well defined by \cref{lem:special-jump-pair}.
The image is emitted as a second output,
recovering a boundary defect: on the initial external boundary in
case of the prefix reducedness conflict, and on the trailing external
boundary in case of the trailing-digon pruning.

The defect jump lets the trailing-digon pruning~(ii) stay in force for
$g\ge 2$, keeping the search tree small.
For $g=0$ it is not imposed: reaching the defect between wires $0$ and $1$ on
the trailing external boundary by a defect jump would require defining a
$K_0^{-}=\sigma_0$.  Extending~(ii) to $g=0$ would prune more
for $X=1$ (the only case in which an emitted word crosses the wire pair
$(0,1)$ in the interior) but add an extra branch at $g=0$ for all $X$.
We prefer not to do this.

\begin{proposition}[Coverage of the 2-defective search]
\label{prop:2-defective-coverage}
Fix $n\equiv 1\pmod 6$ and $X\in\{1,3,\ldots,n-2\}$.  Let $D$ be a
wiring-diagram representation of a 2-defective arrangement of $n$
pseudolines with one defect on the initial external boundary between
wires $X$ and $X+1$.  Then, up to commutations of independent
elementary generators, $D$ appears in the output of the 2-defective search at parameter
$X$, either as a directly emitted leaf or as the image of one under
a defect jump.
\end{proposition}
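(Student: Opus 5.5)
The plan is to follow the pattern of \cref{thm:perfect-completeness}. We reduce $D$ to a word of the form \eqref{eq:special-word-form} using the coverage lemmas, and then check that the search accepts each letter along that word. By \cref{lem:special-coverage-boundary}, one of two things holds. Either $D$ is encoded by a word $W$ of the form \eqref{eq:special-word-form} whose special letter (if any) creates no trailing boundary defect. Or $D$ is the image, under one defect jump, of a diagram $D_{\mathrm{int}}$ encoded by such a word. In the second case the jump is along the strip of the wires that the omitted generator would cross, and by \cref{lem:special-jump-pair} this is exactly the jump the algorithm performs. So it suffices to show two things. First, the relevant word, suitably normalized, is a leaf of the search. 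Second, in the jump case the search actually triggers the second output at that leaf.

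\emph{Normalization.} Next we bring the composite part $C_1\cdots C_{N_{\mathrm{sp}}}$ into canonical order by the distant commutations \eqref{eq:distant-commute}. These also hold for special letters, since $K_g^{\pm}$ and $K_h$ with $|g-h|\ge4$ act on disjoint slots. The argument of \cref{lem:composite-canonical-form} (termination by counting distant inversions, local confluence, and Newman's lemma) applies verbatim to the enlarged alphabet. This yields a word satisfying rule~(v), and it uses only commutations of independent elementary generators, as the statement allows. The extra relation \eqref{eq:special-commutation} is not needed here: we only need some encoding to be emitted, not uniqueness.

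\emph{Admissibility along the word.} Now we verify conditions (i)--(v) at every prefix of the normalized word.
\begin{itemize}
\item[(i)] This follows because the word is reduced and commutations preserve reducedness of prefixes.
\item[(ii)] This holds for $g\ge2$ because the special letter creates no trailing boundary defect. All trailing external faces other than possibly $(0,1)$ are then digons, and \cref{lem:trailing-digon-pruning} applies to the pairs it constrains.
\item[(iii)] This follows from \cref{lem:sigma-zero} in the special subtype, and from \cref{cor:special-sigma-zero} in the double-zero subtype.
\item[(iv)] This is exactly \cref{lem:special-no-repeat}.
\item[(v)] This holds by construction.
\end{itemize}
The single-special bookkeeping matches the subtype classification of \cref{lem:special-coverage-internal}. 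There is one point to check: in the special subtype, the special letter must come before any $K_0$ that the implementation counts as one of the two of \cref{cor:special-sigma-zero}. This holds because that skip is triggered only in the double-zero subtype, which has no special letter. Since the prunings of \cref{app:prunings} do not affect correctness, the leaf is reached and emitted. By \cref{lem:special-sufficiency} it encodes $D$ (resp.\ $D_{\mathrm{int}}$).

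\emph{Main obstacle: the jump trigger.} The hardest step is showing that in the jump case the algorithm's trigger condition actually fires. That condition is that, at the position of the special $K_g^{\pm}$, the ordinary $K_g$ is rejected, either by reducedness against a prefix generator or by the trailing-digon pruning. I would argue as follows. In $D_{\mathrm{ext}}$ the defect $F_{\mathrm{ext}}$ lies on the initial (resp.\ trailing) boundary. After the jump, the two wires $L_1,L_2$ cross at $p$ inside $D_{\mathrm{int}}$, and $p$ is precisely where the crossing of $L_1,L_2$ lies in $D_{\mathrm{int}}$. If $F_{\mathrm{ext}}$ was on the trailing boundary, then $L_1,L_2$ bound a trailing digon of $D_{\mathrm{ext}}$. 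Completing $K_g^{\pm}$ to $K_g$ would cross them in the interior, so the pruning of \cref{lem:trailing-digon-pruning} rejects it. If $F_{\mathrm{ext}}$ was on the initial boundary, then $L_1,L_2$ have already crossed via a prefix $\sigma$, and the reducedness check rejects $K_g$. Carrying this out needs care in identifying $p$ with the omitted crossing. I would check it with the slot analysis in the proof of \cref{lem:special-jump-pair}, and watch for the $g=0$ exception, where the defect between wires $0$ and $1$ cannot arise from the jump.
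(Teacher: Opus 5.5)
Your outline follows the paper: the case split via \cref{lem:special-coverage-boundary}, normalization by distant commutations, letter-by-letter admissibility, then the trigger. However, the step you flag as the main obstacle is a genuine gap, and you attribute it to the wrong lemma.

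\Cref{lem:special-jump-pair} only shows that the jump along the omitted pair of $K_g^{\pm}$ is well defined. It does not show that this pair is the $(L_1,L_2)$ of the particular jump taking $D_{\mathrm{int}}$ to the given $D_{\mathrm{ext}}$. A quadrilateral $F_{\mathrm{int}}$ has two pairs of opposite sides, hence two possible jumps. Which one the search performs is decided by which white vertex of $F_{\mathrm{int}}$ is packaged into the special letter. For a rhombus, the two encodings related by \eqref{eq:special-commutation} place the special letter at different white vertices, omit different crossings, and jump along different strips (\cref{rem:2-defective-duplicate-words}). So your remark that the extra relation is irrelevant because ``we only need some encoding to be emitted'' fails exactly in the jump case. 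The arbitrary encoding of $D_{\mathrm{int}}$ supplied by \cref{lem:special-coverage-internal} may trigger the wrong jump.

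The missing idea is to take the encoding of $D_{\mathrm{int}}$ whose letter order is inherited from $D_{\mathrm{ext}}$; this is possible because the jump moves only one crossing. In that order the triangle $t_1$ closes at $p$ and $t_2$ opens there. So the white vertex of $t_1$ is the earlier white vertex of $F_{\mathrm{int}}$, and rule~(iii) of \cref{lem:special-factorization} packages it into the special letter. The black generator this letter omits is then precisely the one that would recreate $p$. Only then is the omitted pair $(L_1,L_2)$, and the subsequent distant commutations preserve it.

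Your trigger argument also conflates $p$ and $q$. It is in $D_{\mathrm{int}}$, not $D_{\mathrm{ext}}$, that $L_1,L_2$ cut off a boundary digon. They cross there at the point $q$ inside the former region of $F_{\mathrm{ext}}$. That digon is either closed by a prefix letter or is a trailing digon, and this is what rejects the ordinary $K_g$.

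Second, the $\sigma_0$ case is not just something to watch for: the route through $D_{\mathrm{int}}$ fails there. If the white vertex of $t_1$ comes from $\sigma_0$, there is no $K_0^{-}$, so the rhombus $F_{\mathrm{int}}$ can only be encoded as $K_2^{+}\ldots K_0$. The jump of that encoding runs along the other strip. Since your case split puts this $D$ in the jump branch, you need a separate argument that $D$ itself is a directly emitted leaf:
\begin{itemize}
\item $p$ is then produced by the black generator of that $K_0$, so $L_1$ is wire $0$ and $F_{\mathrm{ext}}$ is the trailing defect at $(0,1)$.
\item $D$ still has exactly one defect on the initial boundary, hence an encoding of the form \eqref{eq:special-word-form}.
\item That encoding passes every check, because the only trailing pair of \cref{lem:trailing-digon-pruning} that is not a digon is $(0,1)$, where condition~(ii) is not imposed.
\end{itemize}
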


\begin{proof}
\emph{Two cases: direct leaf and jump image.}
By \cref{lem:special-coverage-boundary}, $D$ is either encoded by a word of
the form \eqref{eq:special-word-form} (with the special composite
generator, if any, not creating a defect on the trailing external
boundary) or equals $D_{\mathrm{ext}}$, the image of such a diagram
$D_{\mathrm{int}}$ under the jump of its internal defect
$F_{\mathrm{int}}$ along the strip between two wires $L_1,L_2$.
In the latter case $L_1,L_2$ carry two non-adjacent sides of the
non-anchor external defect $F_{\mathrm{ext}}$ of $D_{\mathrm{ext}}$, and
the inverse jump, of $F_{\mathrm{ext}}$,
merges two internal black triangles $t_1,t_2$ of $D_{\mathrm{ext}}$
(the two faces along the strip at $p=L_1\cap L_2$, where $t_1$ ends
and $t_2$ starts) into $F_{\mathrm{int}}$
(see the proof of \cref{lem:special-coverage-boundary} and
\cref{fig:defect-jump-internal,fig:defect-jump-trailing}).  It suffices to show that
$D_{\mathrm{int}}$ (in the direct case, $D$ itself) is enumerated
as a directly emitted leaf and, in the defect-jump case, that its leaf
triggers the second output with image $D_{\mathrm{ext}}$.

\emph{The word to be enumerated.}
By \cref{lem:special-coverage-internal}, $D_{\mathrm{int}}$ admits an
encoding word of the form \eqref{eq:special-word-form} with $C_1\cdots
C_{N_{\mathrm{sp}}}$ of the special or double-zero subtype.  In the
defect-jump case the jump moves a single crossing, so the other letters keep
the order they take in $D_{\mathrm{ext}}$, where $t_1$ ends at $p$ and $t_2$
starts there.  The white vertex of $t_1$ is thus the earlier of the two white
vertices of $F_{\mathrm{int}}$, and the letter
giving it is packaged into the special composite generator (cases~(3) and~(4) of
\cref{lem:special-factorization}).  The black generator it omits is then the
one that would create $p$, absent from $D_{\mathrm{int}}$.
This fails only for $\sigma_0$, where no $K_0^{-}$ exists; see the exception
at the end of the proof.

\emph{Its canonical representative.}
The structural properties of
\cref{lem:special-coverage-internal,lem:special-no-repeat} hold for
every word in the commutation class of $D_{\mathrm{int}}$.  Distant
commutations~\eqref{eq:distant-commute} bring the word above to a
representative $W$ with $C_1\cdots C_{N_{\mathrm{sp}}}$ satisfying the
canonical-form rule of \cref{lem:composite-canonical-form} (with $K_g^{\pm}$
treated as having index $g$: its elementary support is a subset of
that of $K_g$, so the commutation relations this rule relies on still hold).
They swap composite generators acting on disjoint slot ranges, so they do
not change the wire pair on the omitted side of the special composite
generator.

\emph{The leaf is enumerated.}
At each partial word along $W$, reducedness holds because $W$ is reduced;
the trailing-digon condition holds because $D_{\mathrm{int}}$ has no defect on the
trailing boundary; the canonical-form rule of \cref{lem:composite-canonical-form} holds by the
choice of $W$; the subtype constraint on the $K_0$ and $K_g^{\pm}$ letters
holds by \cref{lem:special-coverage-internal} and the $K_0$ slot conditions
by \cref{lem:sigma-zero,cor:special-sigma-zero}; no-repeat holds by
\cref{lem:special-no-repeat}.  Every letter $C_i$ is therefore
admissible at the corresponding partial word and the leaf is enumerated.

\emph{The jump image is emitted.}
In the defect-jump case, the closing generator omitted by the
special composite generator would restore the removed crossing $p$,
so the pair on its omitted side is exactly $(L_1,L_2)$, along which the jump
of $F_{\mathrm{int}}$ is defined (\cref{lem:special-jump-pair}).
In $D_{\mathrm{int}}$ the
wires $L_1,L_2$ cross inside the region that $F_{\mathrm{ext}}$ occupies in
$D_{\mathrm{ext}}$ (\cref{def:defect-jump}); write $q$ for this crossing.
It splits the region in two, and the part along the external boundary is an
external digon whose only vertex is $q$.  If this digon lies on the initial
boundary, it is closed by a prefix letter; if it lies on the trailing
boundary, it is a trailing digon of a wire pair
of \cref{lem:trailing-digon-pruning}.  At the position of the special composite
generator the ordinary $K_g$ is therefore rejected either by a reducedness
conflict with a prefix elementary generator or by the trailing-digon
pruning, so the leaf triggers the second output; the defect jump
being an involution along the same strip, the emitted image is $D_{\mathrm{ext}}$.

\emph{The $\sigma_0$ exception.}
Consider the case where the white vertex of $t_1$ comes from $\sigma_0$.
No $K_0^{-}$ is defined, so no factorization packages $\sigma_0$ into the
special composite generator (case~(4) of \cref{lem:special-factorization}),
and the route through $D_{\mathrm{int}}$ is unavailable.  Here $\sigma_0$ is
packaged into a $K_0$, and $p$ is produced by its black generator, so $L_1$ is the
wire $0$ and $F_{\mathrm{ext}}$ is the trailing defect at
$(L_1,L_2)=(0,1)$ (an external defect between wires $0$ and $n-1$ falls
under the first branch of \cref{lem:special-coverage-boundary}, and $(0,1)$
is the only trailing pair with wire $0$).
Then $D$ itself is enumerated
directly: it has exactly one defect on the initial boundary, hence is
encoded by a word of the form \eqref{eq:special-word-form}, and the checks
above apply verbatim except for the trailing-digon condition.  The only
trailing pair of \cref{lem:trailing-digon-pruning} that is not a digon of
$D$ is $(0,1)$, crossed by the black generator of $K_0$, where
condition~(ii) is not imposed.
\end{proof}

\begin{remark}[Duplicate words are unavoidable]
\label{rem:2-defective-duplicate-words}
The special composite generators bring an additional
relation~\eqref{eq:special-commutation} with their ordinary neighbors
(illustrated in \cref{fig:defect-class-rhombus}), so beyond the multiplicity of
\cref{obs:representation-multiplicity} the search
may emit several reduced words encoding the same wiring diagram.
These duplicates are removed afterward by grouping the output into the
equivalence classes of \cref{sec:classification}.

The search itself, however, cannot drop either rhombus variant.  Both
encode the same internal rhombus, so as directly emitted leaves they are
duplicates; but each may also trigger a second output
(\cref{prop:2-defective-coverage}), and since the two variants each package
into the special composite generator the letter giving a different white
vertex of the rhombus, their defect jumps remove different crossings and
may yield two distinct boundary-defect diagrams.
\end{remark}

\subsection{Completeness on the level of \texorpdfstring{$P$}{P}-classes}
\label{subsec:special-coverage}

The perfect search was complete at the level of commutation classes,
emitting one word per class (\cref{cor:perfect-class-output}).  At
parameter $X$ the 2-defective search visits only diagrams with a defect on
the initial external boundary between wires $X$ and $X+1$, so no
per-diagram statement holds; completeness is stated on the $P$-classes
(\cref{def:p-equivalence}).

\begin{theorem}[Completeness of the 2-defective search]
\label{thm:2-defective-completeness}
Fix $n\equiv 1\pmod 6$.  For every
$X\in\{1,3,\ldots,n-2\}$, the 2-defective search at parameter $X$
emits at least one wiring-diagram representative of every $P$-class
of 2-defective arrangements of $n$ pseudolines.
\end{theorem}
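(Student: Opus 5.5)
The plan is to reduce the theorem to \cref{prop:2-defective-coverage}. Given an arbitrary $P$-class $P$ of 2-defective arrangements and a fixed $X\in\{1,3,\ldots,n-2\}$, I will exhibit an $O$-matrix $M\in P$ whose wiring diagram has a defect on the initial external boundary between wires $X$ and $X+1$. It also has to survive the odd-majority convention. \Cref{prop:2-defective-coverage} then shows that this diagram is emitted at parameter $X$, up to commutations, either as a leaf or as a jump image. Since $M\in P$, the class is represented in the output. So all the work is in producing $M$ by the operations $\pi_p$, $\rho$, $\mu$ of \cref{subsec:order-matrix-operations}.

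\emph{Step 1: projective invariance.} First I check that 2-defectiveness is a property of the projective closure. By the computation in \cref{lem:optimal-defects-2q1p}, attaining \eqref{eq:bint-bound} is equivalent to the side excess $\sum_f (s(f)-3)$ over black faces of the closure being $2$. Faces and their side counts in the closure of $n+1$ lines do not depend on which line is at infinity. The checkerboard coloring of the closure is well defined for $n$ odd (\cref{subsec:checkerboard}). Hence every chart $\pi_p M$ is again 2-defective, with the same two defects, or the same pentagon, as faces of the closure.

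\emph{Step 2: make a defect external.} Let $F$ be a defect of the closure. It has at least four sides, so I pick a line $\ell$ carrying a side of $F$. I apply $\pi_\ell$, or the identity if $\ell$ is already at infinity. In the new chart $F$ has a side on the line at infinity, so $F$ is an external black face. It is not an external digon, because its closure is not a triangle. It must still be checked that the black color is the majority color in the new chart. This follows from the count in \cref{subsec:black-face-bound}: the color containing $\lfloor n(n-2)/3\rfloor$ of the internal faces is the majority one.

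\emph{Step 3: move $F$ to slot $(X,X+1)$.} The $2n$ external faces alternate in color, and $\rho$ shifts the labeling at infinity by one step. Its powers $\rho^k$, $0\le k<2n$, therefore place any chosen external face at any chosen position around infinity. I choose $k$ so that $F$ becomes the initial external face between wires $X$ and $X+1$.

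The main obstacle is the interaction with the parity convention. The initial external faces between $(X,X+1)$ with $X$ odd must be black under the convention that odd generators open black faces. A given $\rho^k$ may instead produce the even-majority diagram, in which the black faces sit at the even slot pairs. I plan to fix this with $\mu$, which swaps parity ($n$ odd) and sends slot $(i,i+1)$ to $(n-2-i,n-1-i)$. So I will choose the combination $\mu^\epsilon\rho^k$ that puts $F$ at the odd slot $X$ in odd-majority form. This needs a short check that the black external positions form exactly one parity class under $\rho$, shifting by two. In particular, the $n$ black external faces must fill precisely the initial slots $X$ odd, the trailing slots, and the $(0,n-1)$ face, so every black position is attainable. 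This is where I expect the bookkeeping to be delicate.

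Finally, the resulting diagram has a defect on the initial boundary at $(X,X+1)$. The other defect may lie anywhere, including elsewhere on the initial or trailing boundary. \Cref{lem:special-coverage-boundary} and hence \cref{prop:2-defective-coverage} allow all of these cases, so applying \cref{prop:2-defective-coverage} finishes the proof.
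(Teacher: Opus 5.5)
Your proposal is correct and follows the paper's proof. In both, you send a line carrying a side of a defect $F$ to infinity by $\pi_p$, relabel at infinity so that $F$ becomes the initial external face between wires $X$ and $X+1$, and invoke \cref{prop:2-defective-coverage}. The parity step you flag as delicate is automatic: a black (majority-color) face at the odd gap $X$ forces the black color onto the odd gaps, so the $\rho^k$ that places $F$ there already gives an odd-majority diagram and no extra $\mu$ is needed. The same holds for its direction-reversed counterpart; these are the two labelings the paper says to ``fix either'' of.
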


\begin{proof}
Let $P$ be a $P$-class of 2-defective arrangements.  Pick any
representative $\mathcal{A}$ of $P$ and any defect $F$, and choose a
side of $F$ on the spherical model.  Send the pseudoline carrying this
side to infinity via the projective rotation $\pi_p$
(\cref{def:projective-rotation}); $F$ then becomes an external defect.
The lines at infinity may be labeled freely.
Two of these labelings, differing by direction, place the chosen
side of $F$ on the initial external boundary between wires $X$ and
$X+1$.  Fix either.  By \cref{prop:2-defective-coverage} the
resulting wiring diagram appears in the output of the 2-defective
search at parameter $X$, directly or as the image of a leaf under a
defect jump.
\end{proof}

\begin{remark}[Representatives per \texorpdfstring{$P$}{P}-class realized by the 2-defective search]
\label{rem:2-defective-representatives}
In the proof of \cref{thm:2-defective-completeness}, the choice of
a defect, a side of it, and a direction along the side forms a
\emph{flag} of the representative $\mathcal{A}$ of $P$.

The symmetry group $G_P$ acts on the $n+1$ projective lines (\cref{def:p-symmetry-group}),
hence on the defects and their flags.  We show this action is free.
Suppose $t\in G_P$ fixes a flag.  Choose a line at infinity
and the labeling start and direction along it according to the flag;
they are also fixed by $t$. By \cref{lem:adjacency-preservation},
this determines the labeling itself through the crossing-adjacency, so $t=\mathrm{id}$.
Two flags produce the same wiring diagram exactly when their labelings
differ by an element of $G_P$.
The representatives of $P$ are therefore in bijection with the $G_P$-orbits
of flags, each of size $|G_P|$ by freeness:
\[
  \operatorname{rep}_{\text{2-defective}}(P) =
  \begin{cases}
    16/|G_P| & \text{for two quadrilateral defects }(2\cdot 4\cdot 2),\\
    10/|G_P| & \text{for a single pentagonal defect }(1\cdot 5\cdot 2).
  \end{cases}
\]
The result is independent of $X$. The groups $G_P$ that occur are listed in
\cref{tab:sym-combined}.
\end{remark}

By \cref{thm:2-defective-completeness}, a single $X$ suffices for
completeness.  Running the search at multiple values of $X$ provides
a self-consistency check on the implementation: the $P$-class set
after the deduplication of \cref{sec:classification} must be
invariant under the choice of $X$.  Numerical results of this
consistency check can be found in \cref{app:special-self-consistency}.

\section{Computational results}
\label{sec:computational-results}

Our implementations print one word per arrangement, annotated with the two face
counts maximized in the Kobon and Arnold problems: the triangle number
\(K = a_3(\mathcal{A})\) and the black excess \(A=b-w\).  The full outputs
are stored in the \texttt{data/} directory of the public
repository~\cite{parpalak-utkin-pseudoline-algorithms}, along with
the search implementations (\texttt{search/}) and the classification
tools (\texttt{combinatorics/}).

\subsection{Extracting triangle-maximal arrangements}
\label{subsec:triangle-maximal-classes}

The two searches enumerate the black-ma\-xi\-mal family of
\cref{prob:main}.  For $n\equiv 3,5\pmod 6$, by
\cref{lem:defect-free-perfect}, every black-maximal
arrangement is perfect, so it attains the affine triangle
bound \eqref{eq:a3-bound}, $a_3(\mathcal{A}) = \lfloor n(n-2)/3\rfloor$;
the perfect search of \cref{sec:perfect-search} thereby enumerates
tri\-angle-ma\-xi\-mal arrangements directly.
For $n\equiv 1\pmod 6$, triangle-maximal arrangements are extracted
from the 2-defective family of \cref{sec:2-defective-search} by the
projective and affine refinements.

\emph{Projective.}  By \cref{lem:optimal-defects-2q1p} a 2-defective
arrangement carries either one pentagonal defect or two quadrilateral
defects, and the pentagon case has one more projective triangle.
Pentagonal-defect arrangements are therefore projectively
triangle-maximal.  This matches \cite[Lemma~3.1]{blanc-2011}, where the
same statement is proved directly in the projective formulation.
Pentagonal-defect arrangements first appear at $n=25$.  For
$n=7,13,19$, the entire 2-defective family is two-quadrilateral and
projectively triangle-maximal as a whole.

\emph{Affine.}  By \cref{lem:a3-max-characterization}, at
$n\equiv 1\pmod 6$ an arrangement attains
\(a_3(\mathcal{A}) = \lfloor n(n-2)/3\rfloor\) if and only if it is
2-defective with all defects external.  To enumerate all $a_3$-maximal
arrangements one expands each \(P\)-class of the search output into its
constituent \(E\)-classes (one per affine chart) and keeps those
\(E\)-classes whose defects are all external.
When a single example arrangement is available rather than an exhaustive
enumeration, its internal defects can often be made external by a defect jump
(\cref{def:defect-jump}) or a change of affine chart.  The first-hit
output shows this (files \texttt{data/partial/*.words-part.txt}): a leaf
and its defect-jump image occupy consecutive lines, differing by one in
the triangle number \(K\).

\subsection{Exhaustive enumerations}
\label{subsec:exhaustive-enumerations}

\Cref{tab:complete-counts} gives the summary of the exhaustive
enumerations we were able to carry out.
\(\#D\) is the number of wiring diagrams (equivalently the number of
reduced words modulo commutations~\eqref{eq:commute} or of distinct
\(O\)-matrices, see \cref{lem:o-matrix-commutation,rem:o-matrix-wiring}).
\(\#E\) and \(\#P\) are the numbers of \(E\)- and \(P\)-classes
(\cref{def:e-equivalence,def:p-equivalence}), which identify wiring
diagrams under Euclidean rotations and reflections, and changes of
affine chart.

\begin{table}[ht]
\centering
\captionsetup{width=0.68\linewidth}
\caption{Exhaustive enumeration counts. \emph{$^\dagger$The column for $n=25$ records the
single-pentagon defect family only.}}
\label{tab:complete-counts}
\setlength{\tabcolsep}{2.93pt}
\begin{tabular*}{\linewidth}{l@{\hspace{1em}}rrrrrrrrrrrrr}
\toprule
\(n\) & 3 & 5 & 7 & 9 & 11 & 13 & 15 & 17 & 19 & 21 & 23 & \textit{25 (5-gon)}$^\dagger\!\!$ & 27 \\
\midrule
\#\(D\) & 1 & 1 & 28 & 3 & 0 & 1\,456 & 96 & 255 & 991\,040 & 9\,548 & 104\,512 & \textit{3\,021\,200} & 85\,562\,064 \\
\#\(E\) & 1 & 1 & 3 & 1 & 0 & 60 & 4 & 10 & 26\,084 & 236 & 2\,272 & \textit{60\,424} & 1\,584\,540 \\
\#\(P\) & 1 & 1 & 1 & 1 & 0 & 6 & 1 & 3 & 1\,312 & 18 & 112 & \textit{2\,324} & 56\,646 \\
\bottomrule
\end{tabular*}
\end{table}

The zero entry at $n=11$ shows the known non-existence of perfect
arrangements in this case~\cite{blanc-2011}.  Comparisons with the counts
published in prior work are collected in \cref{subsec:published-enumerations}.

The counts in \cref{tab:complete-counts} are obtained by different methods for the
two searches, because each covers a different level of the class
hierarchy.  The perfect search emits one word per commutation class
(\cref{cor:perfect-class-output}), hence one wiring diagram, so the
output size is already \(\#D\).  Deduplication under \(E\)- and
\(P\)-equivalence (\cref{def:e-equivalence,def:p-equivalence}),
implemented by computing a canonical representative per class
(\cref{app:canonicalization:forms}), gives \(\#E\) and \(\#P\).
\begin{center}
\begin{tikzpicture}[node distance=3.5cm,
  block/.style={draw, rounded corners, text width=2.4cm, align=center,
                font=\small, minimum height=1cm, inner sep=4pt},
  >=latex
]
\node[block] (a) {one word per wiring diagram};
\node[block, right of=a] (b) {one word per $E$-class};
\node[block, right of=b] (c) {one word per $P$-class};
\draw[->] ([xshift=-1.5cm,yshift=0.2cm]a.north) to[out=0, in=150] node[above, yshift=2.2pt] {\footnotesize perfect run} (a);
\draw[->] (a) to[out=30, in=150] node[above] {\footnotesize deduplication by $E$} (b);
\draw[->] (b) to[out=30, in=150] node[above] {\footnotesize deduplication by $P$} (c);
\end{tikzpicture}
\end{center}

The 2-defective search emits several words per \(P\)-class
(\cref{rem:2-defective-representatives}), so its output is deduplicated to
\(P\)-classes first, giving \(\#P\).  The constituent \(E\)-classes and
wiring diagrams can be recovered by the reverse process: each
\(P\)-class representative is expanded by applying the projective
rotations \(\pi_p\) and Euclidean rotations \(\rho^k\)
(\cref{def:projective-rotation,def:euclidean-rotation}) to its
\(O\)-matrix, and \(\#E\) and \(\#D\) are obtained by aggregating the
distinct results.
\begin{center}
\begin{tikzpicture}[node distance=3cm,
  block/.style={draw, rounded corners, text width=2.4cm, align=center,
                font=\small, minimum height=1cm, inner sep=4pt},
  >=latex
]
\node[block] (a) {several words per $P$-class};
\node[block, right of=a] (b) {one word per $P$-class};
\node[block, right of=b] (c) {one word per $E$-class};
\node[block, right of=c] (d) {one word per wiring diagram};
\draw[->] ([xshift=-1.5cm,yshift=0.2cm]a.north) to[out=0, in=150] node[above, yshift=2.2pt] {\footnotesize 2-defective run} (a);
\draw[->] (a) to[out=30, in=150] node[above] {\footnotesize deduplication by $P$} (b);
\draw[->] (b) to[out=30, in=150] node[above, align=center] {\footnotesize $\pi_p$, dedup.\ by $E$} (c);
\draw[->] (c) to[out=30, in=150] node[above, align=center] {\footnotesize $\rho^k$, dedup.\ by comm.} (d);
\end{tikzpicture}
\end{center}

The enumerations were performed on personal computers, not in a
controlled environment, so the single-core CPU times below are approximate:
well under a second for $n \le 17$; for the perfect search,
${\sim}\,3$\,s at $n=21$ and ${\sim}\,1.8$\,min at $n=23$;
for the 2-defective search, ${\sim}\,8$\,s at $n=19$ and
${\sim}\,3.3$\,min at $n=25$ (5-gon).
The $n=27$ case would require ${\sim}\,7$ single-core days and
was produced by parallel computation.
Deduplicating its output to the $56\,646$ $P$-classes requires storing
one canonical $O$-matrix per $E$-class in memory (about $1.6$ million
representatives, or ${\sim}\,1.3$\,GB).

The $25$ (5-gon) column of \cref{tab:complete-counts} records only the
single-pentagon defect family.
After projective closure, it gives the triangle-maximal arrangements
of $n=26$ projective pseudolines.
We tested this family for stretchability~\cite{parpalak-utkin-2026}:
some representatives are stretchable, but none of them admit the
iterative constructions
of~\cite{forge-ramirez-1998,bartholdi-blanc-loisel-2007}.

The two-quadrilateral case at $n = 25$ is also algorithmically reachable,
but we chose not to run it, given the size of the expected
output.  The search would emit ${\sim}\,200$\,GB
of words and yield an estimated ${\sim}\,10^7$ projective classes.
The output is too large to deduplicate in memory, and we saw no reason
to produce that many representatives.

\subsection{Symmetry profiles}
\label{subsec:symmetry-profiles}

For every \(P\)-class in the exhaustive enumerations of
\cref{tab:complete-counts} we record its symmetry profile: the
\(P\)-symmetry group \(G_P\) (\cref{def:p-symmetry-group}) together with
the orbit--stabilizer data of its constituent \(E\)-classes, computed
as in \cref{app:canonicalization:groups}.
\Cref{tab:sym-combined} collects these profiles; drawings illustrating
them are given in \cref{app:symmetry-drawings}.  The caption of
\cref{fig:symmetry-n15-n17-a} explains one profile in detail.

Column legend:
\begin{itemize}[label=$\circ$, itemsep=0pt]
\item \(G_P\), \(|G_P|\) --- type and order of the \(P\)-symmetry group.
\item \(\#E/P\) --- number of \(E\)-classes of the given type inside
  one \(P\)-class.
\item \(m(E)\) --- multiplicity (\cref{def:multiplicity}).
\item \(\operatorname{rep}(E)\) --- number of wiring diagrams
  (\(O\)-matrices, \cref{rem:o-matrix-wiring}) in one \(E\)-class, equal
  to \(2n/|H_E|\) (\cref{lem:parity-fixed-representatives}).
\item \(H_E\) --- Euclidean stabilizer (\cref{def:euclidean-stabilizer}),
  the symmetry group of an \(E\)-class of this row: cyclic \(C_k\) or
  dihedral \(D_k\), \(D_1\) a single mirror reflection.
\item \(\operatorname{rep}(P)\) --- all wiring diagrams in one \(P\)-class
of this profile.
\item \(\#P\) --- number of \(P\)-classes sharing this profile.
\item \(\#E\), \(\#D\) --- total \(E\)-classes and wiring diagrams
  contributed by this profile (\(\#P\cdot\sum_{\text{rows}}(\#E/P)\) and
  \(\#P\cdot\operatorname{rep}(P)\) respectively).
\end{itemize}

\begin{table}[p]
\centering
\captionsetup{width=\linewidth}
\caption{Symmetry profiles of the \(P\)-classes.
\(G_P\): \(P\)-symmetry group;
\(\#E/P\): \(E\)-classes of this type in one \(P\)-class;
\(m(E)\): multiplicity;
\(\operatorname{rep}(\cdot)\): all wiring diagrams in one class;
\(H_E\): symmetry group of an \(E\)-class.
\emph{$^\dagger$The row for \(n=25\) records the single-pentagon defect family,
two-quadrilateral arrangements are skipped.}}
\label{tab:sym-combined}
\setlength{\tabcolsep}{4pt}
\renewcommand{\arraystretch}{0.96}
\setlength{\aboverulesep}{0.05ex}
\setlength{\belowrulesep}{0.355ex}
\begin{tabular*}{\linewidth}{@{}l@{\hspace{0.7em}}l@{\extracolsep{\fill}}rrrrlrrrr@{}}
\toprule
 & & & \multicolumn{4}{c}{$E$-classes} & & \multicolumn{3}{c}{counts} \\
\cmidrule(lr){4-7}\cmidrule(lr){9-11}
$n$ & $G_P$ & $|G_P|$ & \#$E/P$ & $m(E)$ & $\operatorname{rep}(E)$ & $H_E$ & $\operatorname{rep}(P)$ & \#$P$ & \#$E$ & \#$D$ \\
\midrule
$3$ & $S_{4}$ & 24 & 1 & 4 & 1 & $D_{3}$ & 1 & 1 & 1 & 1 \\
\midrule
$5$ & $A_{5}$ & 60 & 1 & 6 & 1 & $D_{5}$ & 1 & 1 & 1 & 1 \\
\midrule
\multirow{2}{*}{$7$} & \multirow{2}{*}{$D_{2}$} & \multirow{2}{*}{4} & 2 & 2 & 7 & $D_{1}$ & \multirow{2}{*}{28} & \multirow{2}{*}{1} & \multirow{2}{*}{3} & \multirow{2}{*}{28} \\
 &  &  & 1 & 4 & 14 & $C_{1}$ &  &  &  &  \\
\midrule
$9$ & $A_{5}$ & 60 & 1 & 10 & 3 & $D_{3}$ & 3 & 1 & 1 & 3 \\
\midrule
\multirow{4}{*}{$13$} & \multirow{2}{*}{$C_{2}$} & \multirow{2}{*}{2} & 2 & 1 & 13 & $D_{1}$ & \multirow{2}{*}{182} & \multirow{2}{*}{4} & \multirow{2}{*}{32} & \multirow{2}{*}{728} \\
 &  &  & 6 & 2 & 26 & $C_{1}$ &  &  &  &  \\
\cmidrule{2-11}
 & $C_{1}$ & 1 & 14 & 1 & 26 & $C_{1}$ & 364 & 2 & 28 & 728 \\
\cmidrule{2-11}
 & \multicolumn{7}{@{}l}{Totals for $n=13$} & 6 & 60 & 1\,456 \\
\midrule
\multirow{2}{*}{$15$} & \multirow{2}{*}{$C_{5}$} & \multirow{2}{*}{5} & 1 & 1 & 6 & $C_{5}$ & \multirow{2}{*}{96} & \multirow{2}{*}{1} & \multirow{2}{*}{4} & \multirow{2}{*}{96} \\
 &  &  & 3 & 5 & 30 & $C_{1}$ &  &  &  &  \\
\midrule
\multirow{5}{*}{$17$} & \multirow{2}{*}{$D_{3}$} & \multirow{2}{*}{6} & 2 & 3 & 17 & $D_{1}$ & \multirow{2}{*}{102} & \multirow{2}{*}{2} & \multirow{2}{*}{8} & \multirow{2}{*}{204} \\
 &  &  & 2 & 6 & 34 & $C_{1}$ &  &  &  &  \\
\cmidrule{2-11}
 & \multirow{2}{*}{$A_{4}$} & \multirow{2}{*}{12} & 1 & 6 & 17 & $D_{1}$ & \multirow{2}{*}{51} & \multirow{2}{*}{1} & \multirow{2}{*}{2} & \multirow{2}{*}{51} \\
 &  &  & 1 & 12 & 34 & $C_{1}$ &  &  &  &  \\
\cmidrule{2-11}
 & \multicolumn{7}{@{}l}{Totals for $n=17$} & 3 & 10 & 255 \\
\midrule
\multirow{5}{*}{$19$} & $C_{1}$ & 1 & 20 & 1 & 38 & $C_{1}$ & 760 & 1\,296 & 25\,920 & 984\,960 \\
\cmidrule{2-11}
 & $C_{2}$ & 2 & 10 & 2 & 38 & $C_{1}$ & 380 & 12 & 120 & 4\,560 \\
\cmidrule{2-11}
 & \multirow{2}{*}{$C_{2}$} & \multirow{2}{*}{2} & 2 & 1 & 19 & $D_{1}$ & \multirow{2}{*}{380} & \multirow{2}{*}{4} & \multirow{2}{*}{44} & \multirow{2}{*}{1\,520} \\
 &  &  & 9 & 2 & 38 & $C_{1}$ &  &  &  &  \\
\cmidrule{2-11}
 & \multicolumn{7}{@{}l}{Totals for $n=19$} & 1\,312 & 26\,084 & 991\,040 \\
\midrule
\multirow{9}{*}{$21$} & \multirow{2}{*}{$C_{3}$} & \multirow{2}{*}{3} & 1 & 1 & 14 & $C_{3}$ & \multirow{2}{*}{308} & \multirow{2}{*}{8} & \multirow{2}{*}{64} & \multirow{2}{*}{2\,464} \\
 &  &  & 7 & 3 & 42 & $C_{1}$ &  &  &  &  \\
\cmidrule{2-11}
 & $C_{1}$ & 1 & 22 & 1 & 42 & $C_{1}$ & 924 & 7 & 154 & 6\,468 \\
\cmidrule{2-11}
 & \multirow{3}{*}{$A_{4}$} & \multirow{3}{*}{12} & 1 & 4 & 14 & $C_{3}$ & \multirow{3}{*}{77} & \multirow{3}{*}{2} & \multirow{3}{*}{6} & \multirow{3}{*}{154} \\
 &  &  & 1 & 6 & 21 & $D_{1}$ &  &  &  &  \\
 &  &  & 1 & 12 & 42 & $C_{1}$ &  &  &  &  \\
\cmidrule{2-11}
 & \multirow{2}{*}{$C_{2}$} & \multirow{2}{*}{2} & 2 & 1 & 21 & $D_{1}$ & \multirow{2}{*}{462} & \multirow{2}{*}{1} & \multirow{2}{*}{12} & \multirow{2}{*}{462} \\
 &  &  & 10 & 2 & 42 & $C_{1}$ &  &  &  &  \\
\cmidrule{2-11}
 & \multicolumn{7}{@{}l}{Totals for $n=21$} & 18 & 236 & 9\,548 \\
\midrule
\multirow{3}{*}{$23$} & $C_{1}$ & 1 & 24 & 1 & 46 & $C_{1}$ & 1\,104 & 86 & 2\,064 & 94\,944 \\
\cmidrule{2-11}
 & $C_{3}$ & 3 & 8 & 3 & 46 & $C_{1}$ & 368 & 26 & 208 & 9\,568 \\
\cmidrule{2-11}
 & \multicolumn{7}{@{}l}{Totals for $n=23$} & 112 & 2\,272 & 104\,512 \\
\midrule
$\mathit{25^{\dagger}}$ & \textit{$C_{1}$} & \textit{1} & \textit{26} & \textit{1} & \textit{50} & \textit{$C_{1}$} & \textit{1\,300} & \textit{2\,324} & \textit{60\,424} & \textit{3\,021\,200} \\
\midrule
\multirow{4}{*}{$27$} & $C_{1}$ & 1 & 28 & 1 & 54 & $C_{1}$ & 1\,512 & 56\,560 & 1\,583\,680 & 85\,518\,720 \\
\cmidrule{2-11}
 & \multirow{2}{*}{$C_{3}$} & \multirow{2}{*}{3} & 1 & 1 & 18 & $C_{3}$ & \multirow{2}{*}{504} & \multirow{2}{*}{86} & \multirow{2}{*}{860} & \multirow{2}{*}{43\,344} \\
 &  &  & 9 & 3 & 54 & $C_{1}$ &  &  &  &  \\
\cmidrule{2-11}
 & \multicolumn{7}{@{}l}{Totals for $n=27$} & 56\,646 & 1\,584\,540 & 85\,562\,064 \\
\bottomrule

\end{tabular*}
\end{table}

Within each profile, \(\sum_{\text{rows}} (\#E/P)\cdot m(E) = n+1\), the number of
affine charts of a \(P\)-class.  When an \(n\) has several profiles, the
table's total row reproduces the \(\#P\), \(\#E\), \(\#D\) columns of
\cref{tab:complete-counts}.  For 2-defective \(n\) this is tautological:
those counts were filled by the same \(P\)-class expansion.  Here
\(\operatorname{rep}(P)\) counts \emph{all} wiring diagrams of a \(P\)-class;
the 2-defective search realizes only the subset
\(\operatorname{rep}_{\text{2-defective}}(P)\) of \cref{rem:2-defective-representatives}.
For perfect \(n\) the agreement is an independent
check: there \(\#D\) is the raw search output
(\cref{cor:perfect-class-output}), so reproducing it from the
\(P\)-classes confirms that the search loses no wiring diagram, that the
deduplication is correct, and that arrangements are correctly
reconstructed from their \(P\)-classes.

\subsection{Comparison with prior enumerations}
\label{subsec:published-enumerations}

The counts of \cref{tab:complete-counts,tab:sym-combined} admit
independent cross-checks against two prior enumerations, collected in
\cref{tab:published-comparison}.  Wherever a comparison is defined, the
published counts agree with ours.

Bokowski, Roudneff, and Strempel (BRS) work in the projective plane
\cite{bokowski-roudneff-strempel-1997}, with one more line than our
affine \(n\).  A triangle-maximal arrangement in their sense is a
combinatorial type of projective arrangement, that is, a \(P\)-class
(\cref{def:p-equivalence}), so their counts match our \(\#P\).  Savchuk \cite{savchuk-2025} counts
affine tables, one per \(E\)-class (\cref{def:e-equivalence}), so his
totals match our \(\#E\), and, when a symmetry is imposed, the number of
\(E\)-classes whose stabilizer \(H_E\) contains it.

\begin{table}[H]
\centering
\caption{Published enumeration counts compared with our results.
Parenthetical tags are each source's symmetry option: BRS's ``order 3'';
Savchuk's rotational \texttt{-R}\,\(k\) and mirror \texttt{-M}
(our \(C_k\) and \(D_1\)).
}
\label{tab:published-comparison}
\setlength{\tabcolsep}{4pt}
\renewcommand{\arraystretch}{1.02}
\begin{tabular*}{\linewidth}{@{\extracolsep{\fill}}llll@{}}
\toprule
\(n\), sym.\ & BRS (\(n{+}1\)) & Savchuk & counts in \cref{tab:complete-counts,tab:sym-combined} \\
\midrule
\(n=3,5,9\) & 1 & 1  & \(\#P=\#E=1\) \\
\addlinespace
\(n=11\) & 0 & 0 & \(\#P=\#E=0\) \\
\addlinespace
\(n=15\) & 1 & 4 & \(\#P=1\), \(\#E=4\) \\
\addlinespace
\(n=15\), \(C_3\) & -- & 0\,(\texttt{-R}\,3) & no \(C_3\subseteq H_E\) \\
\addlinespace
\(n=15\), \(C_5\) & -- & 1\,(\texttt{-R}\,5) & 1 \(P\)-class: \(G_P=C_5\), \(H_E=C_5\), \(\#E/P=1\) \\
\addlinespace
\(n=17\) & 3 & 10 & \(\#P=3\), \(\#E=10\) \\
\addlinespace
\(n=21\) & -- & 236 & \(\#E=236\) \\
\addlinespace
\(n=21\), \(D_1\) & -- & 4\,(\texttt{-M})
  & \(\!\!\left[\begin{array}{@{}l@{}}
      2\ P\text{-classes:}\ G_P=A_4,\ H_E=D_1,\ \#E/P=1\\
      1\ P\text{-class:}\ G_P=C_2,\ H_E=D_1,\ \#E/P=2
    \end{array}\right.\) \\
\addlinespace
\(n=21\), \(C_3\) & 10\,(order 3) & 10\,(\texttt{-R}\,3)
  & \(\!\!\left[\begin{array}{@{}l@{}}
      8\ P\text{-classes:}\ G_P=C_3,\ H_E=C_3,\ \#E/P=1\\
      2\ P\text{-classes:}\ G_P=A_4,\ H_E=C_3,\ \#E/P=1
    \end{array}\right.\) \\
\addlinespace
\(n=21\), \(C_7\) & -- & 0\,(\texttt{-R}\,7) & no \(C_7\subseteq H_E\) \\
\addlinespace
\(n=23\), \(D_1\) & -- & 0\,(\texttt{-M}) & no \(D_1\subseteq H_E\) \\
\addlinespace
\(n=27\), \(C_3\) & 86\,(order 3) & 86\,(\texttt{-R}\,3) & 86 \(P\)-classes: \(G_P=C_3\), \(H_E=C_3\), \(\#E/P=1\) \\
\addlinespace
\(n=27\), \(D_1\) & -- & 0\,(\texttt{-M}) & no \(D_1\subseteq H_E\) \\
\addlinespace
\(n=27\), \(C_9\) & -- & 0\,(\texttt{-R}\,9) & no \(C_9\subseteq H_E\) \\
\bottomrule
\end{tabular*}
\end{table}

Rows without symmetry are read from \cref{tab:complete-counts}; rows
with a symmetry are read from \cref{tab:sym-combined}.  A zero
entry means no \(E\)-class has an \(H_E\) containing the listed subgroup.
For the 2-defective cases (\(n\equiv1\pmod6\)) Savchuk reports the
number of arrangements with a given placement of the unused segments.
This is neither a total count nor a count with an imposed symmetry, so
it matches none of our counts, and we did not compare it.

The two enumerations produce their counts by different means.  BRS's
algorithm constructs arrangements as hyperline sequences by constraint
propagation, one per wiring diagram; the
projective-class counts they publish therefore rest on a deduplication
to combinatorial types that their paper does not describe.  Savchuk's
SAT-based search reaches the \(E\)-class level directly: each
arrangement it finds is excluded, together with its full orbit under the
Euclidean rotations and reflections
(\cref{def:euclidean-rotation,def:reflection}), before the next is
generated.

We also reconstructed the algorithm of BRS.  Its completed hyperline
sequences carry the same data as an \(O\)-matrix, which we convert to
reduced words (\cref{app:word-reconstruction}).
The reconstruction enumerates all perfect
arrangements through \(n=23\) in about \(1.3\) hours on a single core,
and its
output coincides with that of the perfect search as a set of wiring
diagrams, up to a global reflection.  This is a check at the level of
the individual diagrams, not only of the counts.  Unlike our search,
whose enumeration tree is fixed, the reconstruction forms its tree at
runtime from the current partial data structure; these runtime choices
govern its total running time, not only the time to the first
arrangement (cf.\ \cref{rem:branching-order}).

These cross-checks rest on methods of an entirely different nature
from our word-based search, so their agreement is an independent check
of our implementation.

\subsection{Partial and first-hit searches}
\label{subsec:first-hit}

For $n > 27$, exhaustive enumeration becomes impractical on personal
computers in both time and output size, so we instead run the
same algorithms only until they emit one example arrangement.

The resulting leaves are distributed irregularly across the search tree, so
the time to the first one depends strongly on the order in which the tree is
traversed.  We observed that, for perfect arrangements, choosing
the ascending order (\cref{rem:branching-order}) and the first composite
generator $K_{n-3}$ leads to fast runs.  The results are summarized in
\cref{tab:first-hit-perfect}.
\(T\) is not monotone in~$n$, illustrating the irregularity.

\begin{table}[H]
\centering
\setlength{\tabcolsep}{3pt}%
\caption{First-hit times for perfect arrangements found by the implemented
algorithm of \cref{subsec:algorithm}.
$T$ is a normalized single-core time derived from iteration counts.\protect\footnotemark}
\label{tab:first-hit-perfect}
\newcommand{\tu}[1]{{\footnotesize\,#1}}
\begin{tabular*}{0.9\linewidth}{@{\extracolsep{\fill}}l*{11}{p{2.2em}}@{}}
\toprule
\(n\)\, & 29 & 33 & 35 & 39 & 41 & 45 & 47 & 51 & 53 & 57 & 59 \\
\(T\)\, & $25$\tu{$\mu$s} & $11$\tu{ms} & $81$\tu{$\mu$s} & $19$\tu{ms} & $32$\tu{ms} & $5$\tu{ms} & $80$\tu{ms} & $10$\tu{ms} & $26$\tu{ms} & $5.5$\tu{s} & $4.9$\tu{s} \\
\midrule
\(n\)\, & 63 & 65 & 69 & 71 & 75 & 77 & 81 & 83 & 87 & 89 & 93 \\
\(T\)\, & $31$\tu{s} & $1.7$\tu{m} & $3.3$\tu{m} & $5.0$\tu{h} & $12$\tu{s} & $43$\tu{m} & $3.0$\tu{m} & $26$\tu{m} & $2.5$\tu{h} & $41$\tu{h} & $16$\tu{h} \\
\bottomrule
\end{tabular*}
\end{table}
\footnotetext{Directly measured wall-clock times are noisy and
machine-dependent, whereas the
DFS iteration count $N_{\mathrm{iter}}$ to the first hit is deterministic.
We report a normalized time as an intuitive indication of how far the search must
walk the tree to reach an arrangement, obtained from the iteration count as
$T=N_{\mathrm{iter}}/R$.  The single-core rate $R\approx3.1\times10^{8}$
iterations/s is measured across the perfect $n=39$--$81$ runs on one machine,
an AMD Ryzen AI 9 HX 370 (stable to within ${\pm}5\%$, no decay with~$n$); the 2-defective rate is less
stable, varying by about ${\pm}30\%$ across different runs in the reported time range.
$N_{\mathrm{iter}}$ is specific to our DFS implementation, which can be found in the
repository~\cite{parpalak-utkin-pseudoline-algorithms} together with the
arrangements and their iteration counts.}%

For the 2-defective search no fixed parameters work uniformly
across~$n$. Instead, for each $n$ we run a grid over the branch order, the
first composite generator $K_{g_1}$, and the skipped odd generator $X$
(\cref{lem:special-coverage-internal}) under a fixed per-run budget of iterations
(an iteration is one candidate generator tried at the current level), keeping the
arrangement reached in the fewest iterations.  If no run succeeds within the
budget, we raise it and repeat. \Cref{tab:first-hit-2defective}
lists the winning parameters and the resulting single-core time for each
$n$.

\begin{table}[H]\centering
\setlength{\tabcolsep}{3.6pt}%
\caption{First-hit times for 2-defective arrangements found by the implemented
algorithm of \cref{subsec:2-defective-algorithm}, with $X$, $g_1$, and
branch order chosen per instance.
$T$ is a normalized single-core time derived from iteration counts
(as in \cref{tab:first-hit-perfect}).}
\label{tab:first-hit-2defective}
\newcommand{\tu}[1]{{\footnotesize\,#1}}
\begin{tabular*}{0.9\linewidth}{@{\extracolsep{\fill}}l*{9}{p{2.6em}}@{}}
\toprule
\(n\)\,      & 31 & 37 & 43 & 49 & 55 & 61 & 67 & 73 & 79 \\
\midrule
\(X\)\,      & 5 & 11 & 7 & 3 & 27 & 15 & 65 & 11 & 11 \\
\(g_1\)\,    & 6 & 16 & 6 & 20 & 20 & 20 & 38 & 32 & 44 \\
branch       & desc & asc & desc & asc & asc & asc & asc & asc & asc \\
\midrule
\(T\)\,      & $4.7$\tu{$\mu$s} & $17$\tu{$\mu$s} & $0.68$\tu{ms} & $3.1$\tu{ms} & $5.5$\tu{ms} & $0.14$\tu{s} & $0.17$\tu{s} & $1.4$\tu{s} & $2.9$\tu{s} \\
\bottomrule
\end{tabular*}
\end{table}

To reach a single arrangement we relied only on varying the parameters
described above.  More
advanced methods (randomized restarts or heuristic estimates of where the
solutions lie) could reach the first arrangement sooner, but we leave these to
future work.

The examples reported above can be used in the known iterative constructions.
These constructions build a triangle-maximal arrangement from a smaller one:
doubling
$n\mapsto 2n-1$~\cite{roudneff-1986,forge-ramirez-1998,bartholdi-blanc-loisel-2007},
tripling $n\mapsto 3n$~\cite{harborth-1985,roudneff-1986}, and the
constructions $n\mapsto 7n$ and
$n\mapsto 8n-1$~\cite{bokowski-roudneff-strempel-1997}.
All of them apply to perfect arrangements; the doubling
of~\cite{bartholdi-blanc-loisel-2007} applies to 2-defective arrangements
as well (thus these constructions do not mix the perfect and 2-defective families).

An odd size $n$ is \emph{reducible} if some construction builds an
arrangement of size $n$ from one of smaller size, and \emph{irreducible}
otherwise.  The irreducible sizes are
$3, 7, 11, 19, 31, 43, 47, 55, 59, 67, 75, 79, 83, 91 \dots$
(no arrangement exists at the irreducible size $n=11$, the zero entry of
\cref{tab:complete-counts}).

Not every size in
\cref{tab:first-hit-perfect,tab:first-hit-2defective} requires a direct
example: at a reducible size an arrangement can be built from a smaller
one by the constructions mentioned above.  Among the remaining irreducible
sizes, $n=43,47,55,59,67,75,79,83$ have not been reported in the literature.
The words themselves are published in the
repository~\cite{parpalak-utkin-pseudoline-algorithms}.

\begin{corollary}[Existence range]
\label{cor:existence-range}
A triangle-maximal simple pseudoline arrangement, attaining the
bound~\eqref{eq:a3-bound}, exists for every odd $n\le 89$
except $n=11$, for which none exists.  The smallest odd size at which
existence remains open is $n=91$.
\end{corollary}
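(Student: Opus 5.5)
The plan splits the odd range $n\le 89$ by residue class and, within each class, by size, routing each size to either an exhaustive enumeration, a first-hit example, or an iterative construction. The non-existence at $n=11$ comes from the zero entry in \cref{tab:complete-counts}. By \cref{thm:perfect-completeness} the perfect search reaches every perfect arrangement, so an empty output at $n=11$ means no perfect arrangement exists. Since $11\equiv 5\pmod 6$, \cref{lem:defect-free-perfect} identifies triangle-maximal arrangements with perfect ones there; this agrees with~\cite{blanc-2011}.

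\emph{Existence for $n\equiv 3,5\pmod 6$.} For $n\le 27$, $n\ne 11$, existence is read off the positive $\#D$ entries of \cref{tab:complete-counts}. Every emitted word encodes a perfect arrangement by \cref{lem:composite-sufficiency}, and perfect arrangements attain~\eqref{eq:a3-bound}. For $29\le n\le 89$ I take the sizes listed in \cref{tab:first-hit-perfect}; each output word again encodes a perfect arrangement by \cref{lem:composite-sufficiency}.

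\emph{Existence for $n\equiv 1\pmod 6$.} For $n=7,13,19,25$ the exhaustive runs give 2-defective arrangements; for $31\le n\le 79$ I use \cref{tab:first-hit-2defective}. These arrangements are Arnold-optimal, but triangle-maximality in the affine sense needs more. By \cref{lem:a3-max-characterization}, I must exhibit a 2-defective arrangement with no internal defects. The route is the one in \cref{subsec:triangle-maximal-classes}:
\begin{itemize}
\item either apply a change of affine chart ($\pi_p$), sending a line carrying a side of each defect to infinity;
\item or apply a defect jump (\cref{def:defect-jump}), which preserves 2-defectiveness by \cref{lem:defect-jump-correct}, to move an internal defect onto the boundary.
\end{itemize}
The first-hit output already lists a leaf together with its jump image. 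So for each reported $n$ I would check that one listed word attains $K=\lfloor n(n-2)/3\rfloor$; the table values of $K$ verify this directly.

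\emph{Gaps and the main obstacle.} Two sizes are missing from the tables, $n=85$ and $n=91$. For $n=85$ I would use a construction: $85=7\cdot 13-6$ does not fit any rule, so I use doubling, $85=2\cdot 43-1$, from the perfect-or-2-defective example at $43$ via~\cite{bartholdi-blanc-loisel-2007}. Since $85\equiv 1\pmod 6$ and $43\equiv 1\pmod 6$, that doubling applies to 2-defective input. The delicate step, which I expect to be the main obstacle, is the bookkeeping here. I must verify that every odd $n\le 89$ absent from both first-hit tables is reducible from a size that is covered. I also must verify that the construction preserves the affine triangle-maximal property and not merely projective optimality. I would do this by listing each gap size with an explicit construction and source, citing the relevant constructions' guarantees.

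Finally, $n=91$ is irreducible by the listed closure of constructions and has no example, so its status is open. This gives the smallest open size, provided the irreducibility list is correct; that list follows from the arithmetic of the four constructions.
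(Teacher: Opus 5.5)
Your plan follows the paper's proof: exhaustive enumeration for $n\le 27$ (including non-existence at $n=11$), direct first-hit examples, and constructions for the remaining sizes. You split the work differently, taking a direct example at every tabulated size so that only $n=85=2\cdot 43-1$ needs a construction, and that part is fine.

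The step that fails as written is the last one. The irreducibility of $91$ does not follow from the arithmetic of the four constructions, because $91=7\cdot 13$ fits $n\mapsto 7n$ (just as $75=3\cdot 25$ fits tripling). The missing ingredient is the family restriction. Tripling, $n\mapsto 7n$ and $n\mapsto 8n-1$ take perfect arrangements to perfect ones, whose sizes are $\equiv 3,5\pmod 6$, so they cannot reach $91\equiv 1\pmod 6$; in particular $13$ has no perfect arrangement to feed them. Only doubling remains, and $91=2m-1$ forces the even source $m=46$, which is not a valid input. This is exactly the paper's argument, and without it your claim that $91$ is the smallest open size is unsupported.

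A smaller point concerns the first-hit tables, which report $T$, $X$, $g_1$ and the branch order, not $K$. The triangle counts are in the repository output, and the paper asserts attainment of \eqref{eq:a3-bound} only for the runs at irreducible sizes. At the reducible 2-defective sizes $37,49,61,73$ a first hit may still carry an internal defect. You should therefore either verify the stored $K$ there or use doubling ($37=2\cdot 19-1$, $49=2\cdot 25-1$, $61=2\cdot 31-1$, $73=2\cdot 37-1$), as the paper does. The issue you flag, that constructions must preserve the affine bound and not only projective optimality, is real. The paper simply takes this from the cited constructions, so in your version it is needed only at $n=85$.
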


\begin{proof}
For odd $n\le 27$ the statement follows from the exhaustive enumeration,
reported in \cref{tab:complete-counts}. Among the larger sizes the irreducible
ones are $31$, $43$, $47$, $55$, $59$, $67$, $75$, $79$, $83$
(then $91$, $95$, $\dots$); for each, a run reaching an arrangement
that attains the bound~\eqref{eq:a3-bound} is reported in
\cref{tab:first-hit-perfect,tab:first-hit-2defective}, and the arrangement
is stored in the repository~\cite{parpalak-utkin-pseudoline-algorithms}.
Every remaining odd $n\le 89$ is reducible and inherits
existence from a smaller size.  The next irreducible size,
$n=91$, has no direct example, and no construction produces
an arrangement of this size: it is 2-defective ($91\equiv 1\pmod 6$),
so only doubling could produce one, and $91=2m-1$ forces the even source $m=46$.
\end{proof}

\appendix
\numberwithin{table}{section}
\numberwithin{figure}{section}
\section{Drawings illustrating the symmetry profiles}
\label{app:symmetry-drawings}

\begingroup
\captionsetup{skip=4pt}
\setlength{\floatsep}{8pt plus 2pt minus 2pt}
\setlength{\textfloatsep}{10pt plus 2pt minus 2pt}
\setlength{\intextsep}{8pt plus 2pt minus 2pt}

We computed the symmetry profile of every \(P\)-class in our
exhaustive enumerations; the combined profile across all \(n\) is
collected in \cref{tab:sym-combined}.  Some
arrangements are drawn below on the sphere in
the antipodal model, with \(n+1\)
lines (the affine arrangement completed by the line at infinity, as in
\cref{subsec:checkerboard}).  White faces are colored by side count:
quadrilaterals yellow, pentagons red, hexagons green, heptagons
purple, octagons blue.  For small \(n\) we use stretched drawings; for
larger \(n\) the unstretched spherical drawings make the incidence
structure more visible.

Some profiles are highlighted in the captions of the figures below.  Two
remarks help in matching the \(G_P\) column of \cref{tab:sym-combined}
against the spherical drawings.

\emph{\(C_2\) and \(D_1\) coincide under antipodal identification.}
The \(G_P\) column reports the abstract isomorphism type, and we
write \(C_2\) for groups of order~2.  On the antipodal sphere the
same involution is realized by two transformations of \(S^2\).
In coordinates with the symmetry axis along \(z\), the
rotation by \(\pi\) about this axis sends \((x,y,z)\mapsto(-x,-y,z)\), the
reflection across the plane \(z=0\) sends
\((x,y,z)\mapsto(x,y,-z)\), and these two images are antipodal.
Both names therefore describe the same element of \(G_P\), and every
such figure shows both patterns at once: a \(C_2\) rotation about the
polar axis and a \(D_1\) reflection across the equatorial plane.

\emph{Polyhedral symmetry.} The polyhedral
groups \(S_4\), \(A_5\), \(A_4\) cannot be realized within the dihedral
group \(D_{2n}\) of a single affine chart, so wherever they appear as
\(G_P\), the symmetry is realized only in the projective plane.  All remaining
\(G_P\) groups in the enumeration are cyclic \(C_k\) (including the trivial
\(C_1\)) or dihedral \(D_k\) (these too may be visible only on the sphere,
e.g., \(D_2\) at \(n=7\), \(D_3\) at \(n=17\)).

The two largest, \(S_4\) and \(A_5\), occur only at the small \(n\)
and can be read directly off the sphere (the antipodal model shows
each independent direction twice).  Looking into a white face
center, one sees each of their arrangements repeat the same local picture from
several directions: the \(S_4\) arrangement (\(n=3\)) from six directions,
each carrying \(D_4\) symmetry, just as a cube has six faces
(\(6\cdot|D_4|=48=2|S_4|\)); the \(A_5\) arrangement (\(n=5\)) from
twelve directions, each \(D_5\), just as an icosahedron has twelve
vertices (\(12\cdot|D_5|=120=2|A_5|\)).  Looking into a black face
center, one sees \(D_3\) instead: eight
directions at \(n=3\) and twenty at \(n=5\), the same group decomposed
the other way (\(8\cdot|D_3|=48\), \(20\cdot|D_3|=120\)).

\begin{figure}[H]
\centering
\begin{minipage}{0.31\linewidth}
\centering
\begin{tikzpicture}[x=\linewidth,y=\linewidth]
\useasboundingbox (0,0) rectangle (1,1);
\clip (0,0) rectangle (1,1);
\definecolor{cell4}{RGB}{255,205,95}
\fill[cell4](0.461,0.998)..controls(0.407,0.994)and(0.351,0.967)..(0.301,0.917)..controls(0.35,0.903)and(0.414,0.865)..(0.484,0.806)..controls(0.554,0.748)and(0.628,0.668)..(0.69,0.584)..controls(0.779,0.613)and(0.853,0.635)..(0.9,0.649)..controls(0.949,0.662)and(0.972,0.667)..(0.972,0.664)..controls(0.937,0.768)and(0.866,0.859)..(0.774,0.919)..controls(0.682,0.979)and(0.57,1.007)..(0.461,0.998)--cycle;
\fill[cell4](0.984,0.627)..controls(1.003,0.553)and(1.003,0.47)..(0.982,0.39)..controls(0.961,0.311)and(0.919,0.234)..(0.861,0.171)..controls(0.857,0.155)and(0.85,0.143)..(0.841,0.134)..controls(0.906,0.195)and(0.955,0.274)..(0.98,0.361)..controls(1.005,0.447)and(1.007,0.54)..(0.984,0.627)--cycle;
\fill[cell4](0.37,0.017)..controls(0.374,0.016)and(0.377,0.015)..(0.381,0.014)..controls(0.365,0.018)and(0.35,0.023)..(0.336,0.028)..controls(0.347,0.024)and(0.359,0.02)..(0.37,0.017)--cycle;
\fill[cell4](0.301,0.917)..controls(0.248,0.864)and(0.203,0.785)..(0.174,0.692)..controls(0.145,0.599)and(0.135,0.493)..(0.144,0.394)..controls(0.062,0.363)and(0.024,0.346)..(0.025,0.342)..controls(-0.009,0.444)and(-0.008,0.557)..(0.026,0.659)..controls(0.06,0.761)and(0.127,0.851)..(0.216,0.912)..controls(0.235,0.925)and(0.264,0.927)..(0.301,0.917)--cycle;
\fill[cell4](0.861,0.171)..controls(0.798,0.102)and(0.716,0.051)..(0.629,0.025)..controls(0.543,-0.002)and(0.452,-0.004)..(0.37,0.017)..controls(0.316,0.034)and(0.263,0.075)..(0.223,0.141)..controls(0.182,0.207)and(0.153,0.296)..(0.144,0.394)..controls(0.207,0.419)and(0.296,0.451)..(0.394,0.485)..controls(0.493,0.519)and(0.599,0.555)..(0.69,0.584)..controls(0.753,0.5)and(0.803,0.413)..(0.832,0.34)..controls(0.862,0.267)and(0.87,0.208)..(0.861,0.171)--cycle;
\fill[black](0.972,0.664)..controls(0.972,0.667)and(0.949,0.662)..(0.9,0.649)..controls(0.853,0.635)and(0.779,0.613)..(0.69,0.584)..controls(0.753,0.5)and(0.803,0.413)..(0.832,0.34)..controls(0.862,0.267)and(0.87,0.208)..(0.861,0.171)..controls(0.919,0.234)and(0.961,0.311)..(0.982,0.39)..controls(1.003,0.47)and(1.003,0.553)..(0.984,0.627)..controls(0.98,0.64)and(0.977,0.652)..(0.972,0.664)--cycle;
\fill[black](0.861,0.171)..controls(0.798,0.102)and(0.716,0.051)..(0.629,0.025)..controls(0.543,-0.002)and(0.452,-0.004)..(0.37,0.017)..controls(0.374,0.016)and(0.377,0.015)..(0.381,0.014)..controls(0.461,-0.005)and(0.546,-0.005)..(0.626,0.016)..controls(0.706,0.037)and(0.78,0.078)..(0.841,0.134)..controls(0.85,0.143)and(0.857,0.155)..(0.861,0.171)--cycle;
\fill[black](0.144,0.394)..controls(0.153,0.296)and(0.182,0.207)..(0.223,0.141)..controls(0.263,0.075)and(0.316,0.034)..(0.37,0.017)..controls(0.359,0.02)and(0.347,0.024)..(0.336,0.028)..controls(0.264,0.053)and(0.198,0.095)..(0.144,0.149)..controls(0.09,0.203)and(0.049,0.27)..(0.025,0.342)..controls(0.024,0.346)and(0.062,0.363)..(0.144,0.394)--cycle;
\fill[black](0.69,0.584)..controls(0.599,0.555)and(0.493,0.519)..(0.394,0.485)..controls(0.296,0.451)and(0.207,0.419)..(0.144,0.394)..controls(0.135,0.493)and(0.145,0.599)..(0.174,0.692)..controls(0.203,0.785)and(0.248,0.864)..(0.301,0.917)..controls(0.35,0.903)and(0.414,0.865)..(0.484,0.806)..controls(0.554,0.748)and(0.628,0.668)..(0.69,0.584)--cycle;
\fill[black](0.461,0.998)..controls(0.407,0.994)and(0.351,0.967)..(0.301,0.917)..controls(0.264,0.927)and(0.235,0.925)..(0.216,0.912)..controls(0.288,0.961)and(0.373,0.992)..(0.461,0.998)--cycle;
\end{tikzpicture}
\end{minipage}\hfill
\begin{minipage}{0.31\linewidth}
\centering
\begin{tikzpicture}[x=\linewidth,y=\linewidth]
\useasboundingbox (0,0) rectangle (1,1);
\clip (0,0) rectangle (1,1);
\definecolor{cell5}{RGB}{255,140,140}
\fill[cell5](0.978,0.645)..controls(0.984,0.626)and(0.989,0.607)..(0.992,0.587)..controls(0.993,0.585)and(0.993,0.583)..(0.993,0.581)..controls(0.99,0.602)and(0.985,0.624)..(0.978,0.645)--cycle;
\fill[cell5](0.682,0.509)..controls(0.727,0.391)and(0.754,0.276)..(0.759,0.195)..controls(0.853,0.201)and(0.917,0.232)..(0.945,0.273)..controls(0.994,0.37)and(1.011,0.482)..(0.992,0.587)..controls(0.98,0.647)and(0.931,0.708)..(0.846,0.753)..controls(0.82,0.693)and(0.761,0.604)..(0.682,0.509)--cycle;
\fill[cell5](0.37,0.972)..controls(0.408,0.961)and(0.462,0.912)..(0.52,0.828)..controls(0.644,0.827)and(0.763,0.799)..(0.846,0.753)..controls(0.869,0.804)and(0.871,0.836)..(0.857,0.85)..controls(0.756,0.954)and(0.608,1.01)..(0.464,0.999)..controls(0.433,0.996)and(0.401,0.988)..(0.37,0.972)--cycle;
\fill[cell5](0.011,0.591)..controls(0.031,0.652)and(0.09,0.713)..(0.182,0.758)..controls(0.228,0.86)and(0.298,0.936)..(0.37,0.972)..controls(0.357,0.976)and(0.346,0.976)..(0.336,0.972)..controls(0.257,0.945)and(0.186,0.898)..(0.13,0.836)..controls(0.073,0.774)and(0.033,0.698)..(0.014,0.617)..controls(0.012,0.609)and(0.011,0.6)..(0.011,0.591)--cycle;
\fill[cell5](0.424,0.246)..controls(0.508,0.317)and(0.604,0.414)..(0.682,0.509)..controls(0.638,0.627)and(0.578,0.745)..(0.52,0.828)..controls(0.397,0.829)and(0.273,0.803)..(0.182,0.758)..controls(0.135,0.656)and(0.112,0.529)..(0.121,0.411)..controls(0.194,0.347)and(0.305,0.286)..(0.424,0.246)--cycle;
\fill[cell5](0.003,0.557)..controls(0.005,0.568)and(0.007,0.58)..(0.011,0.591)..controls(0.011,0.54)and(0.047,0.475)..(0.121,0.411)..controls(0.129,0.293)and(0.168,0.187)..(0.224,0.117)..controls(0.201,0.11)and(0.185,0.112)..(0.176,0.12)..controls(0.113,0.172)and(0.065,0.241)..(0.035,0.317)..controls(0.005,0.392)and(-0.006,0.476)..(0.003,0.557)--cycle;
\fill[cell5](0.759,0.195)..controls(0.764,0.113)and(0.748,0.065)..(0.72,0.051)..controls(0.719,0.05)and(0.718,0.05)..(0.717,0.049)..controls(0.607,-0.004)and(0.477,-0.015)..(0.36,0.02)..controls(0.311,0.034)and(0.263,0.067)..(0.224,0.117)..controls(0.267,0.129)and(0.339,0.174)..(0.424,0.246)..controls(0.542,0.205)and(0.665,0.187)..(0.759,0.195)--cycle;
\fill[cell5](0.72,0.051)..controls(0.72,0.051)and(0.72,0.051)..(0.72,0.051)..controls(0.815,0.097)and(0.896,0.177)..(0.945,0.273)..controls(0.948,0.278)and(0.95,0.283)..(0.953,0.287)..controls(0.905,0.185)and(0.821,0.1)..(0.72,0.051)--cycle;
\fill[black](0.992,0.587)..controls(1.011,0.482)and(0.994,0.37)..(0.945,0.273)..controls(0.948,0.278)and(0.95,0.283)..(0.953,0.287)..controls(0.995,0.378)and(1.01,0.482)..(0.993,0.581)..controls(0.993,0.583)and(0.993,0.585)..(0.992,0.587)--cycle;
\fill[black](0.846,0.753)..controls(0.931,0.708)and(0.98,0.647)..(0.992,0.587)..controls(0.989,0.607)and(0.984,0.626)..(0.978,0.645)..controls(0.955,0.722)and(0.913,0.793)..(0.857,0.85)..controls(0.871,0.836)and(0.869,0.804)..(0.846,0.753)--cycle;
\fill[black](0.52,0.828)..controls(0.578,0.745)and(0.638,0.627)..(0.682,0.509)..controls(0.761,0.604)and(0.82,0.693)..(0.846,0.753)..controls(0.763,0.799)and(0.644,0.827)..(0.52,0.828)--cycle;
\fill[black](0.182,0.758)..controls(0.273,0.803)and(0.397,0.829)..(0.52,0.828)..controls(0.462,0.912)and(0.408,0.961)..(0.37,0.972)..controls(0.298,0.936)and(0.228,0.86)..(0.182,0.758)--cycle;
\fill[black](0.336,0.972)..controls(0.346,0.976)and(0.357,0.976)..(0.37,0.972)..controls(0.401,0.988)and(0.433,0.996)..(0.464,0.999)..controls(0.42,0.996)and(0.377,0.987)..(0.336,0.972)--cycle;
\fill[black](0.121,0.411)..controls(0.112,0.529)and(0.135,0.656)..(0.182,0.758)..controls(0.09,0.713)and(0.031,0.652)..(0.011,0.591)..controls(0.011,0.54)and(0.047,0.475)..(0.121,0.411)--cycle;
\fill[black](0.224,0.117)..controls(0.168,0.187)and(0.129,0.293)..(0.121,0.411)..controls(0.194,0.347)and(0.305,0.286)..(0.424,0.246)..controls(0.339,0.174)and(0.267,0.129)..(0.224,0.117)--cycle;
\fill[black](0.014,0.617)..controls(0.012,0.609)and(0.011,0.6)..(0.011,0.591)..controls(0.007,0.58)and(0.005,0.568)..(0.003,0.557)..controls(0.006,0.577)and(0.009,0.597)..(0.014,0.617)--cycle;
\fill[black](0.176,0.12)..controls(0.185,0.112)and(0.201,0.11)..(0.224,0.117)..controls(0.263,0.067)and(0.311,0.034)..(0.36,0.02)..controls(0.292,0.04)and(0.229,0.074)..(0.176,0.12)--cycle;
\fill[black](0.72,0.051)..controls(0.72,0.051)and(0.72,0.051)..(0.72,0.051)..controls(0.719,0.05)and(0.718,0.05)..(0.717,0.049)..controls(0.718,0.05)and(0.719,0.05)..(0.72,0.051)--cycle;
\fill[black](0.945,0.273)..controls(0.896,0.177)and(0.815,0.097)..(0.72,0.051)..controls(0.748,0.065)and(0.764,0.113)..(0.759,0.195)..controls(0.853,0.201)and(0.917,0.232)..(0.945,0.273)--cycle;
\fill[black](0.424,0.246)..controls(0.508,0.317)and(0.604,0.414)..(0.682,0.509)..controls(0.727,0.391)and(0.754,0.276)..(0.759,0.195)..controls(0.665,0.187)and(0.542,0.205)..(0.424,0.246)--cycle;
\end{tikzpicture}
\end{minipage}\hfill
\begin{minipage}{0.31\linewidth}
\centering
\begin{tikzpicture}[x=\linewidth,y=\linewidth]
\useasboundingbox (0,0) rectangle (1,1);
\clip (0,0) rectangle (1,1);
\definecolor{cell4}{RGB}{255,205,95}
\definecolor{cell5}{RGB}{255,140,140}
\definecolor{cell6}{RGB}{135,215,135}
\fill[cell4](0.38,0.756)..controls(0.474,0.748)and(0.578,0.726)..(0.671,0.694)..controls(0.681,0.744)and(0.687,0.788)..(0.691,0.826)..controls(0.652,0.863)and(0.611,0.894)..(0.571,0.917)..controls(0.511,0.885)and(0.443,0.829)..(0.38,0.756)--cycle;
\fill[cell5](0.559,0.31)..controls(0.633,0.277)and(0.702,0.253)..(0.758,0.238)..controls(0.822,0.28)and(0.877,0.33)..(0.916,0.381)..controls(0.91,0.454)and(0.886,0.536)..(0.846,0.615)..controls(0.797,0.644)and(0.737,0.671)..(0.671,0.694)..controls(0.648,0.576)and(0.606,0.431)..(0.559,0.31)--cycle;
\fill[cell4](0.116,0.591)..controls(0.142,0.565)and(0.176,0.537)..(0.215,0.508)..controls(0.257,0.595)and(0.316,0.683)..(0.38,0.756)..controls(0.285,0.764)and(0.201,0.757)..(0.14,0.74)..controls(0.127,0.695)and(0.118,0.644)..(0.116,0.591)--cycle;
\fill[cell5](0.268,0.12)..controls(0.329,0.109)and(0.4,0.11)..(0.474,0.124)..controls(0.501,0.172)and(0.531,0.236)..(0.559,0.31)..controls(0.439,0.364)and(0.309,0.439)..(0.215,0.508)..controls(0.185,0.446)and(0.163,0.386)..(0.151,0.332)..controls(0.178,0.25)and(0.22,0.176)..(0.268,0.12)--cycle;
\fill[cell6](0.841,0.135)..controls(0.865,0.157)and(0.885,0.187)..(0.898,0.224)..controls(0.87,0.217)and(0.822,0.221)..(0.758,0.238)..controls(0.673,0.182)and(0.572,0.142)..(0.474,0.124)..controls(0.444,0.069)and(0.417,0.035)..(0.396,0.022)..controls(0.43,0.008)and(0.463,0.000)..(0.496,0)..controls(0.622,-0.002)and(0.749,0.048)..(0.841,0.135)--cycle;
\fill[cell5](0.372,0.017)..controls(0.379,0.015)and(0.387,0.017)..(0.396,0.022)..controls(0.352,0.042)and(0.308,0.075)..(0.268,0.12)..controls(0.222,0.128)and(0.183,0.143)..(0.15,0.163)..controls(0.158,0.141)and(0.17,0.124)..(0.184,0.113)..controls(0.239,0.068)and(0.304,0.035)..(0.372,0.017)--cycle;
\fill[cell6](0.078,0.232)..controls(0.095,0.206)and(0.119,0.182)..(0.15,0.163)..controls(0.136,0.203)and(0.135,0.262)..(0.151,0.332)..controls(0.124,0.415)and(0.112,0.506)..(0.116,0.591)..controls(0.073,0.632)and(0.051,0.666)..(0.046,0.69)..controls(0.036,0.679)and(0.029,0.668)..(0.025,0.656)..controls(-0.022,0.516)and(-0.002,0.356)..(0.078,0.232)--cycle;
\fill[cell5](0.048,0.714)..controls(0.045,0.707)and(0.044,0.699)..(0.046,0.69)..controls(0.066,0.711)and(0.097,0.728)..(0.14,0.74)..controls(0.173,0.85)and(0.237,0.928)..(0.309,0.961)..controls(0.302,0.959)and(0.297,0.957)..(0.291,0.954)..controls(0.185,0.906)and(0.097,0.819)..(0.048,0.714)--cycle;
\fill[cell5](0.916,0.381)..controls(0.945,0.42)and(0.966,0.459)..(0.978,0.497)..controls(0.993,0.472)and(0.998,0.449)..(0.995,0.428)..controls(0.985,0.363)and(0.963,0.299)..(0.928,0.242)..controls(0.923,0.234)and(0.913,0.227)..(0.898,0.224)..controls(0.914,0.268)and(0.921,0.321)..(0.916,0.381)--cycle;
\fill[cell6](0.691,0.826)..controls(0.696,0.887)and(0.693,0.93)..(0.683,0.952)..controls(0.704,0.952)and(0.722,0.948)..(0.737,0.94)..controls(0.859,0.875)and(0.951,0.758)..(0.984,0.624)..controls(0.994,0.586)and(0.993,0.543)..(0.978,0.497)..controls(0.956,0.535)and(0.911,0.576)..(0.846,0.615)..controls(0.807,0.693)and(0.752,0.767)..(0.691,0.826)--cycle;
\fill[cell5](0.309,0.961)..controls(0.315,0.964)and(0.322,0.967)..(0.328,0.97)..controls(0.436,1.009)and(0.558,1.01)..(0.666,0.972)..controls(0.673,0.969)and(0.679,0.963)..(0.683,0.952)..controls(0.652,0.952)and(0.613,0.94)..(0.571,0.917)..controls(0.473,0.974)and(0.378,0.986)..(0.309,0.961)--cycle;
\fill[black](0.671,0.694)..controls(0.737,0.671)and(0.797,0.644)..(0.846,0.615)..controls(0.807,0.693)and(0.752,0.767)..(0.691,0.826)..controls(0.687,0.788)and(0.681,0.744)..(0.671,0.694)--cycle;
\fill[black](0.758,0.238)..controls(0.822,0.221)and(0.87,0.217)..(0.898,0.224)..controls(0.914,0.268)and(0.921,0.321)..(0.916,0.381)..controls(0.877,0.33)and(0.822,0.28)..(0.758,0.238)--cycle;
\fill[black](0.14,0.74)..controls(0.201,0.757)and(0.285,0.764)..(0.38,0.756)..controls(0.443,0.829)and(0.511,0.885)..(0.571,0.917)..controls(0.473,0.974)and(0.378,0.986)..(0.309,0.961)..controls(0.237,0.928)and(0.173,0.85)..(0.14,0.74)--cycle;
\fill[black](0.474,0.124)..controls(0.572,0.142)and(0.673,0.182)..(0.758,0.238)..controls(0.702,0.253)and(0.633,0.277)..(0.559,0.31)..controls(0.531,0.236)and(0.501,0.172)..(0.474,0.124)--cycle;
\fill[black](0.215,0.508)..controls(0.309,0.439)and(0.439,0.364)..(0.559,0.31)..controls(0.606,0.431)and(0.648,0.576)..(0.671,0.694)..controls(0.578,0.726)and(0.474,0.748)..(0.38,0.756)..controls(0.316,0.683)and(0.257,0.595)..(0.215,0.508)--cycle;
\fill[black](0.151,0.332)..controls(0.163,0.386)and(0.185,0.446)..(0.215,0.508)..controls(0.176,0.537)and(0.142,0.565)..(0.116,0.591)..controls(0.112,0.506)and(0.124,0.415)..(0.151,0.332)--cycle;
\fill[black](0.046,0.69)..controls(0.051,0.666)and(0.073,0.632)..(0.116,0.591)..controls(0.118,0.644)and(0.127,0.695)..(0.14,0.74)..controls(0.097,0.728)and(0.066,0.711)..(0.046,0.69)--cycle;
\fill[black](0.396,0.022)..controls(0.417,0.035)and(0.444,0.069)..(0.474,0.124)..controls(0.4,0.11)and(0.329,0.109)..(0.268,0.12)..controls(0.308,0.075)and(0.352,0.042)..(0.396,0.022)--cycle;
\fill[black](0.15,0.163)..controls(0.183,0.143)and(0.222,0.128)..(0.268,0.12)..controls(0.22,0.176)and(0.178,0.25)..(0.151,0.332)..controls(0.135,0.262)and(0.136,0.203)..(0.15,0.163)--cycle;
\fill[black](0.496,0)..controls(0.463,0.000)and(0.43,0.008)..(0.396,0.022)..controls(0.387,0.017)and(0.379,0.015)..(0.372,0.017)..controls(0.412,0.006)and(0.454,0.000)..(0.496,0)--cycle;
\fill[black](0.184,0.113)..controls(0.17,0.124)and(0.158,0.141)..(0.15,0.163)..controls(0.119,0.182)and(0.095,0.206)..(0.078,0.232)..controls(0.106,0.187)and(0.142,0.146)..(0.184,0.113)--cycle;
\fill[black](0.025,0.656)..controls(0.029,0.668)and(0.036,0.679)..(0.046,0.69)..controls(0.044,0.699)and(0.045,0.707)..(0.048,0.714)..controls(0.039,0.695)and(0.031,0.676)..(0.025,0.656)--cycle;
\fill[black](0.309,0.961)..controls(0.315,0.964)and(0.322,0.967)..(0.328,0.97)..controls(0.316,0.965)and(0.303,0.96)..(0.291,0.954)..controls(0.297,0.957)and(0.302,0.959)..(0.309,0.961)--cycle;
\fill[black](0.683,0.952)..controls(0.679,0.963)and(0.673,0.969)..(0.666,0.972)..controls(0.691,0.963)and(0.715,0.952)..(0.737,0.94)..controls(0.722,0.948)and(0.704,0.952)..(0.683,0.952)--cycle;
\fill[black](0.978,0.497)..controls(0.993,0.543)and(0.994,0.586)..(0.984,0.624)..controls(1.001,0.561)and(1.004,0.494)..(0.995,0.428)..controls(0.998,0.449)and(0.993,0.472)..(0.978,0.497)--cycle;
\fill[black](0.898,0.224)..controls(0.913,0.227)and(0.923,0.234)..(0.928,0.242)..controls(0.905,0.202)and(0.875,0.166)..(0.841,0.135)..controls(0.865,0.157)and(0.885,0.187)..(0.898,0.224)--cycle;
\fill[black](0.846,0.615)..controls(0.911,0.576)and(0.956,0.535)..(0.978,0.497)..controls(0.966,0.459)and(0.945,0.42)..(0.916,0.381)..controls(0.91,0.454)and(0.886,0.536)..(0.846,0.615)--cycle;
\fill[black](0.571,0.917)..controls(0.613,0.94)and(0.652,0.952)..(0.683,0.952)..controls(0.693,0.93)and(0.696,0.887)..(0.691,0.826)..controls(0.652,0.863)and(0.611,0.894)..(0.571,0.917)--cycle;
\end{tikzpicture}
\end{minipage}
\caption{Arrangements for \(n=3\), \(n=5\), and \(n=7\).  The first two
realize the polyhedral groups \(S_4\) and \(A_5\); \(n=7\) gives
\(D_2\), one of only two non-cyclic dihedral \(G_P\) in the enumeration
(the other is \(D_3\) at \(n=17\)).}
\label{fig:symmetry-n3-n5-n7}
\end{figure}

\begin{figure}[H]
\centering
\begin{minipage}{0.48\linewidth}
\centering
\input{figures/9}
\end{minipage}\hfill
\begin{minipage}{0.48\linewidth}
\centering
\input{figures/13_2tcv981y0y3sl}
\end{minipage}
\caption{Arrangements for \(n=9\) and \(n=13\) \((C_2)\).  At \(n=9\) the
polyhedral group \(A_5\) appears a second time.}
\label{fig:symmetry-n9-n13}
\end{figure}

\begin{figure}[H]
\centering
\begin{minipage}{0.48\linewidth}
\centering
\input{figures/15}
\end{minipage}\hfill
\begin{minipage}{0.48\linewidth}
\centering
\input{figures/17_35u8pww448fx5}
\end{minipage}
\caption{Arrangements for \(n=15\) \((C_5)\) and \(n=17\) \((D_3)\), the
second of the two non-cyclic dihedral \(G_P\).  The \(n=15\) arrangement
illustrates the Euclidean stabilizers of
\cref{subsec:symmetries-stabilizers}: on the sphere it has \(n+1=16\)
lines and \(C_5\) symmetry about the polar axis.  One line is the
equator; taking it as the line at infinity gives a \(C_5\)-symmetric
affine chart --- a single \(E\)-class with \(H_E=C_5\) and multiplicity
\(m(E)=1\).  The other \(15\) lines fall by proximity to the pole into
three clusters of five lines.
Cutting along any line of a cluster gives
the same asymmetric \(E\)-class \((H_E=C_1)\), encountered
five times, so \(m(E)=5\).
Three clusters correspond to three different \(E\)-classes.
The four \(E\)-classes exhaust the \(16\) affine
charts: \(1\cdot1+3\cdot5=n+1\).}
\label{fig:symmetry-n15-n17-a}
\end{figure}

\begin{figure}[H]
\centering
\begin{minipage}{0.48\linewidth}
\centering
\input{figures/17_3cnjb4fdm3aip}
\end{minipage}\hfill
\begin{minipage}{0.48\linewidth}
\centering
\input{figures/17_ukrnlxg98ae9}
\end{minipage}
\caption{Arrangements for \(n=17\) with symmetries \(D_3\) and \(A_4\).
The first one can be obtained from the \(n=9\) arrangement 
(\cref{fig:n7-n9-samples}) by the doubling construction of
Forge--Ramírez Alfonsín~\cite{forge-ramirez-1998} or
Bartholdi--Blanc--Loisel~\cite{bartholdi-blanc-loisel-2007}; in the
projective plane the two constructions give the same result.
The tetrahedral group \(A_4\) appears here for the first time.  On the
sphere the \(A_4\) arrangement shows the same local picture from six
directions, each view carrying \(D_2\) symmetry; just as a
tetrahedron has six edges, each edge-on view having \(D_2\)
symmetry (\(6\cdot|D_2|=24=2|A_4|\)).
}
\label{fig:symmetry-n17-b}
\end{figure}

\begin{figure}[H]
\centering
\begin{minipage}{0.48\linewidth}
\centering
\input{figures/19_3hogzjbmfysij}
\end{minipage}\hfill
\begin{minipage}{0.48\linewidth}
\centering
\input{figures/21_2wdhca8g2cjbp}
\end{minipage}
\caption{Arrangements for \(n=19\) with \(C_2\) symmetry
(its affine charts give \(E\)-classes with \(H_E=D_1\) and \(H_E=C_1\))
and for \(n=21\) with \(A_4\).  No larger arrangement in the
enumeration (\(n=23\), \(25\), or \(27\)) has a polyhedral \(G_P\).}
\label{fig:symmetry-n19-3hogzjbmfysij}
\end{figure}

\begin{figure}[H]
\centering
\begin{minipage}{0.48\linewidth}
\centering
\input{figures/21_2wyxe08jij93p}
\end{minipage}\hfill
\begin{minipage}{0.48\linewidth}
\centering
\input{figures/21_3ip8x8zug58f5}
\end{minipage}
\caption{Arrangements for \(n=21\) with symmetries \(C_2\) and \(C_3\).
\(n=21\) carries four distinct \(G_P\) (\(C_1\), \(C_2\),
\(C_3\), \(A_4\)), and the
\(A_4\) profile has two \(P\)-classes, each splitting into three \(H_E\)
classes (\(C_3\), \(D_1\), \(C_1\)) --- one projective object cut into
three inequivalent affine charts.}
\label{fig:symmetry-n21-c2-c3}
\end{figure}

\begin{figure}[H]
\centering
\begin{minipage}{0.48\linewidth}
\centering
\input{figures/25-b5}
\end{minipage}\hfill
\begin{minipage}{0.48\linewidth}
\centering
\input{figures/27}
\end{minipage}
\caption{Arrangements for \(n=25\) with one pentagonal defect \((C_1)\)
and for \(n=27\) \((C_3)\).  The entire single-pentagon family at
\(n=25\) is asymmetric: all \(2324\) \(P\)-classes have trivial \(G_P\).
One might expect a \(C_5\) rotation centered on the pentagonal defect,
but no such arrangement appears.  This is the only family in the
enumeration with no nontrivial symmetry; every other \(n\), perfect or
defective, contributes at least one nontrivial \(G_P\).}
\label{fig:symmetry-n25-b5-n27}
\end{figure}

\endgroup

\section{Additional prunings}
\label{app:prunings}

\subsection{Forced descent of a suffix-maximal wire}
\label{app:forced-descent}

This additional pruning is used in the perfect search
(\cref{sec:perfect-search}) and partially in the 2-defective search
(\cref{sec:2-defective-search}).

\begin{figure}[ht]
\centering
\begin{tikzpicture}[x=0.5cm,y=0.5cm]
  \PseudolineWiringDiagram[fill]{9}{{7,5,3,1},4,{3,5},2,{1,3},6,{5,7},4,{3,5},2,{1,3},0,1,6,{5,7},4,{3,5},2,{1,3},4,{3,5},6,{5,7}}{wirefrom/8/6,wirefrom/7/14,wirefrom/5/20,wirefrom/3/22}
  \foreach \xl/\xr/\lab in {%
      5.35/13.15/{K_6K_4K_2K_0},
      13.35/19.15/{K_6K_4K_2},
      19.35/21.15/{K_4},
      21.35/23.15/{K_6}}{%
    \draw (\xl,-0.35) -- (\xl,-0.55) -- (\xr,-0.55) -- (\xr,-0.35);
    \node at ({(\xl+\xr)/2},-1.05) {$\lab$};
  }
  \node at (2.7,-1.00) {\small forced descents $\rightarrow$};
\end{tikzpicture}%
\captionsetup{width=0.88\linewidth}
\caption{A perfect arrangement for $n=9$ with forced descents.}
\label{fig:forced-descent}
\end{figure}

\begin{definition}[Suffix-maximal wire]
\label{def:suffix-maximal}
In a permutation $a_0,\ldots,a_{n-1}$ of the wires, a wire $w=a_i$ is
\emph{suffix-maximal} if $a_i=\max\{a_i,a_{i+1},\ldots,a_{n-1}\}$.
\end{definition}

\begin{lemma}[Forced descent of a suffix-maximal wire]
\label{lem:forced-descent}
Let a canonical reduced word in composite form~\eqref{eq:factorization} encode a
perfect arrangement, and let $K_g$ ($g\ge 2$) be a composite generator
acting on the permutation $a_0,\ldots,a_{n-1}$ reached just before it.
$K_g$ moves the wire $w=a_{g+1}$ from slot $g+1$ to slot $g-1$.
If $w$ is suffix-maximal in the permutation $a_0,\ldots,a_{n-1}$, then $w$
moves monotonically to its final slot $s=n-1-w$, and $s$ is either $0$
or odd.  The composite generators starting from $K_g$ are forced to be
\[
  K_{g}\,K_{g-2}\,\cdots\,K_f,
  \qquad
  f=\begin{cases}
    s+1, & s\text{ odd},\\
    0, & s=0.
  \end{cases}
\]
No other composite generator is admissible until $w$ reaches $s$.
\end{lemma}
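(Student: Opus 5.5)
The plan is to track the single wire $w$ through the composite letters that follow $K_g$. First I would record how $K_g$ acts on slots. In $K_g=\sigma_g\sigma_{g-1}\sigma_{g+1}$, the $\sigma_g$ moves $w$ from slot $g+1$ to slot $g$. The $\sigma_{g-1}$ then moves it from $g$ to $g-1$, and $\sigma_{g+1}$ does not touch it. So after $K_g$ the wire $w$ sits at the odd slot $g-1$. More generally, among composite letters only $K_h$ with $h\in\{g',g'+2\}$ move the wire at an odd slot $g'$. The letter $K_{g'+1}$ (i.e.\ $h=g'+1$ even) moves it up by two, from $g'$ to $g'-2$, or to $0$ when $g'=1$ via $K_0=\sigma_0\sigma_1$. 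The letter $K_{g'-1}$ moves it down by one, through its black generator $\sigma_{g'}$.

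\emph{Monotonicity and parity of $s$.} Suffix-maximality says every wire in a lower slot has a smaller label. Since wires start in increasing order, every such wire has already crossed $w$. Any later step moving $w$ down would cross one of them a second time, which reducedness forbids. A wire from above that crosses $w$ has a larger label, so after the crossing it sits above $w$. Hence suffix-maximality is preserved and $w$ only moves up, ending at its final slot $s=n-1-w$. The upward moves take $w$ from the odd slot $g-1$ to odd slots or to $0$. Its rest positions are therefore odd or $0$, which gives the claim on $s$. In the case $s=0$ I would invoke \cref{lem:sigma-zero}: the $K_0$ requires $a_1=n-1$, so $w=n-1$, which is consistent with $s=0$.

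\emph{Forcing the next letter.} Suppose $w$ sits at the odd slot $g'=g-1>s$, right after $K_g$. I would show that the very next composite letter is $K_{g-2}$. The letters that touch $w$ are $K_g$, which would move it down and is also excluded by \cref{lem:composite-no-repeat}, and $K_{g-2}$. Consider the letters after $K_g$. Consecutive indices $g_k,g_{k+1}$ satisfy $g_{k+1}\ge g_k-2$ by the canonical rule of \cref{lem:composite-canonical-form}. So the index sequence can drop below $g$ only by passing through $g$ or $g-2$. Index $g$ is forbidden while $w$ is at $g-1$, and $w$ must still move up since $g-1>s$. Hence the first letter with index $\le g$ is $K_{g-2}$, and every letter strictly between has index $\ge g+2$. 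If there were such a letter, the last of them, $K_h$, would immediately precede $K_{g-2}$ with $h\ge (g-2)+4$. That is exactly a forbidden canonical pair. So nothing lies between, and the next letter is $K_{g-2}$, which moves $w$ to slot $g-3$, or to $0$ via $K_0$ if $g=2$.

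I would then iterate this by induction, using that suffix-maximality is preserved. The result is the chain $K_gK_{g-2}\cdots K_f$, stopping when $w$ reaches $s$, with $f=s+1$ or $f=0$. Within the search, this means every other candidate index fails admissibility~(iv) or reducedness~(i) at each of these levels. The main obstacle I expect is the forcing step: turning the canonical-form rule into a ``no intermediate letters'' statement. This rests on the observation that indices decrease by at most $2$ per step. I would also double-check the boundary case $g-1=1$, where the upward letter is $K_0$ and its special slot condition must be matched against $w=n-1$.
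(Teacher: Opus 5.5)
Your proof is correct, and for the forcing step it takes a different and shorter route than the paper. The paper proceeds in three stages. It first commutes elementary letters so that all crossings of $w$ form one consecutive chain $\sigma_g\sigma_{g-1}\cdots\sigma_s$. It then packages this chain into a composite form that contains $K_gK_{g-2}\cdots K_f$ contiguously; even $s>0$ is excluded there because a final white $\sigma_s$ would need a later $\sigma_{s-1}$ crossing $w$ again. Finally it shows that resolving forbidden pairs by distant commutations never leaves a foreign letter inside the chain, and concludes by uniqueness of the canonical form.

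You never leave the given canonical word. Because indices fall by at most $2$ per step, the first letter after $K_g$ with index $\le g$ has index $g$ or $g-2$. Letters of index $\ge g+2$ do not touch slot $g-1$. $K_g$ is excluded by reducedness, since the wires below $w$ are smaller and already crossed. A letter of index $\ge g+2$ immediately before $K_{g-2}$ would be a forbidden pair.

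This avoids the elementary-level commutations and the confluence/uniqueness argument. Your slot-tracking (a composite letter can only lift a wire at an odd slot to an odd slot or to $0$) replaces the packaging argument for the parity of $s$. It also makes the descent-versus-ascent asymmetry in the remark after the lemma transparent, since it uses exactly that indices may rise freely but fall by at most $2$. What the paper's route gives in exchange is an intrinsic fact about the commutation class: the crossings of $w$ can be made consecutive in some word, proved before specializing to the canonical representative.

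A few slips need fixing.

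\begin{itemize}
\item In your first paragraph the indices are off. The composite letters touching an odd slot $g'$ are $K_{g'-1}$ and $K_{g'+1}$, not $h\in\{g',g'+2\}$. It is $K_{g'-1}$ ($K_0$ when $g'=1$) that lifts $w$ to $g'-2$, while $K_{g'+1}$ pushes it down to $g'+1$ through $\sigma_{g'}$. Your forcing paragraph already uses the correct assignment.
\item In the monotonicity paragraph, a wire crossing $w$ from above must have a \emph{smaller} label, since the two have not crossed yet, and afterwards it sits \emph{below} $w$. That is why suffix-maximality persists.
\item The gloss on the search is overstated. A candidate of index $\ge g+2$ may pass (i)--(iv) at the level right after $K_g$. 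What your argument shows is that its subtree has no leaf: by the canonical rule, the first later letter of index $\le g$ must then be $K_g$, which fails reducedness. This is the contiguity the lemma asserts.
\end{itemize}
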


\begin{proof}
We track the wire $w=a_{g+1}$ through its descent: it starts at slot $w$ in the
identity permutation, occupies slot $g+1$ just before $K_g$, and ends at slot
$s=n-1-w$ in the final permutation, the full reversal.

\emph{Step 1: monotone descent.}
Since $w$ is suffix-maximal, the wires $a_{g+2},\ldots,a_{n-1}$ after it are
smaller, so they started at smaller slot indices and have already crossed it.
By reducedness, none crosses it again, so $w$'s slot index does not increase.
As long as $w$
has not reached $s$, its next crossing therefore exchanges it with the smaller
wire at the adjacent lower slot, which afterward occupies a higher slot index
than $w$; hence $w$ is again suffix-maximal, and the same argument applies at
the next slot.  By induction the index falls monotonically from $g+1$ to $s$,
where $w$ ends.

\emph{Step 2: consecutive crossings.}
While $w$ occupies slot $k$, its next crossing is $\sigma_{k-1}$, moving it to
slot $k-1$.  Both $\sigma_{k-1}$ and $\sigma_k$ act on a pair touching slot $k$
and would cross $w$, so every letter $\sigma_a$ occurring between $w$'s arrival
in slot $k$ and that $\sigma_{k-1}$ has $a>k$ or $a<k-1$.  Write
$C_k=\sigma_g\sigma_{g-1}\cdots\sigma_k$ for the descent chain down to
$\sigma_k$ (base case $C_g=\sigma_g$), and suppose $C_k$ is already consecutive.
The two kinds of intervening letter differ in index by at least $3$, hence
commute; we sort them as $\sigma_{b_1}\cdots\sigma_{b_p}$ ($b_i<k-1$) followed
by $\sigma_{c_1}\cdots\sigma_{c_q}$ ($c_i>k$).  Each $\sigma_{b_i}$ commutes
with all of $C_k$ (indices $\ge k$) and moves left past it; each
$\sigma_{c_i}$ commutes with $\sigma_{k-1}$ and moves right past it.  This
removes every intervening letter and extends $C_k$ to $C_{k-1}$; iterating down to
$C_{s}=\sigma_g\cdots\sigma_{s}$ makes $w$'s crossings consecutive
(\cref{fig:forced-descent}).

\emph{Step 3: the chain in composite form.}
Bring the word obtained in Step 2 to the form~\eqref{eq:factorization} by
\cref{lem:composite-necessity}.  An even $s>0$ is impossible:
then the final crossing of $w$ would be the white letter $\sigma_s$, but its
packaging into $K_s=\sigma_s\sigma_{s-1}\sigma_{s+1}$
(\cref{lem:composite-necessity}) requires a later $\sigma_{s-1}$, crossing
$w$ again after it has reached slot $s$.  For $s=0$, $K_0=\sigma_0\sigma_1$
moves the wire $n-1$ to its final slot $0$.  Hence $s$ is odd, or $s=0$.

The white generators in $C_s$ are then
$\sigma_g,\sigma_{g-2},\ldots,\sigma_f$, where $f=s+1$ when $s$ is odd and
$f=0$ when $s=0$.  They are consecutive among the white generators of the word,
and the composite indices follow their order, so the composite form contains the
contiguous chain $K_gK_{g-2}\cdots K_f$.  If $f=g$, this chain has only its head
and the forced continuation after $K_g$ is empty.

\emph{Step 4: the chain in canonical form.}
Reduce the composite form to the canonical word by the distant
commutations~\eqref{eq:distant-commute}, which recover the canonical form
uniquely (\cref{lem:composite-distant-commutation,lem:composite-canonical-form});
it is therefore the given word.  Each replacement $K_aK_b\to K_bK_a$
($a\ge b+4$), resolving the forbidden pair, moves the larger index to the
right and the smaller to the left.  The chain's consecutive indices differ by
$2<4$, so its internal pairs never commute and are never swapped.

Because the canonical form is unique, it suffices to exhibit one sequence of
commutations that removes every forbidden pair: we resolve forbidden pairs among
the foreign generators in any order, and a forbidden pair between a foreign
generator and a chain generator by commuting that foreign generator through the
entire chain in a single run, without disturbing the other pairs.
This resolution terminates, since each swap strictly decreases the
distant-inversion count used in \cref{lem:composite-canonical-form}.

Since the chain is contiguous in the composite form obtained above, every other
composite generator starts outside it, left of the head $K_g$ or right of the tail $K_f$.
Take a foreign composite generator $K_h$ next to the head: $K_hK_g\cdots K_f$.
If $h\le g+2$ the pair $K_hK_g$ is canonical and $K_h$ stays left; if $h\ge g+4$
then $h$ exceeds every chain index by at least $4$, so $K_h$ is swapped past the
whole chain, beyond its right end.  Symmetrically, $K_h$ next to the tail
$K_g\cdots K_fK_h$ stays right if $h\ge f-2$ and passes beyond its left end if
$h\le f-4$.  No foreign composite generator ever stops between two links, so the
chain $K_gK_{g-2}\cdots K_f$ stays contiguous in the canonical word, that is, no
other composite generator is admissible until $w$ reaches $s$.
\end{proof}

\begin{remark}
The forced run is a \emph{descent} because the canonical-form rule of
\cref{lem:composite-canonical-form} is asymmetric: consecutive composite
indices may decrease by at most~$2$ but may increase freely.  The mirror
notion of a prefix-minimal wire forces no analogous ascent: an ascending
chain would be broken up by foreign generators stopping between its links
in Step~4.
\end{remark}

\begin{remark}[Forced-descent pruning in the 2-defective search]
\label{rem:2-defective-forced-descent}
The forced-descent pruning of \cref{lem:forced-descent} is used in the
2-defective search too, but only among the ordinary generators
that follow the special one.  By that point the special generator has
fixed the defect it creates, and the generators after it are applied
as in the perfect search.  Otherwise, the pruning would exclude
one of the rhombus variants \eqref{eq:special-commutation} together with
its defect jump, which the search must keep.

Turning this into a proof would mean re-deriving Steps~3--4 in the proof
of \cref{lem:forced-descent} for a word of the form
\eqref{eq:special-word-form}: packaging the defect-adjacent generators
by the composition rules of \cref{lem:special-factorization} rather than
\cref{lem:composite-necessity}, and re-establishing, on the ordinary
tail, that its composite form is determined up to distant commutation
(\cref{lem:composite-distant-commutation}) once the position and variant
of the special generator are held fixed.  We omit it.
The pruning only removes search branches, so it cannot cause
an invalid word to be emitted.  For the exhaustive enumerations reported in
\cref{subsec:exhaustive-enumerations} we have run the search with the
pruning disabled and enabled and obtained identical output streams,
so empirically no coverage is lost.
\end{remark}

\subsection{Wall debt prunings}
\label{subsec:crossing-debt}

The forced descent acts locally on the candidate $K_g$, along one descending
wire.  The next pruning gives a global constraint on the multiset of remaining
composite generators.

\begin{wrapfigure}[5]{r}{0.30\linewidth}
\vspace{-1.25\baselineskip}
\begin{tikzpicture}[x=0.8cm,y=1cm,>=stealth,baseline=(current bounding box.north)]
  \foreach \wi/\dv in {0/1, 1/2, 2/2, 3/1}{%
    \pgfmathsetmacro{\wx}{\wi+0.5}%
    \draw[dashed,line width=0.3pt] (\wx,-0.15) -- (\wx,1.4);%
    \node[scale=0.8] at (\wx,1.53) {$\delta_{\wi}{=}\dv$};%
  }
  \foreach \s/\v in {0/1, 1/0, 2/3, 3/4, 4/2}{%
    \node[fill=white,draw,inner sep=3pt,font=\scriptsize] (cur) at (\s,1) {$\v$};%
  }
  \foreach \s/\v in {0/4, 1/3, 2/2, 3/1, 4/0}{
    \node[font=\scriptsize] at (\s,0) {$\v$};%
  }
  \foreach \xt/\xb in {2/1, 3/0, 4/2}{%
    \draw[dotted, shorten >=4pt, shorten <=6pt, ->] (\xt,1) -- (\xb,0);%
  }
  \foreach \xt/\xb in {0/3, 1/4}{%
    \draw[line width=0.3pt, ->, shorten >=4pt, shorten <=6pt] (\xt,1) -- (\xb,0);
  }
\end{tikzpicture}
\end{wrapfigure}

Call the boundary between slots $i$ and $i+1$ \emph{wall}~$i$.
The generator $\sigma_i$ is the only one that carries wires across this wall.
The pruning compares the number of wires that need to pass through the walls
with the number of remaining composite generators.

In a simple arrangement, every pair of wires crosses exactly once and
the final permutation is the reversal $a_i=n-1-i$; that is, wire $w$ finishes
in slot $s=n-1-w$.

\begin{definition}[Wall debt]
\label{def:wall-debt}
For a wall $i$ and the permutation $a_0,\ldots,a_{n-1}$ at the partial
word $W_\ell=\sigma_{g_1}\cdots\sigma_{g_\ell}$, the \emph{wall debt} at $i$ is
\[
  \delta_i \;=\; \#\{\, j\le i : i < n-1-a_j \,\}, \quad i=0,\ldots,n-2,
\]
the number of wires in slots $0,\ldots,i$ whose final slot $s = n-1-a_j$ exceeds $i$.
\end{definition}

\begin{lemma}[Wall-debt constraint for simple arrangements]
\label{lem:wall-debt-simple}
In a reduced word encoding a simple arrangement, the letters after the partial
word $W_\ell=\sigma_{g_1}\cdots\sigma_{g_\ell}$ include at least
$\delta_i$ occurrences of $\sigma_i$, for every wall $i=0,\ldots,n-2$.
\end{lemma}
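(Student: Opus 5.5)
The argument is a counting argument over wall crossings.

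Fix a wall $i$ and the permutation $a_0,\ldots,a_{n-1}$ reached after $W_\ell$. Let $S$ be the set of wires currently in slots $0,\ldots,i$ whose final slot $n-1-a_j$ exceeds $i$; by \cref{def:wall-debt}, $|S|=\delta_i$. Each wire of $S$ is now on the low side of wall $i$ and must end on the high side. Since $\sigma_i$ is the only generator moving a wire across wall $i$, each such wire takes part in at least one remaining occurrence of $\sigma_i$ that carries it from slot $i$ to slot $i+1$.

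It remains to show that distinct wires of $S$ use distinct occurrences. A single $\sigma_i$ exchanges exactly two wires: one moves up across the wall and one moves down. So each occurrence carries exactly one wire upward, and the upward crossings of the $\delta_i$ wires of $S$ are at least $\delta_i$ distinct letters among the suffix. This gives at least $\delta_i$ occurrences of $\sigma_i$ after $W_\ell$.

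The only subtlety is that a wire may cross the wall several times, going up, down and up again. This only increases the count, since we charge each wire of $S$ to, say, its last upward crossing, which still makes the charging injective.

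Two facts are used and should be made explicit. First, the full word ends at the reversal $w_0$, so every wire finishes in slot $n-1-w$; this holds because the word is reduced for $w_0$. Second, reducedness is not needed beyond this, which makes the bound valid for any word ending at the reversal.

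I expect no real obstacle here; the injectivity of the charging is the main point to state carefully.
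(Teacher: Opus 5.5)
Your proposal is correct and is essentially the paper's argument. The paper calls slots $0,\ldots,i$ the block, notes that each $\sigma_i$ moves exactly one wire out of it, and concludes that the $\delta_i$ wires which must leave require at least $\delta_i$ further occurrences of $\sigma_i$. This is your injective charging of the wires of $S$ to upward crossings of wall $i$, and your remark that only the final reversal is used (not reducedness) is consistent with that proof.
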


\begin{proof}
Call slots $0,\ldots,i$ the \emph{block}.  Among all generators only
$\sigma_i$ changes which wires are in the block: $\sigma_i$ swaps the wires in
slots $i$ and $i+1$, moving one wire out of the block and one in, while every
$\sigma_a$ with $a<i$ permutes wires within the block and every $\sigma_a$
with $a>i$ permutes wires outside it.  At the end, the block must
hold the final wires $\{w:n-1-w\le i\}$.  The $\delta_i$ wires currently in it with
$n-1-a_j>i$ are not among the final wires and must leave the block.
Each $\sigma_i$ removes
exactly one wire from the block, and each of the $\delta_i$ leaving wires leaves
at least once, so at least $\delta_i$ occurrences of $\sigma_i$ remain.
\end{proof}

\begin{corollary}[Wall-debt constraint for perfect arrangements]
\label{cor:wall-debt-perfect}
Let the partial word be of the composite form~\eqref{eq:factorization}
with $k$ composite generators placed and $B=N_{\mathrm{perf}}-k$ remaining, and
let $x_g$ be the number of remaining generators equal to $K_g$.
Every $K_g$ ($g\ge2$) contributes one letter at each of the
walls $g-1,g,g+1$, and $K_0=\sigma_0\sigma_1$ at walls $0,1$.  The even wall
$g$ receives letters only from $K_g$, and an odd wall $j$ only from $K_{j-1}$
and $K_{j+1}$.  Applying \cref{lem:wall-debt-simple} at every wall gives
\[
  x_g\ge \delta_g\ (g\text{ even}),\qquad
  x_{j-1}+x_{j+1}\ge \delta_j\ (j\text{ odd}),\qquad
  \sum_g x_g=B .
\]
If no nonnegative integers $x_g$ satisfy these, the branch encodes no perfect
arrangement and is pruned.
\end{corollary}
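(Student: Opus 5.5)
The corollary follows by specializing \cref{lem:wall-debt-simple} to the alphabet of composite generators. I would proceed in three steps.

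\emph{Step 1: which walls each composite generator touches.} Fix a partial word of the form \eqref{eq:factorization} that extends to a word encoding a perfect arrangement. By \cref{lem:composite-necessity}, together with the canonical-form uniqueness used in the search, the remaining suffix is a sequence of $B=N_{\mathrm{perf}}-k$ composite generators. I would tabulate the elementary letters of each one. For $g\ge 2$, $K_g=\sigma_g\sigma_{g-1}\sigma_{g+1}$ contains exactly one letter for each of the walls $g-1$, $g$ and $g+1$. $K_0=\sigma_0\sigma_1$ contains one letter each for walls $0$ and $1$. Consequently the only composite generator carrying a $\sigma_g$ with $g$ even is $K_g$, and a $\sigma_j$ with $j$ odd appears only in $K_{j-1}$ or $K_{j+1}$. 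The case $j=1$ is covered because $K_0$ contributes $\sigma_1$, and so does $K_2$.

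\emph{Step 2: count the remaining letters at each wall.} Let $x_g$ be the number of remaining $K_g$. By Step 1, the number of remaining occurrences of $\sigma_g$ is exactly $x_g$ for even $g$, and the number of remaining occurrences of $\sigma_j$ is exactly $x_{j-1}+x_{j+1}$ for odd $j$. Out-of-range indices count as zero; for example, $x_{n-1}=0$ when $j=n-2$.

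\emph{Step 3: apply the wall-debt lemma.} The completed word is reduced and encodes a simple arrangement. \Cref{lem:wall-debt-simple} therefore gives, at each wall $i$, at least $\delta_i$ occurrences of $\sigma_i$ among the remaining letters. Substituting the counts from Step 2 yields the stated inequalities, and $\sum_g x_g=B$ holds by definition. If the system has no nonnegative integer solution, no completion exists, so pruning the branch is sound.

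The only subtle point is Step 1's premise that every completion consists solely of ordinary composite generators; this is exactly what \cref{lem:composite-necessity} supplies. Everything else is bookkeeping.
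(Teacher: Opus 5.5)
Your proposal is correct and matches the paper's argument, which is exactly this bookkeeping of which walls each $K_g$ touches, followed by \cref{lem:wall-debt-simple} applied at every wall. Your extra appeal to \cref{lem:composite-necessity} to show that every completion consists of composite generators is fine, though within the search it is automatic: branches are only ever extended by composite generators.
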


\begin{remark}[Wall sealing]
\label{rem:wall-sealing}
\cref{lem:trailing-digon-pruning} implies a pruning without giving a way to
check it.  The wall debt provides one.  Once a pair of wires $(2m,2m+1)$ has
crossed at its trailing digon by $\sigma_{n-2-2m}$, the wires no longer move,
so the walls $n-3-2m,n-2-2m,n-1-2m$ admit no further crossing (for $m=0$ the
wall $n-1$ lies outside the range $0,\ldots,n-2$ and is omitted).
If the debt at such a \emph{sealed} wall is positive, the word cannot be
constructed and the branch is pruned.
Conversely, if $\delta_i=0$, slots $0,\ldots,i$ hold exactly the $i+1$ wires
that finish there, so $a_i>a_{i+1}$ and any other $\sigma_i$ violates reducedness.
Sealing with zero debt thus decides at $W_\ell$ what
\cref{lem:trailing-digon-pruning} states about the completed word.
\end{remark}

\cref{cor:wall-debt-perfect} generalizes to the 2-defective search, with
$K_g^{\pm}$ counted in $x_g$ by their white index $g$. However, both
implementations evaluate only the even-wall bound
$\sum_{g\text{ even}}\delta_g\le B$. In our runs the odd-wall constraints
were not tighter for the perfect search and pruned negligibly in the
2-defective search.

\section{The dihedral reduction of the \texorpdfstring{$O$}{O}-matrix operations}
\label{app:symmetries}

\Cref{sec:classification} introduces the operations \(\rho\), \(\mu\),
\(\pi_p\) on \(O\)-matrices descriptively. This appendix gives their explicit
row-level formulas and establishes the reduction underlying the classification
of \cref{sec:classification} and \cref{app:canonicalization}.

Unlike \(\rho\) and \(\mu\), the operation \(\pi_p\) relabels lines depending on
the matrix it acts on (see \eqref{eq:pi-formula}),
so an operation sequence \(g\in\mathcal{W}\)
(\cref{subsec:symmetries-stabilizers}) does not act on the labels by a
well-defined permutation \(\tau(g)\).
However, for any fixed \(O\)-matrix, \cref{lem:chart-reduction} shows that
every operation sequence factors as a projective rotation \(\pi_s\),
sending the chosen line \(s\) to infinity, followed by an element of the
group \(D_{2n}\).
This reduction is the basis for the symmetry conclusions of
\cref{sec:classification}.

\subsection{Row formulas}
\label{app:symmetries:row-formulas}

In what follows \(O\in\mathcal{O}_n\) has rows \(O_0,\ldots,O_{n-1}\), and
each row has length \(n-1\) indexed by \(j=0,\ldots,n-2\).

The formulas can be re-derived as follows.  Pass to the
projective closure (\cref{subsec:checkerboard}): each of the \(n+1\)
projective lines is a closed curve, and its crossings with the other lines
follow one another along it in a cyclic order that depends on no choice of
chart or labeling.  These cyclic orders are what the extension \(O^+\) of
\eqref{eq:projective-closure} below records; an \(O\)-matrix reads them off
in one affine chart.  The chart cuts each closed line at its crossing with
the line at infinity, turning the cyclic order into a row, and the labeling
choices of \cref{subsec:order-matrices} (which line is \(0\), an end of it,
and a direction around infinity) fix the direction in which each row is
read.  The operations change only this reading data: \(\mu\) reverses the
direction of the labeling, \(\rho\) moves the sweep start past one line
end, and \(\pi_p\) re-chooses which line is cut out as the line at
infinity.  The row formulas record where each row is cut, which rows
reverse, and how the lines are renamed.  They
were obtained, and can be rechecked, by relabeling a drawn arrangement and
reading off its new matrix; the adjacency arguments of
\cref{app:symmetries:adjacency} rest on the same invariance of the cyclic
orders.

The chart change behind \(\pi_p\) stays within pseudoline arrangements: a
pseudoline of the projective closure can be straightened by a homeomorphism
of the projective plane and sent to infinity by a projective
transformation, turning the remaining \(n\) lines into an affine pseudoline
arrangement in the new chart (for these standard facts
see~\cite{grunbaum-1972,felsner-goodman-2017}).

\emph{Reflection \(\mu\)} (restated from \cref{def:reflection}):
\begin{equation}
\label{eq:mu-formula}
  (\mu O)_{i,j} = (n-1) - (O_{n-1-i})_j .
\end{equation}

\emph{Euclidean rotation \(\rho\)} (\cref{def:euclidean-rotation}):
\begin{equation}
\label{eq:rho-formula}
  (\rho O)_{k,j} =
  \begin{cases}
    \bigl((O_{k+1})_j - 1\bigr)\bmod n, & 0\le k\le n-2,\\
    \bigl((O_0)_{n-2-j} - 1\bigr)\bmod n, & k=n-1.
  \end{cases}
\end{equation}
Thus \(\rho\) moves each row up one position and decrements its entries
mod \(n\); the wrapped row (former row \(0\)) is additionally reversed.

\emph{Projective rotation \(\pi_p\).}
For \(p\in\{0,\ldots,n-1\}\), we construct \(\pi_p O\) (\cref{def:projective-rotation}),
using an auxiliary projective closure on the extended label set
\(\{0,\ldots,n\}\). The label \(n\) denotes the line at infinity
(in \cref{subsec:order-matrix-operations} we wrote \(\infty\) for clarity).

\emph{Projective rotation: \(O\)-matrix extension.}
Form \(O^+\in\mathcal{O}_{n+1}\) by
\begin{equation}
\label{eq:projective-closure}
  (O^+)_i = (O_i,\,n)\quad\text{for }i\in\{0,\ldots,n-1\},\qquad
  (O^+)_n = (0,1,\ldots,n-1),
\end{equation}
where \((O_i,n)\) denotes the row \(O_i\) with \(n\) appended.
Membership \(O^+\in\mathcal{O}_{n+1}\) follows from adding the line at infinity
as a last wire to the wiring diagram of \(O\).

\emph{Projective rotation: relabeling.}
Write \(L_j=(O_p)_j\) for the entries of row \(p\) of \(O\), so that
\(L_0\), \(L_1\), \(\ldots\), \(L_{n-2}\) is a sequence of distinct
labels from \(\{0,\ldots,n-1\}\setminus\{p\}\).  Define
\(\tau\colon\{0,\ldots,n\}\setminus\{p\}\to\{0,\ldots,n-1\}\) by
\begin{equation}
\label{eq:pi-relabeling}
  \tau(L_j)=j\quad(0\le j\le n-2),\qquad \tau(n)=n-1.
\end{equation}
For each new label \(k\in\{0,\ldots,n-1\}\), the corresponding old label
is \(v(k)=\tau^{-1}(k)\): namely \(v(k)=L_k\) for \(k\le n-2\) and
\(v(n-1)=n\).

\emph{Projective rotation: entry formula.}
For \(k\in\{0,\ldots,n-1\}\), let \(q_k\) be the position of \(p\) in row
\((O^+)_{v(k)}\) (which exists and is unique because \(v(k)\neq p\)),
and for \(j\in\{0,\ldots,n-2\}\) let
\[
  c_k(j) = \begin{cases}
    (q_k+j+1)\bmod n, & v(k)>p,\\[2pt]
    (q_k-j-1)\bmod n, & v(k)<p.
  \end{cases}
\]
As \(j\) runs over \(0,\ldots,n-2\), the cyclic index \(c_k(j)\) sweeps,
in the cyclic direction fixed by the two cases above, the \(n-1\) positions
of row \((O^+)_{v(k)}\) other than \(q_k\) (the position holding \(p\)).
The branch \(v(k)>p\) covers the row at infinity \(v(k)=n\) as well.  Then
the resulting matrix \(\pi_p O\) lies in \(\mathcal{O}_n\) and is given by
\begin{equation}
\label{eq:pi-formula}
  (\pi_p O)_{k,j} = \tau\bigl((O^+)_{v(k),\,c_k(j)}\bigr),
  \qquad j=0,\ldots,n-2.
\end{equation}

\subsection{Dihedral relations and the action of \texorpdfstring{\(D_{2n}\)}{D2n}}
\label{app:symmetries:group-relations}

We verify the dihedral relations directly from
\cref{app:symmetries:row-formulas}.  Involutivity \(\mu^2=\mathrm{id}\)
is immediate from \eqref{eq:mu-formula}:
\((\mu^2 O)_{i,j}=(n-1)-(\mu O)_{n-1-i,j}=(n-1)-\bigl((n-1)-(O_i)_j\bigr)=(O_i)_j\).

\begin{lemma}[No palindromes in \(O_i\)]
\label{lem:no-palindrome}
No row of an \(O\)-matrix equals its reversal.
\end{lemma}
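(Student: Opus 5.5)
The statement asks that no row $O_i$ of an $O$-matrix coincide with its own reversal. My plan is a short counting argument on the row itself, using only that $O_i$ is a permutation of the $n-1$ labels $\{0,\ldots,n-1\}\setminus\{i\}$ (\cref{def:order-matrix}). A row equal to its reversal satisfies $(O_i)_j=(O_i)_{n-2-j}$ for all $j=0,\ldots,n-2$. Entries within a row are pairwise distinct, so this forces $j=n-2-j$ for every $j$. That is possible only when the row has length at most one.

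The key steps, in order, are these. First, fix $i$ and suppose $O_i$ reads the same backwards. Second, take $j=0$: the first and last entries are then equal. Because $O_i$ is a permutation, the positions $0$ and $n-2$ must coincide, so $n-2=0$, i.e.\ $n\le 2$. Third, handle the degenerate sizes. For $n=2$ the row has a single entry, and a length-one sequence trivially equals its reversal. For $n=1$ the row is empty.

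So the only real subtlety is the range of $n$. The lemma must be read for $n\ge 3$, which is the setting of the paper, since we work with odd $n\ge 3$. Alternatively, ``palindrome'' can be understood as a row of length at least two. I will state this restriction explicitly, and then the combinatorial core is immediate.

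I expect the only obstacle to be this bookkeeping about small $n$. I also want to confirm how the lemma will be used: in verifying $(\mu\rho)^2=\mathrm{id}$ and the kernel statements (\cref{lem:tau-kernel}, \cref{cor:rho-n-no-fixed}), one needs that reversing a row genuinely changes it. The argument above gives exactly that for $n\ge 3$, with no geometric input needed.
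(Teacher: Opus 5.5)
Your proof is correct and matches the paper's: a row is a sequence of $n-1\ge 2$ pairwise distinct labels, so its first and last entries differ, and the paper also assumes $n\ge 3$ tacitly when it writes ``$n-1\ge 2$''. One small correction to your aside: the lemma is needed for \cref{cor:rho-n-no-fixed} (hence \cref{lem:tau-kernel}) and for the well-definedness of the reversed-row set in \cref{lem:o-matrix-determinacy}, not for $(\mu\rho)^2=\mathrm{id}$, which follows from the direct computation of \cref{lem:conjugation}.
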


\begin{proof}
By \cref{def:order-matrix} its \(n-1\ge 2\) entries are pairwise distinct.
\end{proof}

\begin{lemma}[Order of \(\rho\)]
\label{lem:rho-order}
\(\rho^{2n}=\mathrm{id}\), and \(\rho^{n}\) reverses every row of an
\(O\)-matrix, leaving the row labels and entry values unchanged.
\end{lemma}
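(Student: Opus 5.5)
The plan is to compute $\rho^{m}$ explicitly from the row formula \eqref{eq:rho-formula} by induction on $m$, then specialize to $m=n$ and $m=2n$. Define, for $0\le m\le n$, the claimed closed form
\[
  (\rho^m O)_{k,j} =
  \begin{cases}
    \bigl((O_{k+m})_j - m\bigr)\bmod n, & k+m\le n-1,\\
    \bigl((O_{k+m-n})_{n-2-j} - m\bigr)\bmod n, & k+m\ge n,
  \end{cases}
\]
that is: row $k$ of $\rho^m O$ is the old row $(k+m)\bmod n$, with entries decremented by $m$ mod $n$, and reversed exactly when the index wrapped around. The base case $m=1$ is \eqref{eq:rho-formula} itself.

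For the inductive step $m\to m+1$ (with $m+1\le n$) I apply \eqref{eq:rho-formula} to $\rho^m O$. For $k\le n-2$, row $k$ of $\rho^{m+1}O$ is row $k+1$ of $\rho^m O$ decremented by one. It falls in the unwrapped or wrapped case according as $k+1+m\le n-1$ or not, and the reversal status is inherited, which gives the claim. For $k=n-1$, the new bottom row is row $0$ of $\rho^m O$, reversed and decremented. Row $0$ of $\rho^m O$ is the unwrapped old row $m$ (since $m\le n-1$), so the new row is $(O_m)_{n-2-j}-(m+1)$. This matches the wrapped case, because $k+m+1-n=m$. The only care needed is that the decrements compose additively mod $n$, which is immediate.

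Setting $m=n$, every index wraps ($k+n\ge n$), so $(\rho^n O)_{k,j}=(O_k)_{n-2-j}-n\equiv (O_k)_{n-2-j}\pmod n$. Since entries lie in $\{0,\dots,n-1\}$, this gives $(\rho^nO)_{k,j}=(O_k)_{n-2-j}$: every row is reversed, row labels are unchanged, and entry values are unchanged. This is the second assertion. Applying $\rho^n$ twice reverses each row twice, so $\rho^{2n}=\mathrm{id}$.

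The main obstacle is bookkeeping rather than an idea: verifying the wrap-around step at $k=n-1$ and the mod-$n$ normalization. One point needs checking: the decrement ``$-1 \bmod n$'' in \eqref{eq:rho-formula} acts on labels in $\{0,\dots,n-1\}$ and yields labels there, so iterated decrements are well defined. I would also note that \cref{lem:no-palindrome} then shows $\rho^n O\neq O$, although this is not required for the statement itself.
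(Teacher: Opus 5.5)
Your proof is correct and takes the same approach as the paper. The paper tracks each row through $n$ applications of $\rho$: every row wraps, and is therefore reversed, exactly once, and its entries accumulate a total shift of $-n\equiv 0\pmod n$. Your closed form for $\rho^m$, proved by induction for $0\le m\le n$, is a more explicit and rigorous version of that same bookkeeping.
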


\begin{proof}
Track row \(i\) under iterations of \(\rho\) \eqref{eq:rho-formula}.
One application sends a row in position \(i\) to position \((i-1)\bmod n\)
and decrements each entry by \(1\bmod n\); the application sending
position \(0\) to position \(n-1\) additionally reverses the row.

In any block of \(n\) consecutive applications, every row position is
visited once, so each row wraps exactly once.  The cumulative entry shift
over \(n\) iterations is \(-n\equiv 0\pmod n\).  Hence \(\rho^n\) acts
as ``reverse every row'' on \(\mathcal{O}_n\) (with row labels and
entries unchanged), and \(\rho^{2n}\) reverses every row twice, giving
\(\rho^{2n}=\mathrm{id}\).
\end{proof}

\begin{corollary}[Action of \(\rho^{n}\)]
\label{cor:rho-n-no-fixed}
\(\rho^{n}\) fixes no \(O\)-matrix.
\end{corollary}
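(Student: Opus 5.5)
By \cref{lem:rho-order}, \(\rho^{n}\) reverses every row of an \(O\)-matrix and leaves the row labels and entry values unchanged. So if \(\rho^{n}O=O\), then each row \(O_i\) would equal its own reversal. The plan is to rule this out directly from the structure of a single row.

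First, the row-level effect of \(\rho^{n}\) is exactly the one recorded in \cref{lem:rho-order}: row \(i\) of \(\rho^nO\) is the reversal of row \(i\) of \(O\), with no relabeling. The point to check is that the relabelings introduced by the successive applications of \(\rho\) compose to the identity. The cumulative shift on labels and row positions over \(n\) steps is \(-n\equiv 0\pmod n\), and the lemma states this explicitly. Hence \(\rho^nO=O\) holds if and only if \(O_i\) is a palindrome for every \(i\).

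Second, take any single row, say \(O_0\). It has length \(n-1\), and for \(n\ge 3\) this gives at least two pairwise distinct entries. A palindrome must satisfy \((O_0)_0=(O_0)_{n-2}\), with \(0\ne n-2\). This is impossible for distinct entries, which is precisely \cref{lem:no-palindrome}. Therefore \(\rho^nO\ne O\) for every \(O\in\mathcal{O}_n\).

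The only real obstacle is the degenerate range of \(n\). For \(n\le 2\), a row has at most one entry and is trivially a palindrome, so the argument needs \(n\ge 3\). This is harmless because the paper works with odd \(n\ge 3\) throughout, and \cref{lem:no-palindrome} already assumes \(n-1\ge 2\). I would state this restriction explicitly in the proof.
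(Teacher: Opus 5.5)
Your proposal is correct and follows the paper's proof: the row-reversal description of \(\rho^{n}\) from \cref{lem:rho-order} reduces \(\rho^{n}O=O\) to every row being a palindrome, and \cref{lem:no-palindrome} rules that out. Your explicit remark that this needs \(n\ge 3\) matches the hypothesis \(n-1\ge 2\) already assumed in that lemma.
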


\begin{proof}
By \cref{lem:rho-order} the operation \(\rho^{n}\) reverses every row, and
by \cref{lem:no-palindrome} no row of an \(O\)-matrix equals its reversal;
hence \(\rho^{n}O\neq O\).
\end{proof}

\begin{lemma}[Conjugation relation]
\label{lem:conjugation}
\(\mu\rho\mu=\rho^{-1}\).
\end{lemma}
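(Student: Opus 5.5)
We prove \(\mu\rho\mu=\rho^{-1}\) by a direct entry-level computation from the row formulas \eqref{eq:mu-formula} and \eqref{eq:rho-formula}. Since \(\rho^{2n}=\mathrm{id}\) (\cref{lem:rho-order}) and \(\mu^2=\mathrm{id}\), the claim is equivalent to \(\rho\mu\rho\mu=\mathrm{id}\), or to \(\mu\rho\mu\rho=\mathrm{id}\). We verify the equivalent identity \(\mu\rho\mu = \rho^{-1}\) by checking \(\rho(\mu\rho\mu O)=O\) for every \(O\in\mathcal{O}_n\). Working with \(\rho\) rather than an explicit formula for \(\rho^{-1}\) avoids deriving the inverse.

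\emph{Step 1: compute \(\mu\rho\mu O\).} Let \(P=\mu O\), so \(P_{i,j}=(n-1)-(O_{n-1-i})_j\). Set \(Q=\rho P\). For \(k\le n-2\) we have \(Q_{k,j}=(P_{k+1,j}-1)\bmod n=(n-2-(O_{n-2-k})_j)\bmod n\). For \(k=n-1\) we have \(Q_{n-1,j}=(P_{0,n-2-j}-1)\bmod n=(n-2-(O_{n-1})_{n-2-j})\bmod n\).

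Apply \(\mu\) once more: \(R_{i,j}=(n-1)-Q_{n-1-i,j}\). For \(i\ge 1\) this gives \(R_{i,j}=((O_{i-1})_j+1)\bmod n\); the reductions are consistent because all values lie in \(\{0,\dots,n-1\}\) and both \(-1\) and \(+1\) wrap mod \(n\). For \(i=0\) it gives \(R_{0,j}=((O_{n-1})_{n-2-j}+1)\bmod n\). Thus \(\mu\rho\mu\) moves each row down one position and increments its entries mod \(n\), and the wrapped row (old row \(n-1\), now row \(0\)) is reversed.

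\emph{Step 2: verify \(\rho R=O\).} For \(k\le n-2\), \((\rho R)_{k,j}=(R_{k+1,j}-1)\bmod n=(O_k)_j\). For \(k=n-1\), \((\rho R)_{n-1,j}=(R_{0,n-2-j}-1)\bmod n=(O_{n-1})_{j}\), since the double index reversal \(n-2-(n-2-j)=j\) cancels. Hence \(\rho\mu\rho\mu=\mathrm{id}\) on \(\mathcal{O}_n\). Because \(\rho\) is a bijection on \(\mathcal{O}_n\) (by \cref{lem:rho-order}, \(\rho^{2n-1}\) is its inverse), this yields \(\mu\rho\mu=\rho^{-1}\).

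The only delicate point is bookkeeping of the \(\bmod n\) reductions in the wrapped row. The raw expressions \((n-1)-\bigl((n-2-x)\bmod n\bigr)\) must equal \((x+1)\bmod n\) for \(x\in\{0,\dots,n-1\}\). This holds by checking the boundary case \(x=n-1\) separately: the left side gives \((n-1)-(n-1)=0\), and the right side gives \(n\bmod n=0\). We also note that the proof is purely formal on label matrices, so membership in \(\mathcal{O}_n\) is not needed beyond the closure noted in \cref{def:order-matrix}.
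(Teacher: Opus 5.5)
Your proof is correct and is essentially the paper's argument: Step~1 is exactly the paper's computation of \(C=\mu\rho\mu O\), which moves each row down one position, increments entries mod \(n\), and reverses the wrapped row. The only difference is that you finish by checking \(\rho(\mu\rho\mu O)=O\) and using the bijectivity of \(\rho\) (from \(\rho^{2n}=\mathrm{id}\)), whereas the paper compares against an explicitly stated formula for \(\rho^{-1}\); this is a cosmetic variation, and your explicit check of the \(\bmod n\) boundary case \(x=n-1\) fills in a detail the paper leaves implicit.
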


\begin{proof}
We compute \(\mu\rho\mu\) and compare with \(\rho^{-1}\).  From the
formula for \(\rho\),
\[
  (\rho^{-1}O)_{l,j} =
  \begin{cases}
    \bigl((O_{l-1})_j+1\bigr)\bmod n, & 1\le l\le n-1,\\
    \bigl((O_{n-1})_{n-2-j}+1\bigr)\bmod n, & l=0.
  \end{cases}
\]

Set \(A=\mu O\), \(B=\rho A\), \(C=\mu B\).  Then \(A_{i,j}=(n-1)-(O_{n-1-i})_j\)
and, by the formula for \(\rho\), for \(0\le k\le n-2\)
\[
  \begin{aligned}
    B_{k,j}
    &= \bigl((A_{k+1})_j - 1\bigr)\bmod n
     = \bigl((n-1)-(O_{n-2-k})_j - 1\bigr)\bmod n\\
    &= \bigl(n-2-(O_{n-2-k})_j\bigr)\bmod n,
  \end{aligned}
\]
and for \(k=n-1\)
\[
  B_{n-1,j} = \bigl((A_0)_{n-2-j} - 1\bigr)\bmod n
  = \bigl(n-2-(O_{n-1})_{n-2-j}\bigr)\bmod n.
\]
Consider \(C_{i,j}=(n-1)-B_{n-1-i,j}\).  For \(i=0\), \(n-1-i=n-1\), so
\[
  C_{0,j} = (n-1)-\Bigl(\bigl(n-2-(O_{n-1})_{n-2-j}\bigr)\bmod n\Bigr)
         = \bigl((O_{n-1})_{n-2-j}+1\bigr)\bmod n.
\]
For \(1\le i\le n-1\), \(0\le n-1-i\le n-2\), so
\[
  C_{i,j} = (n-1)-\Bigl(\bigl(n-2-(O_{i-1})_j\bigr)\bmod n\Bigr)
         = \bigl((O_{i-1})_j+1\bigr)\bmod n.
\]
Both expressions match \(\rho^{-1}O\), proving the claim.
\end{proof}

\begin{definition}[Dihedral action]
\label{def:dihedral-action}
Let \(D_{2n}=\langle\rho,\mu\mid\rho^{2n}=\mu^{2}=(\mu\rho)^{2}=e\rangle\) be
the abstract dihedral group of order \(4n\).  It acts on \(\mathcal{O}_n\)
with each generator acting as the operation of the same name.
\end{definition}

This action is well defined: by the universal property of the presentation it
suffices that the operations \(\rho\) and \(\mu\) satisfy the three defining
relations in \(\mathrm{Sym}(\mathcal{O}_n)\), namely \(\mu^{2}=\mathrm{id}\)
(immediate from \eqref{eq:mu-formula}), \(\rho^{2n}=\mathrm{id}\)
(\cref{lem:rho-order}), and \((\mu\rho)^{2}=\mathrm{id}\), which follows from
\(\mu\rho\mu=\rho^{-1}\) (\cref{lem:conjugation}).

\begin{remark}
\label{rem:action-not-faithful}
The action need not be faithful.  For \(n=3\), the set \(\mathcal{O}_3\) has
just two elements (corresponding to the reduced words \(\sigma_0\sigma_1\sigma_0\) and
\(\sigma_1\sigma_0\sigma_1\)), and \(\rho^{2}\) acts trivially on it (indeed
\(\mu=\rho\) as maps), so the image of \(D_{6}\) in
\(\mathrm{Sym}(\mathcal{O}_3)\) has order \(2\), not \(12\).  This is why
\(D_{2n}\) is taken abstractly: the counting below uses only its order \(4n\)
and the orbit--stabilizer theorem, both of which hold for any action.  One
consequence of non-faithfulness is that the Euclidean stabilizer \(H_E\)
(\cref{def:euclidean-stabilizer}) may be larger than the number of distinct
\(O\)-matrices in an orbit.
\end{remark}

\subsection{Induced line permutations}
\label{app:symmetries:induced-permutations}

We make explicit the induced line permutation \(\tau(g;O)\in S_{n+1}\)
introduced in \cref{subsec:symmetries-stabilizers} for \(\rho\), \(\mu\),
and \(\pi_p\), where the \(n+1\) projective lines are labeled
\(\{0,\ldots,n\}\) (\(n\) is the current line at infinity).

For \(\rho\) and \(\mu\) the induced permutation is determined by the
operation alone; for \(\pi_p\) it also depends on row \(p\) of \(O\) via
the relabeling defined in \eqref{eq:pi-relabeling}.

\emph{Reflection \(\mu\).} The line at infinity is unchanged.
Finite labels are reflected:
\begin{equation}
\label{eq:tau-mu}
  \tau(\mu;O) = \begin{pmatrix}
    0   & 1   & 2   & \cdots & n-1 & n \\
    n-1 & n-2 & n-3 & \cdots & 0   & n
  \end{pmatrix}.
\end{equation}

\emph{Euclidean rotation \(\rho\).} The finite lines are relabeled
\(i\mapsto(i-1)\bmod n\).  The line at infinity is fixed,
since rotation moves the sweep start within the affine chart.
\begin{equation}
\label{eq:tau-rho}
  \tau(\rho;O) = \begin{pmatrix}
    0   & 1 & 2 & \cdots & n-1 & n \\
    n-1 & 0 & 1 & \cdots & n-2 & n
  \end{pmatrix}.
\end{equation}

\emph{Projective rotation \(\pi_p\).}
Writing out the relabeling \(\tau\) of \eqref{eq:pi-relabeling}, extended
to send \(p\mapsto n\), the induced permutation \(\tau(\pi_p;O)\in S_{n+1}\)
reads
\begin{equation}
\label{eq:tau-pi}
  \tau(\pi_p;O) = \begin{pmatrix}
    (O_p)_0 & (O_p)_1 & \cdots & (O_p)_{n-2} & n   & p \\
    0       & 1       & \cdots & n-2         & n-1 & n
  \end{pmatrix}.
\end{equation}
By construction, row \(p\) of \(O\) is a permutation of
\(\{0,\ldots,n-1\}\setminus\{p\}\), so the top row of \eqref{eq:tau-pi}
ranges over \(\{0,\ldots,n\}\) without repetition and
\(\tau(\pi_p;O)\) is well-defined.
Geometrically, line \(p\) becomes the new line at infinity and takes the label
\(n\).  The previous line at infinity takes the finite label \(n-1\).
The other finite lines are renamed by their order of intersection along \(p\).

\begin{lemma}[Kernel of the induced-permutation map]
\label{lem:tau-kernel}
The map \(\Phi\colon D_{2n}\to S_{n+1}\), \(h\mapsto\tau(h)\), is a group
homomorphism with kernel \(\{\mathrm{id},\rho^{n}\}\).
\end{lemma}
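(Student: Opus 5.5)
The plan is to check first that \(\Phi\) is a homomorphism, then compute its image on a normal form of \(D_{2n}\) and read off the kernel.

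\emph{Homomorphism.} By \eqref{eq:tau-mu} and \eqref{eq:tau-rho}, \(\tau(\rho;O)\) and \(\tau(\mu;O)\) do not depend on \(O\). So the groupoid law \eqref{eq:tau-composition} reduces, for words in \(\rho^{\pm1},\mu\), to \(\tau(hg)=\tau(h)\circ\tau(g)\). Hence \(\Phi\) is well defined as soon as the permutations \(r=\tau(\rho)\) and \(m=\tau(\mu)\) satisfy the defining relations of \(D_{2n}\) in \(S_{n+1}\). This is direct:
\begin{itemize}
\item \(r\) fixes \(n\) and acts as the \(n\)-cycle \(i\mapsto i-1\bmod n\) on \(\{0,\ldots,n-1\}\), so \(r^n=\mathrm{id}\) and therefore \(r^{2n}=\mathrm{id}\);
\item \(m\) is the involution \(i\mapsto n-1-i\);
\item \(mrm\) sends \(i\mapsto n-1-i\mapsto n-2-i\mapsto i+1\), which is \(r^{-1}\), so \((mr)^2=\mathrm{id}\).
\end{itemize}
By the universal property of the presentation, \(\Phi\) extends to a homomorphism. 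It is then consistent with the action, since \(\tau(h;O)\) is computed generator by generator.

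\emph{Kernel.} Every element of \(D_{2n}\) has the form \(\rho^k\) or \(\mu\rho^k\) with \(0\le k<2n\).
\begin{itemize}
\item \(\Phi(\rho^k)=r^k\) is the rotation \(i\mapsto i-k\bmod n\). It is trivial exactly when \(n\mid k\), that is, for \(k\in\{0,n\}\).
\item \(\Phi(\mu\rho^k)=m r^k\) sends \(i\mapsto n-1-(i-k)\bmod n\). Being the identity would require \(2i\equiv n-1+k\pmod n\) for all \(i\), which is impossible because \(n\ge3\): taking \(i=0\) and \(i=1\) gives \(2\equiv0\pmod n\).
\end{itemize}
So the kernel is \(\{\mathrm{id},\rho^n\}\).

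It remains to confirm that \(\rho^n\neq\mathrm{id}\) in the abstract group \(D_{2n}\), so that the kernel really has two elements. This is standard: the presentation defines the dihedral group of order \(4n\), in which \(\rho\) has order exactly \(2n\). Alternatively, \(\rho^n\) acts nontrivially on \(\mathcal{O}_n\) by \cref{cor:rho-n-no-fixed}, so it cannot be the identity.

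The only delicate point is the first step: \(\tau\) must be genuinely independent of \(O\) for \(\rho\) and \(\mu\), so that \eqref{eq:tau-composition} yields an honest homomorphism rather than a groupoid map. This holds by inspection of \eqref{eq:tau-mu} and \eqref{eq:tau-rho}.
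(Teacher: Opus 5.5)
Your proof is correct and matches the paper's argument for the homomorphism part: both use the matrix-independence of \eqref{eq:tau-mu} and \eqref{eq:tau-rho}, check the dihedral relations on \(\tau(\rho),\tau(\mu)\), and invoke the universal property. For the kernel, the paper counts instead: the image \(\langle\tau(\rho),\tau(\mu)\rangle\) is dihedral of order \(2n\), so \(|\ker\Phi|=4n/2n=2\), and \(\rho^n\) is the nontrivial kernel element by \cref{cor:rho-n-no-fixed}. Your direct evaluation of \(\Phi\) on the normal forms \(\rho^k,\mu\rho^k\) reaches the same conclusion more explicitly, without needing the order of the image.
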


\begin{proof}
By \eqref{eq:tau-mu} and \eqref{eq:tau-rho}, \(\tau(\mu)\) is a reflection
on the \(n\) finite labels and \(\tau(\rho)\) is an \(n\)-cycle.  They satisfy
the dihedral relations
\(\tau(\rho)^{2n}=\tau(\mu)^{2}=(\tau(\mu)\tau(\rho))^{2}=\mathrm{id}\)
(the first because \(\tau(\rho)^n=\mathrm{id}\)), so by the universal property
of the presentation \(\Phi\colon D_{2n}\to S_{n+1}\), \(\rho\mapsto\tau(\rho)\),
\(\mu\mapsto\tau(\mu)\), is a well-defined homomorphism.  (Concretely,
\(\tau(h_1h_2)=\tau(h_1)\circ\tau(h_2)\) follows from the composition law
\eqref{eq:tau-composition} and matrix-independence of \eqref{eq:tau-mu} and
\eqref{eq:tau-rho}.)  Its image \(\langle\tau(\rho),\tau(\mu)\rangle\) is
dihedral of order \(2n\), so \(|\ker\Phi|=4n/2n=2\) by \(|D_{2n}|=4n\).  Since
\(\tau(\rho^{n})=\tau(\rho)^{n}=\mathrm{id}\) while \(\rho^{n}\) acts
nontrivially (\cref{cor:rho-n-no-fixed}), so that \(\rho^{n}\neq\mathrm{id}\) in
\(D_{2n}\), the kernel is exactly \(\{\mathrm{id},\rho^{n}\}\).
\end{proof}

\subsection{Crossing-adjacency and the reduction of operations}
\label{app:symmetries:adjacency}

Each row of the projective closure \(O^+\) \eqref{eq:projective-closure}
has length \(n\) and records, read
cyclically, the order in which a projective line meets the other \(n\).
We say that labels \(b,c\) are \emph{adjacent along \(a\)} in \(O^+\) if
they occupy consecutive positions of row \((O^+)_a\), counted cyclically
(positions \(n-1\) and \(0\) being consecutive).  Geometrically, the
crossings of line \(a\) with \(b\) and \(c\) are consecutive along
\(a\), with no other crossing between them.

The operations all re-encode the same projective arrangement, so the
transformed \(O\)-matrix is read off the relabeled arrangement.
We claim that this transformation preserves crossing-adjacency
(\cref{lem:adjacency-preservation}), and in particular the cyclic order
of the labels along the line \(s\) sent to infinity.  Given that,
the labelings of \(gM\) and \(\pi_s M\) can differ only by an element
\(h\in D_{2n}\).  Crossing-adjacency and the characteristic property
of \(O\)-matrices (\cref{lem:o-matrix-determinacy}) force \(gM\) and
\(h\pi_s M\) to coincide (\cref{lem:chart-reduction}): every operation
acts within a single chart as a dihedral symmetry composed with the
projective rotation \(\pi_s\) that selects the chart's line at infinity.

\begin{lemma}[Operations preserve crossing-adjacency]
\label{lem:adjacency-preservation}
Let \(T\in\{\rho,\mu\}\cup\{\pi_p:0\le p\le n-1\}\) and \(O\in\mathcal{O}_n\),
and write \(t=\tau(T;O)\in S_{n+1}\) for the induced line permutation
(\cref{app:symmetries:induced-permutations}).  If \(b,c\) are adjacent
along \(a\) in \(O^+\), then \(t(b),t(c)\) are adjacent along \(t(a)\) in
\((TO)^+\).
\end{lemma}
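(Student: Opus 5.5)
We prove the lemma separately for each generator type \(T\), using the explicit row formulas of \cref{app:symmetries:row-formulas} and the induced permutations \eqref{eq:tau-mu}, \eqref{eq:tau-rho}, \eqref{eq:tau-pi}. In each case the claim reduces to one observation: row \(t(a)\) of \((TO)^+\), read cyclically, is the image under \(t\) of row \(a\) of \(O^+\), read cyclically, possibly reversed. Cyclic reversal and cyclic rotation both preserve cyclic adjacency. So it suffices to show, for every label \(a\in\{0,\ldots,n\}\), that \((TO)^+_{t(a)}\) is a cyclic rotation of \(t\bigl((O^+)_a\bigr)\) or of its reversal.

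\emph{Reflection \(\mu\).} By \eqref{eq:mu-formula}, row \(t(a)=n-1-a\) of \(\mu O\) is the entrywise image under \(t\) of row \(a\) of \(O\). Appending \(n=t(n)\) gives the same relation in the closures for finite \(a\). For the row at infinity, \((O^+)_n=(0,\ldots,n-1)\) maps under \(t\) to \((n-1,\ldots,0)\). This is the reversal of \(((\mu O)^+)_n\), which is enough.

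\emph{Rotation \(\rho\).} For \(k\le n-2\), \eqref{eq:rho-formula} gives row \(k=t(k+1)\) of \(\rho O\) as the image under \(t\) of row \(k+1\), and the appended \(n\) is fixed. For the wrapped row \(n-1=t(0)\), the entries are \(t\) applied to the reversed \(O_0\), followed by \(n\). Cyclically this is the reversal of \(t\bigl((O_0,n)\bigr)\), because moving \(n\) from the front to the back of a reversed cyclic word is just a rotation. The infinity row \((0,\ldots,n-1)\) maps to \((n-1,0,\ldots,n-2)\), which is a cyclic rotation of \((0,\ldots,n-1)\).

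\emph{Projective rotation \(\pi_p\).} This case is the main obstacle, because the new matrix is built by cutting rows at \(p\). We use \eqref{eq:pi-formula} directly. For a new label \(k\) with old label \(v(k)\ne p\), row \(k\) of \(\pi_pO\) is \(\tau\) applied to the entries of \((O^+)_{v(k)}\), read cyclically starting just after the position \(q_k\) of \(p\), in direction \(\pm1\), skipping \(p\). Appending the new infinity label \(n=t(p)\) to this row reinserts \(t(p)\) exactly at the cut point. Thus \(((\pi_pO)^+)_k\) is cyclically \(t\bigl((O^+)_{v(k)}\bigr)\) or its reversal.

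It remains to handle the new infinity row \(((\pi_pO)^+)_n=(0,\ldots,n-1)\), which corresponds to \(a=p\). By \eqref{eq:tau-pi}, this row is \(t\) applied to \((O_p)_0,\ldots,(O_p)_{n-2},n\), and that sequence is exactly \((O^+)_p\). We also need to check that \(t\) restricted to the labels other than \(p\) coincides with the relabeling \(\tau\) of \eqref{eq:pi-relabeling}. This holds by definition. Combining the cases proves the lemma.
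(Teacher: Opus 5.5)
Your proposal is correct and takes essentially the paper's approach: each operation acts on the rows of \(O^+\) by relabeling entries via \(t\), combined with a reversal and/or cyclic shift within each row, and these moves preserve cyclic consecutivity. You go further than the paper's one-sentence proof by checking this case by case against the explicit row formulas, including the row-index correspondence \(a\mapsto t(a)\), the wrapped row of \(\rho\), and the new infinity row of \(\pi_p\), all of which the paper leaves implicit.
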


\begin{proof}
By their definitions
(\cref{def:reflection,def:euclidean-rotation,def:projective-rotation}),
\(\rho\), \(\mu\), and \(\pi_p\) act on the
rows of the projective closure \(O^+\) by relabeling the entries and, within
each row, a reversal, a cyclic shift, or both.  None of these alters the
undirected cyclic consecutivity of a row's entries, so adjacency is preserved.
\end{proof}

By the groupoid law \eqref{eq:tau-composition}, crossing-adjacency is then
preserved by every \(g\in\mathcal{W}\): if \(b,c\) are
adjacent along \(a\) in \(O^+\), then \(\tau(g;O)(b),\tau(g;O)(c)\) are
adjacent along \(\tau(g;O)(a)\) in \((gO)^+\).

\begin{lemma}[Determinacy of the \texorpdfstring{$O$}{O}-matrix]
\label{lem:o-matrix-determinacy}
Let \(O_1,O_2\in\mathcal{O}_n\) be such that each row of \(O_2\) equals the
corresponding row of \(O_1\) or its reversal.
Then \(O_2=O_1\) or \(O_2=\rho^{n}O_1\).
\end{lemma}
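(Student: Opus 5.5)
The plan is to reduce the statement to triples of lines, where the \(O\)-matrices can be listed explicitly, and then to propagate the conclusion across all rows. For \(i<j<k\), write \(O|_{\{i,j,k\}}\) for the restriction of \(O\) to these three rows, with each row keeping only the entries from \(\{i,j,k\}\). If \(O\in\mathcal{O}_n\) is realized by a labeled arrangement (\cref{def:order-matrix}), then deleting all other pseudolines gives an arrangement of three pseudolines. The restriction \(O|_{\{i,j,k\}}\) is its \(O\)-matrix, with labels \(i<j<k\) standing for \(0<1<2\): deleting lines keeps the cyclic order of the remaining ends at infinity, and it keeps the order of the surviving crossings along each remaining line.

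The first step is to list \(\mathcal{O}_3\). Running the construction of \(O(W)\) on the only two reduced words \(\sigma_0\sigma_1\sigma_0\) and \(\sigma_1\sigma_0\sigma_1\) for \(w_0\in S_3\) (see \cref{rem:action-not-faithful}) gives the following two matrices.
\[
  \sigma_0\sigma_1\sigma_0:\quad 0:(1,2),\ 1:(0,2),\ 2:(0,1);
  \qquad
  \sigma_1\sigma_0\sigma_1:\quad 0:(2,1),\ 1:(2,0),\ 2:(1,0).
\]
In the first matrix every row is increasing, and in the second every row is decreasing. So in every \(O\)-matrix and for every triple \(i<j<k\), the three restricted rows are either all increasing or all decreasing.

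Next, apply this to \(O_1\) and \(O_2\). Call row \(i\) \emph{flipped} if \((O_2)_i\) is the reversal of \((O_1)_i\) rather than equal to it. By \cref{lem:no-palindrome}, for \(n\ge3\) exactly one of the two alternatives holds, so ``flipped'' is well defined. If row \(i\) is flipped, then each of its restrictions to a triple is reversed; reversing a row of length two turns increasing into decreasing. Now fix a triple. Both \(O_1|_{\{i,j,k\}}\) and \(O_2|_{\{i,j,k\}}\) are monotone in the same direction on all three rows. Hence the rows \(i,j,k\) are either all flipped or all unflipped. Any two rows \(i\ne j\) lie in a common triple because \(n\ge3\), so either no row is flipped or every row is flipped.

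In the first case \(O_2=O_1\). In the second case, \(\rho^{n}\) reverses every row and leaves the row labels and entries unchanged (\cref{lem:rho-order}), so \(O_2=\rho^{n}O_1\). The only step that needs care is the claim that restricting an \(O\)-matrix to three labels yields an element of \(\mathcal{O}_3\), in the labeling induced at infinity. This follows from the geometric description in \cref{def:order-matrix}, since deleting pseudolines preserves both the order of the line ends at infinity and the order of the remaining crossings along each line. The case \(n\le2\) is degenerate and is excluded, as in \cref{lem:no-palindrome}.
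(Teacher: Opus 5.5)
Your proof is correct and follows the paper's argument. The paper likewise lets $S$ be the set of reversed rows, uses the fact that for every triple $i<j<k$ the orders of $\{i,j\}$ in row $k$, $\{i,k\}$ in row $j$ and $\{j,k\}$ in row $i$ must agree to force $S=\varnothing$ or $S=\{0,\ldots,n-1\}$, and concludes with \cref{lem:rho-order}. The only difference is that the paper cites this triple condition from \cite[Theorem~5.2.10]{felsner-goodman-2017}, whereas you derive it yourself by restricting to three-line subarrangements and listing the two elements of $\mathcal{O}_3$, which makes your version self-contained.
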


\begin{proof}
Let \(S=\{\,a:(O_2)_a=\operatorname{rev}((O_1)_a)\,\}\) be the set of reversed
rows (well-defined by \cref{lem:no-palindrome}).  Then \(O_2\) is \(O_1\)
with the rows in \(S\) reversed.

Both \(O_1,O_2\in\mathcal{O}_n\) satisfy the triple condition of
\cite[Theorem~5.2.10]{felsner-goodman-2017}: for every \(i<j<k\) the
relative orders of the pair \(\{i,j\}\) in row \(k\), of \(\{i,k\}\) in
row \(j\), and of \(\{j,k\}\) in row \(i\) all agree.  Reversing row \(r\)
flips every relative order within it, hence flips the listed order exactly
for the triples whose corresponding index lies in \(S\).  For \(O_2\) to
meet the condition at \(i<j<k\), the indicators \([i\in S],[j\in S],[k\in
S]\) must coincide; since every two indices lie in a common triple (as
\(n\ge 3\)), all indices share the same membership in \(S\), so
\(S=\varnothing\) or \(S=\{0,\ldots,n-1\}\).  In the first case \(O_2=O_1\); in the second
\(O_2=\rho^{n}O_1\), since \(\rho^{n}\) reverses every row
(\cref{lem:rho-order}).
\end{proof}

\begin{lemma}[Operation sequences are dihedral within a chart]
\label{lem:chart-reduction}
Let \(M\in\mathcal{O}_n\) and \(g\in\mathcal{W}\), and let
\(s=\tau(g;M)^{-1}(n)\) be the label that \(g\) carries to the line at
infinity (so \(s=n\) when \(\tau(g;M)\) fixes \(n\); we set
\(\pi_n=\mathrm{id}\)).  Then there is an \(h\in D_{2n}\) for which
\(h\pi_s\) and \(g\) agree on \(M\) both as matrices and as induced
line permutations:
\begin{equation}
\label{eq:chart-reduction}
  gM=h\pi_s M,\qquad
  \tau(g;M)=\tau(h;\pi_s M)\circ\tau(\pi_s;M).
\end{equation}
In particular \(gM\sim_E\pi_s M\).
\end{lemma}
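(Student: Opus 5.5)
The plan follows the outline given just before \cref{lem:adjacency-preservation}: compare $gM$ with $\pi_s M$ through the cyclic orders in the projective closure, find a dihedral element matching their labelings, and then let \cref{lem:o-matrix-determinacy} remove the residual ambiguity.

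\emph{Step 1: labels along line $s$.} Set $t=\tau(g;M)$. In $M^+$, row $s$ lists the other $n$ projective lines in a cyclic order; for $s=n$ this is $(0,\dots,n-1)$. By \cref{lem:adjacency-preservation}, extended to all of $\mathcal{W}$ by the groupoid law \eqref{eq:tau-composition}, $t$ maps cyclically consecutive pairs along $s$ to cyclically consecutive pairs along $t(s)=n$ in $(gM)^+$. Row $n$ of $(gM)^+$ is $(0,1,\dots,n-1)$. Hence $t$ restricted to $\{0,\dots,n\}\setminus\{s\}$ is an isomorphism of $n$-cycles, from the cyclic order along $s$ onto the standard cycle $0,1,\dots,n-1$.

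The same reasoning applies to $t'=\tau(\pi_s;M)$, which also carries the cycle along $s$ onto the standard cycle. Therefore $\delta:=t\circ t'^{-1}$ restricted to $\{0,\dots,n-1\}$ is an automorphism of the standard $n$-cycle, so it lies in the dihedral group generated by $\tau(\rho)$ and $\tau(\mu)$. By \cref{lem:tau-kernel} the map $\Phi$ onto this group is surjective with kernel $\{\mathrm{id},\rho^n\}$. So there are exactly two $h\in D_{2n}$ with $\tau(h)=\delta$, and they differ by $\rho^n$. Both satisfy the second identity of \eqref{eq:chart-reduction}, since $\tau(h;\cdot)$ does not depend on the matrix.

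\emph{Step 2: comparing matrices.} Fix one such $h$ and set $N_1=gM$, $N_2=h\pi_s M$. Both arise from $M$ with the same induced line permutation $t$. For each line $a$, both $(N_1)^+_{t(a)}$ and $(N_2)^+_{t(a)}$ are images under $t$ of the cyclic order of row $a$ of $M^+$; this again uses adjacency preservation along every line, not only along $s$. An $n$-cycle is determined up to orientation by its adjacency relation. Each row of an $O$-matrix is that cyclic order cut at the entry $n$ (the line at infinity), which is placed last. So each row of $N_2$ equals the corresponding row of $N_1$ or its reversal.

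\Cref{lem:o-matrix-determinacy} then gives $N_2=N_1$ or $N_2=\rho^nN_1$. In the second case, replace $h$ by $\rho^n h$. This leaves $\tau(h)$ unchanged, because $\rho^n$ is in the kernel of $\Phi$, and it gives $N_2=N_1$ because $\rho^n$ reverses every row (\cref{lem:rho-order}). This proves both identities in \eqref{eq:chart-reduction}, and $gM\sim_E\pi_sM$ follows immediately.

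\emph{Main obstacle.} The delicate point is justifying that cyclic adjacency along every line determines each row up to reversal, with the cut placed correctly. This relies on $n$ being a fixed label (the line at infinity) in every row of an $O^+$ matrix, and on $t$ mapping the entry $s$ in each row of $M^+$ to $n$. I would check this carefully, along with the degenerate case $s=n$, where $\pi_s=\mathrm{id}$ and the argument reduces to comparing $gM$ with $hM$.
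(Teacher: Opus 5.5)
Your proposal is correct and follows the paper's proof step for step. The labeling discrepancy \(t\circ t'^{-1}\) fixes \(n\) and preserves the cycle along the line at infinity, so it is realized by some \(h\in D_{2n}\); the remaining ambiguity, rows agreeing only up to reversal, is removed by \cref{lem:o-matrix-determinacy} together with the replacement \(h\mapsto\rho^n h\). Your explicit argument that each row is the cyclic order cut at the fixed entry \(n\) fills in a step the paper states without detail.
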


\begin{proof}
All operation sequences in \(\mathcal{W}\) preserve crossing-adjacency
(\cref{lem:adjacency-preservation} and the groupoid law
\eqref{eq:tau-composition}), so \(gM\) and \(\pi_s M\) each carry the same
projective arrangement as \(M\), relabeled by \(\tau(g;M)\) and
\(\tau(\pi_s;M)\) respectively.  By construction both \(g\) and \(\pi_s\)
(with \(\pi_n=\mathrm{id}\)) send the line \(s\) to infinity, so \(gM\) and
\(\pi_s M\) are \(O\)-matrices in the same affine chart.  Their labelings differ by the
bijection \(r=\tau(g;M)\circ\tau(\pi_s;M)^{-1}\) of the lines, which fixes the
infinity label \(n\) and permutes the finite labels.  Being the label
correspondence between two encodings of one arrangement, \(r\) preserves
crossing-adjacency.  Adjacency along the infinity line is the cycle
\(0\!-\!1\!-\!\cdots\!-\!(n-1)\!-\!0\), so on the finite labels \(r\) is a
rotation or reflection and equals \(\tau(h;\cdot)\) for some \(h\in D_{2n}\)
by \eqref{eq:tau-mu} and \eqref{eq:tau-rho}.

We now verify that this \(h\) satisfies both identities of
\eqref{eq:chart-reduction}.  Its line
permutation is \(\tau(h;\pi_s M)=r\), so by the groupoid law
\eqref{eq:tau-composition} and the definition of \(r\),
\(\tau(h;\pi_s M)\circ\tau(\pi_s;M)=r\circ\tau(\pi_s;M)=\tau(g;M)\).  Hence
\(h\pi_s M\) and \(gM\) are relabeled from \(M\) by the same line permutation,
so identically labeled lines coincide and their rows agree up to reversal; by
\cref{lem:o-matrix-determinacy} the two matrices are equal or differ by
\(\rho^{n}\).  In the latter case replace \(h\) by
\(\rho^{n}h\in D_{2n}\), which leaves \(\tau(h;\cdot)\) unchanged since
\(\rho^{n}\) induces the trivial line permutation
(\cref{lem:tau-kernel}).  Either way \(gM=h\pi_s M\), and in particular
\(gM\sim_E\pi_s M\).
\end{proof}

\begin{corollary}[Operation sequence with inverse action]
\label{cor:sequence-reverse-action}
For every $g\in\mathcal{W}$ and $O\in\mathcal{O}_n$ there is
a $w\in\mathcal{W}$ with $w(gO)=O$, hence each $P$-class is
a single $\mathcal{W}$-orbit.
\end{corollary}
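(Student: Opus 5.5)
The plan is to reduce to the chart-reduction lemma and invert each of its two factors separately. Fix $g\in\mathcal{W}$ and $O\in\mathcal{O}_n$. By \cref{lem:chart-reduction}, there are a label $s=\tau(g;O)^{-1}(n)$ and an element $h\in D_{2n}$ with $gO=h\pi_s O$. It therefore suffices to produce operation sequences undoing $h$ and undoing $\pi_s$, and then to concatenate them.

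The dihedral part is immediate. Since $D_{2n}$ acts on $\mathcal{O}_n$ (\cref{def:dihedral-action}), $h^{-1}$ is expressible as a word in $\rho,\mu$, for instance using $\rho^{-1}=\rho^{2n-1}$ and $\mu^{-1}=\mu$. This word is an element of $\mathcal{W}$, and it satisfies $h^{-1}(gO)=\pi_s O$.

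The projective part is where the real content lies. We need some $w'\in\mathcal{W}$ with $w'\pi_sO=O$; if $s=n$ then $\pi_s=\mathrm{id}$ and we take $w'$ empty. Otherwise, put $O'=\pi_sO$ and $t=\tau(\pi_s;O)$. The old line at infinity now carries the label $t(n)=n-1$ by \eqref{eq:tau-pi}. Apply $\pi_{n-1}$ to $O'$; this sends that line back to infinity. Then $\pi_{n-1}\pi_s$ is an operation sequence whose induced permutation fixes the infinity line, because it maps $n\mapsto n-1\mapsto n$. Applying \cref{lem:chart-reduction} to $g'=\pi_{n-1}\pi_s$ at $O$ gives $s'=n$, so $\pi_{n-1}\pi_sO=h'O$ for some $h'\in D_{2n}$. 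Setting $w'=h'^{-1}\pi_{n-1}$ then gives $w'\pi_sO=O$.

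Combining the two parts, $w=w'h^{-1}$ satisfies $w(gO)=O$. The main care point is checking that \cref{lem:chart-reduction} applies to $g'$ with $s'=n$, which needs $\tau(g';O)(n)=n$. This follows from the groupoid law \eqref{eq:tau-composition}: $\tau(\pi_{n-1};O')$ sends $n-1$ to $n$, by \eqref{eq:tau-pi} with $p=n-1$, and $\tau(\pi_s;O)$ sends $n$ to $n-1$.

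For the orbit statement, $\sim_P$ is generated by $\sim_E$, which is realized by words in $\rho,\mu$, together with the relations $O\sim\pi_pO$. The relation "$O'$ lies in the $\mathcal{W}$-orbit of $O$" contains these generators and is reflexive and transitive by concatenation. It is symmetric by the result just proved. Hence it is an equivalence relation containing $\sim_P$. Conversely, each single step $O\mapsto TO$ with $T\in\{\rho,\mu,\pi_p\}$ stays within a $P$-class by \cref{def:p-equivalence}, so the two relations coincide and each $P$-class is a single $\mathcal{W}$-orbit.
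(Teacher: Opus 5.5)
Your proof is correct and uses the same device as the paper: $\pi_p$ is undone by $\pi_{n-1}$ followed by a dihedral correction from \cref{lem:chart-reduction}, because $\pi_{n-1}\pi_p$ fixes the infinity label. The only difference is bookkeeping. You first collapse all of $g$ to $h\pi_s$ and invert those two factors, while the paper inverts $g$ generator by generator along the intermediate matrices; you also state explicitly the easy inclusion of $\mathcal{W}$-reachability in $\sim_P$, which the paper leaves implicit.
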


\begin{proof}
For $\rho,\mu$ take $\rho^{2n-1}$ and $\mu$; for $\pi_p$,
\cref{lem:chart-reduction} gives $\pi_{n-1}\pi_pO=hO$ with $h\in D_{2n}$
(by \eqref{eq:tau-pi}, \(\pi_p\) sends infinity label \(n\) to \(n-1\)
and \(\pi_{n-1}\) restores it), so $w=h^{-1}\pi_{n-1}$ serves.
Composing these generator inverses, each taken at the intermediate matrix
it acts on (right to left), gives $w$ for every $g\in\mathcal{W}$.
Hence forward reachability is symmetric; being also reflexive and transitive,
it is an equivalence relation containing the generators of $\sim_P$, so it
coincides with $\sim_P$ and each $P$-class is a single $\mathcal{W}$-orbit.
\end{proof}

\begin{corollary}[The \(n+1\) charts exhaust a \(P\)-class]
\label{cor:chart-exhaustion}
For every \(M\in\mathcal{O}_n\), each \(O\)-matrix \(P\)-equivalent to
\(M\) is \(E\)-equivalent to one of the \(n+1\) \(O\)-matrices
\(M\), \(\pi_0 M\), \ldots, \(\pi_{n-1}M\).
\end{corollary}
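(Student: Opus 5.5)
The plan is to combine \cref{cor:sequence-reverse-action} with \cref{lem:chart-reduction}. Let \(O\sim_P M\). By \cref{cor:sequence-reverse-action}, the \(P\)-class of \(M\) is a single \(\mathcal{W}\)-orbit, so there is an operation sequence \(g\in\mathcal{W}\) with \(O=gM\). This is the only place where the equivalence relation \(\sim_P\), being the smallest equivalence relation generated by \(\sim_E\) and the \(\pi_p\), has to be unwound. The corollary already shows that forward reachability under \(\mathcal{W}\) is symmetric, so no inverse operations are needed.

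Next, apply \cref{lem:chart-reduction} to this \(g\) and \(M\). Set \(s=\tau(g;M)^{-1}(n)\), the label of the projective line that \(g\) carries to infinity. The lemma gives \(h\in D_{2n}\) with \(gM=h\pi_s M\), with the convention \(\pi_n=\mathrm{id}\). Hence \(O=h\,\pi_s M\), and by \cref{def:e-equivalence} we get \(O\sim_E \pi_s M\).

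It remains to read off which matrix \(\pi_s M\) is. If \(s=n\), the line at infinity is unchanged and \(\pi_s M=M\). Otherwise \(s\in\{0,\ldots,n-1\}\), and \(\pi_s M\) is one of \(\pi_0M,\ldots,\pi_{n-1}M\). In either case \(O\) is \(E\)-equivalent to one of the \(n+1\) listed matrices, which is the claim.

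I expect no real obstacle, since the substantive work is contained in \cref{lem:chart-reduction}, namely crossing-adjacency preservation and the determinacy \cref{lem:o-matrix-determinacy}. The one point to check is bookkeeping: the label \(s\) is read in the labeling of \(M\), so \(\pi_s\) is applied to \(M\) itself and not to some intermediate matrix. This matches the statement of \cref{lem:chart-reduction} exactly.
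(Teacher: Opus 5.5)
Your proposal is correct and is essentially the paper's own proof: obtain \(O=gM\) from \cref{cor:sequence-reverse-action}, set \(s=\tau(g;M)^{-1}(n)\), and conclude \(O=h\pi_sM\sim_E\pi_sM\) by \cref{lem:chart-reduction}. Your explicit treatment of the case \(s=n\) (where \(\pi_n=\mathrm{id}\) gives \(M\) itself) is a detail the paper leaves implicit.
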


\begin{proof}
Let \(N\sim_P M\). By \cref{cor:sequence-reverse-action}, \(N=gM\)
for some \(g\in\mathcal{W}\). Take \(s=\tau(g;M)^{-1}(n)\).
By \cref{lem:chart-reduction}, \(N=gM\sim_E\pi_s M\).
\end{proof}

\begin{lemma}[Symmetry group is a \texorpdfstring{$P$}{P}-class invariant]
\label{lem:gp-conjugacy}
If \(M,M'\in\mathcal{O}_n\) lie in the same \(P\)-class, then \(G(M)\) and
\(G(M')\) \eqref{eq:stabilizer-image} are conjugate subgroups of \(S_{n+1}\);
in particular \(|G(M)|\) depends only on the \(P\)-class.
\end{lemma}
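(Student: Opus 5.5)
We will show that \(G(M')\) is a conjugate of \(G(M)\) by an explicit line permutation. By \cref{cor:sequence-reverse-action}, \(M'\) and \(M\) lie in one \(\mathcal{W}\)-orbit, so we can fix \(u\in\mathcal{W}\) with \(M'=uM\) and \(w\in\mathcal{W}\) with \(wM'=M\). Put \(\alpha=\tau(u;M)\). The aim is the identity \(G(M')=\alpha\,G(M)\,\alpha^{-1}\).

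\emph{Inclusion \(\supseteq\).} Take \(g\) in the stabilizer of \(M\), so \(gM=M\). Then the composite sequence \(u\,g\,w\) fixes \(M'\), because \(ugwM'=ugM=uM=M'\). Applying the groupoid law \eqref{eq:tau-composition} twice gives
\[
\tau(ugw;M')=\tau(u;M)\circ\tau(g;M)\circ\tau(w;M').
\]
Next consider \(uw\), which fixes \(M'\). By \eqref{eq:tau-composition}, \(\tau(w;M')\) is not necessarily \(\alpha^{-1}\); rather \(\tau(uw;M')=\alpha\circ\tau(w;M')\) lies in \(G(M')\). Likewise \(wu\) fixes \(M\), so \(\beta:=\tau(wu;M)=\tau(w;M')\circ\alpha\) lies in \(G(M)\). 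This gives \(\tau(w;M')=\beta\alpha^{-1}\) with \(\beta\in G(M)\). Substituting,
\[
\tau(ugw;M')=\alpha\,\tau(g;M)\,\beta\,\alpha^{-1}.
\]
Since \(G(M)\) is a group (\cref{lem:gp-is-subgroup}), \(\tau(g;M)\beta\) runs over all of \(G(M)\) as \(g\) varies. Hence \(\alpha G(M)\alpha^{-1}\subseteq G(M')\).

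\emph{Inclusion \(\subseteq\).} Exchange the roles of \(M\) and \(M'\), using \(w\) in place of \(u\). The same argument gives \(\tau(w;M')\,G(M')\,\tau(w;M')^{-1}\subseteq G(M)\). We have \(\tau(w;M')=\beta\alpha^{-1}\) with \(\beta\in G(M)\), and conjugating \(G(M)\) by \(\beta\) leaves it unchanged. Therefore \(\alpha^{-1}G(M')\alpha\subseteq G(M)\), which is the reverse inclusion. Combining the two inclusions yields conjugacy, and the equality \(|G(M)|=|G(M')|\) follows at once.

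The main obstacle is one point: \(\tau(w;M')\) need not equal \(\alpha^{-1}\) exactly. It does so only up to an element of \(G(M)\), and the \(\beta\) bookkeeping above is what absorbs this discrepancy. All other steps are direct applications of \eqref{eq:tau-composition} and the existence of inverse sequences. Finiteness of \(S_{n+1}\) is not needed, since both inclusions are shown directly.
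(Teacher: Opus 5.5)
Your proof is correct and is essentially the paper's argument: the same pair of round-trip sequences, the same correction $\beta=\tau(wu;M)\in G(M)$ giving $\tau(w;M')=\beta\alpha^{-1}$, and the same groupoid-law computation of $\tau(ugw;M')$. The one difference is the last step: the paper turns the role-swapped inclusion into equality by finiteness (comparing orders), whereas you substitute $\tau(w;M')=\beta\alpha^{-1}$ to get the literal reverse inclusion. That is a slightly cleaner finish, though finiteness still enters through \cref{lem:gp-is-subgroup}, which you need for $\beta^{-1}\in G(M)$.
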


\begin{proof}
By \cref{cor:sequence-reverse-action} there are \(a,\bar a\in\mathcal{W}\)
with \(aM=M'\) and \(\bar aM'=M\).  Put \(\gamma=\tau(a;M)\in S_{n+1}\).
Since \(\bar aa\) fixes \(M\), \(\beta=\tau(\bar aa;M) \in G(M)\),
and the groupoid law \eqref{eq:tau-composition} applied to
\(\bar aa\) gives \(\beta=\tau(\bar a;M')\,\gamma\), that is
\(\tau(\bar a;M')=\beta\,\gamma^{-1}\).

Now take \(\alpha=\tau(g;M)\in G(M)\) (so \(gM=M\)).  The sequence
\(g^\ast=ag\bar a\in\mathcal{W}\) fixes \(M'\), and the groupoid law gives
\[
  \tau(g^\ast;M')=\tau(a;M)\,\tau(g;M)\,\tau(\bar a;M')
  =\gamma\,\alpha\,\beta\,\gamma^{-1}.
\]
As \(G(M)\) is a group (\cref{lem:gp-is-subgroup}), \(\alpha\beta\) ranges
over \(G(M)\) with \(\alpha\), so \(\gamma\,G(M)\,\gamma^{-1}\subseteq G(M')\).
Exchanging the roles of \(M,M'\) gives the reverse inclusion, whence
\(G(M')=\gamma\,G(M)\,\gamma^{-1}\) by finiteness.
\end{proof}

\section{Canonicalization and the symmetry detector}
\label{app:canonicalization}

\Cref{sec:classification} and \cref{app:symmetries} define the $E$- and
$P$-equivalences and the symmetry data $G_P$, $H_E$ abstractly.  This
appendix describes the implementation details: a computable canonical
representative for each class, used for equivalence testing and
deduplication; a short hash-based identifier for cross-referencing;
and the algorithm computing $G_P$ and $H_E$.

Monospace names below (e.g., \texttt{exact\_min\_o\_key})
refer to functions and classes of the
reference implementation under
\texttt{combinatorics/sorter/}~\cite{parpalak-utkin-pseudoline-algorithms}.

\subsection{Canonical representatives and deduplication}
\label{app:canonicalization:forms}

Deduplicating the search output into \(E\)- and \(P\)-classes
requires deciding when two
\(O\)-matrices are \(E\)- or \(P\)-equivalent
(\cref{def:e-equivalence,def:p-equivalence}).  We map each
\(O\)-matrix to the \emph{canonical representative} of the class: one
distinguished member, chosen so that two matrices are
equivalent if and only if the canonical representatives are identical.
Equivalence testing then becomes byte-for-byte comparison, and
deduplication becomes grouping by canonical representative.

The implementation represents \(O\in\mathcal{O}_n\) as a flat byte
buffer of length \(n(n-1)\) obtained by concatenating the rows in
order \(O_0,O_1,\ldots,O_{n-1}\).  Lexicographic order on these buffers gives a total order on
\(\mathcal{O}_n\); we write \(<_{\mathrm{lex}}\) for this order.

\emph{Euclidean canonical representative.}
For \(O\in\mathcal{O}_n\), the \emph{E-canonical representative} of the
\(E\)-class \([O]_E\) is
\begin{equation}
\label{eq:e-canonical}
  E\text{-can}(O)\ =\ \min{}_{<_{\mathrm{lex}}}\bigl\{\,h(O)\ :\ h\in D_{2n}\,\bigr\}.
\end{equation}
The orbit has size at most \(4n\) (\cref{def:dihedral-action}); the
implementation enumerates the \(2n\) rotations \(\rho^k\) and the
\(2n\) reflected rotations \(\mu\rho^k\) directly on the byte buffer
via the primitives \texttt{fixed\_o::rotate} and
\texttt{fixed\_o::mirror}, and keeps the lexicographically smallest
result (\texttt{exact\_min\_o\_key}).

\emph{Projective canonical representative.}
For \(O\in\mathcal{O}_n\), the \emph{P-canonical representative} of the
\(P\)-class \([O]_P\) is
\begin{equation}
\label{eq:p-canonical}
  P\text{-can}(O)\ =\ \min{}_{<_{\mathrm{lex}}}\bigl\{\,E\text{-can}(O),\ \,
                   E\text{-can}(\pi_0 O),\ \ldots,\ E\text{-can}(\pi_{n-1}O)\,\bigr\}.
\end{equation}
The set under the minimum has \(n+1\) elements: the input chart and its
\(n\) projective rotations. By \cref{cor:chart-exhaustion} these
exhaust the \(P\)-class up to \(E\)-equivalence, so the \(P\)-class is
the union of their \(D_{2n}\)-orbits and \eqref{eq:p-canonical} is the
lex-minimum over the full \(P\)-orbit.
The implementation \texttt{get\_projective\_orbit\_keys} calls
\texttt{exact\_min\_o\_key} on each \(\pi_p O\) and takes the byte-wise
minimum across the \(n+1\) results.

In both cases the canonical representative is a concrete matrix in
\(\mathcal{O}_n\); the byte buffer it produces is the \emph{exact key}
used by the deduplication stage (\texttt{sorter}) to partition the
input into commutation classes (\(D\)), \(E\)-classes, or \(P\)-classes via dictionary lookup on
the corresponding key (raw bytes for \(D\), E-canonical bytes for
\(E\), P-canonical bytes for \(P\); selected by
\texttt{-{}-uniq D|E|P}).

The exact key is a canonical handle in \(\mathcal{O}_n\), not a reduced
word.  When \texttt{sorter} prints a class as a reduced word
(\cref{app:data-formats}), the printed word is the first input word
whose canonicalization yielded this key, not a word reconstructed from
the canonical $O$-matrix (\cref{app:word-reconstruction}); class equality is decided against the key, not
against the printed word.

\subsection{Class identifiers}
\label{app:canonicalization:identifiers}

The exact key of \cref{app:canonicalization:forms} fully
identifies a class but is too long for cross-referencing
(\(n(n-1)\) bytes per $O$-matrix).  The classifier pairs each class with a
short hash-derived label.

\begin{definition}[Class identifiers]
\label{def:class-identifiers}
For \(C\in\{E,P\}\) and the corresponding canonical key \(K\) of
\cref{app:canonicalization:forms}, the \emph{\(C\)-class
identifier} is the string
\[
  \texttt{<C><n>-<base36(FNV1a64(\(K\)))>},
\]
where \(\mathtt{FNV1a64}\) is the 64-bit FNV-1a hash (offset basis
\(\mathtt{0xcbf29ce484222325}\), prime \(\mathtt{0x100000001b3}\)),
encoded in lowercase base~36.  We write \emph{\(E\)-id} (or
\texttt{eid}) for \(C=E\) and \emph{\(P\)-id} (or \texttt{pid}) for
\(C=P\).
\end{definition}

Equal classes have equal keys and therefore equal identifiers.  The
converse can fail to a 64-bit hash collision; at the enumeration scale
of \cref{sec:computational-results} (\(\#P\le 5.7\times 10^4\) at
\(n=27\)) the expected number of identifier collisions across all
\(P\)-classes is below \(10^{-10}\), and we observe none.  Class
equality is verified against the exact key, not against the
identifier.

\subsection{Construction of \texorpdfstring{$G_P$}{G\_P} and
\texorpdfstring{$H_E$}{H\_E}}
\label{app:canonicalization:groups}

We now describe the algorithm used by the symmetry detector
\texttt{sym\_detect} to compute the symmetry group \(G_P\) of a
\(P\)-class (\cref{def:p-symmetry-group}) and the Euclidean
stabilizer \(H_E\subseteq D_{2n}\) of each \(E\)-class contained in
it (\cref{def:euclidean-stabilizer}).

\emph{Input.} A representative \(O\)-matrix \(M\) of the \(P\)-class.
Any representative is acceptable: a different one conjugates \(G_P\) in
\(S_{n+1}\) (\cref{lem:gp-conjugacy}) and spans the same \(E\)-classes
(\cref{cor:chart-exhaustion}), so the group orders, the isomorphism types,
and the multiplicities \(m(E)\) are unchanged.
The implementation builds \(M\)
from a reduced word encoding it, via the \(O(W)\) construction of
\cref{subsec:order-matrices}.

\emph{Output.} The group \(G_P\), as a subgroup of \(S_{n+1}\) (order,
isomorphism type, generators, and element-order histogram); and, for
each \(E\)-class \(E\subseteq P\), its multiplicity \(m(E)\) and
Euclidean stabilizer \(H_E\) (order, type, and generators).

The algorithm runs in four steps. Step~1 builds the \(O\)-matrices in the \(n+1\)
affine charts of \(P\) with their induced line permutations. Steps~2--4
consume these to produce \(G_P\), the partition of charts into
\(E\)-classes, and the per-class stabilizers. A final block verifies
several invariants.

\emph{Step 1: the \(n+1\) charts.}
For each \(s\in\{0,\ldots,n\}\), let \(M_s:=\pi_s M\) (\(\pi_n:=\mathrm{id}\))
be the \(O\)-matrix of \(P\) in the chart with line \(s\) at infinity.
Record each induced line permutation \(\theta_s=\tau(\pi_s;M)\in
S_{n+1}\) of \eqref{eq:tau-pi}, with \(\theta_n=\mathrm{id}\). The
implementation stores these as \texttt{chart\_perms[s]}, computed via
\texttt{traced\_projective\_rotation}.

\emph{Step 2: enumerate Euclidean overlaps for \(G_P\).}
For every \(M_s\) and every
\(
h\in D_{2n}=\{\rho^k:0\le k<2n\}\cup\{\mu\rho^k:0\le k<2n\},
\)
test whether
\(hM_s=M\) by direct byte comparison. A match witnesses an element
\(g_{s,h}=h\pi_s\) of \(\operatorname{Stab}(M)\) (with
\(\pi_n:=\mathrm{id}\)); its induced line permutation is
\begin{equation}
\label{eq:gp-element}
  \tau(g_{s,h};M)\ =\ \tau(h;M_s)\circ\theta_s\ \in\ S_{n+1},
\end{equation}
by the groupoid law \eqref{eq:tau-composition}, where
\(\tau(h;\,\cdot)\) for \(h\in D_{2n}\) is determined by
\eqref{eq:tau-mu} and \eqref{eq:tau-rho} (and their compositions)
independently of the matrix it acts on.

By \cref{lem:chart-reduction} every operation acts on \(M\) as some
\(h\pi_s\), so the enumerated matches are exactly the operations fixing
\(M\): the matches are \(\operatorname{Stab}(M)\), and their induced
permutations are \(G_P\). Each such permutation \eqref{eq:gp-element}
sends \(s\) to the infinity label \(n\), so it is the identity only when
\(s=n\); there \(g_{s,h}=h\in D_{2n}\) with \(hM=M\), and
\(\tau(h;M)=\mathrm{id}\) forces \(h=\mathrm{id}\) by
\cref{lem:tau-kernel}, since its other kernel element \(\rho^n\) does not
fix \(M\) (\cref{cor:rho-n-no-fixed}). Thus the identity is the only
element of \(\operatorname{Stab}(M)\) inducing the trivial permutation,
so \(\tau\) is injective there and the enumeration lists each element of
\(G_P\) once. The implementation collects the \(\tau\)-images and reads off
generators, element orders, and the isomorphism type via
\texttt{LinePermutationGroup}.

\emph{Step 3: partition charts into \(E\)-classes.}
For each \(M_s\), compute its E-canonical representative
\(E\text{-can}(M_s)\). Charts whose \(M_s\) have the same E-canonical representative lie in
the same \(E\)-class; the partition realizes
\cref{def:multiplicity}, with \(m(E)\) equal to the number of charts
collapsing to a single E-canonical representative. The check
\(\sum_{E\subseteq P} m(E)=n+1\) is verified after the pass.

\emph{Step 4: \(H_E\) per \(E\)-class.}
Pick one chart \(s^\ast\) per \(E\)-class. Enumerate all
\(h\in D_{2n}\) and test
\(hM_{s^\ast}=M_{s^\ast}\) by byte comparison. The matches form
\(H_E\subseteq D_{2n}\). To recover \(H_E\) as a subgroup of
\(S_{n+1}\), the implementation records the induced permutation
\(\tau(h;M_{s^\ast})\) for each match; these are pairwise distinct, as in
Step~2, since the only nontrivial kernel element \(\rho^n\) does not fix
\(M_{s^\ast}\) (\cref{cor:rho-n-no-fixed}). Each element is tagged as a
\emph{rotation} (\(h=\rho^k\)) or a \emph{reflection}
(\(h=\mu\rho^k\)); the resulting reflection count disambiguates cyclic
\(C_k\) from dihedral \(D_k\) stabilizers when the abstract group
alone would not.

\emph{Consistency checks.}
The implementation verifies several invariants on every output:
\(\sum_{E\subseteq P} m(E)=n+1\) (charts partition into
\(E\)-classes);
\(m(E)\cdot|H_E|=|G_P|\) for every \(E\subseteq P\)
(\cref{lem:orbit-stabilizer-charts});
\(\operatorname{rep}(E)\cdot|H_E|=2n\)
(\cref{lem:parity-fixed-representatives}); that the symmetry enumerations of
Steps~2 and~4 yield as many distinct line permutations as there are
matches, witnessing \(\rho^n\notin\operatorname{Stab}(M)\) and
\(\rho^n\notin H_E\) on the actual data; and that the recovered
generators close to the full group with the recorded element-order
histogram. A failure of any check is logged with the position of the
offending \(P\)-class.

\section{Self-consistency of the 2-defective search}
\label{app:special-self-consistency}

This appendix records consistency checks for the 2-defective search
(\cref{subsec:2-defective-algorithm}).  Although the emitted words
depend on the parameter $X$, the covered \(P\)-classes and the number
of wiring-diagram representatives per \(P\)-class do not
(\cref{thm:2-defective-completeness,rem:2-defective-representatives}). We check this
invariance and record the results.

We consider three search modes: the $K_g^+$-only and $K_g^-$-only
experimental modes restrict the special generator alphabet to a single variant,
while the both mode has no restrictions
(binaries \texttt{two\_defect\_inc}, \texttt{two\_defect\_dec}, and
\texttt{two\_defect})~\cite{parpalak-utkin-pseudoline-algorithms}.
Only the both mode guarantees completeness
(\cref{thm:2-defective-completeness}).

Throughout this appendix, $\#D^\star$ and $\#E^\star$ count the wiring
diagrams and $E$-classes \emph{realized by the words emitted by the
2-defective search}. The totals $\#D$ and $\#E$ are recorded in
\cref{tab:complete-counts} and are in general
larger (for $n=19$, the both mode gives $\#E^\star=9236$ at any $X$
versus $\#E=26\,084$ in total).

\subsection{Two quadrilateral defects}
\label{subsec:special-self-consistency-two-quad}

\cref{tab:special-self-consistency-n19} reports the output sizes ($\#\text{words}$) of
all three modes at $n=19$ for every $X\in\{1,3,\ldots,17\}$,
and the $\#D^\star$ and $\#E^\star$
counts after deduplication (\cref{app:canonicalization:forms}).  The
number of $P$-classes is $\#P=1312$ throughout and is
omitted from the table.

\emph{For the both mode}, the triple $(\#D^\star, \#E^\star, \#P) = (20\,864, 9236, 1312)$
is invariant in $X$, illustrating
\cref{thm:2-defective-completeness,rem:2-defective-representatives}: each
run already exhausts the $P$-class set, and contributes the same number of
wiring diagrams.

\emph{The experimental $K_g^+$-only and $K_g^-$-only modes} also achieve
$\#P=1312$ at every $X$, but their $\#D^\star$ and $\#E^\star$ counts vary
with $X$, consistent with each experimental mode at fixed $X$ missing a subset
of wiring diagrams and covering every $P$-class only through alternative
representatives.  The equality $\#\text{words}=\#D^\star$ in every row
shows that these modes do not emit commutation-equivalent
words~\eqref{eq:special-commutation}.

\begin{table}[H]
\centering
\captionsetup{width=0.9\linewidth}
\caption{Output sizes of the 2-defective search at $n=19$ by skipped
generator $X$ and search mode.  The constant column $\#P=1312$ is
omitted.}
\label{tab:special-self-consistency-n19}
\small
\setlength{\tabcolsep}{4pt}
\begin{tabular*}{\linewidth}{@{}r@{\extracolsep{\fill}}rrrrrrrrr@{}}
\toprule
 & \multicolumn{3}{c}{$K_g^+$-only} & \multicolumn{3}{c}{$K_g^-$-only} & \multicolumn{3}{c}{both} \\
\cmidrule(lr){2-4}\cmidrule(lr){5-7}\cmidrule(lr){8-10}
$X$ & $\#\text{words}$ & $\#D^\star$ & $\#E^\star$ & $\#\text{words}$ & $\#D^\star$ & $\#E^\star$ & $\#\text{words}$ & $\#D^\star$ & $\#E^\star$ \\
\midrule
 1 & 15\,502 & 15\,502 & 8\,452 & 16\,346 & 16\,346 & 9\,009 & 31\,798 & 20\,864 & 9\,236 \\
 3 & 15\,629 & 15\,629 & 8\,854 & 15\,815 & 15\,815 & 8\,860 & 31\,292 & 20\,864 & 9\,236 \\
 5 & 15\,838 & 15\,838 & 9\,027 & 15\,867 & 15\,867 & 8\,936 & 31\,533 & 20\,864 & 9\,236 \\
 7 & 15\,885 & 15\,885 & 9\,215 & 15\,882 & 15\,882 & 9\,124 & 31\,583 & 20\,864 & 9\,236 \\
 9 & 15\,931 & 15\,931 & 9\,236 & 16\,059 & 16\,059 & 9\,236 & 31\,766 & 20\,864 & 9\,236 \\
11 & 15\,789 & 15\,789 & 9\,025 & 15\,893 & 15\,893 & 9\,187 & 31\,532 & 20\,864 & 9\,236 \\
13 & 15\,741 & 15\,741 & 8\,779 & 15\,754 & 15\,754 & 8\,851 & 31\,283 & 20\,864 & 9\,236 \\
15 & 15\,839 & 15\,839 & 8\,909 & 15\,722 & 15\,722 & 8\,916 & 31\,333 & 20\,864 & 9\,236 \\
17 & 16\,196 & 16\,196 & 8\,740 & 16\,546 & 16\,546 & 8\,722 & 31\,714 & 20\,864 & 9\,236 \\
\bottomrule
\end{tabular*}
\end{table}

To illustrate the counts in \cref{rem:2-defective-representatives}, we group
the output of the both mode at $n=13$ by individual $P$-classes.
After attaching the canonical $P$-class identifier to every emitted word
(\cref{def:class-identifiers}; concretely
\verb|sorter -p pid ')' gens|), we count
$\#\text{words}$, $\#D^\star$, and $\#E^\star$ within each $P$-class.

The $\#D^\star$ and $\#E^\star$ columns in \cref{tab:special-self-consistency-n13} are invariant in $X$ for every individual
$P$-class.  The contributed wiring diagrams themselves differ with $X$
(a different skipped generator \(\sigma_X\) anchors the defect
at a different place on the initial boundary),
but their number \(\#D^\star\) is preserved. In contrast,
the same $E$-classes recur for every $X$, since the diagrams contributed
at different $X$ are related by $\rho^k$ and hence Euclidean-equivalent
(\cref{def:e-equivalence}).
The word count $\#\text{words}$, however, varies with $X$, reflecting
different commutation-class representatives in the search output.

Each $P$-class has either $16$ or $8$ representative wiring diagrams
($\#D^\star$), realizing the two-quadrilateral count $16/|G_P|$ of
\cref{rem:2-defective-representatives}: the two classes with $\#D^\star=16$ have trivial
$G_P$, and the four with $\#D^\star=8$ are halved by a
symmetry of order~$2$ in $G_P$.  This matches the
$G_P$ distribution for $n=13$ in \cref{tab:sym-combined}: two
$P$-classes with $G_P\cong C_1$ and four with $G_P\cong C_2$.

\begin{table}[H]
\centering
\captionsetup{width=0.8\linewidth}
\caption{Output sizes of the both mode at $n=13$ for
$X\in\{1,3,\ldots,11\}$.  $\#\text{words}$, $\#D^\star$ and $\#E^\star$
are reported per $P$-class, labeled by identifier
(\cref{def:class-identifiers}).}
\label{tab:special-self-consistency-n13}
\small
\setlength{\tabcolsep}{4pt}
\begin{tabular}{lrrrrrrrr}
\toprule
& \multicolumn{6}{c}{$\#\text{words}$ at $X=$} & \multicolumn{2}{c}{invariant in $X$} \\
\cmidrule(lr){2-7}\cmidrule(lr){8-9}
$P$-class & 1 & 3 & 5 & 7 & 9 & 11 & $\#D^\star$ & $\#E^\star$ \\
\midrule
\texttt{P13-1lj52719vw4m1} & 12 & 13 & 12 & 11 & 11 & 12 &  8 & 4 \\
\texttt{P13-1ypd38fakt521} & 13 & 11 & 12 & 11 & 11 & 12 &  8 & 4 \\
\texttt{P13-2jyo1nd6n4f39} & 23 & 24 & 25 & 24 & 20 & 24 & 16 & 7 \\
\texttt{P13-2tcv981y0y3sl} & 14 & 13 & 13 & 13 & 10 & 16 &  8 & 4 \\
\texttt{P13-5rnnoook1aox} & 10 &  9 & 10 & 10 &  9 & 10 &  8 & 3 \\
\texttt{P13-85c0n1xk2un9} & 20 & 21 & 21 & 21 & 21 & 20 & 16 & 6 \\
\bottomrule
\end{tabular}
\end{table}

\subsection{One pentagonal defect}
\label{subsec:special-self-consistency-pentagon}

The both mode at $n=25$, $X=5$, restricted to the
single-pentagon family (\verb|two_defect -defects b5|), produces $35\,314$ words
across $23\,240$ wiring diagrams, $11\,620$ $E$-classes, and $2324$
$P$-classes.  Every $P$-class contributes exactly $10$ wiring diagrams,
the single-pentagon count of \cref{rem:2-defective-representatives}: every
$G_P$ is trivial.  The $10$ diagrams form $5$
$E$-classes, each a mirror-image pair.

The only quantity that varies from $P$-class to $P$-class is
$\#\text{words}$: each of the $10$ wiring diagrams is reached by one or
two commutation-equivalent words~\eqref{eq:special-commutation}, so the
count ranges from $10$ to $20$.  Its distribution over the $2324$
$P$-classes is $51, 173, 812, 944, 312, 32$ classes at
$\#\text{words}=10, 12, 14, 16, 18, 20$, respectively, summing to
$10\cdot 51 + 12\cdot 173 + 14\cdot 812 + 16\cdot 944 + 18\cdot 312
+ 20\cdot 32 = 35\,314$ words.

At the boundary values $X=1$ and $X=n-2$
relation~\eqref{eq:special-commutation} does not apply to the pentagonal
anchor (proof of \cref{lem:special-coverage-internal}).  Each run at $X=1$
and $X=23$ emits $23\,240$ words, matching $\#D^\star$ exactly,
while $\#E^\star=11\,620$ and $\#P=2324$ agree with $X=5$.  In this case
every wiring diagram is reached by exactly one word.

\section{Reconstructing a reduced word from an \texorpdfstring{$O$}{O}-matrix}
\label{app:word-reconstruction}

\Cref{subsec:order-matrices} maps a reduced word $W$ to its $O$-matrix
$O(W)$.  We record the inverse: given $O\in\mathcal{O}_n$, the procedure
below produces a reduced word $W$ with $O(W)=O$, unique up to commutations
(\cref{lem:o-matrix-commutation}).

\begingroup\ttfamily
\begin{tabbing}
xxxx\=xxxx\=\kill
Initialize $a=(0,1,\ldots,n-1)$ and $p_i=0$ for $i=0,\ldots,n-1$;\\
for step $=1,\ldots,n(n-1)/2$:\\
\>find the smallest $g$ with $O_{a_g}[p_{a_g}]=a_{g+1}$ and $O_{a_{g+1}}[p_{a_{g+1}}]=a_g$;\\
\>output $\sigma_g$; increment $p_{a_g}$, $p_{a_{g+1}}$; swap $a_g\leftrightarrow a_{g+1}$.
\end{tabbing}
\endgroup

\section{Reproducing the results: a worked example}
\label{app:data-formats}

The pipeline producing the numbers and tables of \cref{sec:computational-results}
is a chain of four programs:
\[
  \langle\textit{search}\rangle \;\longrightarrow\;
  \texttt{sorter} \;\longrightarrow\;
  \texttt{sym\_detect} \;\longrightarrow\;
  \texttt{sym\_detect\_aggregate.py}.
\]
Here $\langle\textit{search}\rangle$ is the instance-specific search binary:
\texttt{perfect} for the perfect search (\cref{sec:perfect-search}) or
\texttt{two\_defect} for the 2-defective
search (\cref{sec:2-defective-search}).
The command lines below assume all four are on \texttt{\$PATH}.
\begin{description}
  \item[$\langle\textit{search}\rangle$] prints one reduced word per line with a
    metadata prefix.
  \item[\texttt{sorter}] deduplicates the words under one of three
    equivalences, selected by the \verb|--uniq| flag, and prints one
    representative per class (\cref{app:canonicalization:forms}).  The
    classes are commutation classes (mode \verb|D|: words modulo the
    commutations~\eqref{eq:commute}, equivalently $O$-matrices by
    \cref{lem:o-matrix-commutation}), $E$-classes (mode \verb|E|,
    \cref{def:e-equivalence}), or $P$-classes (mode \verb|P|,
    \cref{def:p-equivalence}).
  \item[\texttt{sym\_detect}] reads $P$-class representatives and prints
    one JSON record per class, with $G_P$, the $H_E$ for every
    contained $E$-class, and the orbit--stabilizer consistency checks of
    \cref{subsec:symmetries-stabilizers}; the algorithm computing $G_P$
    and $H_E$ is given in \cref{app:canonicalization:groups}.
  \item[\texttt{sym\_detect\_aggregate.py}] turns these records into either
    an ASCII profile table or the LaTeX source of \cref{tab:sym-combined}.
\end{description}

This appendix runs the full pipeline on the two smallest instances with
$\#D>1$: $n=9$ for the perfect search and $n=7$ with $X=5$ for the
2-defective search, showing every command and its output.

\paragraph{Step 1: enumerate.}
The perfect search of \cref{sec:perfect-search} is implemented as
\texttt{perfect}.  Run on $n=9$:

\begin{small}
\begin{verbatim}
$ perfect -n 9 | sed -n 's/^.*) //p' > 9.words.txt
$ cat 9.words.txt
7 5 3 1 6 5 7 4 3 5 2 1 3 0 1 4 3 5 2 1 3 6 5 7 4 3 5 2 1 3 6 5 7 4 3 5
7 5 3 1 4 3 5 2 1 3 6 5 7 4 3 5 2 1 3 0 1 6 5 7 4 3 5 2 1 3 4 3 5 6 5 7
7 5 3 1 2 1 3 4 3 5 6 5 7 4 3 5 2 1 3 0 1 2 1 3 4 3 5 6 5 7 4 3 5 2 1 3
\end{verbatim}
\end{small}
The \texttt{sed} filter strips the metadata prefix (\verb|[time] A=... K=... i=...) |)
that the search writes ahead of each word.  Integers are
elementary-generator indices.  At $n=9$ the search prints three reduced
words, matching $\#D=3$ in \cref{tab:complete-counts}.

\paragraph{Step 2: deduplicate.}
The \texttt{sorter} canonicalizes under the requested equivalence.  At
$\sim_P$ on $n=9$ the three reduced words collapse to a single
$P$-class:

\begin{small}
\begin{verbatim}
$ sorter -i 9.words.txt --uniq P --exact-cache --print gens > 9.uniq-p.txt
$ cat 9.uniq-p.txt
7 5 3 1 6 5 7 4 3 5 2 1 3 0 1 4 3 5 2 1 3 6 5 7 4 3 5 2 1 3 6 5 7 4 3 5
\end{verbatim}
\end{small}
The \verb|--print| flag selects which fields to print per representative;
\verb|gens| prints the generator sequence on its own line.

\paragraph{Step 3: analyze symmetries.}
The \texttt{sym\_detect} program analyzes the symmetries of each
representative, printing one JSON record per $P$-class.  At $n=9$ it
produces a single record (formatted here for readability; the on-disk
JSONL is one line per record):

\begin{small}
\begin{verbatim}
$ sym_detect -i 9.uniq-p.txt
{"n": 9,
 "pid": "P9-1xw8b9ugsk55",
 "gp_ord": 60,
 "gp_type": "A5",
 "gp_gens": [[0,1,7,8,6,9,4,2,3,5], [1,3,0,4,2,6,9,8,5,7]],
 "gp_elt_orders": {"1": 1, "2": 15, "3": 20, "5": 24},
 "chart_to_e": [0,0,0,0,0,0,0,0,0,0],
 "e_classes": [
   {"eid": "E9-1xw8b9ugsk55",
    "m": 10, "reps": 3,
    "he_type": "D3", "he_ord": 6,
    "he_gens": [[1,0,8,7,6,5,4,3,2,9], [3,4,5,6,7,8,0,1,2,9]]}
 ],
 "checks": {"sum_m_ok": true, "orbit_stabilizer_ok": true,
            "representatives_ok": true, "generators_close": true,
            "e_generators_close": true, "chart_to_e_ok": true}}
\end{verbatim}
\end{small}
The fields are: $n$, the $P$-class identifier \texttt{pid}, the symmetry
group $G_P$ (\cref{def:p-symmetry-group}) given by its order
\texttt{gp\_ord}, a heuristic isomorphism-type name \texttt{gp\_type},
generators \texttt{gp\_gens} as line permutations of the $n+1$ projective
lines, and an element-order histogram \texttt{gp\_elt\_orders}; the array
\texttt{chart\_to\_e} maps each of the $n+1$ affine charts to the local
$E$-class identifier within this
$P$-class, so the multiplicity $m(E)$ of a class is the number of
occurrences of its id in \texttt{chart\_to\_e}; the array has $n+1$
entries, so the multiplicities sum to $n+1$ (here a single id $0$ fills
all $n+1=10$);
\texttt{e\_classes} is the per-$E$-class structure: identifier
\texttt{eid}, multiplicity $m(E)$ from \cref{def:multiplicity}, the count
$\operatorname{rep}(E)$ of \cref{lem:parity-fixed-representatives}, the
Euclidean stabilizer $H_E$ of \cref{def:euclidean-stabilizer} by order
and isomorphism type, and its generators as line permutations fixing the
line at infinity; the
\texttt{checks} block records the consistency checks of
\cref{subsec:symmetries-stabilizers}.

The single $E$-class spans $m(E)=10$ of the $n+1=10$ affine charts of
this $P$-class, and the perfect search emits it
$\operatorname{rep}(E)=3$ times, matching the three reduced
words ($\#D = \operatorname{rep}(E)\cdot\#E/P = 3\cdot 1 = 3$).

\paragraph{Step 4: aggregate.}
The Python post-processor groups records by ($G_P$, $m$-profile,
$(\operatorname{rep}(E), H_E)$-profile) and prints a per-profile table
whose columns mirror those of \cref{tab:sym-combined}.  For $n=9$:

\begin{small}
\begin{verbatim}
$ sym_detect -i 9.uniq-p.txt | sym_detect_aggregate.py
n = 9, P-classes = 1, profiles = 1

G_P | |G_P| | #E/P | m(E) | rep(E) | H_E | rep(P) | #P | #E | #D
----+-------+------+------+--------+-----+--------+----+----+---
A5  |    60 |    1 |   10 |      3 | D3  |      3 |  1 |  1 |  3
----+-------+------+------+--------+-----+--------+----+----+---
\end{verbatim}
\end{small}
The single profile gives $\#D=3$.

The \verb|G_P| label \verb|A5| comes directly from \texttt{sym\_detect}, which
names the isomorphism types it expects ($C_k$, $D_k$, $A_4$, $S_4$, $A_5$) and
otherwise prints a plain order descriptor \texttt{"nonabelian order $N$"} or
\texttt{"abelian order $N$"}; the group itself is recoverable from
\texttt{gp\_gens}.  The \verb|--format latex| flag typesets the same profile as
a per-$n$ table; \cref{tab:sym-combined} stacks all $n$ via
\verb|--format latex-combined|.

\paragraph{2-defective search at $n=7$, $X=5$}
The 2-defective search of \cref{sec:2-defective-search} is implemented
as \verb|two_defect|; the parameter $X$ is exposed as
\verb|-X|.  The pipeline runs identically except for the search
binary.  At $n=7$ with $X=5$ the search emits each accepted leaf
together with its defect-jump image of
\cref{subsec:2-defective-algorithm} (where one exists), printing five
reduced words:

\begin{small}
\begin{verbatim}
$ two_defect -n 7 -X 5 | sed -n 's/^.*) //p' > 7.words.txt
$ cat 7.words.txt
3 1 4 5 2 1 3 4 3 5 2 1 3 0 1 2 1 3 4 3 5
3 1 4 3 5 2 1 3 4 3 5 2 1 3 0 1 2 3 4 3 5
3 1 2 1 3 0 1 4 3 5 2 1 3 4 3 5 2 1 3 0 1
3 1 2 1 3 4 3 5 4 3 2 1 3 0 1 2 1 3 4 3 5
3 1 2 1 4 3 5 4 3 5 2 1 3 0 1 2 1 3 4 3 5
\end{verbatim}
\end{small}
The five words collapse to a single $P$-class:

\begin{small}
\begin{verbatim}
$ sorter -i 7.words.txt --uniq P --exact-cache --print gens > 7.uniq-p.txt
$ cat 7.uniq-p.txt
3 1 4 5 2 1 3 4 3 5 2 1 3 0 1 2 1 3 4 3 5
\end{verbatim}
\end{small}
Symmetry analysis on this representative:

\begin{small}
\begin{verbatim}
$ sym_detect -i 7.uniq-p.txt
{"n": 7,
 "pid": "P7-1xi1hu47otg9n",
 "gp_ord": 4, "gp_type": "D2",
 "gp_gens": [[0,4,2,5,1,3,7,6], [2,6,0,5,7,3,1,4]],
 "gp_elt_orders": {"1": 1, "2": 3},
 "chart_to_e": [1,0,1,2,0,2,0,0],
 "e_classes": [
   {"eid": "E7-21x35repcgpxf", "m": 4, "reps": 14,
    "he_type": "C1", "he_ord": 1, "he_gens": []},
   {"eid": "E7-yxwe78lyhblz", "m": 2, "reps": 7,
    "he_type": "D1", "he_ord": 2,
    "he_gens": [[6,5,4,3,2,1,0,7]]},
   {"eid": "E7-1xi1hu47otg9n", "m": 2, "reps": 7,
    "he_type": "D1", "he_ord": 2,
    "he_gens": [[2,1,0,6,5,4,3,7]]}
 ],
 "checks": {"sum_m_ok": true, "orbit_stabilizer_ok": true,
            "representatives_ok": true, "generators_close": true,
            "e_generators_close": true, "chart_to_e_ok": true}}
\end{verbatim}
\end{small}
The single $P$-class splits into three $E$-classes. 
\texttt{chart\_to\_e = [1,0,1,2,0,2,0,0]} shows the
distribution: id $0$ (i.e., \texttt{E7-21x35repcgpxf}) occurs $4$ times, ids $1$ and $2$ twice each,
giving the multiplicities $m(E_0)=4$, $m(E_1)=m(E_2)=2$ recorded in
\texttt{e\_classes}, with sum $n+1=8$.  The aggregator prints
one main row plus a sub-row, one per $(\operatorname{rep}(E), H_E)$ pair:

\begin{small}
\begin{verbatim}
$ sym_detect -i 7.uniq-p.txt | sym_detect_aggregate.py
n = 7, P-classes = 1, profiles = 1

G_P | |G_P| | #E/P | m(E) | rep(E) | H_E | rep(P) | #P | #E | #D
----+-------+------+------+--------+-----+--------+----+----+---
D2  |     4 |    2 |    2 |      7 | D1  |     28 |  1 |  3 | 28
    |       |    1 |    4 |     14 | C1  |        |    |    |
----+-------+------+------+--------+-----+--------+----+----+---
\end{verbatim}
\end{small}
This profile is the one tabulated in \cref{tab:sym-combined}: a single $P$-class with symmetry group
$G_P\cong D_2$ that splits into $\#E=3$ $E$-classes and spans
$\#D=28$ wiring diagrams.  The last is the sum
$\sum_{\text{rows}} \operatorname{rep}(E)\cdot \#E/P = 2\cdot 7 + 14$ over the
two rows.

\clearpage
\bibliographystyle{unsrt}
\bibliography{references}

\end{document}